\title[Analogs of Dirichlet $L$-functions in chromatic homotopy theory]{Analogs of Dirichlet $L$-functions \protect\\ in chromatic homotopy theory}
\author{Ningchuan Zhang}
\subjclass[2020]{Primary 55N22, 55P42; Secondary 11S40, 55Q50.}
\address{Department of Mathematics, University of Pennsylvania, Philadelphia, PA 19104, USA}
\date{}
\begin{document}
	\begin{abstract}
		The relation between Eisenstein series and the $J$-homomorphism is an important topic in chromatic homotopy theory at height $1$. Both sides are related to the special values of the Riemann $\zeta$-function. Number theorists have studied the twistings of the Riemann $\zeta$-function and Eisenstein series by Dirichlet characters. 
		
		Motivated by the Dirichlet equivariance of these Eisenstein series, we introduce the Dirichlet $J$-spectra in this paper. The homotopy groups of the Dirichlet $J$-spectra are related to the special values of the Dirichlet $L$-functions. Moreover, we find Brown-Comenetz duals of the Dirichlet $J$-spectra, whose formulas resemble functional equations of the corresponding Dirichlet $L$-functions. In this sense, the Dirichlet $J$-spectra we constructed are analogs of Dirichlet $L$-functions in chromatic homotopy theory.
		
		\smallskip
		\noindent \textbf{Keywords.} Dirichlet $L$-functions, chromatic homotopy theory, $J$-spectra.
	\end{abstract}
	\maketitle
	\tableofcontents

	Bernoulli numbers show up in many seemingly unrelated areas of mathematics, as observed in  \cite{Mazur_Bernoulli}. They are the special values of the Riemann $\zeta$-function at negative integers:
	\begin{equation*}\zeta(1-k)=-\frac{B_{k}}{k}.\end{equation*}
	Another two such occasions are $q$-expansions of normalized Eisenstein series in number theory:
		\begin{equation*}E_{2k}(q)=1-\frac{4k}{B_{2k}}\sum_{n\geq 1}\sigma_{2k-1}(n)q^n,\end{equation*} 
	and the images of the $J$-homomorphisms in the stable homotopy groups of spheres in algebraic topology:
		\begin{equation*}\imag(J_{4k-1})\simeq \Z/D_{2k},~D_{2k}=\text{the denominator of }B_{2k}/4k. \end{equation*}
	\noindent The connections between the congruences of the normalized Eisenstein series $E_{2k}$ and images of the $J_{4k-1}$ have been explained in \cite{Baker_Hecke_operations,topqexp,atmf,congbeta} in different ways since the invention of elliptic cohomology and topological modular forms (TMF). 
	
	Number theorists have studied the twistings of the Riemann $\zeta$-functions and Eisenstein series by Dirichlet characters. Let $\chi\colon \znx\to\Cx$ be a primitive Dirichlet character of conductor $N$. Leopoldt defined generalized Bernoulli numbers $B_{k,\chi}$ associated to $\chi$ in \cite{Leopoldt_GBN}. These numbers are algebraic numbers in $\Q(\imag \chi)$. Moreover, they are related to the special values of the Dirichlet $L$-function $L(s,\chi)$ at negative integers:
	\begin{equation*}
		L(1-k;\chi)=-\frac{B_{k,\chi}}{k}.
	\end{equation*}
	As in the classical case, $B_{k,\chi}$ appears in the $q$-expansion of $E_{k,\chi}$, the normalized Eisenstein series associated to $\chi$ when $(-1)^k=\chi(-1)$:
	\begin{equation*}
		E_k(q;\chi)=1-\frac{2k}{B_{k,\chi}}\sum_{n=1}^{\infty}\sigma_{k-1,\chi}(n)q^n.
	\end{equation*}
	
	Denote the ideal of $\Z[\chi]=\Z[\imag \chi]$ generated by the denominator of $\frac{B_{k,\chi}}{2k}$ by $\mathcal{D}_{k,\chi}$ when $(-1)^k=\chi(-1)$.\footnote{A priori, the denominator of $\frac{B_{k,\chi}}{2k}$ is not well-defined since the ring $\Z[\chi]$ is in general not a unique factorization domain and has a non-trivial unit group. But since $\Z[\chi]$ is a Dedekind domain, its fractional ideals have unique factorizations. As a result, the principal fractional ideal generated by $\frac{B_{k,\chi}}{2k}$ can be uniquely written as the difference of two actual ideals of $\Z[\chi]$. Thus the term "denominator ideal" makes sense in this context.} One may now wonder what is the object in homotopy theory that completes the analogy below:
	\begin{table}[ht]
		\centering
		\tabulinesep=1.2mm
		\begin{tabu}{ccc} \everyrow{\tabucline-} 
			\textbf{$L$-functions} & \textbf{Modular forms} & \textbf{Homotopy theory} \\
			$\zeta(1-2k)=-\frac{B_{2k}}{2k}$ & $E_{2k}\equiv 1\mod D_{2k}$&$\imag J_{4k-1}\simeq\Z/D_{2k}$\\
			$L(1-k;\chi)=-\frac{B_{k,\chi}}{k} $& $E_{k,\chi}\equiv 1\mod \mathcal{D}_{k,\chi}$& ?
		\end{tabu}
		\caption{Analogy of $L$-functions, modular forms and homotopy theory}\label{table:analogy}
	\end{table}

	In this paper, we construct analogs of Dirichlet $L$-functions in homotopy theory, called the \textbf{Dirichlet $J$-spectra}, that fit in the table above. We further compute their homotopy groups and study their properties. The relations between homotopy groups of the Dirichlet $J$-spectra and congruences of $E_{k,\chi}$ will be explained in a subsequent paper in preparation.

	The motivation for our construction of the Dirichlet $J$-spectra is the Dirichlet equivariance of the Eisenstein series $E_{k,\chi}$. This Eisenstein series is a modular form of weight $k$ and level $\Gamma_1(N)$. Moreover, it satisfies an automorphic equation \eqref{aut_eqn} for a larger congruence subgroup $\Gamma_0(N)$ that translates into a  Dirichlet equivariance with respect to the action of the quotient group $\Gamma_0(N)/\Gamma_1(N)\simeq \znx$: 
	\begin{equation*}
	E_{k,\chi}\in\hom_{\znx\textup{-rep}}(\Cbb_{\chi^{-1}},H^0(\Mell(\Gamma_1(N)),\bfo{k})).
	\end{equation*}	
	Imitating this formula, we define the Dirichlet $J$-spectrum in \Cref{con:twisted_j} by
	\begin{equation*}
	J(N)^{h\chi}=\map\left(M(\Z[\chi]),J(N)\right)^{h\znx}.
	\end{equation*}
	In this formula,
	\begin{itemize}
		\item The notation $(-)^{h\chi}$ stands for the "homotopy $\chi$-eigen-spectrum".
		\item $\Z[\chi]$ is the $\Z$-subalgebra of $\Cbb$ generated by the image of $\chi$. The character $\chi$ induces a $\znx$-action on $\Z[\chi]$ where $a\in\znx$ acts by multiplication by $\chi(a)$. 
		\item $M(\Z[\chi])$ is the Moore spectrum of $\Z[\chi]$ with a $\znx$-action such that the induced $\znx$-action on $\pi_0$ is equivalent to that on $\Z[\chi]$. The existence of such actions on the Moore spectra is non-trivial since the formation of Moore spectra is NOT functorial. In \Cref{Subsec:Moore}, we give an explicit construction of $M(\Z[\chi])$ with $\znx$-action suggested by Charles Rezk. 
		\item $J(N)$ is the "$J$-spectrum with $\mu_N$-level structure". It is defined as the homotopy pullback of the arithmetic fracture square \eqref{eqn:jn}:
		\begin{equation*}
		\begin{tikzcd}
		J(N)\rar\dar\arrow[dr, phantom, "\lrcorner", very near start]&\prod_{p}S^0_{KU/p}\left(p^{v_p(N)}\right)\dar["\text{Rationalization}"]\\
		S^0_\Q\rar["\text{Hurewicz}"]&\left(\prod_{p}S^0_{KU/p}\left(p^{v_p(N)}\right)\right)_\Q
		\end{tikzcd}
		\end{equation*}
		Here, $S^0_{KU/p}\left(p^{v}\right)=\left(\Kp\right)^{h(1+p^v\Zp)}$ is a $\zx{p^v}$-Galois extension of the $K(1)$-local sphere $S^0_{KU/p}$. The spectrum $J(N)$ is endowed with a $\znx$-action by assembling the Galois actions of $\zx{p^{v_p(N)}}$ for each prime $p\mid N$. 
		
		In particular, $J=J(1)$ is equivalent to $S^0_{KU}$, the Bousfield localization of the sphere spectrum $S^0$ at $KU$, as discussed in \cite{Bousfield_localization}. We call it the $J$-spectrum, because its Hurewicz map detects the image of the stable $J$-homomorphism.
		The details of this construction are explained in \Cref{Subsec:jn}.
	\end{itemize}
	\begin{prop*}\textup{(\ref{prop:hess})}
		There is a variant of the homotopy fixed point spectral sequence to compute $\pi_*(J(N)^{h\chi})$:
		\begin{equation*}E_2^{s,t}\simeq \ext^s_{\Z[\znx]}\left(\Z[\chi],\pi_t(J(N))\right)\Longrightarrow \pi_{t-s}\left(J(N)^{h\chi}\right). \end{equation*}
		As the $E_2$-page consists of derived $\chi$-eigenspaces of $\pi_*(J(N))$, it is appropriate to call this spectral sequence the "homotopy eigen(-spectrum) spectral sequence".
	\end{prop*}	 
	
	This computation is carried out $p$-adically. For a $p$-adic Dirichlet character $\chi\colon \znx\to \Cpx$, we construct the Dirichlet $K(1)$-local sphere $S^0_{K(1)}\left(p^{v}\right)^{h\chi}$ in a similar fashion. We show in \Cref{prop:jnchi_p_decomposition} that the $p$-completion of $J(N)^{h\chi}$ decomposes into a wedge sum of Dirichlet $K(1)$-local spheres. When $N=p>2$ or $4$, the summands in this decomposition represent elements of finite order in the $K(1)$-local Picard group, first defined in \cite{HMS_picard}. Moreover, we notice the definitions of the Dirichlet $J$-spectra and $K(1)$-local spheres depend on the group actions on the Moore spectra. In the case when $N=4$ and $p=2$, we observe in \Cref{rem:exotic_pic} that the Dirichlet $K(1)$-local spheres constructed using different group actions on the Moore spectra differ by the  the exotic element in the $K(1)$-local Picard group at $p=2$.
	
	The homotopy groups of these Dirichlet $K(1)$-local spheres are computed by a homotopy fixed point spectral sequence (HFPSS), whose $E_2$-page consists of continuous group cohomology.
	\begin{cor*}\textup{(\ref{cor:hfpss_d_k1_gp_coh})}
		Write $N=p^v\cdot N'$, where $p\nmid N'$. Then $\chi$ factors as $\chi=\chi_p\cdot \chi'$ where $\chi_p$ and $\chi'$ have conductors $p^v$ and $N'$, respectively. Then there is a HFPSS:
		\begin{equation*}
		E_2^{s,2t}=\ext_{\Zp\llb \Zpx\x \zx{N'}\rrb}^s(\Zp[\chi],\Zp^{\otimes t})
		\simeq H_c^s(\Zpx\x \zx{N'};\Zp^{\otimes t}[\chi^{-1}])\Longrightarrow \pi_{2t-s}\left(S^0_{K(1)}(p^v)^{h\chi}\right),
		\end{equation*}
		where $\Zp^{\otimes t}[\chi]$ is the representation associated to the character $\Zpx\x\zx{N'}\xrightarrow{(a,b)\mapsto \chi_p(a)\chi'(b)a^t}(\Zp[\chi])^\x$.
	\end{cor*}
	In a subsequent paper, we will relate the group cohomology $H_c^1(\Zpx\x \zx{N'};\Zp^{\otimes k}[\chi^{-1}])$ in \Cref{cor:hfpss_d_k1_gp_coh} to congruences of the $p$-adic Eisenstein series $E_{k,\chi^{-1}}$, using \Dieudonne theory of height $1$ formal groups and formal $A$-modules.
	
	Assembling the computations of homotopy groups of the Dirichlet $K(1)$-local spheres in the first three subsections of \Cref{Sec:pi_Dirichlet_j}, we record the homotopy groups of the Dirichlet $J$-spectra in \Cref{thm:pi_dirichlet_j}.  These homotopy groups are related to the special values of the corresponding Dirichlet $L$-functions:
	\begin{thm*}\textup{(\ref{thm:dirichlet_j_gbn})}
		Assume $N=p^v>1$. For all integers $k$ satisfying $(-1)^k=\chi(-1)$, we have
		\begin{equation*}
		\pi_{2k-1}\left(J(p^v)^{h\chi}\left[\frac{1}{\ell(\chi)}\right]\right)\simeq \left.\Z\left[\chi\right]\right/\mathcal{I}_{|k|,\chi^{-1}},\quad \textup{where }\ell(\chi)=\left\{\begin{array}{cl}
		\ell,&\textup{if }|\imag(\chi)|\textup{ is a power of a prime }\ell\neq p;\\
		1,&\textup{otherwise},
		\end{array}\right.
		\end{equation*}
		and the possible multiplicative difference of the ideals $\mathcal{I}_{k,\chi}$ and $\mathcal{D}_{k,\chi}$ of $\Z[\chi]$ contains the principal ideal $(2)$ in $\Z[\chi]$.
	\end{thm*}
	This computation of Dirichlet $J$-spectra allows us to compare the spectrum $J(N)$ with the Dedekind $\zeta$-function attached to $\Q(\zeta_N)$. The comparison does not work directly, as the latter has only zero special values. Instead, we focus on \emph{totally real} abelian extensions of $\Q$. 
	\begin{thm*}\textup{(\ref{thm:J_K_Dedekind})}
			Let $\Kbb/\Q$ be a totally real finite abelian extension and $p^v$ be the smallest integer such that $\Kbb\subseteq \Q(\zeta_{p^v})$. Denote the Galois group $\gal(\Q(\zeta_{p^v})/\Kbb)$ by $G$. Then 
			\begin{equation*}
			\pi_{4t-1}\left(J(p^v)^{hG}\left[\frac{1}{|G|}\right]\right)=\left.\Z\left[\frac{1}{|G|}\right]\right/D_{\Kbb,2t},
			\end{equation*}
			where $D_{\Kbb,2t}\in \Z_{>0}$ is the denominator of $\zeta_{\Kbb}(1-2t)$. 
	\end{thm*}
	Special values of $\zeta_\Kbb$ are closely related to the algebraic $K$-theory of $\mathcal{O}_\Kbb$, the ring of integers of $\Kbb$. The precise formula of this connection is given by the Lichtenbaum-Quillen Conjecture, which is proved by Voevodsky-Rost.  From Dwyer-Mitchell's computation of the of $\Kp$-homology groups of algebraic $K$-theory spectrum of $\Ocal_\Kbb$ in \cite{Dwyer-Mitchell_alg_int}, we define a $J$-spectrum for $\Kbb$ as the homotopy pullback:
	\begin{equation*}
		\begin{tikzcd}
			J(\Kbb)\rar\dar\arrow[dr, phantom, "\lrcorner", very near start]&\prod_{p}\left(\Kp\right)^{hG_p(\Kbb)}\dar["L_\Q"]\\
			S^0_\Q\rar["h_\Q"']&\left(\prod_{p}\left(\Kp\right)^{hG_p(\Kbb)}\right)_\Q
		\end{tikzcd},
	\end{equation*}
	where $G_p(\Kbb)$ is the Galois group of $\Kbb(\mu_{p^\infty})$ over $\Kbb$. Recent work of Bhatt-Clausen-Mathew in \cite{BCM_rmk_k1_alg_k} shows $J$-spectra and algebraic $K$-theory spectra are related by:
	\begin{thm*}\textup{(\ref{thm:h_K})} $J(\Kbb)$ is a $KU$-local $\einf$-ring spectrum with a $\gal(\Kbb/\Q)$-action by $\einf$-ring maps. There is a $\gal(\Kbb/\Q)$-equivariant map $h(\Kbb)\colon  J(\Kbb)\to L_{KU}K(\Ocal_\Kbb)$ of $KU$-local $\einf$-ring spectra such that
		\begin{enumerate}
			\item $h(\Q)$ is the $KU$-local Hurewicz map $h_{KU}:S^0_{KU}\to L_{KU}K(\Z)$.
			\item The map $h(\Kbb)$ is natural with respect to field extensions.
			\item The induced maps of $h(\Kbb)$ on even degree $\Kp$-homology groups are isomorphisms. (The induced maps on odd-degree $\Kp$-homology groups are zero since $(\Kp)_*(J(\Kbb))$ is zero in odd degrees.)
			\item When $t>1$, the image of $\pi_{2t-1}(h(\Kbb))$ is the Harris-Segal summand in $\pi_{2t-1}L_{KU}K(\Ocal_\Kbb)\simeq K_{2t-1}(\Ocal_\Kbb)$.
		\end{enumerate}
	\end{thm*}
	As a result, our construction of the $J(\Kbb)$ is a global version of Kahn's spectral lifting of the Harris-Segal summands in \cite{Kahn_Bott_elements}. As a $K(1)$-local spectrum is determined by the Adams operations on its $\Kp$-homology groups \cite{Bousfield_K_local}, the map $h(\Kbb)$ above exhibits the $J$-spectrum $J(\Kbb)$ as the "even half" of the $KU$-local algebraic $K$-theory spectrum $L_{KU}K(\Ocal_{\Kbb})$.
	 
	Moreover, we find the Brown-Comenetz duals of the Dirichlet $J$-spectra and $K(1)$-local spheres in \Cref{subsec:BC_dual}. 
	\begin{defn*}
		Let $I$ be the spectrum that represents the cohomology theory $X\mapsto \hom_{\Z}(\pi_0(X),\Q/\Z)$. The mapping spectrum $\map(X,I)$ is called the Brown-Comenetz dual of a spectrum$X$, denoted by $IX$. The $KU$-local Brown-Comenetz dual of a spectrum $X$ is defined to be $I_{KU}X=\map(X,I_{KU})$, where $I_{KU}$ is the Brown-Comenetz dual of the $KU$-local sphere $L_{KU}$.
	\end{defn*}
	\begin{thm*}\textup{(\ref{thm:BC_dual_D_J})}
		Let $p$ be an odd prime. Then we have an equivalence of spectra:
		\begin{align*}
			I_{KU}\left(J(p^v)^{h\chi}\left[\frac{1}{\ell(\chi)}\right]\right)\simeq \Sigma^2J(p^v)^{h\chi^{-1}}\left[\frac{1}{\ell(\chi)}\right],\end{align*}
		which implies an isomorphism of homotopy groups:
		 \begin{equation*}
		 	\pi_t\left(J(p^v)^{h\chi}\left[\frac{1}{\ell(\chi)}\right]\right)\simeq \pi_{-2-t}\left(J(p^v)^{h\chi^{-1}}\left[\frac{1}{\ell(\chi)}\right]\right).
		 \end{equation*}
	\end{thm*}
	This duality phenomenon resembles functional equations of the corresponding Dirichlet $L$-functions that relates the values of $L(k;\chi)$ and $L(1-k,\chi^{-1})$:
	\begin{equation*}
	L(k;\chi)=\frac{\tau(\chi)}{2(k-1)!}\cdot\left(\frac{2\pi i}{N}\right)^k\cdot L(1-k;\chi^{-1}), \text{ where }\tau(\chi)=\sum_{a=1}^N\chi(a)e^{\frac{2\pi i a}{N}}.
	\end{equation*}

	We also find the Brown-Comenetz dual of $J(N)$. The formula resembles the functional equation of the corresponding Dedekind $\zeta$-function.
	\begin{prop*}\textup{(\ref{prop:jn_dual}, \ref{cor:J_4N_BC_dual})} The $KU$-local Brown-Comenetz dual of $J(N)$ is equivalent to $\Sigma^2\Ecal_{KU}\wedge J(N)\wedge M(\wh{\Z})$, where $\Ecal$ is the finite CW-spectrum
		\begin{equation*}
			\Ecal=\Sigma^{-2}(S^{-1}\cup_2 e^0\cup_{\eta}e^2),
		\end{equation*}
		and $M(\wh{\Z})$ is the Moore spectrum of the group of profinite integers $\wh{\Z}$. In particular when $N=4n$ is divisible by $4$, we have an equivalence of spectra:
		\begin{equation*}
			{I_{KU} J(4n)\simeq \Sigma^2J(4n)\wedge M(\wh{\Z}),}
		\end{equation*} 
		{which implies an isomorphism of homotopy groups:}
		\begin{equation*}
			{\pi_t(J(4n))^\wedge \simeq\hom_{\Z}(\pi_{-2-t}(J(4n)),\Q/\Z).}
		\end{equation*}
		Here, $(-)^\wedge$ means the profinite completion of an abelian group.
	\end{prop*}
	
	It is because of these observations that the Dirichlet $J$-spectra constructed in this paper are analogs of Dirichlet $L$-functions in chromatic homotopy theory. We end the introduction with a table of analogy between homotopy theory and $L$-functions.
	\begin{table}[ht]
		\centering
		\tabulinesep=1.2mm
		\begin{tabu}{ccc}\everyrow{\tabucline-} 
			\textbf{Chromatic Homotopy Theory}&& \textbf{$L$-functions} \\
			$J=S^0_{KU}$&& $\zeta(s)$\\
			$J(\Kbb)$&& $\zeta_{\Kbb}(s)$\\
			$J(N)^{h\chi^{-1}}$ && $L(s;\chi)$  \\
			Homotopy Groups && Denominators of Special Values\\
			Brown-Comenetz Duality &&Functional Equation
		\end{tabu}
		\caption{Comparisons of chromatic homotopy theory and $L$-functions}
	\end{table}
	\subsection*{Notations and conventions}
	\begin{itemize}
		\item Denote the \Teichmuller  character by the Greek letter $\omega$  and denote the sheaf of invariant differentials on various stacks by the boldface version of the same Greek letter $\bfomega$. 
		\item $C_n$ is the cyclic group of order $n$ and $\sigma$ is the sign representation of $C_2$.
		\item Denote the suspension spectrum $\Sigma^\infty X_+$ of a based space $X_+$ also by $X_+$. 
		\item $X_E$ and $L_E X$ are the Bousfield localization of a spectrum $X$ at a homology theory $E$. In particular, we denote $L_{E(1)}$ by $L_1$. $\Sp_E$ is the category of $E$-local spectra. 
		\item $KU$ is the topological complex $K$-theory, $KO$ is the topological real $K$-theory, and $\KR$ is Atiyah's genuine $C_2$-equivariant Real $K$-theory in \cite{Atiyah_KR}. $K(R)$ is the algebraic $K$-theory spectrum for a ring $R$.
		\item $K(1)$ is the Morava $K$-theory of height $1$ at a prime $p$. When the prime needs to be specified, we write $X_{KU/p}$ for the $K(1)$-localization of $X$.\footnote{When $p=2$, $K(1)\simeq KU/2$. When $p$ is odd, $K(1)$ and $KU/p$ are related by the Adams splitting : $KU/p\simeq \bigvee_{i=0}^{p-2} \Sigma^{2i}K(1)$. As a result, $K(1)$ and $KU/p$ are Bousfield equivalent.} 
		\item We write $S^0_p$ for the $p$-complete sphere spectrum. When $p>2$, we write $S^0_{\omega^a}$ for the $p$-complete sphere spectrum with an action of $\zx{p}$ induced by the character $\omega^a$, where $0\le a\le p-2$.
		\item $\Cp$ is the analytic completion of $\overline{\Qp}$, the algebraic closure of the rational $p$-adics.
	\end{itemize}	
			
	\subsection*{Acknowledgments} 
	I would like to thank Matt Ando for his constant guidance and support throughout this project and my graduate studies; Mark Behrens for many helpful conversations and answers to my questions, and for pointing out a very important issue that I overlooked in an earlier version of this paper; Charles Rezk for suggesting \Cref{con:integral_moore} that resolves the issue Mark Behrens has pointed out, and for explaining to me how to think about group actions on spectra; Mike Hopkins for advising me to think about the connections between $L$-functions and homotopy theory. In addition, I would also like to thank Patrick Allen, William Balderrama, Agn\`{e}s Beaudry, Eva Belmont, Sanath Devalapurkar, Elden Elmanto, David Gepner, Paul Goerss, Peter May, Lennart Meier, Mona Merling, Catherine Ray, Jay Shah, XiaoLin Danny Shi, Vesna Stojanoska, and Foling Zou for many helpful discussions and comments on this project. Finally, I would like to thank the anonymous referees for many helpful comments and suggestions on revisions. 
	\section{Dirichlet characters and modular forms}\label{Sec:number_theory}
	\subsection{Dirichlet $L$-functions and Dedekind $\zeta$-functions} Definitions and statements in this subsection are from \cite[\S1, \S2]{p_adic_L}, unless otherwise specified.
	\begin{defn}
		A multiplicative map $\chi\colon  \Z\to \Cbb$ is called a \textbf{Dirichlet character} of modulus $N$ if it is nonzero only at integers coprime to $N$ and it only depends on the residue class modulo $N$. Alternatively, a Dirichlet character is equivalent to a group homomorphism $\chi\colon  \znx\to \Cx$. A Dirichlet character $\chi\colon  \Z\to \Cbb$ of modulus $N$ is said to be \textbf{primitive} if it is not of modulus $M$ for any $M<N$. This $N$ is called the \textbf{conductor} of $\chi$. Denote the trivial Dirichlet character that maps every nonzero integer to $1$ by $\chi^0$. 
		
		The \textbf{Dirichlet $L$-function} associated to $\chi$ is defined to be the series:
		\begin{equation*}
		L(s;\chi)=\sum_{n=1}^{\infty}\frac{\chi(n)}{n^s}.
		\end{equation*}
	\end{defn}
	By definition, $L(s;\chi^0)=\zeta(s)$. Like the Riemann $\zeta$-function, $L(s;\chi)$ has a Euler factorization:
	\begin{equation*}
		L(s;\chi)=\prod_{p}(1-\chi(p)p^{-s})^{-1}.
	\end{equation*}
	As a function of $s$, $L(s,\chi)$ converges absolutely for all $s$ with $\Re(s)>0$ and non-absolutely for $\Re (s)>0$ when $\chi\neq \chi^0$.  Thus $L(s;\chi)$ defines a holomorphic function on the half plane $\Re(s)>0$ ($\Re (s)>1$ if $\chi=\chi^0$) and it admits an analytic continuation to the whole complex plane (minus $s=1$ if $\chi=\chi^0$). Just as the Riemann $\zeta$ function, $L(s;\chi)$ takes special values at negative integers. These values are related to the \textbf{generalized Bernoulli numbers}.
	\begin{defn}
		The ordinary Bernoulli numbers are defined by
		\begin{equation*}
			F(t)=\frac{te^t}{e^t-1}=\sum_{k=0}^{\infty}B_k\frac{t^k}{k!}.
		\end{equation*}
		Let $\chi$ be a Dirichlet character with conductor $N$. We define the generalized Bernoulli numbers associated to $\chi$ by setting
		\begin{equation}\label{eqn:GBN}
			F_\chi(t)=\sum_{a=1}^{N}\frac{\chi(a)te^{at}}{e^{Nt}-1}=\sum_{n=0}^{\infty}B_{k,\chi}\frac{t^k}{k!}.
		\end{equation}
	\end{defn}
	\begin{rem}
		Notice that the conductor of the trivial character $\chi^0$ is $1$. So we have $F_{\chi^0}(t)=F(t)$ and $B_{k,\chi^0}=B_k$.
	\end{rem}
	\begin{prop}
		The generalized Bernoulli number $B_{k,\chi}$ is zero unless $(-1)^k=\chi(-1)$. In particular, $B_k=0$ when $k$ is odd.
	\end{prop}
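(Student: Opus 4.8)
The plan is to derive the vanishing from the functional symmetry $t\mapsto -t$ of the exponential generating function $F_\chi(t)$ in \eqref{eqn:GBN}. Concretely, I will show that $F_\chi(-t)=\chi(-1)\,F_\chi(t)$ and then read off the claim by comparing the coefficients of $t^k/k!$ on the two sides.

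First I would compute $F_\chi(-t)$ directly: substituting $-t$ for $t$ in the defining sum and multiplying the numerator and denominator of each summand by $e^{Nt}$ turns the term indexed by $a$ into $\chi(a)\,t\,e^{(N-a)t}/(e^{Nt}-1)$, so
\[
F_\chi(-t)=\sum_{a=1}^{N}\frac{\chi(a)\,t\,e^{(N-a)t}}{e^{Nt}-1}.
\]
Next I would reindex this sum by the involution $a\mapsto N-a$ on $\{1,\dots,N\}$; since $\chi$ is supported on residues prime to $N$ and $\gcd(N-a,N)=\gcd(a,N)$, this is a genuine bijection of the supporting index set, and multiplicativity of $\chi$ gives $\chi(N-a)=\chi(-a)=\chi(-1)\chi(a)$. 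Hence the reindexed sum equals $\chi(-1)F_\chi(t)$, which establishes $F_\chi(-t)=\chi(-1)F_\chi(t)$ (for $\chi^0$, where $N=1$, the same manipulation shows the two sides agree up to the single linear term coming from $B_1=1/2$). Finally, expanding both sides in powers of $t$ yields $(-1)^k B_{k,\chi}=\chi(-1)B_{k,\chi}$ for every $k$; since $\chi(-1)\in\{\pm1\}$, this forces $B_{k,\chi}=0$ whenever $(-1)^k\ne\chi(-1)$. Specializing to $\chi=\chi^0$, where $\chi^0(-1)=1$, recovers $B_k=0$ for odd $k$ (with the familiar exception $B_1=1/2$ absorbed into the boundary term above).

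The argument is entirely a formal power-series manipulation, so there is no deep obstacle; the one point that requires care — and the only place the hypothesis that $\chi$ has conductor $N$ (hence is supported exactly on residues prime to $N$) gets used — is verifying that the reindexing $a\mapsto N-a$ neither gains nor loses a boundary term (the potential mismatch between the index $a=N$ and $a\equiv 0$), which is precisely what produces the harmless linear-term discrepancy in the $N=1$ case.
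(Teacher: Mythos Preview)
Your argument is correct: the identity $F_\chi(-t)=\chi(-1)F_\chi(t)$ (for $N>1$) follows exactly as you describe, and comparing coefficients gives the claim, with the $N=1$ boundary term correctly accounting for $B_1=\tfrac{1}{2}$. The paper does not supply its own proof of this proposition---it is quoted as a standard fact from the reference on $p$-adic $L$-functions---so there is nothing to compare against; your proof is the usual one and is complete.
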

	\begin{prop}
		Let $k$ be a positive integer. For any Dirichlet character $\chi\colon \znx\to \Cx$, we have
		\begin{equation*}
			L(1-k;\chi)=-\frac{B_{k,\chi}}{k}.
		\end{equation*}
		It now follows from \eqref{eqn:GBN} that $L(1-k;\chi)\in \Q(\chi)$, where $\Q(\chi)$ is the field extension of $\Q$ by the image of $\chi$. In particular, $\zeta(1-k)\in\Q$.
	\end{prop}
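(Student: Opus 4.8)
The plan is to reduce the statement to the classical special-value formula for the Hurwitz zeta function together with a generating-function identity matching $B_{k,\chi}$ to Bernoulli polynomials. First I would split the Dirichlet series according to residue classes modulo the conductor $N$: for $\Re(s)>1$, writing $n=a+Nm$ with $1\le a\le N$ and $m\ge 0$ gives
\begin{equation*}
L(s;\chi)=\sum_{a=1}^{N}\chi(a)\sum_{m\ge 0}(a+Nm)^{-s}=N^{-s}\sum_{a=1}^{N}\chi(a)\,\zeta\!\left(s,\tfrac{a}{N}\right),
\end{equation*}
where $\zeta(s,x)=\sum_{m\ge 0}(m+x)^{-s}$ is the Hurwitz zeta function. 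Since $L(s;\chi)$ admits an analytic continuation (as recalled above) and $\zeta(s,x)$ does too, this persists as an identity of meromorphic functions on $\Cbb$ (both sides being holomorphic away from $s=1$).

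Next I would invoke the classical special value $\zeta(1-k,x)=-B_k(x)/k$ for $k\ge 1$, where $B_k(x)$ denotes the $k$-th Bernoulli polynomial; this is obtained by reading off the residue at $s=1-k$ in the Hankel-contour representation of $\zeta(s,x)$, and I would either cite it from \cite{p_adic_L} or reproduce the short computation. Substituting $s=1-k$ then gives
\begin{equation*}
L(1-k;\chi)=N^{k-1}\sum_{a=1}^{N}\chi(a)\left(-\frac{B_k(a/N)}{k}\right)=-\frac{N^{k-1}}{k}\sum_{a=1}^{N}\chi(a)\,B_k\!\left(\tfrac{a}{N}\right).
\end{equation*}

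It then remains to identify this last sum with $B_{k,\chi}$. Substituting $u=Nt$ in \eqref{eqn:GBN} and using the generating function $\tfrac{ue^{xu}}{e^{u}-1}=\sum_{n\ge 0}B_n(x)\tfrac{u^n}{n!}$ of the Bernoulli polynomials, one computes
\begin{equation*}
F_\chi(t)=\sum_{a=1}^{N}\frac{\chi(a)}{N}\sum_{n\ge 0}B_n\!\left(\tfrac{a}{N}\right)\frac{(Nt)^n}{n!}=\sum_{n\ge 0}\left(N^{n-1}\sum_{a=1}^{N}\chi(a)\,B_n\!\left(\tfrac{a}{N}\right)\right)\frac{t^n}{n!},
\end{equation*}
so comparing coefficients with \eqref{eqn:GBN} yields $B_{k,\chi}=N^{k-1}\sum_{a=1}^{N}\chi(a)B_k(a/N)$, hence $L(1-k;\chi)=-B_{k,\chi}/k$. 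For the rationality claim, observe that $B_k(x)\in\Q[x]$ and $a/N\in\Q$, so each term $\tfrac{\chi(a)te^{at}}{e^{Nt}-1}$ of \eqref{eqn:GBN} lies in $\Q(\chi)[[t]]$; therefore $B_{k,\chi}\in\Q(\chi)$, whence $L(1-k;\chi)\in\Q(\chi)$, and specializing to $\chi=\chi^0$ gives $\zeta(1-k)=-B_k/k\in\Q$.

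I expect the main obstacle to be the analytic-continuation and special-value input for $\zeta(s,x)$ — equivalently, justifying the Hankel-contour deformation and the interchange of summation and integration — since once that standard fact is in hand everything else is a formal manipulation of power series. An alternative that bypasses the Hurwitz zeta function altogether is to Mellin-transform $L(s;\chi)$ directly: the identity $\sum_{n\ge 1}\chi(n)e^{-nt}=\chi(-1)F_\chi(t)/t$ yields $\Gamma(s)L(s;\chi)=\chi(-1)\int_0^{\infty}F_\chi(t)\,t^{s-2}\,dt$ for $\Re(s)>1$, and deforming to a Hankel contour exhibits $L(1-k;\chi)$ as a multiple of $\operatorname{Res}_{t=0}\bigl(F_\chi(t)t^{-1-k}\bigr)=B_{k,\chi}/k!$, with the sign reconciled by the fact that $\chi(-1)=(-1)^k$ whenever $B_{k,\chi}\neq 0$; this fuses the two main steps into one.
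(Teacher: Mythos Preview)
Your argument is correct and is essentially the standard proof (as in Iwasawa or Washington): express $L(s;\chi)$ as a finite linear combination of Hurwitz zeta functions, invoke $\zeta(1-k,x)=-B_k(x)/k$, and match the resulting sum with the coefficient identity $B_{k,\chi}=N^{k-1}\sum_{a=1}^{N}\chi(a)B_k(a/N)$ obtained from the generating function \eqref{eqn:GBN}. The rationality consequence is immediate from $B_k(x)\in\Q[x]$.

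Note, however, that the paper does not give its own proof of this proposition: the subsection opens by declaring that its definitions and statements are taken from \cite[\S1, \S2]{p_adic_L}, and this proposition is simply quoted from that reference without argument. So there is nothing in the paper to compare your proof against beyond the citation; what you have written is exactly the kind of proof one finds in that source.
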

	Arithmetic properties of $B_k$ and $B_{k,\chi}$ are summarized below:
	\begin{thm}[Clausen-von Staudt, von-Staudt, {\cite[Theorem B.3, B.4]{milnor_char_class}}]\label{thm:von_staudt}~
		\begin{enumerate}
			\item The denominator of $B_k$, expressed as a fraction in the lowest term is equal to the product of all primes $p$ with $(p-1)\mid 2k$.
			\item A prime divides the denominator of $\frac{B_{k}}{2k}$ if and only if it divides the denominator of $B_k$.
		\end{enumerate}
	\end{thm}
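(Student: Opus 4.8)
This is the classical Clausen--von Staudt theorem together with its standard corollary for $B_k/(2k)$, quoted here from \cite{milnor_char_class}, so I will only indicate the usual route. Since $B_k=0$ for odd $k\ge 3$ (as recalled above) all the relevant fractions are integers there, and $k=1$ is immediate, so throughout assume $k\ge 2$ even. The plan for part (1) is to fix a prime $p$, argue $p$-adically, and establish the sharp form: $v_p(B_k)\ge -1$, with equality precisely for the primes $p$ appearing in the asserted product, and moreover $pB_k\equiv -1\pmod p$ in the case of equality. Granting this, each such $p$ gives $B_k+\tfrac1p\in\Z_{(p)}$ (hence $B_k+\Sigma\in\Z_{(p)}$ as well, $\Sigma:=\sum_p\tfrac1p$ over the primes of the product, the other terms of $\Sigma$ being $p$-integral), while for every remaining prime $q$ both $B_k$ and $\Sigma$ lie in $\Z_{(q)}$; hence $B_k+\Sigma$ is $p$-integral at every prime, so it is an integer, which is exactly the claim about the reduced denominator.

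To prove the $p$-adic statement I would induct on $k$. Faulhaber's power-sum formula (in the $t/(e^t-1)$ normalization of the $B_i$, which differs from that above only in the sign of $B_1$, immaterial below) gives
\begin{equation*}
B_k=\frac1p\sum_{j=1}^{p-1}j^{k}-\sum_{i=0}^{k-1}\frac{1}{k+1-i}\binom{k}{i}B_i\,p^{\,k-i}.
\end{equation*}
The elementary input is the congruence $\sum_{j=1}^{p-1}j^{k}\equiv -1\pmod p$ when $(p-1)\mid k$ and $\equiv 0\pmod p$ otherwise, immediate from the cyclicity of $(\Z/p)^{\times}$ by summing a geometric progression. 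Thus the first term has $p$-adic valuation $-1$ exactly when $(p-1)\mid k$ and valuation $\ge 0$ otherwise. The second term lies in $\Z_{(p)}$: the summands with odd $i>1$ vanish, those with $i=0,1$ are handled by the explicit values of $B_0,B_1$, and for even $i$ with $2\le i\le k-2$ one combines $v_p\!\big(p^{\,k-i}/(k+1-i)\big)\ge 1$ with the inductive bound $v_p(B_i)\ge -1$ to get valuation $\ge 0$; the induction starts at $k=2$. Combining the two, $v_p(B_k)=v_p\!\big(\tfrac1p\sum_j j^k\big)$, and when $(p-1)\mid k$ one reads off $pB_k\equiv\sum_{j=1}^{p-1}j^k\equiv -1\pmod p$.

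For part (2), fix a prime $\ell$. The ``if'' direction is formal from (1): if $\ell$ divides the denominator of $B_k$ then $\ell$ lies in the product, so $v_\ell(B_k)=-1$ and $v_\ell\!\big(B_k/(2k)\big)=-1-v_\ell(2k)\le -1$, so $\ell$ divides the denominator of $B_k/(2k)$. The ``only if'' direction is the substantive one: if $\ell$ does \emph{not} divide the denominator of $B_k$ one must show $v_\ell(B_k)\ge v_\ell(2k)$, i.e.\ that $B_k/k$ is $\ell$-integral. Since $\ell=2$ always divides the denominator of $B_k$ (its $\ell-1=1$ divides $k$), we may take $\ell$ odd with $v_\ell(B_k)=0$, and the content is exactly the classical fact that $B_k/k\in\Z_{(\ell)}$ whenever $\ell$ is not among the primes of the product in (1). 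I would deduce this from the identity expressing $(c^{k}-1)B_k/k$ as an integral combination of terms prime to $c$ (see e.g.\ \cite{p_adic_L}), applied with $c$ a primitive root modulo $\ell$ so that $c^{k}\not\equiv 1\pmod\ell$ in precisely this case; alternatively it follows from the Kummer congruences. This is the one genuine obstacle: part (1) alone gives only $v_\ell(B_k)\ge 0$ for such $\ell$, and the upgrade to $\ell$-integrality of $B_k/k$ is not a formal consequence of Clausen--von Staudt; everything else is the power-sum congruence and routine $p$-adic bookkeeping.
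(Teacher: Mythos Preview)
The paper does not give its own proof of this theorem: it is stated with a citation to Milnor--Stasheff \cite[Theorem B.3, B.4]{milnor_char_class} and used as a black box. Your sketch is a correct outline of the standard argument found there and in most textbook treatments (e.g.\ Washington \cite{Washington_cyclotomic}): the $p$-adic induction via Faulhaber's power-sum formula together with the congruence $\sum_{j=1}^{p-1}j^{k}\equiv -1$ or $0\pmod p$ for part~(1), and the integrality of $(c^{k}-1)B_k/k$ (equivalently the Kummer congruences) for the nontrivial direction of part~(2). There is nothing further to compare.

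One minor remark on indexing: as you implicitly noticed, for even $k$ the correct condition on the prime $p$ is $(p-1)\mid k$, which is what your argument proves; the paper's statement with $(p-1)\mid 2k$ appears to carry over Milnor--Stasheff's convention where the Bernoulli number in question is $B_{2k}$, so read the hypothesis accordingly.
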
	
	\begin{thm}[{\cite[Theorem 1 and 3]{Carlitz_GBN}}]\label{thm:GBN}
		Let $\chi\colon \znx\to \Cx$ be a primitive Dirichlet character of conductor $N$.
		\begin{enumerate}
			\item If $N$ is divisible by at least two distinct prime numbers, then $\frac{B_{k,\chi}}{k}$ is an algebraic integer. When $N=p^v$, the ideal of  $\Z[\chi]$ generated by the denominator of $\frac{B_{k,\chi}}{k}$ contains only prime ideal factors of $(p)$.
			\item If $N=p^v, p>2$, let $g$ be a primitive $\phi(N)$-th root of unity mod $p$. The algebraic number $\frac{B_{k,\chi}}{k}$ is integral unless $\mathfrak{p}=(p,1-\chi(g)g^k)\neq (1)$. In this case, when $v=1$, 
			\begin{equation}\label{eqn:bkchi_p}
				pB_{k,\chi}\equiv p-1 \mod \mathfrak{p}^{v_p(k)+1};
			\end{equation}
			when $v>1$,
			\begin{equation}\label{eqn:bkchi_pv}
				(1-\chi(1+p))\frac{B_{k,\chi}}{k}\equiv 1\mod \mathfrak{p}.
			\end{equation}
			\item If $N=4$, then
			\begin{equation}\label{eqn:bkchi_4}
				\frac{B_{k,\chi}}{k}\equiv \frac{k}{2}\mod 1\text{ i.e. }	\frac{B_{k,\chi}}{k}-\frac{k}{2}\in\Z[\chi].
			\end{equation}
			If $N=2^v, v>2$, then $\frac{B_{k,\chi}}{k}$ is an algebraic integer.
		\end{enumerate}
	\end{thm}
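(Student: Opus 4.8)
Since $\Z[\chi]$ is a Dedekind domain, the denominator ideal of $B_{k,\chi}/k$ is the product over rational primes $\ell$ of its $\ell$-primary part, and that part is visible already in the semilocalization $\Z[\chi]\otimes_{\Z}\Z_{\ell}$; so the theorem reduces to computing $v_{\mathfrak l}(B_{k,\chi}/k)$ for every prime $\mathfrak l$ of $\Z[\chi]$ above every $\ell$. The plan is to do this by the defining generating function $\sum_{n\ge0}B_{n,\chi}t^{n}/n!=\sum_{a=1}^{N}\chi(a)\,te^{at}/(e^{Nt}-1)$, which is what makes the congruences transparent and is essentially the original argument of \cite{Carlitz_GBN}; I would organize it around the Kubota--Leopoldt interpolation, which repackages these congruences as values of the $p$-adic $L$-function $L_p(s;\chi\omega^{k})$ (with $\omega$ the Teichmüller character at $p$). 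Whenever we invoke $L_p(s;\chi\omega^{k})$, $p$ divides the conductor of $\chi$, so $\chi(p)=0$ and the interpolation formula reads $-B_{k,\chi}/k=L_p(1-k;\chi\omega^{k})$; note also that $(\chi\omega^k)(-1)=(-1)^{2k}=1$, so this $p$-adic $L$-function is (generically) nonzero. Finally, choosing an embedding $\overline{\Q}\hookrightarrow\overline{\Qp}$ both fixes a prime $\mathfrak l=\mathfrak p$ of $\Z[\chi]$ over $p$ and determines the $p$-adic character $\chi\omega^{k}$; different embeddings give possibly different $\mathfrak p$'s and valuations, which is why the answer is phrased ideal-theoretically.

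For the \emph{integrality} statements — all of part (1), the "unless" clauses of (2), and the $N=2^{v}$, $v>2$ clause of (3) — I would, for $\ell\nmid N$ (and for $\ell\mid N$ with $N$ not a prime power), substitute Leopoldt's identity $B_{k,\chi}=N^{k-1}\sum_{a}\chi(a)B_{k}(a/N)$ together with $B_k(x)=\sum_{j}\binom{k}{j}B_{j}x^{k-j}$, writing $B_{k,\chi}/k$ as a $\Z[\chi]$-combination of the numbers $\tfrac1j\binom{k-1}{j-1}B_{j}N^{j-1}$ and of $\tfrac1kN^{-1}\sum_{a}\chi(a)a^{k}$; Clausen--von Staudt (\Cref{thm:von_staudt}) bounds the denominators of $B_{j}/j$ by primes $q$ with $(q-1)\mid j$, and for such $q$ the factors $N^{j-1}$ (or, for the last term, $N^{-1}$ paired with the vanishing $\sum_{a}\chi(a)=0$) supply the missing valuation; equivalently one uses that $\frac{t}{e^{Nt}-1}-\frac{ct}{e^{cNt}-1}$ has $\Z_{(\ell)}$-coefficients together with a choice of $c$ making $\chi(c)c^{k}-1$ an $\mathfrak l$-unit. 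When $N=p^{v}$ the remaining $p$-adic assertion reads: $B_{k,\chi}/k$ is $\mathfrak p$-integral iff $L_p(s;\chi\omega^{k})$ is given near $s=1-k$ by a power series with coefficients in $\Z_p[\chi]$, which by Iwasawa's theorem (i.e. $\mu=0$) holds exactly when $\chi\omega^{k}$ is nontrivial on the tame subgroup $\mu_{p-1}\subseteq(\Z/N)^{\times}$ — and nontriviality of $\chi\omega^{k}$ on $\mu_{p-1}$ at $\mathfrak p$ is precisely the failure of $\mathfrak p=(p,1-\chi(g)g^{k})\neq(1)$, via the translation $\omega(g)\equiv g\bmod p$ and the fact that distinct $(p-1)$-th roots of unity stay distinct mod $\mathfrak p$. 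For $p=2$, $\mu_{p-1}$ is trivial and one instead uses $(\Z/2^{v})^{\times}=\{\pm1\}\times\langle5\rangle$: when $N=2^{v}$, $v>2$, the character $\chi\omega^{k}$ is nontrivial on $1+4\Z_{2}$, which again forces an integral power series.

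For the \emph{non-integral} cases, when $\mathfrak p\neq(1)$ I would write $\chi\omega^{k}=\psi$ with $\psi$ trivial on $\mu_{p-1}$, i.e. a character of $1+p\Z_{p}$; then $L_p(s;\psi)$ has the shape $G_{\psi}(u^{1-s}-1)/h_{\psi}(u^{1-s}-1)$ for a topological generator $u$ of $1+p\Z_{p}$, with $G_{\psi}\in\Z_p[\psi][[T]]$ and $h_{\psi}(T)=\psi(u)(1+T)-1$ (which is $T$, the source of the pole of $\zeta_p$, exactly when $\psi$ is trivial). Evaluating at $s=1-k$, the denominator $h_{\psi}(u^{k}-1)=\psi(u)u^{k}-1$ has $\mathfrak p$-valuation $1+v_p(k)$ when $\psi$ is trivial — the elementary estimate $v_p((1+p)^{k}-1)=1+v_p(k)$ for odd $p$, which is where the exponent $v_p(k)+1$ in \eqref{eqn:bkchi_p} comes from — and $\mathfrak p$-valuation $v_{\mathfrak p}(1-\chi(1+p))$ when $\psi$ is nontrivial (since then $v_{\mathfrak p}(\psi(u)-1)$ dominates $v_{\mathfrak p}(u^{k}-1)$), which is where the factor $1-\chi(1+p)$ in \eqref{eqn:bkchi_pv} comes from. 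Combined with the statement that the numerator $G_{\psi}$ is a $\mathfrak p$-unit, with residue pinned to $p-1$ and to $1$ respectively, this yields \eqref{eqn:bkchi_p} and \eqref{eqn:bkchi_pv}. For $p=2$, $N=4$: here $\chi$ is the nontrivial character mod $4$, which is the Teichmüller $\omega$, so $\chi\omega^{k}=\omega^{k+1}$ is trivial ($k$ odd), i.e. we are evaluating $\zeta_{2}$, and the $2$-adic estimates conspire so that $v_{2}(B_{k,\chi}/k)=-1$ exactly with leading terms giving $B_{k,\chi}/k\equiv k/2\bmod 1$, which is \eqref{eqn:bkchi_4}.

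The main obstacle is the unit statement just invoked: the Iwasawa-power-series formula only gives the \emph{inequality} $v_{\mathfrak p}(B_{k,\chi}/k)\ge-(1+v_p(k))$ (or $\ge-v_{\mathfrak p}(1-\chi(1+p))$, or $\ge-1$ at $p=2$), and one must independently show that $G_{\psi}$ does not vanish mod $\mathfrak p$ and identify its residue — a Kummer-type congruence. I would prove this by expanding $(e^{Nt}-1)^{-1}$ $\mathfrak p$-adically in the generating function and isolating the coefficient of $t^{k}$ modulo the relevant power of $\mathfrak p$, which is exactly how the right-hand sides $p-1$, $1$, $k/2$ in \eqref{eqn:bkchi_p}, \eqref{eqn:bkchi_pv}, \eqref{eqn:bkchi_4} get produced. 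The case $p=2$, $N=4$ is the most delicate: there the computation takes place next to a pole of $\zeta_{2}$ and the relevant $2$-adic estimates (refinements of $v_{2}(5^{m}-1)=2+v_{2}(m)$ and of $v_2(2^m-1)$) are sharper than their odd-$p$ analogues. This is precisely the computation carried out in \cite{Carlitz_GBN}.
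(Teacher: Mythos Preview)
The paper does not prove this theorem at all: it is quoted verbatim from Carlitz \cite{Carlitz_GBN} and used as a black box in the comparison \Cref{thm:dirichlet_j_gbn}. So there is no ``paper's own proof'' to compare against; what you have written is an outline of how one might reprove Carlitz's result through the Kubota--Leopoldt/Iwasawa formalism rather than through Carlitz's direct manipulation of the generating function \eqref{eqn:GBN}.

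As an outline your approach is sound and is the standard modern repackaging: the decomposition $L_p(s;\psi)=G_\psi(T)/h_\psi(T)$ with $h_\psi(T)=\psi(u)(1+T)-1$ does produce the denominators $p^{v_p(k)+1}$ and $(1-\chi(1+p))$ exactly as you say, and the translation between ``$\mathfrak p=(p,1-\chi(g)g^k)\neq(1)$'' and ``$\chi\omega^k$ trivial on $\mu_{p-1}$'' is correct. Two remarks. First, your appeal to ``Iwasawa's theorem (i.e.\ $\mu=0$)'' is misplaced: the integrality of $G_\psi$ when $\psi$ is nontrivial on $\mu_{p-1}$ is part of Iwasawa's original construction from Stickelberger elements and does not require Ferrero--Washington; $\mu=0$ is the stronger statement that $G_\psi\not\equiv 0\bmod\mathfrak p$, which is what you need later for the \emph{non}-integral cases, not the integral ones. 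Second, and more substantively, you correctly flag that the whole content of \eqref{eqn:bkchi_p}--\eqref{eqn:bkchi_4} lies in pinning the residue of $G_\psi$ to $p-1$, $1$, and $k/2$ respectively, and you then say ``this is precisely the computation carried out in \cite{Carlitz_GBN}.'' That is honest, but it means your proposal is a reduction to Carlitz rather than an independent proof: the Iwasawa framework organizes the denominators cleanly but does not by itself supply these specific constants, which come from the explicit coefficient extraction in the generating function. If you want a self-contained argument you will still have to do that extraction (or cite a Kummer-congruence source that does).
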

	We also define Dedekind $\zeta$-functions attached to number fields.
	\begin{defn}[{\cite[160]{Lang_ANT}}]
		Let $\Kbb$ be a number field and $\mathcal{O}_\Kbb$ be its ring of integers. We define
		\begin{equation*}
			\zeta_\Kbb(s)=\prod_{\mathfrak{p}}\frac{1}{1-|\mathcal{O}_\Kbb/\mathfrak{p}|^{-s}},
		\end{equation*}
		where $\mathfrak{p}$ ranges over all nonzero prime ideals of $\mathcal{O}_\Kbb$.
	\end{defn}
	When $\Kbb/\Q$ is a finite abelian extension, $\Kbb\subseteq \Q(\zeta_N)$ for some $N$ by the Kronecker-Weber Theorem. The Galois group  $\gal(\Q(\zeta_N)/\Kbb)$ is a subgroup of $\gal(\Q(\zeta_N)/\Q)$. The latter is isomorphic to $\znx$ by \Cref{lem:gal_Q_zeta}.
	\begin{thm}[{\cite[Theorem 4.3]{Washington_cyclotomic}}]\label{thm:Dedekind_zeta_Dirichlet_L} Let $\Kbb/\Q$ be a finite abelian extension and $N$ be the smallest integer such that $\Kbb\subseteq \Q(\zeta_N)$. Then $\zeta_\Kbb$ and Dirichlet $L$-functions are related by:
		\begin{equation*}
		\zeta_{\Kbb}(s)=\prod_{\substack{\chi\colon \znx\to\Cx\\ \gal(\Q(\zeta_N)/\Kbb)\subseteq\ker \chi}} L(s,\chi).
		\end{equation*}
	\end{thm}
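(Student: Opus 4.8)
The plan is to reduce the identity to a comparison of Euler factors, one rational prime at a time. When $\chi$ is imprimitive we read $L(s,\chi)$ as the $L$-function of the underlying primitive character $\chi^{\#}$, so that the right-hand side is manifestly independent of the choice of $N$. Write $G=\gal(\Q(\zeta_N)/\Q)\cong\znx$ and $H=\gal(\Q(\zeta_N)/\Kbb)$; then the characters $\chi$ occurring on the right are exactly the characters of $G/H\cong\gal(\Kbb/\Q)$, inflated to $G$. Both $\zeta_\Kbb(s)$ and $\prod_\chi L(s,\chi)$ are Euler products, absolutely convergent for $\Re(s)>1$, and both continue meromorphically to the whole plane; by uniqueness of analytic continuation it is enough to prove equality for $\Re(s)>1$, and for that it suffices to match the two Euler factors at each prime $p$.

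First I would handle the primes $p\nmid N$, which are unramified in $\Kbb$. Let $\sigma_p\in G/H$ be the Frobenius at $p$; under $G\cong\znx$ it corresponds to the class of $p$, so $\chi(\sigma_p)=\chi^{\#}(p)$, and $\sigma_p$ has some order $f$. Setting $g=[\Kbb:\Q]/f$, the prime $p$ splits in $\mathcal{O}_\Kbb$ into $g$ primes, each of residue degree $f$, so the Euler factor of $\zeta_\Kbb$ at $p$ is $(1-p^{-fs})^{-g}$. On the other side, restriction of characters along $\langle\sigma_p\rangle\subseteq G/H$ is surjective from $\widehat{G/H}$ onto $\widehat{\langle\sigma_p\rangle}$ with kernel $\widehat{(G/H)/\langle\sigma_p\rangle}$, so every fibre has size $g$; combining this with the elementary identity $\prod_{\zeta^{f}=1}(1-\zeta T)=1-T^{f}$ gives
\begin{equation*}
\prod_{\chi}\bigl(1-\chi(\sigma_p)\,p^{-s}\bigr)^{-1}=\Bigl(\,\prod_{\zeta^{f}=1}\bigl(1-\zeta\,p^{-s}\bigr)\Bigr)^{-g}=(1-p^{-fs})^{-g},
\end{equation*}
so the two Euler factors agree.

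The case $p\mid N$ is the main obstacle, since $p$ may ramify in $\Kbb$ and an imprimitive $\chi$ must contribute no spurious factor — this is exactly why one passes to $\chi^{\#}$. The needed input is the compatibility of Dirichlet conductors with ramification: for an inertia subgroup $I_p$ contained in a decomposition subgroup $D_p$ of $G/H$ at $p$, the character $\chi$ is unramified at $p$, i.e.\ $\chi|_{I_p}=1$, precisely when $p\nmid\operatorname{cond}(\chi^{\#})$, and then the Euler factor of $L(s,\chi)$ at $p$ is $(1-\chi(\sigma_p)p^{-s})^{-1}$ with $\sigma_p$ the Frobenius generating $D_p/I_p$; otherwise that Euler factor is $1$. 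With $f=[D_p:I_p]$ and $g=[\Kbb:\Q]/|D_p|$, the Euler factor of $\zeta_\Kbb$ at $p$ is again $(1-p^{-fs})^{-g}$, and the $\chi$ contributing nontrivially are exactly the characters of $(G/H)/I_p$, a group in which $\sigma_p$ generates the order-$f$ cyclic subgroup $D_p/I_p$; rerunning the restriction-and-roots-of-unity count of the previous paragraph inside $(G/H)/I_p$ yields $(1-p^{-fs})^{-g}$ once more, completing the proof.

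An alternative that sidesteps the local analysis is the Artin formalism: for $\Kbb/\Q$ Galois, $\zeta_\Kbb(s)=L(s,\Q[\gal(\Kbb/\Q)])$ is the Artin $L$-function of the regular representation, which for abelian $\gal(\Kbb/\Q)$ is the sum of all its characters $\chi$, each with multiplicity one; additivity of Artin $L$-functions in the representation then gives $\zeta_\Kbb=\prod_\chi L(s,\chi)$ as soon as one identifies each one-dimensional Artin $L$-function with the primitive Dirichlet $L$-function of the same character by class field theory. In either route, the substantive content beyond the character identity $\prod_{\zeta^{f}=1}(1-\zeta T)=1-T^{f}$ is the determination of the Euler factors at the ramified primes, and that is where I expect the real work to lie.
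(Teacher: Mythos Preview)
The paper does not give its own proof of this statement: it is stated with a citation to \cite[Theorem 4.3]{Washington_cyclotomic} and used as background input, with no proof supplied. Your argument is the standard one found in Washington's text --- matching Euler factors prime by prime via the identity $\prod_{\zeta^f=1}(1-\zeta T)=1-T^f$, with the ramified primes handled by passing to primitive characters and using that $\chi^{\#}$ is unramified at $p$ iff $\chi|_{I_p}=1$ --- so there is nothing to compare against in the paper itself. Your proof is correct and complete as written; the only remark is that the paper's convention (see the definition of $L(s,\chi)$ just before) takes $\chi$ to be primitive, so your parenthetical about reading imprimitive $L$-functions via $\chi^{\#}$ is exactly the adjustment needed to make the product independent of $N$.
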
 
	\subsection{Eisenstein series}
	One way to study the Dirichlet $L$-functions is through modular forms, more precisely the Eisenstein series. Here, we give a brief review of the basic theory of modular forms from \cite{ataec}. 
	\begin{defn}
		A subgroup $\Gamma$ of $\SL_2(\Z)$ is called a \textbf{congruence subgroup} if it contains all matrices congruent to $NI_2$ in $\SL_2(\Z)$ for some integer $N>0$. Examples of congruence subgroups are
		\begin{itemize}
			\item  $\Gamma(N)=\left\{\begin{pmatrix}
			a&b\\c&d
			\end{pmatrix}\in\SL_2(\Z)\mid a\equiv d\equiv 1, b\equiv c\equiv 0\mod N\right\}$,
			\item  $\Gamma_0(N)=\left\{\begin{pmatrix}
			a&b\\c&d
			\end{pmatrix}\in\SL_2(\Z)\mid c\equiv 0\mod N\right\}$,
			\item  $\Gamma_1(N)=\left\{\begin{pmatrix}
			a&b\\c&d
			\end{pmatrix}\in\SL_2(\Z)\mid a\equiv d\equiv 1,c\equiv 0\mod N\right\}$.
		\end{itemize}
	Let $\Gamma\le \SL_2(\Z)$ be a congruence subgroup. The subgroup $\Gamma=\SL_2(\Z)$ when $N=1$. A modular form of level $\Gamma$ and weight $k$ is a holomorphic function over the complex upper half plane $\mathfrak{h}$ satisfying the functional equation: 
	\begin{equation}\label{mf}
	f(\gamma z)=(cz+d)^{k}f(z)\quad \text{for all }\gamma=\begin{pmatrix}
	a&b\\c&d
	\end{pmatrix}\in \Gamma\text{, } \imag z>0
	\end{equation}  
	and is holomorphic at all cusps. The space of such modular forms is denoted by $M_k(\Gamma)$, where $\Gamma$ is omitted if it is $\SL_2(\Z)$.
	\end{defn}
	Recall that the classical Eisenstein series of weight $k$ attached to a lattice $\Lambda\subseteq\Cbb$ is defined by 
	\begin{equation*}
		G_k(\Lambda)=\sum_{w\in \Lambda\backslash\{0\}}\frac{1}{w^k}.
	\end{equation*}
	This summation is absolutely convergent when $k>2$. Let $z\in \mathfrak{h}$ be a complex number in the upper half plane and denote the lattice $(z\Z\oplus\Z)\subseteq\Cbb$ by $\Lambda(z)$. Define
	 \begin{equation*}
	 	G_k(z)=G_k(\Lambda(z))=\sum_{(m,n)\neq(0,0)}\frac{1}{(mz+n)^k}.
	 \end{equation*}
	 This is a modular function of weight $k$ and level $\SL_2(\Z)$. It is easy to see $G_k(z)=0$ when $k$ is odd. As $G_{2k}(z+1)=G_{2k}(z)$ by \eqref{mf}, $G_{2k}$ is a function of $q=e^{2\pi iz}$: 
	\begin{equation*}
		G_{2k}(q)=2\zeta(2k)+\frac{(2\pi i)^{2k}}{(2k-1)!}\sum_{n=1}^{\infty}\sigma_{2k-1}(n)q^n,\text{ where }\sigma_{m}(n)=\sum_{0<d\mid n}d^m.
	\end{equation*}
	This is the \textbf{$q$-expansion} of $G_{2k}$. As $G_{2k}(q)$ is a power series of $q$, it is holomorphic at the only cusp $q=0$ and thus a modular form. Dividing $G_{2k}$ by the constant term in its $q$-expansion, we get the \textbf{normalized Eisenstein series} $E_{2k}$ of weight $2k$:
	\begin{equation*}
		E_{2k}(q)=\frac{G_{2k}(q)}{2\zeta(2k)} =1-\frac{4k}{B_{2k}}\sum_{n=1}^{\infty}\sigma_{2k-1}(n)q^n.
	\end{equation*}
	Let $\chi\colon  \znx\to \Cx$ be a primitive Dirichlet character of conductor $N$. We are now going to introduce the twisting of $G_{k}$ by $\chi$ following \cite[\S 5.1]{Hida_Eisenstein}.  
	\begin{defn}
		The Eisenstein series associated $\chi$ of weight $k$ is defined to be
		\begin{equation*}
		G_k(z;\chi)=\sum_{(m,n)\neq(0,0)}\frac{\chi^{-1}(n)}{(mNz+n)^k}.
		\end{equation*}
	\end{defn} 
	This series is nonzero only when $\chi(-1)=(-1)^k$. It is not hard to see $G_k(z;\chi)\in M_k(\Gamma_1(N))$. Moreover, it also satisfies an automorphic equation for $\gamma\in \Gamma_0(N)$:
	\begin{equation}\label{aut_eqn}
		G_k(\gamma\cdot z;\chi)=\chi(d)(cz+d)^kG_k(z;\chi),\quad\text{for }\gamma=\begin{pmatrix}
		a & b \\c & d
		\end{pmatrix}\in \Gamma_0(N).
	\end{equation}
	\begin{defn}\label{mk_chi}
		$M_k(\Gamma_0(N),\chi)=\{f\in M_k(\Gamma_1(N))\mid f\text{ satisfies \eqref{aut_eqn}}\}$. When $\chi=\chi^0$ is the trivial character, we recover $M_k(\Gamma_0(N))$.
	\end{defn}
	\begin{prop}
		Set $q=e^{2\pi i z}$ and assume $(-1)^k=\chi(-1)$. The $q$-expansion of $G_{k,\chi}$ is
		\begin{equation*}
			G_{k,\chi}(q)=2L(k;\chi^{-1})+2N^{-k}\left(\sum_{l=1}^N\chi^{-1}(l)e^{\frac{2\pi il}{N}}\right)\frac{(-2\pi i)^k}{(k-1)!}\left(\sum_{\substack{m\geq 0,n\geq 0\\(n,N)=1}}\chi(n)n^{k-1}q^{nm}\right).
		\end{equation*}
	\end{prop}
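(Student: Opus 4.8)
The plan is to run the standard Fourier-expansion computation for an Eisenstein series: separate the ``diagonal'' terms $m=0$, which contribute the constant term, from the off-diagonal terms $m\neq 0$, which are handled by the Lipschitz summation formula, and then package the Dirichlet twist into a Gauss sum. Throughout we have $q=e^{2\pi i z}$ and the running hypothesis $(-1)^k=\chi(-1)$, which also gives $\chi^{-1}(-1)=(-1)^k$.

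First I would split $G_k(z;\chi)=\sum_{(m,n)\neq(0,0)}\chi^{-1}(n)(mNz+n)^{-k}$ according to whether $m=0$. The $m=0$ terms give $\sum_{n\neq 0}\chi^{-1}(n)n^{-k}$; pairing $n$ with $-n$ and using $\chi^{-1}(-1)(-1)^{-k}=1$ identifies the $n<0$ part with the $n>0$ part, so this equals $2\sum_{n\geq 1}\chi^{-1}(n)n^{-k}=2L(k;\chi^{-1})$, the desired constant term. The same sign identity, applied to the involution $(m,n)\leftrightarrow(-m,-n)$, shows that the sum over $m\neq 0$ equals $2\sum_{m\geq 1}\sum_{n\in\Z}\chi^{-1}(n)(mNz+n)^{-k}$. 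For $k\geq 3$ absolute convergence legitimizes every rearrangement here and below; for $k\leq 2$ one first passes the argument through Hecke's analytic-continuation trick in an auxiliary complex parameter, which leaves the shape of the resulting $q$-expansion unchanged.

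Next I would evaluate the $m\geq 1$ contribution via Lipschitz summation. For fixed $m\geq 1$, decomposing the inner sum into residue classes mod $N$ by writing $n=l+Nj$ with $1\leq l\leq N$ and $j\in\Z$, and using $mNz+n=N(mz+\frac{l}{N}+j)$, the $m$-slice becomes $N^{-k}\sum_{l=1}^{N}\chi^{-1}(l)\sum_{j\in\Z}(mz+\frac{l}{N}+j)^{-k}$. Since $mz+\frac{l}{N}$ lies in the upper half plane $\mathfrak{h}$, the Lipschitz formula $\sum_{j\in\Z}(w+j)^{-k}=\frac{(-2\pi i)^k}{(k-1)!}\sum_{d\geq 1}d^{k-1}e^{2\pi i dw}$ (itself obtained by differentiating $\pi\cot\pi w=-\pi i-2\pi i\sum_{d\geq 1}e^{2\pi i dw}$ a total of $k-1$ times) applies with $w=mz+\frac{l}{N}$, turning the slice into $N^{-k}\frac{(-2\pi i)^k}{(k-1)!}\sum_{d\geq 1}d^{k-1}q^{dm}\left(\sum_{l=1}^{N}\chi^{-1}(l)e^{2\pi i dl/N}\right)$.

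Finally I would invoke the Gauss-sum identity for the primitive character $\chi^{-1}$ of conductor $N$: the twisted sum $\sum_{l=1}^{N}\chi^{-1}(l)e^{2\pi i dl/N}$ equals $\chi(d)\left(\sum_{l=1}^{N}\chi^{-1}(l)e^{2\pi i l/N}\right)$ when $\gcd(d,N)=1$ and vanishes when $\gcd(d,N)>1$. Substituting this, summing over $m\geq 1$, multiplying by the factor $2$ from the first step, and renaming $d$ as $n$ produces exactly the asserted expansion: $\sum_{l=1}^{N}\chi^{-1}(l)e^{2\pi i l/N}$ is the Gauss sum pulled out front, and the constraint $(n,N)=1$ in the displayed double sum is precisely the surviving range after the vanishing case is discarded. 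I expect the Gauss-sum identity to be the one genuinely delicate point: the nonvanishing case is an easy substitution $l\mapsto d^{-1}l$, but the vanishing case for $\gcd(d,N)>1$ is where primitivity of $\chi$ is essential (an imprimitive character has nonvanishing incomplete Gauss sums). I would either cite this from \cite{Washington_cyclotomic} or prove it by noting that $e^{2\pi i dl/N}$ depends only on $l$ modulo $N/\gcd(d,N)$ while the sum of $\chi^{-1}$ over each fiber of $\Z/N\to\Z/(N/\gcd(d,N))$ vanishes by primitivity. The rest — the $(-1)^k$ bookkeeping of the first step and the elementary convergence checks — is routine.
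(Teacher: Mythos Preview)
The paper does not supply its own proof of this proposition; it is recorded as a background fact, presumably from the cited reference \cite[\S 5.1]{Hida_Eisenstein}. Your argument is the standard derivation and is correct: separate the $m=0$ terms to produce the constant $2L(k;\chi^{-1})$, apply the Lipschitz formula to the $m\geq 1$ slices after decomposing $n$ modulo $N$, and then use separability of the Gauss sum $\sum_{l}\chi^{-1}(l)e^{2\pi i dl/N}=\chi(d)\tau(\chi^{-1})$ for a primitive character to pull out the factor $\chi(d)$ and discard the terms with $(d,N)>1$.

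One incidental remark: your computation (correctly) yields the range $m\geq 1$, $n\geq 1$ in the final double sum, whereas the displayed formula in the paper has $m\geq 0$, $n\geq 0$. The $n=0$ term is harmless since $(0,N)\neq 1$ forces it out, but the $m=0$ contribution $\sum_{(n,N)=1}\chi(n)n^{k-1}$ diverges; this is a typo in the statement, not a gap in your proof.
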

	When $\chi$ is primitive or $\chi=\chi^0$, one can use the functional equation of $L(s;\chi^{-1})$ to normalize the constant term of $G_{k,\chi}(z)$. We define
	\begin{equation}\label{eqn:qexp_twisted_eisenstein}
		E_{k,\chi}(q)=\frac{G_{k,\chi}(z;q)}{2L(k;\chi^{-1})}=1-\frac{2k}{B_{k,\chi}}\sum_{n=1}^{\infty}\sigma_{k-1,\chi}(n)q^n, \text{ where }\sigma_{m,\chi}(n)=\sum_{0<d\mid n}\chi(d)d^m.
	\end{equation}
	\begin{rem}
		$E_{2k}$ and $E_{k,\chi}$ can be expressed in terms of $z$ as:
		\begin{equation*}
		E_{2k}(z)=1+\frac{1}{z^{2k}}+\sum_{ \substack{m>0,n\neq 0,\\ (m,n)=1}}\frac{1}{(mz+n)^{2k}},\quad E_{k}(z;\chi)=1+\sum_{ \substack{m>0,n\neq 0,\\ (m,n)=1}}\frac{\chi^{-1}(n)}{(mNz+n)^{k}}.
		\end{equation*}
		It is straight forward to check from these formulas that \begin{equation*}G_{2k}(z)=2\zeta(2k)E_{2k}(z),\quad G_{k}(z;\chi)=2L(k;\chi^{-1})E_{k}(z;\chi).\end{equation*}
	\end{rem}
	\subsection{Moduli interpretations of modular forms}
	Modular forms are closely related to moduli stacks of elliptic curves with level structures over $\Cbb$.
	\begin{defns}\label{def_moduli}
		Let $\Mell$ be the moduli stack of \textbf{generalized elliptic curves} over $\Cbb$. That is, cubic curves with possible nodal singularities. Let $N$ be a positive integer. Define the following moduli stacks:
		\begin{itemize}
			\item $\Mell(\Gamma_0(N))$ is the moduli stack for the pairs $(C,H)$, where $C$ is a generalized elliptic curve and $H\subseteq C$ is a subgroup of order $N$.
			\item $\Mell(\Gamma_1(N))$ is the moduli stack for the triples $(C,H,\eta)$, where $C$ is a generalized elliptic curve, $H\subseteq C$ is a subgroup of order $N$, and $\eta:\Z/N\simto H$ is an isomorphism.
		\end{itemize}
	\end{defns}
	\begin{rem}
		When $N=1$, the moduli stack $\Mell(\Gamma)$ is the same as $\Mell$.
	\end{rem}
	\begin{prop}
		For the stacks above, denote the sheaves of invariant differentials by $\bfomega$. Then we have \begin{equation*}M_k(\Gamma)\simeq H^0(\Mell(\Gamma),\bfo{k}).\end{equation*}
	\end{prop}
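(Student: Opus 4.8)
The plan is to pass to the complex-analytic uniformization of $\Mell(\Gamma)$, to identify the Hodge bundle $\bfomega$ explicitly on it, and to match the data of a section of $\bfo{k}$ with the classical automorphy and holomorphicity conditions defining $M_k(\Gamma)$. First I would treat the open substack $\Mell^\circ(\Gamma)\subset \Mell(\Gamma)$ parametrizing smooth elliptic curves with $\Gamma$-level structure. Over $\Cbb$ every elliptic curve is $\Cbb/\Lambda(z)$ for some $z\in\mathfrak{h}$; the universal family over $\mathfrak{h}$ is $(\Cbb\times\mathfrak{h})/\Z^2$, with $(m,n)\in\Z^2$ acting by $w\mapsto w+mz+n$, and the action of $\Gamma$ on $\mathfrak{h}$ together with the induced action on level data exhibits $\Mell^\circ(\Gamma)$ as the analytic quotient stack $[\Gamma\backslash\mathfrak{h}]$. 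The sheaf $\bfomega$ pulls back to the free $\mathcal{O}_{\mathfrak{h}}$-module generated by the invariant differential $dw$. The key computation is that for $\gamma=\left(\begin{smallmatrix}a&b\\c&d\end{smallmatrix}\right)\in\Gamma$, multiplication by $cz+d$ is an isomorphism $\Cbb/\Lambda(\gamma z)\xrightarrow{\sim}\Cbb/\Lambda(z)$, since $(cz+d)\Lambda(\gamma z)=(az+b)\Z\oplus(cz+d)\Z=\Lambda(z)$; under it the differential $dw$ on $E_z$ pulls back to $(cz+d)\,dw$ on $E_{\gamma z}$. Hence on $[\Gamma\backslash\mathfrak{h}]$ the line bundle $\bfo{k}$ is $\mathcal{O}_{\mathfrak{h}}$ equipped with the $\Gamma$-equivariant structure given by the automorphy factor $(cz+d)^k$, so that $H^0(\Mell^\circ(\Gamma),\bfo{k})$ is precisely the space of holomorphic $f\colon\mathfrak{h}\to\Cbb$ satisfying $f(\gamma z)=(cz+d)^k f(z)$, i.e. \eqref{mf} without the condition at the cusps. (When $-I\in\Gamma$ this already forces $f=0$ for odd $k$, matching the generic $\mu_2$-gerbe structure of the stack.)

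Next I would account for the cusps, i.e. the boundary $\Mell(\Gamma)\setminus\Mell^\circ(\Gamma)$ parametrizing generalized (nodal) elliptic curves with degenerate level structure. Analytically, a punctured neighborhood of a cusp is a punctured disk in the coordinate $q=e^{2\pi i z/h}$ (with $h$ the width of the cusp), over which the universal family is the Tate curve, and this family extends across $q=0$ to the nodal curve classified by the boundary point. The invariant differential of the Tate curve trivializes $\bfomega$ in this neighborhood and differs from $dw$ by an explicit unit, so that a section of $\bfo{k}$ defined on $\Mell^\circ(\Gamma)$ extends over the boundary divisor if and only if the $q$-expansion of the associated $f$ at each cusp has no polar part, i.e. $f$ is holomorphic at every cusp. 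Combined with the previous paragraph, this identifies $H^0(\Mell(\Gamma)^{\mathrm{an}},\bfo{k})$ with $M_k(\Gamma)$. Finally, since $\Mell(\Gamma)$ is a proper Deligne--Mumford stack over $\Cbb$ (indeed a smooth projective curve when $\Gamma=\Gamma_1(N)$ with $N\geq 5$) and $\bfo{k}$ is a coherent sheaf, GAGA for proper DM stacks identifies its algebraic global sections with the analytic ones, completing the proof.

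The step I expect to be the main obstacle is the cusp analysis: one must match the analytic ``holomorphic at all cusps'' condition with algebraic extension over the boundary divisor, which requires the theory of the Tate curve (equivalently, Katz's algebraic theory of modular forms and their $q$-expansions) to pin down $\bfomega$ in the formal neighborhood of each cusp, together with the bookkeeping of cusp widths and, for $\Gamma_1(N)$, of the several inequivalent cusps. The remaining ingredients --- the analytic uniformization, the automorphy-factor computation, and the GAGA comparison --- are standard once the setup is in place, and one should note that in the topological applications only the resulting identification of $\Cbb$-vector spaces (and its compatibility with $q$-expansions) is needed.
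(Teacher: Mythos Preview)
Your outline is correct and is the standard argument for this well-known identification. The paper, however, does not prove this proposition at all: it is stated as background without proof (the reference given for the surrounding material is \cite{ataec}), so there is no approach to compare against.
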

	It is not hard to see the forgetful map $\Mell(\Gamma_1(N))\to \Mell(\Gamma_0(N))$ is a $\znx$-torsor: $g\in\znx\simeq \aut(\Z/N)$ acts by $(C,H,\eta)\mapsto(C,H,\eta\circ g)$. As a result, there is a natural action of $\znx$ on \begin{equation*}H^0(\Mell(\Gamma_1(N)),\bfo{k})\simeq M_k(\Gamma_1(N)).\end{equation*}
	\begin{prop}\label{Prop:Dirichlet_char_moduli}
		Let $\chi\colon\znx\to \Cx$ be a Dirichlet character.The vector space $M_k(\Gamma_0(N),\chi)$ defined in \Cref{mk_chi} is isomorphic to $\hom_{\znx\textup{-rep}}(\Cbb_{\chi^{-1}},M_k(\Gamma_1(N)))$.
	\end{prop}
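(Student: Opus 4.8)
The plan is to make the $\znx$-action on $M_k(\Gamma_1(N))$ that comes from the torsor $\Mell(\Gamma_1(N))\to\Mell(\Gamma_0(N))$ completely explicit in classical terms, and then recognize that the automorphic equation \eqref{aut_eqn} is literally the condition of spanning a $\chi^{-1}$-eigenline. The group-theoretic input I would record first: the assignment $\begin{pmatrix} a&b\\c&d\end{pmatrix}\mapsto d\bmod N$ is a surjective homomorphism $\Gamma_0(N)\to\znx$ with kernel exactly $\Gamma_1(N)$, so it induces $\Gamma_0(N)/\Gamma_1(N)\xrightarrow{\sim}\znx$; surjectivity and the kernel computation are elementary from the relation $ad\equiv 1\bmod N$ on $\Gamma_0(N)$.

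Next I would uniformize. To $z\in\mathfrak{h}$ attach the triple $(C_z,H_z,\eta_z)$ with $C_z=\Cbb/(\Z z\oplus\Z)$, $\eta_z\colon \Z/N\to C_z$, $1\mapsto \tfrac1N$, and $H_z=\mathrm{im}(\eta_z)$; this is the standard analytic model of $\Mell(\Gamma_1(N))$ compatible with the paper's conventions (in which $\Gamma_0(N)$ is the $\SL_2(\Z)$-stabilizer of $(C_z,H_z)$), and under $M_k(\Gamma_1(N))\simeq H^0(\Mell(\Gamma_1(N)),\bfo{k})$ one has $f(z)=\tilde f(C_z,H_z,\eta_z,dw)$ for $w$ the coordinate on $C_z$. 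For $\gamma=\begin{pmatrix} a&b\\c&d\end{pmatrix}\in\Gamma_0(N)$, multiplication by $cz+d$ descends to an isomorphism $\phi_\gamma\colon C_{\gamma z}\xrightarrow{\sim}C_z$ with $\phi_\gamma^*(dw)=(cz+d)\,dw$, and — using $N\mid c$ — one computes $\phi_\gamma\circ\eta_{\gamma z}=\eta_z\circ(\text{mult.\ by }d)$, i.e.\ $(C_{\gamma z},H_{\gamma z},\eta_{\gamma z})\cong \bar d\cdot(C_z,H_z,\eta_z)$ in the notation of the action $g\cdot(C,H,\eta)=(C,H,\eta\circ g)$. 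Feeding this through $\tilde f$ and using that the induced action on sections satisfies $\tilde f(\bar d\cdot x)=(\bar d^{-1}\cdot\tilde f)(x)$, I get the transformation law
\begin{equation*}
f(\gamma z)=(cz+d)^k\,\bigl((\bar d)^{-1}\cdot f\bigr)(z)\qquad\text{for all }\gamma=\begin{pmatrix} a&b\\c&d\end{pmatrix}\in\Gamma_0(N).
\end{equation*}

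Comparing this with \eqref{aut_eqn}, a form $f\in M_k(\Gamma_1(N))$ lies in $M_k(\Gamma_0(N),\chi)$ if and only if $(\bar d)^{-1}\cdot f=\chi(d)\,f$ for every $\gamma\in\Gamma_0(N)$; since $\gamma\mapsto d\bmod N$ is onto, this is equivalent to $g\cdot f=\chi^{-1}(g)\,f$ for all $g\in\znx$, i.e.\ to $f$ lying in the $\chi^{-1}$-eigenspace of $M_k(\Gamma_1(N))$. On the other hand, evaluation at $1\in\Cbb_{\chi^{-1}}$ identifies $\hom_{\znx\textup{-rep}}(\Cbb_{\chi^{-1}},M_k(\Gamma_1(N)))$ with exactly that eigenspace (a $\znx$-equivariant map $\Cbb_{\chi^{-1}}\to V$ is the same as a vector $v\in V$ with $g\cdot v=\chi^{-1}(g)v$). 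Composing the two identifications proves the proposition.

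I expect the only real obstacle to be the middle step: fixing the analytic level structure so that the matrix entry $d$ (not $a$) controls the action, and correctly bookkeeping the contravariance of the action on sections of $\bfo{k}$. These are precisely the two places where an inverse enters, and together they convert the character $\chi$ appearing in \eqref{aut_eqn} into the $\chi^{-1}$ appearing in the statement; an off-by-an-inverse here is the easiest error to make, so I would double-check it against a single explicit $\gamma$, e.g.\ $\begin{pmatrix} 1&0\\N&1\end{pmatrix}$ composed with a matrix realizing a chosen $d\in\znx$.
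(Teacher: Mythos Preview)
Your argument is correct and follows essentially the same route as the paper's proof: uniformize $\Mell(\Gamma_1(N))$ analytically, track how an element of $\Gamma_0(N)$ moves the level structure, and read off that the automorphic equation \eqref{aut_eqn} is exactly the $\chi^{-1}$-eigenvector condition. The only cosmetic difference is the choice of level structure: you take $\eta_z(1)=\tfrac{1}{N}$ and find that $\gamma$ acts through $d\bmod N$, whereas the paper takes $\eta(1)=\tfrac{z}{N}$ and finds the action is through $a\bmod N$; since $ad\equiv 1\bmod N$ these give the same eigenspace condition, and your more explicit bookkeeping of the contravariance on sections is a welcome clarification of a point the paper leaves implicit.
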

	\begin{proof}
		By \Cref{mk_chi}, $M_k(\Gamma_0(N),\chi)$ is a subspace of $M_k(\Gamma_1(N))$. On the other hand, the vector space $\hom_{\znx\textup{-rep}}(\Cbb_{\chi^{-1}},M_k(\Gamma_1(N)))$ is isomorphic to the $\chi^{-1}$-eigensubspace of $M_k(\Gamma_1(N))$ under the action of $\Gamma_0(N)/\Gamma_1(N)\simeq \znx$. We need to identify the two subspaces. For that, it suffices to rephrase the automorphic equation \eqref{aut_eqn} in terms of the $\znx$-action on the moduli stack $\Mell(\Gamma_1(N))$. Consider the lattice $\Lambda(z)=z\Z\oplus\Z$. There is a triple $(C,H,\eta)$ associated to $\Lambda(z)$:
		 \begin{equation*}C=\Cbb/\Lambda(z),H=\Lambda(z/N)/\Lambda(z)\subseteq C, \eta:(\Z/N)\simto H, 1\mapsto z/N. \end{equation*}
		For $\gamma=\begin{pmatrix}
		a&b\\c&d
		\end{pmatrix}\in\Gamma_0(N)$, its actions on the lattices are: 
		\begin{align*}
			\Lambda(z)&\mapsto\Z(az+b)\oplus \Z(cz+b)=\Lambda(z),&\\
			\Lambda(z/N)&\mapsto\Z(az/N+b)\oplus \Z(cz/N+b)\equiv\Lambda(az/N)\equiv\Lambda(z/N)&\mod \Lambda(z),\\
			z/N&\mapsto az/N+b\equiv az/N&\mod	\Lambda(z).		
		\end{align*}
		Here the second line uses the facts $c\equiv 0\mod N$ and $a$ is invertible mod $N$. From this formula, the action of $\gamma$ is trivial when  $a\equiv 1\mod N$, i.e. $\gamma\in\Gamma_1(N)$. For $[\gamma]\in \Gamma_0(N)/\Gamma_1(N)\simeq \znx$, its action on the triple $(C,H,\eta)$ is: 
		\begin{equation*}
			(C,H,\eta:1\mapsto z/N)\longmapsto (C,H,\eta\circ [\gamma]: 1\mapsto a\mapsto az/N).
		\end{equation*}
		Thus for $f(z)\in M_k(\Gamma_0(N),\chi)\simeq \hom_{\znx\textup{-rep}}(\Cbb_{\chi^{-1}},M_k(\Gamma_1(N)))$, we have
		\begin{equation*}f(\gamma\cdot z)=\chi^{-1}(a)(cz+d)^kf(z)=\chi(d)(cz+d)^kf(z).\end{equation*}
	\end{proof} 

	\section{From the $J$-homomorphism to the $K(1)$-local sphere}\label{Sec:k1}
	\subsection{The $J$-homomorphism and the $e$-invariant}
	The $J$-homomorphism was originally defined by G.W. Whitehead as follows:
	\begin{defn}[{\cite[page 636]{Whitehead_j_defn}}, see also {\cite[Definition 1.1.12]{green}}]
		 Let $f\colon  S^k\to \SO(n)$ be a based map. A linear isometry of $\Rbb^n$ restricts to a boundary preserving isometry of the unit ball $D^n$. It induces a boundary-preserving map $\hat{f}\colon  S^k\x D^n\to D^n, (x,y)\mapsto (f(x))(y)$. We now extend $\hat{f}$ to a map $J(f): S^{k+n}\to S^n$ by sending the complement of $S^k\x D^n$ in $S^{k+n}$ to the base point of $S^n$. The homotopy class of $J(f)$ depends only on the homotopy class of $f$. As a result, it induces a map on homotopy groups $J_{k,n}\colon \pi_k(\SO(n))\to \pi_{n+k}(S^n)$. 
	\end{defn}
	\begin{prop}[\mbox{\cite[Theorem 1]{Whitehead_j_defn}}]
		The map $J_{k,n}$ is a group homomorphism.
	\end{prop}
 	We now give three other constructions of the $J$-homomorphisms that lead to different directions in homotopy theory.
	\begin{prop}\label{prop:j_equiv_defns}
		Consider the following constructions of the $J$-homomorphisms:
		\begin{enumerate}
			\item Loop spaces.  A linear isometry of $\Rbb^n$ extends to a based self-homeomorphism of its one-point compactification $S^n$. From this, we get a continuous based map $g_n\colon \SO(n)\to \Omega^nS^n=\map_*(S^n,S^n)$. Here, both $\SO(n)$ and $\map_*(S^n,S^n)$ are based at identity. One can check $\pi_k(\Omega^nS^n,id)\simeq \pi_k(\Omega^nS^n,*)$, where $*\in \Omega^nS^n$ is the constant map. We define  
			\begin{equation*}
				J'_{k,n}=\pi_k(g_n)\colon  \pi_k(\SO(n),id)\longrightarrow \pi_k(\Omega^nS^n,id)\simeq \pi_k(\Omega^nS^n,*)\simeq \pi_{k+n}(S^n).
			\end{equation*}
			\item Framed cobordism. Geometrically, the image of the $J$-homomorphism identifies the framed $k$-dimensional submanifolds of $S^{n+k}$ whose underlying submanifolds are $S^{k}$.  As the normal bundle of $S^k\hookrightarrow S^{n+k}$ is trivial, a framing of this embedding is equivalent a map $f\colon  S^k\to O(n)$. One can further show two framings of the embedding $S^{k}\hookrightarrow S^{n+k}$ are equivalent iff the associated maps are homotopic. Thus we get a map $J''_{k,n}\colon \pi_k(O(n))\to \pi_{n+k}(S^n)$. 
			\item Thom space. A map $f\in\pi_k(\SO(n))\simeq \pi_{k+1}(B\SO(n))$ induces a $n$-dimensional oriented vector bundle $\xi_f$ over $S^{k+1}$. The Thom space of $\xi_f$ is a two-cell complex $\Th(\xi_f)=S^{n}\cup e^{n+k+1}$. Define $J'''_{k,n}(f)$ to be the gluing map of $\Th(\xi_f)$, i.e.
			\begin{equation*}
			S^{n+k}=\partial e^{n+k+1}\xrightarrow{J'''_{k,n}(f)}S^n\xrightarrow{\qquad}\Th(\xi_f).
			\end{equation*}
		\end{enumerate}	
		The three constructions above are equivalent to the original definition up to a sign.
	\end{prop}	
	\begin{proof}
		\begin{enumerate}
			\item The isomorphism of homotopy groups upon changing base points: $\pi_k(\Omega^nS^n,id)\simeq \pi_k(\Omega^nS^n,*)$ is described in \cite[Remark 15.6.2]{Husemoller_fibre_bundle}. The identification of the two maps $J_{k,n}$ and $J'_{k,n}$ then follows from the factorization of $J_{k,n}$ in \cite[Proposition 15.6.4]{Husemoller_fibre_bundle}.  
			\item By \cite[Claim on page 349]{Kervaire_interpretation}, the two maps $J_{k,n}$ and $J''_{k,n}$ agree up to a sign.			
			\item By \cite[Lemma 10.1]{j4}, for any $f\in \pi_k(\SO(n))$, the homotopy cofiber of the maps $J_{k,n}(f)$ and $J'''_{k,n}(f)$ are homotopic. So $J_{k,n}(f)$ and $J'''_{k,n}(f)$ are equivalent up to a sign. 
		\end{enumerate} 
	\end{proof}
	It follows that the $J$-homomorphisms defined above have the same images. So we will not distinguish them in the computation. This map passes to a stable $J$-homomorphism $J_k\colon \pi_k(\SO)\to \pi_k(S^0)$ by the following:
	\begin{prop}[{\cite[Theorem 2]{Whitehead_j_defn}}]
		The $J$-homomorphisms $J_{k,n}$ are compatible under stabilization. More precisely, let $i_n\colon \SO(n)\hookrightarrow \SO(n+1)$ be the map that sends an $n\x n$ orthogonal matrix $A$ to $\begin{pmatrix}
		A&\\&1
		\end{pmatrix}$. The following diagram commutes:
		\begin{equation*}
			\begin{tikzcd}
			\pi_k(\SO(n))\rar["J_{k,n}"]\dar["\pi_k(i_n)"]&\pi_{n+k}(S^{n})\dar["\Sigma"]\\
			\pi_k(\SO(n+1))\rar["J_{k,n+1}"]&\pi_{n+k+1}(S^{n+1})
			\end{tikzcd}
		\end{equation*} 
	\end{prop}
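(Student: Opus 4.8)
I will prove the stronger, unstable statement at the level of spaces and only then pass to homotopy groups, working throughout with the loop-space definition (1), so that $J_{k,n}=\pi_k(g_n)$ with $g_n\colon\SO(n)\to\Omega^nS^n$ sending a rotation $A$ to the based self-map of $S^n=(\Rbb^n)^+$ obtained by one-point compactifying the linear isometry $A$ (which fixes $0$ and $\infty$).

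First I would record the key geometric point: one-point compactification takes products to smash products, so $(\Rbb^{n+1})^+=(\Rbb^n)^+\wedge(\Rbb^1)^+=S^n\wedge S^1=S^{n+1}$, and under this identification the block-diagonal isometry $i_n(A)$ of $\Rbb^{n+1}=\Rbb^n\x\Rbb^1$, which acts by $A$ on the first factor and trivially on the second, compactifies to the smash product $g_n(A)\wedge\id_{S^1}$. Consequently the square of spaces
\begin{equation*}
\begin{tikzcd}
\SO(n)\rar["g_n"]\dar["i_n"] & \Omega^nS^n\dar["\sigma_n"]\\
\SO(n+1)\rar["g_{n+1}"] & \Omega^{n+1}S^{n+1}
\end{tikzcd}
\end{equation*}
commutes, where $\sigma_n$ is the map sending a based self-map $f$ of $S^n$ to the based self-map $f\wedge\id_{S^1}$ of $S^{n+1}=S^n\wedge S^1$; equivalently $\sigma_n=\Omega^n$ of the unit $S^n\to\Omega S^{n+1}$ of the loop–suspension adjunction.

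Next I would apply $\pi_k(-)$ to this square. The left column is $\pi_k(i_n)$ and the two horizontal maps are the two instances of the $J$-homomorphism, so it remains to identify $\pi_k(\sigma_n)$, under the standard isomorphisms $\pi_k(\Omega^nS^n)\simeq\pi_{n+k}(S^n)$ and $\pi_k(\Omega^{n+1}S^{n+1})\simeq\pi_{n+k+1}(S^{n+1})$, with the suspension homomorphism $\Sigma\colon\pi_{n+k}(S^n)\to\pi_{n+k+1}(S^{n+1})$. This is a formal unwinding of the loop–suspension adjunction: transporting a representative $S^{n+k}\to S^n$ along the adjunction and smashing with $\id_{S^1}$ is precisely geometric suspension of maps. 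Combining the two steps yields $\Sigma\circ J_{k,n}=J_{k,n+1}\circ\pi_k(i_n)$, which is the assertion.

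The only real difficulty is the bookkeeping: keeping the one-point-compactification homeomorphisms $(\Rbb^{n+1})^+\cong S^n\wedge S^1$ and the loop–suspension adjunction mutually consistent, and in particular tracking a transposition $S^1\wedge S^n\cong S^n\wedge S^1$, which on homotopy groups contributes the sign $(-1)^n$ and accounts for the "up to a sign" indeterminacy already noted in comparing the three definitions of $J_{k,n}$. One can sidestep this entirely with the Thom-space description (3): since $\xi_{i_n\circ f}\cong\xi_f\oplus\underline{\Rbb}$ one has $\Th(\xi_{i_n\circ f})\cong\Sigma\,\Th(\xi_f)$, and the attaching map of a suspended two-cell complex is the suspension of the original attaching map; this gives the same conclusion with whatever sign conventions were fixed in choosing the definition of $J_{k,n}$.
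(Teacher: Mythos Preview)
The paper states this proposition without proof, as review material in the section surveying the $J$-homomorphism; there is no argument in the paper to compare against. Your proof is correct and is the standard one: the essential point is precisely that one-point compactification converts the block-diagonal inclusion $i_n$ into $(-)\wedge\id_{S^1}$, so the square of spaces commutes on the nose, and the identification $\pi_k(\sigma_n)\cong\Sigma$ is the loop--suspension adjunction. Your alternative via Thom spaces is equally valid. One small remark: the diagram as printed in the paper has $S^k$ and $S^{k+1}$ in the right-hand column, which is a typo for $S^n$ and $S^{n+1}$; you silently corrected this, which is the right thing to do.
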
\color{black}
	\begin{defn}
		We define the stable $J$-homomorphism to be the colimit:
		\begin{equation*}J_k=\colim_{n}J_{k,n}\colon \pi_k(\SO)\longrightarrow \pi_k(S^0)\end{equation*}
	\end{defn}
	\begin{rem}
		$J_{k,n}$ stabilizes when $n>k+1$. For the stabilization of the left column, one can see it from the long exact sequence of homotopy groups attached to the fiber sequence:
		\[ \SO(n)\longrightarrow \SO(n+1)\longrightarrow S^n. \]
		The stabilization of the right column follows from the Freudenthal suspension theorem. 
	\end{rem}
	\begin{rem}
		The definitions of the $J$-homomorphism in \Cref{prop:j_equiv_defns} can be phrased stably:
		\begin{enumerate}
			\item The colimit of the maps $g_n$ in the first definition is a map $g\colon \SO\longrightarrow \Omega^\infty S^\infty$. The induced map \begin{equation*}\pi_k(g)\colon  \pi_k(\SO)\longrightarrow\pi_k\left(\Omega^\infty S^\infty\right)\simeq \pi_k(S^0)\end{equation*}
			is then the $k$-th stable $J$-homomorphism.
			\item In terms of framed cobordism, the stable homotopy group $\pi_k(S^0)$ classifies the framed-cobordism classes of $k$-dimensional manifolds with a framing on its stable normal bundle, when embedded in $\mathbb{R}^{\infty}$. A framing on the stable normal bundle of $S^k$ is then a map $f\colon S^k\to \SO$. Again if $f_1,f_2\colon S^k\to \SO$ are homotopic, then the corresponding stably framed $k$-dimensional manifolds are framed cobordant. From this point view we get the stable $J$-homomorphism $J_k\colon \pi_k(\SO)\to \pi_k(S^0)$. 
			\item $f\in \pi_k(\SO)\simeq \pi_{k+1}(B\SO)$ induces a virtual vector bundle $\xi_f$ of dimension $0$ on $S^{k+1}$. The Thom space of $\xi_f$ is a two-cell complex $\Th(\xi_f)=e^0\cup e^{k+1}$. Again, $J(f)$ is defined to be the gluing map of the stable two-cell complex $\Th(\xi_f)$.
		\end{enumerate}
	\end{rem}
	\begin{rem}
		The three definitions of the $J$-homomorphisms above lead to different directions in homotopy theory. (1) leads to the units of ring spectra, studied in \cite{units_ring_spec}. (2) is related to the work of Kervaire and Milnor in \cite{Kervaire_Milnor}. (3) leads to the computation of the image of the $J$-homomorphism by Adams in \cite{j4}, which we explain below. 
	\end{rem}
	
	Define the $e$-invariant of a stable map $f\colon S^{2k-1}\to S^0$ as below. Consider the cofiber sequence:
	\begin{equation*}
		\begin{tikzcd}
			S^0\rar&S^0\cup_{f}e^{2k}\rar &S^{2k}.
		\end{tikzcd}
	\end{equation*}
	Apply complex $K$-theory homology $KU$ to this sequence. As $KU_*$ is concentrated in even degrees, we get a short exact sequence:
	\begin{equation*}
		\begin{tikzcd}
		0\rar &KU_0(S^{0})\rar&KU_0(S^0\cup_{f}e^{2k})\rar&KU_0(S^{2k})\rar&0.
		\end{tikzcd}
	\end{equation*}
	This is not only an extension of abelian groups, but also of $KU_0 KU$-comodules. As such, this short exact sequence corresponds to an element
	\begin{equation*}
		e(f)\in \ext^1_{KU_0KU}(KU_{0}(S^{0}),KU_{0}(S^{2k})).
	\end{equation*}
	This is the \textbf{$e$-invariant} of $f\colon S^{2k-1}\to S^0$.
	\begin{rem}
		$KU_*KU$ is computed in \cite[Theorem 2.3]{Adams_Harris_Switzer}:
		\begin{equation*}
		KU_*KU\simeq \left\{f(u,v)\in \Q(\!(u,v)\!)\left\mid~ f(ht,kt)\in\Z\left[t,t^{-1},\frac{1}{hk}\right],\forall h,k\in\Z\right. \right\},
		\end{equation*}
		where $t\in KU_2(KU)$. In particular, 
		\begin{equation*}
			KU_0 KU\simeq \left\{f(w)\in\Q(\!(w)\!)\mid f(\Z)\subseteq \Z \right\}.
		\end{equation*}
	\end{rem}
	\begin{thm}[{\cite[Theorem 1.1--1.6]{j4}}]
		The image of the stable $J$-homomorphism $J_k:\pi_k(\SO)\to \pi_k(S^0)$ is described below:
		\begin{enumerate}
			\item $J_k$ is injective when $k\equiv 0,1\mod 8$.
			\item  The image of $J_{8k+3}$ is a cyclic group of order $D_{4k+2}$, the denominator of $\frac{B_{4k+2}}{8k+4}$. The image of $J_{8k-1}$ is a cyclic group of order $D_{4k}$ or $2D_{4k}$.
			\item The image of $J_{4k-1}$ in $\pi_{4k-1}(S^0)$ is a direct summand. The direct sum splitting is accomplished by  $e'\circ J_{4k-1}:\pi_{4k-1}(\SO)\twoheadrightarrow \Z/D_{2k}$. Here $e'$ is the post-composition of the $e$-invariant:	\begin{equation*}
				e'\colon \pi_{4k-1}(S^0)\xrightarrow{e}\ext^1_{KU_0KU}(KU_0(S^{0}),KU_0(S^{2k}))\twoheadrightarrow \Z/D_{2k},
			\end{equation*}  
			with the second map constructed using Chern characters.
		\end{enumerate}
	\end{thm}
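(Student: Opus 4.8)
The plan is to determine $\imag(J_{4k-1})$ by trapping it between a lower bound extracted from the $e$-invariant and an upper bound coming from the Adams conjecture, and then matching both bounds with $D_{2k}$ via the Clausen--von Staudt theorem (\Cref{thm:von_staudt}); the $2$-torsion assertions of part (1) are dealt with by a separate detection argument. A more modern route, in the spirit of the rest of this paper, would instead compute $\pi_*(S^0_{KU})$ through the arithmetic fracture square and the $K(1)$-local homotopy fixed point spectral sequences over $\Zpx$, reading off $\imag J_{4k-1}$ from $\pi_{4k-1}(S^0_{KU})$; I sketch the classical route below since it keeps the number-theoretic input in plain view.

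\emph{Lower bound and the splitting.} Using the Thom-space description of $J$, the two-cell complex $C(f):=S^0\cup_{J(f)}e^{4k}$ attached to a generator $f$ of $\pi_{4k-1}(\SO)\cong\Z$ is the Thom spectrum of a virtual rank-zero bundle $\xi_f$ over $S^{4k}$. I would compute $KU^*(C(f))$ together with the action of the Adams operations $\psi^q$, the key point being that $\psi^q$ differs from the Thom isomorphism by the cannibalistic class $\rho^q(\xi_f)$, whose value on $S^{4k}$ is governed by $\frac{B_{2k}}{4k}(q^{2k}-1)$. Reading off the resulting extension of $KU_0KU$-comodules identifies $e(J(f))\in\ext^1_{KU_0KU}(KU(S^0),KU(S^{4k}))\cong\Q/\Z$ with a unit multiple of $\frac{B_{2k}}{4k}\bmod 1$, an element of exact order $D_{2k}$. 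Since $\pi_{4k-1}(\SO)\cong\Z$, this forces $\imag(J_{4k-1})$ to be cyclic and to surject onto $\Z/D_{2k}$, so $D_{2k}$ divides $|\imag J_{4k-1}|$. Because the (real, $2$-adically refined) invariant $e'$ is a homomorphism defined on all of $\pi_{4k-1}(S^0)$ and is injective on $\imag J_{4k-1}$, the composite $e'\circ J_{4k-1}$ is surjective onto $\Z/D_{2k}$ and splits the inclusion $\imag J_{4k-1}\hookrightarrow\pi_{4k-1}(S^0)$, which is assertion (3).

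\emph{Upper bound.} Identify $\pi_{4k-1}(\SO)\cong\widetilde{KO}{}^{0}(S^{4k})\cong\Z$, on which $\psi^q$ acts as multiplication by $q^{2k}$. The Adams conjecture---that for each $q$ some $q^N(\psi^q-1)$ becomes fiber homotopically trivial---implies that $\imag(J_{4k-1})$ is annihilated by $q^N(q^{2k}-1)$ for every $q$, so $v_\ell(|\imag J_{4k-1}|)\le v_\ell(q^{2k}-1)$ for each prime $\ell$ and a suitably chosen $q$ coprime to $\ell$. A lifting-the-exponent computation shows the minimum of $v_\ell(q^{2k}-1)$ over such $q$ is $1+v_\ell(4k)$ when $(\ell-1)\mid 2k$ and $0$ otherwise, which by Clausen--von Staudt is exactly $v_\ell(D_{2k})$. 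Together with the lower bound this gives $|\imag J_{8k+3}|=D_{4k+2}$ on the nose; in dimensions $\equiv 7\bmod 8$ the Adams-conjecture input available at the prime $2$ in \cite{j4} yields only the weaker bound $2D_{4k}$, which is the source of the stated ambiguity and is eliminated by the full Adams conjecture at $2$.

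\emph{Injectivity, and the main obstacle.} For $k\equiv 0,1\bmod 8$ we have $\pi_k(\SO)\cong\Z/2$, and it suffices to check $J_k\neq 0$. For $k\equiv 1\bmod 8$ the $KO$-Hurewicz map $\pi_k(S^0)\to KO_k\cong\Z/2$ already detects $\imag J_k$ (for $k=1$ it sends $\eta$ to $\eta$). For $k\equiv 0\bmod 8$ one writes the generator of $\pi_k(\SO)$ as $\eta\cdot g$ with $g$ a generator of $\pi_{k-1}(\SO)\cong\Z$, so that $J_k(\eta g)=\eta\cdot J_{k-1}(g)$ by the multiplicativity of $J$; this is nonzero because $\eta$ times a generator of the cyclic group $\imag J_{k-1}$ is nonzero in $\pi_k(S^0)$. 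The genuinely hard step is the upper bound---the Adams conjecture: showing that $q^N(\psi^q-1)(x)$ dies \emph{in the image of $J$}, not merely in $K$-theory, is the deep geometric input, and the difficulty of controlling it $2$-adically is precisely what leaves the residual factor of $2$ in the $8k-1$ case. By contrast, once the cannibalistic class on spheres is known, the $e$-invariant lower bound is a comparatively formal $K$-theoretic computation.
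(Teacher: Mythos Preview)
The paper does not give its own proof of this theorem: it is stated purely as a citation to Adams' original work \cite[Theorem 1.1--1.6]{j4}, with no argument supplied. So there is nothing in the paper to compare your proposal against directly.

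That said, your sketch is a faithful outline of Adams' classical strategy in \cite{j4}---the $e$-invariant computation via cannibalistic classes for the lower bound, the Adams conjecture for the upper bound, and the $KO$-theoretic detection for the $\Z/2$ cases---so it is consistent with what the citation points to. You also correctly anticipate the ``modern route'': the paper's subsequent sections (\Cref{Subsec:HFPSS} and \Cref{thm:Bousfield}) do exactly what you describe, computing $\pi_*(S^0_{KU})$ via the $K(1)$-local homotopy fixed point spectral sequence over $\Zpx$ and the arithmetic fracture square, which recovers $\imag J_{4k-1}$ as the torsion in $\pi_{4k-1}(S^0_{KU})$. In that sense the paper implicitly supplies the alternative proof you allude to, though it never packages it as a proof of this particular theorem.
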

	\subsection{$K$-theory and formal groups of height $1$}
	In this subsection, we will discuss the relation between complex $K$-theory and formal groups of height $1$. In the end, we identify $\ext^1_{KU_0 KU}(KU_{0}(S^{0}),KU_{0}(S^{2k}))$ with the continuous cohomology of a profinite group. References on formal groups and chromatic homotopy theory can be found in \cite{blue,coctalos,CHT}.	
	\begin{defn}
		A cohomology theory $E$ is called \textbf{complex oriented} if it is multiplicative and it satisfies the Thom isomorphism theorem for complex vector bundles. It is \textbf{even periodic} if $E_*$ is concentrated in even degrees and there is a $\beta\in E^{-2}(\mathrm{pt})$ such that $\beta$ is invertible in $E_*$.  
	\end{defn}
	\begin{prop}
		Let $E$ be a complex oriented evenly periodic cohomology theory, then
		\begin{enumerate}
			\item $E^{0}(\CPi)\simeq E^{0}\llb t\rrb$, where $t$ is the image of first Chern class of the tautological line bundle $\xi$ over $\CPi$ under the isomorphism $E^2(\CPi)\simeq E^0(\CPi)$. 
			\item Let $p_i:\CPi\x\CPi\to\CPi$ be the projection map onto the $i$-th component for $i=1,2$. Then $E^{0}(\CPi\x \CPi)\simeq E^{0}\llb t_1,t_2\rrb$, where $t_i=p_i^*c_1(\xi)$. 
			\item The tensor product of line bundles over $\CPi$ induces a \textbf{formal group} structure on $\spf E^0(\CPi)$ over $E_0=E^0$. Denote this formal group associated to a complex-oriented cohomology theory $E$ by $\Gh_E$. 
			\item $E^{0}(S^{2k})$ is identified with $\bfo{k}$, the $k$-th tensor power of the sheaf of invariant differentials on $\Gh_E$. 
		\end{enumerate}
	\end{prop}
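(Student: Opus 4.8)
This proposition collects standard facts about complex-oriented cohomology; the plan is to obtain (1) and (2) from the cellular filtration of projective spaces, read off (3) from the abelian-group structure on $BU(1)$, and prove (4) by unwinding the definition of the sheaf of invariant differentials.

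\textbf{Parts (1) and (2).} Write $\CPi=\colim_n\mathbb{CP}^n$ and use the cofiber sequences $\mathbb{CP}^{n-1}\to\mathbb{CP}^n\to S^{2n}$ coming from the CW structure with a single cell in each even dimension. Applying $\tilde E^*(-)$ and using even periodicity — so $\tilde E^*(S^{2n})$ is a free $E_*$-module of rank one concentrated in even total degrees — the long exact sequence of each cofiber sequence degenerates, by induction on $n$, into a short exact sequence $0\to\tilde E^*(S^{2n})\to\tilde E^*(\mathbb{CP}^n)\to\tilde E^*(\mathbb{CP}^{n-1})\to 0$, since the connecting map has odd-degree target and hence vanishes. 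The right-hand term is free over $E_*$ by the inductive hypothesis, so the sequence splits and $E^*(\mathbb{CP}^n)$ is free of rank $n+1$. A complex orientation supplies the class $t=c_1^E(\xi)\in E^2(\CPi)$ restricting to a generator of $\tilde E^2(\mathbb{CP}^1)=\tilde E^2(S^2)\cong E_0$; multiplicativity of $E$-cohomology then forces $t^k$ to restrict to a generator of $\tilde E^{2k}(\mathbb{CP}^k/\mathbb{CP}^{k-1})=\tilde E^{2k}(S^{2k})$, so $\{1,t,\dots,t^n\}$ is an $E_*$-basis of $E^*(\mathbb{CP}^n)$. The transition maps $E^*(\mathbb{CP}^n)\to E^*(\mathbb{CP}^{n-1})$ are surjective, the Mittag--Leffler condition holds, the relevant $\lim^{1}$ vanishes, and $E^*(\CPi)=\lim_n E^*(\mathbb{CP}^n)=E_*\llb t\rrb$. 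The identical argument applied to the product cell structure on $\mathbb{CP}^m\x\mathbb{CP}^n$ — whose cells again lie only in even dimensions — exhibits $E^*(\mathbb{CP}^m\x\mathbb{CP}^n)$ as free on the external monomials $t_1^it_2^j$, and passing to the limit yields $E^*(\CPi\x\CPi)\cong E_*\llb t_1,t_2\rrb$.

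\textbf{Part (3).} The space $\CPi\simeq BU(1)=K(\Z,2)$ is a commutative topological group up to homotopy, with multiplication $\mu\colon\CPi\x\CPi\to\CPi$ classifying the tensor product of line bundles, unit $\mathbb{CP}^0\hookrightarrow\CPi$, and inverse given by line-bundle dualization. Applying $E^0(-)$ contravariantly and using (2) to identify $E^0(\CPi\x\CPi)$ with the completed tensor square $E^0(\CPi)\,\widehat{\otimes}_{E_0}\,E^0(\CPi)$, the data $(\mu,\text{unit},\text{inverse})$ become a comultiplication $t\mapsto F(t_1,t_2)$, counit, and antipode making $E^0(\CPi)=E_0\llb t\rrb$ a complete Hopf algebra over $E_0$. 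Unitality, commutativity, and associativity of $\mu$ translate into the formal group law axioms $F(t,0)=t=F(0,t)$, $F(t_1,t_2)=F(t_2,t_1)$, and $F(F(t_1,t_2),t_3)=F(t_1,F(t_2,t_3))$; equivalently $\Gh_E:=\spf E^0(\CPi)$ is a one-dimensional commutative formal group over $E_0$.

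\textbf{Part (4).} By definition the sheaf of invariant differentials $\bfomega$ on $\Gh_E$ is the conormal module of the identity section, i.e.\ $\bfomega\cong I/I^2$ where $I=\ker\big(E^0(\CPi)\to E^0(\mathrm{pt})\big)=(t)$ is the augmentation ideal; thus $\bfomega$ is free of rank one on the class of $t$. On the other hand, the inclusion $\mathbb{CP}^1=S^2\hookrightarrow\CPi$ of the bottom cell identifies $\tilde E^*(S^2)$ with $I/I^2$, because every higher power of $t$ restricts to zero on the $2$-skeleton; hence $\tilde E^*(S^2)\cong\bfomega$ as $E_*$-modules, up to the degree shift dictated by periodicity. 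Finally $S^{2k}\simeq(S^2)^{\wedge k}$ and reduced $E$-(co)homology carries smash products of such spheres to tensor products over $E_*$, so $\tilde E^*(S^{2k})\cong\tilde E^*(S^2)^{\otimes k}\cong\bfomega^{\otimes k}=\bfo{k}$. There is no real obstacle in any of this; the only thing demanding care is the bookkeeping with the periodicity isomorphisms $E_{2k}\cong E_0$, so that $\tilde E^*(S^{2k})$ is matched with $\bfo{k}$ rather than with a shift or a dual, together with the $\lim^{1}$ check in (1) ensuring that the answer is the power series ring $E_*\llb t\rrb$ and not the polynomial ring.
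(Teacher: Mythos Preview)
Your proof is correct and follows the standard approach to these well-known facts. The paper itself does not supply a proof of this proposition at all: it states the result as background material on complex-oriented cohomology theories and formal groups, referring the reader to the literature (Ravenel's blue book, \emph{COCTALOS}, and Lurie's chromatic homotopy theory notes) for details. So there is nothing to compare against; you have simply written out a clean version of the textbook argument that the paper chose to omit.
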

	\begin{exmps} We provide two examples of complex oriented cohomology theories and their associated formal groups:
		\begin{enumerate}
			\item For ordinary cohomology $H$, $\Gh_H\simeq \Gah$ is the additive formal group.
			\item For complex $K$-theory, $\Gh_{KU}\simeq \Gmh$ is the multiplicative formal group.
		\end{enumerate}
	\end{exmps}
	\begin{thm}[Quillen]
		The formal group law associated to the periodic complex cobordism $MP =\displaystyle\bigvee_{i\in\Z}\Sigma^{2i}MU$ is the universal one. More precisely, the pair $(MP_0,MP_0(MP))$ classifies formal group laws and isomorphisms between them.
	\end{thm}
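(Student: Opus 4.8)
The statement has two halves: Quillen's theorem proper, that the formal group law carried by the complex orientation of $MU$ is universal, i.e.\ the classifying map $\theta\colon L \to MU_*$ from the Lazard ring is an isomorphism; and its periodic upgrade, that $(MP_0, MP_0(MP))$ presents the moduli stack of formal groups and their isomorphisms. The plan is to prove the first and bootstrap the second. For the first: the Thom class of the tautological bundle gives $MU$ a complex orientation, hence a formal group law $F_{MU}\in MU_*\llb x,y\rrb$, and Lazard's universal property produces the graded ring map $\theta\colon L \to MU_*$. Everything reduces to showing $\theta$ is an isomorphism.

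Next, compute both sides. On the topological side, $MU$ is the Thom spectrum of the tautological bundle over $BU$, so the Thom isomorphism gives $H_*(MU;\Z)\cong H_*(BU;\Z)\cong \Z\llb b_1,b_2,\dots\rrb$-style polynomial ring $\Z[b_1,b_2,\dots]$ with $|b_i|=2i$, concentrated in even degrees and torsion-free; an Atiyah--Hirzebruch argument (or the $p$-local splitting of $MU$ into suspensions of $BP$ together with the even-concentration of $H_*MU$) then shows $MU_*$ is itself torsion-free and even, polynomial on one generator in each degree $2i$, and that the Hurewicz map $h\colon MU_*\to H_*MU$ is injective. On the algebraic side, Lazard's theorem gives $L\cong\Z[a_1,a_2,\dots]$, again one polynomial generator in each even degree. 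Rationally, every formal group law is isomorphic to the additive one via its logarithm $\log_F(x)=\sum_{i\ge0}m_i x^{i+1}$ with $m_0=1$, the coefficients $m_1,m_2,\dots$ freely generating $L\otimes\Q$; on the $MU$ side Mishchenko's formula identifies $m_i=[\mathbb{CP}^i]/(i+1)$, so $\theta\otimes\Q$ carries a polynomial basis to a polynomial basis and is an isomorphism.

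Since $L$ and $MU_*$ are torsion-free with matching finite ranks in each degree and $\theta\otimes\Q$ is an isomorphism, it suffices to prove $\theta$ is surjective; because $h$ is injective, this is equivalent to showing $h\circ\theta\colon L\to H_*MU$ hits a $\Z$-basis modulo $p$ for every prime $p$. Here is the arithmetic heart: $h\circ\theta$ classifies the formal group law over $H_*MU$ obtained from the universal one by the coordinate change $x\mapsto\sum_{i\ge0}b_i x^{i+1}$, and one must reconcile the $\gcd$-type divisibilities built into the Lazard ring (the $a_i$ versus the Hazewinkel/Araki generators after $p$-typicalization) with Milnor's and Novikov's description of the Hurewicz image; alternatively one routes the whole argument through $BP$ and $p$-typical formal group laws, using Cartier's theorem and the known structure $BP_*=\Z_{(p)}[v_1,v_2,\dots]$. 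I expect this to be the main obstacle: everything before it is formal or classical bookkeeping, whereas controlling these divisibilities is the one genuinely delicate point.

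Finally, from laws to the stack. Granting $\theta\colon L\xrightarrow{\sim}MU_*$, periodicity gives $MP_*\cong MU_*[\beta^{\pm1}]$ with $|\beta|=-2$, so $MP_0$ is the degree-$0$ subring and corepresents formal group laws modulo the rescaling $x\mapsto ux$. For the second coordinate, analyzing the diagonal $MU\to MU\wedge MU$ equips the target with two complex orientations related by a strict isomorphism, which identifies $MU_*MU\cong MU_*[b_1',b_2',\dots]$ as the corepresenting object for strict isomorphisms of formal group laws, with the Hopf algebroid structure maps $\eta_L,\eta_R$ classifying source and target. Inverting the unit $\beta$ on both sides upgrades ``strict isomorphism of laws'' to ``isomorphism of formal groups,'' so $(MP_0, MP_0(MP))$ is a Hopf algebroid presenting the moduli stack $\mathcal{M}_{FG}$; unwinding the definitions, this is precisely the assertion that the pair classifies formal groups and isomorphisms between them.
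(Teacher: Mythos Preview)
The paper does not prove this theorem at all: it is stated as a classical result attributed to Quillen and used as a black box, with references to \cite{blue,coctalos,CHT} for background on formal groups and chromatic homotopy theory. There is therefore no ``paper's own proof'' to compare against.

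That said, your outline is a reasonable sketch of the standard argument. The structure---construct $\theta\colon L\to MU_*$ from the universal property, identify both sides as polynomial rings on one generator per even degree, verify $\theta\otimes\Q$ is an isomorphism via the logarithm and Mishchenko's formula, then reduce to surjectivity and handle the $p$-local arithmetic through $BP$ or the Hurewicz image---is the conventional route (see, e.g., Adams's \emph{Stable Homotopy and Generalised Homology} or Ravenel's green book). You are right that the divisibility bookkeeping is the only genuinely nontrivial step; everything else is formal. The passage to $(MP_0, MP_0(MP))$ via the two orientations on $MU\wedge MU$ and inverting the Bott class is also standard. For the purposes of this paper, though, no such argument is expected: the theorem is background, not something the paper sets out to establish.
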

	As $\Gh_{MP}$ is the universal formal group, one might wonder given a formal group over a ring $R$ classified by a map $MP_0\to R$, is $MP_*(-)\otimes_{MP_0} R$ a cohomology theory? The answer is yes when the map $MP_0\to R$ satisfies certain flatness conditions. In particular, we have
	\begin{thm}[Conner-Floyd]
		Let $\theta\colon MP_0\to KU_0$ be the map that classifies $\Gmh$. Then $KU_*(X)\simeq MP_0(X)\otimes_{MP_0}KU_*$ and 
		\begin{equation*}KU_0 KU\simeq KU_0\otimes_{MP_0} MP_0(MP)\otimes_{MP_0}KU_0.\end{equation*}
	\end{thm}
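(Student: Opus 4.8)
The plan is to deduce the statement from Quillen's theorem identifying $(MP_0, MP_0(MP))$ with the moduli of formal groups and their isomorphisms, together with Landweber exactness for the multiplicative formal group $\Gmh$. First I would recall that the map $\theta\colon MP_0 \to KU_0 \simeq \Z$ classifying $\Gmh$ is Landweber exact: over $\Z$ one checks that $(p, v_1)$ is a regular sequence for every prime $p$, where $v_1$ is the Hazewinkel (or Araki) generator mapping to $p-1$ times a unit under $\theta$ (concretely, $[p]_{\Gmh}(x) = (1+x)^p - 1$, so $v_1 \mapsto$ a unit mod $p$ and $v_n \mapsto 0$ for $n\geq 2$). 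Hence $p$ is injective on $KU_0/0 = \Z$ and $v_1 = $ unit on $\Z/p$, so the Landweber exactness criterion is satisfied. The Landweber exact functor theorem then gives that $X \mapsto MP_0(X)\otimes_{MP_0} KU_*$ is a homology theory, and the natural transformation to $KU_*(-)$ induced by $\theta$ is an isomorphism (both are homology theories agreeing on a point, hence agreeing everywhere by comparison of the associated spectra, or by the standard argument that a natural transformation of homology theories which is an iso on $S^0$ is an iso).

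For the second formula, I would apply the first to $X = KU$, giving $KU_0 KU \simeq MP_0(KU) \otimes_{MP_0} KU_0$. Now $MP_0(KU)$ must itself be computed: using that $MP$ is the Thom spectrum with $MP_0 MP \simeq MP_0 \otimes_{MP_0 MP\text{-structure}}\cdots$, or more directly, smashing $MP \wedge KU$ and again invoking Landweber exactness of $\theta$ in the other variable, one gets $MP_0(KU) \simeq MP_0(MP) \otimes_{MP_0} KU_0$ where the right-hand tensor is along the right unit $\eta_R\colon MP_0 \to MP_0(MP)$. Substituting yields
\begin{equation*}
KU_0 KU \simeq KU_0 \otimes_{MP_0} MP_0(MP) \otimes_{MP_0} KU_0,
\end{equation*}
with the two tensor factors taken along $\eta_L$ and $\eta_R$ respectively; this is exactly the claimed description of the Hopf algebroid of stable cooperations.

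The main obstacle I anticipate is verifying the Landweber exactness condition carefully and making the identification $MP_0(KU) \simeq MP_0(MP)\otimes_{MP_0} KU_0$ precise — in particular keeping track of which unit map ($\eta_L$ vs.\ $\eta_R$) each tensor product is formed along, so that the resulting object really is the groupoid of isomorphisms of $\Gmh$-type formal groups (i.e.\ all formal groups of the form $g^{-1}\Gmh f$ over $\Z$-algebras). Everything else is a formal consequence of Quillen's theorem and the comparison of homology theories; the geometric content is entirely that $\Gmh$ over $\Z$ detects the relevant $v_n$-regularity. I would also remark, as a sanity check, that this recovers the concrete presentation of $KU_0 KU$ as $\{f(w) \in \Q(\!(w)\!) : f(\Z)\subseteq \Z\}$ quoted above, via the standard change of coordinates sending the universal isomorphism to the variable $w$.
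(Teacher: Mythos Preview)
The paper does not prove this statement: it is recorded as a classical theorem of Conner--Floyd and used as input, with no argument supplied. So there is no paper proof to compare against.

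Your Landweber-exactness argument is correct and is the standard modern proof. A couple of remarks. First, historically this is anachronistic: Conner--Floyd (1966) predates both Quillen's theorem and Landweber's exact functor theorem, and their original argument is a direct Thom-class computation showing that the Todd orientation $MU\to KU$ exhibits $KU_*$ as a flat quotient of $MU_*$ in the relevant sense. Second, your anticipated obstacle about $\eta_L$ versus $\eta_R$ is the only place requiring care, and you have it right: apply Landweber to $X=KU$ to tensor on one side, then use $MP_0(KU)\cong KU_0(MP)$ (swap smash factors) and apply Landweber to $X=MP$ to tensor on the other side; the two unit maps are forced to be $\eta_L$ and $\eta_R$ respectively, yielding the Hopf-algebroid description $KU_0\otimes_{MP_0} MP_0(MP)\otimes_{MP_0} KU_0$. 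Your remark that this recovers the Adams--Harris--Switzer presentation of $KU_0KU$ is a good consistency check but not needed for the proof itself.
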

	The map of Hopf algebroids $\theta\colon (MP_0,MP_0(MP))\to (KU_0,KU_0 KU)$ induces a map of comodule ext-groups:
	\begin{equation*}
		\theta_*\colon\ext^1_{MP_0 MP}(MP_{0}(S^{0}),MP_{0}(S^{2k}))\to\ext^1_{KU_0KU}(KU_{0}(S^{0}),KU_{0}(S^{2k}))
	\end{equation*}
	The $e$-invariant lives in the target and the source is on the $E_2$-page of the \textbf{Adams-Novikov spectral sequence} (ANSS):
	\begin{equation}\label{eqn:ANSS}
		E_2^{s,t}=\ext^{s}_{MP_0 MP}(MP_{0}(S^{0}),MP_{0}(S^{t}))\Longrightarrow \pi_{t-s}(S^0).
	\end{equation} 
	\begin{thm}
		The $e$-invariant map $e\colon\pi_{2k-1}(S^0)\to \ext^1_{KU_0 KU}(KU_{0}(S^{0}),KU_{0}(S^{2k}))$ factors through $\theta_*$. Moreover, $\theta_*$ is an isomorphism.
	\end{thm}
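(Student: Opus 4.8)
The plan is to decompose the statement into two assertions: first, that the $e$-invariant map $e\colon\pi_{2k-1}(S^0)\to\ext^1_{KU_0KU}(KU(S^0),KU(S^{2k}))$ factors through the comparison map $\theta_*$ coming from the Adams--Novikov $E_2$-page, and second, that $\theta_*$ becomes an isomorphism after restricting to the image of the $J$-homomorphism. For the factorization, I would argue that the $e$-invariant is computed from the $KU$-homology of the cofiber $S^0\cup_f e^{2k}$, and that this $KU_0KU$-comodule extension is the $\theta_*$-image of the corresponding $MP_0MP$-comodule extension obtained from the $MP$-homology of the same cofiber. Concretely, applying $MP_*(-)$ to the cofiber sequence $S^0\to S^0\cup_f e^{2k}\to S^{2k}$ gives a short exact sequence of $MP_0MP$-comodules (since $MP_*$ is even), hence a class in $\ext^1_{MP_0MP}(MP(S^0),MP(S^{2k}))$, which is precisely the image of $f$ under the edge homomorphism of the ANSS; applying the base-change functor $KU_0\otimes_{MP_0}(-)\otimes_{MP_0}KU_0$ and invoking the Conner--Floyd theorem $KU_0KU\simeq KU_0\otimes_{MP_0}MP_0(MP)\otimes_{MP_0}KU_0$ shows this extension maps to the $e$-invariant. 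So $e$ equals $\theta_*$ composed with the ANSS edge map, which is the first claim.

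For the second claim, I would identify both ext-groups with group cohomology of height-$1$ formal group automorphisms, as promised in the subsection heading. After $p$-completion, $KU_p$ is a form of Lubin--Tate theory $E_1$ at height $1$ with Morava stabilizer group $\mathbb{Z}_p^\times$, and $\ext^1_{KU_0KU}(KU(S^0),KU(S^{2k}))$ is computed (prime by prime) by $H^1_c(\mathbb{Z}_p^\times;\mathbb{Z}_p^{\otimes k})$, which by a standard calculation with the logarithm is cyclic of order equal to the $p$-part of the denominator of $B_{k}/(2k)$ (equivalently of $B_{2k}/4k$ in the relevant parity), assembling across primes to $\mathbb{Z}/D_{2k}$. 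On the other hand, Adams' theorem, quoted above as \cite[Theorem 1.1--1.6]{j4}, tells us that $e'\circ J_{4k-1}\colon\pi_{4k-1}(\SO)\twoheadrightarrow\mathbb{Z}/D_{2k}$ is surjective, so the image of $J$ in the target of $\theta_*$ already has order $D_{2k}$. Since the ANSS $E_2^{1,2k}$ likewise has order dividing $D_{2k}$ (this is the classical computation of $\ext^1_{MP_0MP}$ in terms of Bernoulli denominators, e.g. via the Miller--Ravenel--Wilson or Novikov calculation), the map $\theta_*$ restricted to a group of order $D_{2k}$ that surjects onto a group of order $D_{2k}$ must be an isomorphism on that subgroup; a counting/surjectivity argument then finishes it.

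The main obstacle I anticipate is making the identification of the source $\ext^1_{MP_0MP}(MP(S^0),MP(S^{2k}))$ precise and showing the $e$-invariant of an element in the image of $J$ is nonzero of the expected order --- i.e., that $\theta_*$ does not kill part of the $J$-image. The cleanest route is to use that $e'\circ J_{4k-1}$ is \emph{defined} via the ($KU$-based) $e$-invariant in Adams' work, so the composite $\pi_{4k-1}(\SO)\to\pi_{4k-1}(S^0)\xrightarrow{e}\ext^1_{KU_0KU}$ is already known to have image of order $D_{2k}$; combined with the factorization through $\theta_*$ from the first paragraph, this forces $\theta_*$ to hit a subgroup of order $\geq D_{2k}$, and the upper bound from the ANSS $E_2$-term pins it down. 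I would present this as: factorization (formal, from Conner--Floyd), then a dimension/order count comparing Adams' $D_{2k}$ with the order of $\ext^1_{MP_0MP}$, rather than constructing an explicit inverse.
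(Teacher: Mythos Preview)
The paper does not actually supply a proof of this theorem: it is stated as a known result, with the accompanying remark pointing to \cite[Section 5.3]{green} for the computation of the ANSS $1$-line and its comparison with the image of $J$. So there is no ``paper's own proof'' to compare against; your task here was to reconstruct a standard argument from the literature.

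Your outline is essentially the standard one and is correct in spirit. The factorization step is exactly right: the cofiber $S^0\cup_f e^{2k}$ has even $MP$-homology, giving an extension class in $\ext^1_{MP_0MP}$, and Conner--Floyd base change pushes this forward to the $KU$-based extension; this is precisely $e=\theta_*\circ(\text{ANSS edge map})$. For the second claim, your counting argument works, but be precise about the logic: from Adams you know $e|_{\mathrm{im}\,J}$ surjects onto a cyclic group of order $D_{2k}$, and from the factorization you conclude that the image of $J$ in $\ext^1_{MP_0MP}$ (via the edge map) already has order at least $D_{2k}$; the independent computation that $\ext^1_{MP_0MP}(MP(S^0),MP(S^{2k}))\cong\Z/D_{2k}$ then forces $\theta_*$ to be an isomorphism on that subgroup (indeed on the whole $\ext^1$). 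Your phrasing ``order dividing $D_{2k}$'' should be sharpened to ``order exactly $D_{2k}$'' for the argument to close; otherwise you only get surjectivity, not injectivity. With that adjustment the proposal is a faithful reconstruction of the argument the paper is gesturing at via the reference to Ravenel.
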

	\begin{proof}
		Replacing $KU$ by $MP$ in the definition of the $e$-invariant for a homotopy class $f\in \pi_{2k-1}(S^0)$, we get an element $e_{MP}(f)\in \ext^{1}_{MP_0 MP}(MP_0(S^{0}),MP_0(S^{2k}))$. We can see $\theta_*(e_{MP}(f))=e(f)$ is in $\ext^{1}_{KU_0 KU}(KU_0(S^{0}),KU_0(S^{2k}))$ from the following diagram:
		\begin{equation*}
			\begin{tikzcd}[column sep=small]
				{[}0\rar &MP_0(S^{0})\rar\dar["\theta"]&MP_0(S^0\cup_{f}e^{2k})\rar\dar["\theta"]&MP_0(S^{2k})\rar\dar["\theta"]&0{]}=e_{MP}(f) \dar[mapsto,"\theta_*",shift left=2 ex]\\
				{[}0\rar &KU_0(S^{0})\rar&KU_0(S^0\cup_{f}e^{2k})\rar&KU_0(S^{2k})\rar&0{]}=e(f)
			\end{tikzcd}
		\end{equation*}
		This proves the first half of the claim. The remaining follows from \cite[Theorem 5.3.7]{green}.
	\end{proof}
	\begin{rem}
		As $MP_0=MU_*$ is concentrated in even degrees, so is the $0$-line in the ANSS \eqref{eqn:ANSS}.  This means there is no contribution of the 0-line in the ANSS to $\pi_{2k-1}(S^0)$ for any $k$, yielding an edge homomorphism 
			\begin{equation*}
				\begin{tikzcd}
					\pi_{2k-1}(S^0)\rar[->>]& E_\infty^{1,2k}\rar[hook]& E_2^{1,2k}=\ext^{1}_{MP_0 MP}(MP_0(S^{0}),MP_0(S^{2k})).
				\end{tikzcd}				
			\end{equation*}	
			One can check that this edge homomorphism coincides with the map $e_{MP}$ in the proof above.
			
			Moreover by \cite[Theorem 4.3.2]{green}, $E_2^{0,t}=0$ in \eqref{eqn:ANSS} unless $t=0$. The evenness of $MP_0=MU_*$ implies there is no contribution of the $1$-line in the ANSS to even-degree stable stems. Consequently, this yields an edge homomormophism when $k>1$:
			\begin{equation*}
				\begin{tikzcd}
					\pi_{2k-2}(S^0)\rar[->>]& E_\infty^{2,2k}\rar[hook]& E_2^{2,2k}=\ext^{2}_{MP_0 MP}(MP_0(S^{0}),MP_0(S^{2k})).
				\end{tikzcd}				
			\end{equation*}	
			This edge homomorphism is the $f$-invariant introduced in \cite[page 411]{topqexp}.
	\end{rem}
	Thus, the image of the $J$-homomorphism is computed by its image under the $e$-invariant map in the $KU_0KU$-Ext groups. Completed at a prime $p$, these $\ext$-groups are identified with group cohomology.
	\begin{cor}
		As $MP_0(MP)$ classifies isomorphisms between formal group, $\spec KU_0 KU$ is isomorphic to the group scheme $\aut(\Gmh)$ over $\Z$.
	\end{cor}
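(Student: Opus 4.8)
The plan is to unwind the functor of points on both sides; everything needed has already been assembled. By Quillen's theorem the pair $(MP_0, MP_0 MP)$ represents the groupoid-valued functor sending a commutative ring $R$ to formal groups over $R$ and their isomorphisms: a ring map $g: MP_0 MP \to R$ is the datum of the two formal groups $F_0, F_1$ over $R$ classified by $g\circ\eta_L$ and $g\circ\eta_R$, where $\eta_L,\eta_R: MP_0 \to MP_0 MP$ are the left and right units, together with an isomorphism $\phi: F_0 \xrightarrow{\sim} F_1$. By the Conner--Floyd theorem quoted above, $KU_0 KU \simeq KU_0 \otimes_{MP_0} MP_0 MP \otimes_{MP_0} KU_0$, both tensor factors being taken along $\theta: MP_0 \to KU_0 = \Z$, the classifying map of $\Gmh$.

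First I would compute $\hom_{\textup{Ring}}(KU_0 KU, R)$ for an arbitrary commutative ring $R$. Since $KU_0 = \Z$, the two outer tensor factors contribute no data but impose a constraint: a ring map $KU_0 KU \to R$ is the same as a ring map $g: MP_0 MP \to R$ whose precomposites $g\circ\eta_L$ and $g\circ\eta_R$ both equal the composite $MP_0 \xrightarrow{\theta} \Z \to R$. Under Quillen's description this forces $F_0 = (\Gmh)_R = F_1$, so the only remaining datum is an automorphism $\phi$ of $(\Gmh)_R$. This identification is manifestly natural in $R$, so it gives an isomorphism of functors $\spec KU_0 KU \simeq \aut(\Gmh)$, where $\aut(\Gmh)$ denotes the functor $R \mapsto \aut_R\bigl((\Gmh)_R\bigr)$.

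Next I would promote this to an isomorphism of group schemes. The $\aut(\Gmh)$ side carries the evident group law given by composition of automorphisms. On the other side, the structure map $\Psi: MP_0 MP \to MP_0 MP \otimes_{MP_0} MP_0 MP$ of the Hopf algebroid that encodes composition of isomorphisms descends, after base change along $\theta$, to a coproduct $KU_0 KU \to KU_0 KU \otimes_{KU_0} KU_0 KU$; because the left and right units of $KU_0 KU$ now coincide (both being $\Z \to KU_0 KU$), this makes $KU_0 KU$ an honest commutative Hopf algebra and $\spec KU_0 KU$ an affine group scheme over $\Z$. Tracing the identification of the previous paragraph through $\Psi$ shows this coproduct corresponds to composition of automorphisms, completing the isomorphism of group schemes.

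The work here is bookkeeping rather than anything deep: the one point to check with care is that the constraint ``both unit composites factor through $\theta$'' genuinely pins down \emph{both} the source and the target formal group of $\phi$ to be $(\Gmh)_R$ — this uses precisely that $\theta$ is the classifying map of $\Gmh$ and that $KU_0 = \Z$ — and that the Hopf-algebroid structure maps of $MP_0 MP$ are compatible with this base change, so that the resulting group law on $\spec KU_0 KU$ matches composition of automorphisms. One could alternatively make the isomorphism entirely explicit using the presentation $KU_0 KU \simeq \{f(w) \in \Q(\!(w)\!) : f(\Z) \subseteq \Z\}$ recalled above, but the functor-of-points argument is the cleanest route to the statement as phrased.
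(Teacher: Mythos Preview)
Your argument is correct and is exactly the (implicit) reasoning the paper intends: the corollary is stated without proof, and the expected justification is precisely the functor-of-points computation you give, namely that base-changing the universal groupoid $(MP_0, MP_0 MP)$ along $\theta$ on both sides via Conner--Floyd forces source and target to be $\Gmh$ and leaves only an automorphism. Your additional remark that the two units coincide over $KU_0=\Z$, so the Hopf algebroid collapses to a Hopf algebra and $\spec KU_0 KU$ is genuinely a group scheme, is a helpful clarification the paper leaves to the reader.
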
 
	\begin{thm}[{\cite{Hovey_Morita}}]
		Let $(A,\Gamma)$ be a Hopf algebroid.
		\begin{enumerate}
			\item $(\spec A, \spec \Gamma)$ is a groupoid scheme.
			\item There is an equivalence of abelian categories between $(A,\Gamma)$-comodules and quasicoherent sheaves over the quotient stack $\spec A//\spec\Gamma$.
		\end{enumerate}
	\end{thm}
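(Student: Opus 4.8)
The plan is to prove both parts by transporting structure through the contravariant functor $\spec$ from commutative rings to affine schemes. For part (1), recall that a Hopf algebroid $(A,\Gamma)$ comes with a left unit $\eta_L$ and a right unit $\eta_R$ from $A$ to $\Gamma$, a counit $\epsilon\colon\Gamma\to A$, a comultiplication $\Delta\colon\Gamma\to\Gamma\otimes_A\Gamma$, and a conjugation $c\colon\Gamma\to\Gamma$, satisfying axioms dual to those of a groupoid. Applying $\spec$, and using that $\spec$ sends the relative tensor product $\Gamma\otimes_A\Gamma$ to the fiber product $\spec\Gamma\times_{\spec A}\spec\Gamma$ of composable pairs, the two units become the source and target maps $s,t\colon\spec\Gamma\to\spec A$, the counit becomes the identity section $\spec A\to\spec\Gamma$, the comultiplication becomes the composition law, and the conjugation becomes inversion. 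I would then check, one axiom at a time, that the Hopf algebroid identities (coassociativity, counitality, and the antipode relations) go over exactly to the groupoid axioms for $(\spec A,\spec\Gamma)$; this is a routine diagram chase with no surprises.

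For part (2), I would impose the standing hypothesis that $\Gamma$ is flat as an $A$-module via $\eta_L$ (equivalently $\eta_R$) --- this is what makes both sides abelian and makes the canonical atlas $\spec A\to\mathcal X:=\spec A//\spec\Gamma$ a faithfully flat cover of the quotient stack. By faithfully flat descent along this cover, a quasicoherent sheaf on $\mathcal X$ amounts to a quasicoherent sheaf $\mathcal F=\widetilde M$ on $\spec A$ together with an isomorphism $\alpha\colon s^*\mathcal F\xrightarrow{\sim}t^*\mathcal F$ over $\spec\Gamma$ satisfying the cocycle condition on the scheme $\spec\Gamma\times_{\spec A}\spec\Gamma=\spec(\Gamma\otimes_A\Gamma)$ of composable pairs, and consequently normalized along the identity section. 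Passing through the adjunction $s^*\dashv s_*$, such an $\alpha$ becomes a map $\psi_M\colon M\to\Gamma\otimes_A M$; under this identification the normalization condition translates into counitality $(\epsilon\otimes\mathrm{id})\psi_M=\mathrm{id}_M$ and the cocycle condition into coassociativity $(\Delta\otimes\mathrm{id})\psi_M=(\mathrm{id}\otimes\psi_M)\psi_M$, so that descent data on $\widetilde M$ are precisely $(A,\Gamma)$-comodule structures on $M$, the invertibility of $\alpha$ being supplied by the conjugation $c$. Since morphisms on both sides, and kernels, cokernels, and direct sums, are all computed on the underlying $A$-modules, the resulting correspondence is an equivalence of abelian categories.

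I expect the only real difficulty to be organizational: one must keep careful track of which of $\eta_L,\eta_R$ is used to form each tensor product and each of the three pullbacks along $\mathrm{pr}_{12},\mathrm{pr}_{13},\mathrm{pr}_{23}$ from the scheme of composable pairs, so that the groupoid cocycle identity $\mathrm{pr}_{13}^*\alpha=\mathrm{pr}_{23}^*\alpha\circ\mathrm{pr}_{12}^*\alpha$ matches comodule coassociativity on the nose, and similarly for the units. The single genuinely non-formal ingredient is the flatness of $\Gamma$ over $A$: it is needed both for the comodule category to be abelian (so that kernels of comodule maps are again comodules) and for the quasicoherent sheaves on $\mathcal X$ to be governed by descent, so the theorem must be read with that hypothesis in force --- as it is in every chromatic example in this paper, where $(A,\Gamma)=(E_*,E_*E)$ with $E_*E$ flat over $E_*$.
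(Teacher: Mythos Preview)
Your proposal is correct and follows the standard argument for this well-known result. However, the paper does not actually supply a proof of this theorem: it is stated with a citation to \cite{Hovey_Morita} and used as background input, so there is no ``paper's own proof'' to compare against. Your sketch---transporting the Hopf algebroid structure maps through $\spec$ for part (1), and invoking faithfully flat descent along the atlas $\spec A\to\spec A//\spec\Gamma$ to identify descent data with comodule structures for part (2)---is exactly the argument one finds in the cited reference and in standard treatments of stacks in chromatic homotopy theory. Your emphasis on the flatness hypothesis is also correct and worth making explicit, since the paper only applies the result to $(KU_0,KU_0KU)$ and $(MP_0,MP_0MP)$, where flatness holds.
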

	\begin{cor}\label{cor:BAutGm}
		The stack associated to the pair $(KU_0,KU_0KU)$ is the classifying stack \begin{equation*}B\aut (\Gmh)=\spec\Z//\aut (\Gmh).\end{equation*} As a result, the $e$-invariant lives in
		\begin{align*}
			\ext^1_{KU_0 KU}(KU_{0}(S^0),KU_{0}(S^{2k}))\simeq&R^1\hom_{\qcoh\left(B\aut (\Gmh)\right)}(\mathcal{O},\bfo{k})\\
			\simeq &H^1(B\aut (\Gmh),\bfo{k}).
		\end{align*}
	\end{cor}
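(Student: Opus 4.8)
The plan is to deduce this purely formally from the three structural inputs already recorded above: the identification $\spec KU_0 KU \simeq \aut(\Gmh)$ from the previous corollary, the Conner--Floyd presentation of $(KU_0, KU_0 KU)$ coming from Quillen's theorem, and the Hovey--Morita correspondence between comodules and quasicoherent sheaves on the associated stack.

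First I would promote the isomorphism of schemes $\spec KU_0 KU \simeq \aut(\Gmh)$ to an isomorphism of groupoid schemes $(\spec KU_0,\, \spec KU_0 KU) \simeq (\spec\Z,\, \aut(\Gmh))$, where the right-hand side is the action groupoid for the trivial action of the group scheme $\aut(\Gmh)$ on the point $\spec\Z$. Since $KU_0 \simeq \Z$ is initial among commutative rings, the left and right units $\eta_L,\eta_R : KU_0 \to KU_0 KU$ necessarily coincide — both are the unique ring map $\Z \to KU_0 KU$ — so the source and target of the groupoid both become the structure morphism $\aut(\Gmh) \to \spec\Z$, and under Quillen's description of $MP_0(MP)$ as classifying isomorphisms of formal groups (restricted to $\Gmh$ via Conner--Floyd) the comultiplication of the Hopf algebroid becomes the composition law of $\aut(\Gmh)$. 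It follows that the stack attached to $(KU_0, KU_0 KU)$ is precisely $B\aut(\Gmh) = \spec\Z // \aut(\Gmh)$.

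Next I would apply the Hovey--Morita theorem. Because $KU_0 KU$ is a torsion-free abelian group, it is flat over $KU_0 \simeq \Z$, so $(KU_0, KU_0 KU)$ is a flat Hopf algebroid and the theorem gives an exact equivalence between the category of $(KU_0, KU_0 KU)$-comodules and $\qcoh(B\aut(\Gmh))$. This equivalence sends the unit comodule $KU(S^0) = KU_0$ to the structure sheaf $\mathcal{O}$ and, using the identification $E(S^{2k}) \simeq \bfo{k}$ recorded earlier applied to $E = KU$, sends $KU(S^{2k})$ to $\bfo{k}$, the pullback to $B\aut(\Gmh)$ of the $k$-th power of the sheaf of invariant differentials of $\Gmh$. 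Exactness of the equivalence then yields
\begin{equation*}
\ext^1_{KU_0 KU}(KU(S^0), KU(S^{2k})) \simeq R^1\hom_{\qcoh(B\aut(\Gmh))}(\mathcal{O}, \bfo{k}).
\end{equation*}
Finally, $\hom_{\qcoh(B\aut(\Gmh))}(\mathcal{O}, -)$ is the global-sections functor on $B\aut(\Gmh)$, whose right derived functors are by definition stack cohomology $H^*(B\aut(\Gmh), -)$; concretely the cobar complex of the Hopf algebroid coincides with the \v{C}ech complex of the atlas $\spec\Z \to B\aut(\Gmh)$, which makes the last identification $R^1\hom(\mathcal{O}, \bfo{k}) \simeq H^1(B\aut(\Gmh), \bfo{k})$ transparent.

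I expect the only genuinely delicate point to be the bookkeeping in the second paragraph: checking that every structure map of the Hopf algebroid $(KU_0, KU_0 KU)$ — not merely its underlying scheme — translates into the corresponding group-scheme operation on $\aut(\Gmh)$, and that the comodule $KU(S^{2k})$ really goes to $\bfo{k}$ rather than some twist of it, which requires tracking the comodule structure carefully through the Conner--Floyd isomorphism. Everything after that is a formal consequence of the quoted theorems.
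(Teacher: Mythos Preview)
Your proposal is correct and is exactly the unpacking the paper intends: the corollary is stated in the paper without proof, as an immediate consequence of the preceding identification $\spec KU_0KU \simeq \aut(\Gmh)$ together with the Hovey--Morita equivalence of comodules with quasicoherent sheaves. Your argument supplies precisely the details (the triviality of the source/target maps since $KU_0=\Z$, flatness, and the identification of $\hom(\mathcal{O},-)$ with global sections) that the paper leaves implicit.
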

	The group scheme $\aut (\Gmh)$ is not a constant group scheme over $\Z$. However, it becomes one when restricted to the closed points $\spec \Fp\in\spec \Z$. This is even true over $\spf \Zp$, the formal neighborhood of the closed point $\spec \Fp$ in $\spec \Z$.
	\begin{lem}
		Over $\Fp$ or $\Zp$, $\aut (\Gmh)\simeq \underline{\Zpx}$ as a constant pro-group scheme.
	\end{lem}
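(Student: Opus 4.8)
The plan is to identify homomorphisms of $\Gmh$ with grouplike power series, and to show that the classical power operations exhaust them. After passing to the multiplicative coordinate $u=1+x$, a homomorphism $\Gmh\to\Gmh$ over a ring $R$ amounts to a grouplike power series $h(u)\in R\llb u-1\rrb$ with $h(1)=1$ and $h(uv)=h(u)h(v)$, and this $h$ is an automorphism precisely when its linear term is a unit of $R$. For $a\in\Z$ the series $[a]$ corresponding to $u\mapsto u^a$ is such a homomorphism, with linear term $a$, so $a\mapsto[a]$ gives a ring map $\Z\to \mathrm{End}(\Gmh/R)$ and a group map $\{\pm 1\}=\Z^\times\to\aut(\Gmh/R)$. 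The goal is to extend this to an isomorphism $\underline{\Zpx}\xrightarrow{\ \sim\ }\aut(\Gmh)$ over $\Fp$ (resp.\ $\Zp$).

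First I would extend $[\,\cdot\,]$ to $\Zp$ when $R$ is an $\Fp$- or $\Zp$-algebra. The coefficients of $[p^k]$ are the binomials $\binom{p^k}{i}$ with $v_p\binom{p^k}{i}=k-v_p(i)$; hence over a $\Zp$-algebra $[p^k]\to 0$ coefficientwise $p$-adically, while over an $\Fp$-algebra $[p^k]$ reduces to $u\mapsto u^{p^k}$, whose $(u-1)$-adic order tends to $\infty$. Either way $[\,\cdot\,]$ is continuous for the $p$-adic topology, so it extends to $a\mapsto[a]$ for $a\in\Zp$, landing in $\mathrm{End}(\Gmh/R)$, and restricts to a group map $\Zpx\to\aut(\Gmh/R)$. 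Injectivity is clear, since $[a]=[b]$ gives $[a-b]=0$, which is visibly false over any nonzero field quotient of $R$ unless $a=b$. This produces a monomorphism of pro-group schemes $\underline{\Zpx}\hookrightarrow\aut(\Gmh)$ over $\Fp$ (resp.\ $\Zp$).

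The heart of the matter is surjectivity, for which it suffices to treat a connected base. Over an algebraically closed field $k$ of characteristic $p$, $\Gmh$ is a formal group of height $1$, so the classical computation of its endomorphism ring (Lubin–Tate theory; equivalently, its Dieudonn\'e module is $\Zp$ with $F$ and $V$ acting as scalars) gives $\mathrm{End}(\Gmh/k)\cong\Zp$ and $\aut(\Gmh/k)\cong\Zpx$; since $\Gmh$ and the operations $[a]$ are defined over the prime field, Galois descent gives the same over $\Fp$. For an arbitrary connected $\Fp$-algebra $R$, write a homomorphism as $h(u)=\sum_{i\ge0}c_i(u-1)^i$ with $c_0=1$; multiplicativity gives $h(u^p)=h(u)^p$, and combined with the characteristic-$p$ identity $u^p-1=(u-1)^p$ this forces $c_i^p=c_i$ for every $i$. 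Since the ideals $(c_i-a)$, $a\in\Fp$, are pairwise comaximal with product $0$, each $c_i$ is a locally constant $\Fp$-valued function on $\spec R$, hence lies in $\Fp$ by connectedness; so $h$ is defined over $\Fp$ and equals some $[a]$, $a\in\Zp$, by the field case. For a connected $\Zp$-algebra one reduces modulo $p^n$ and passes to the limit (using $p$-adic separatedness in the formal setting), or applies Mahler's criterion over $\Zp$ directly. Globalizing over a general base by decomposing into connected components then identifies $\aut(\Gmh)$ with the constant pro-group scheme $\underline{\Zpx}=\lim_n\underline{\Zpx/(1+p^n\Zp)}$.

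I expect the main obstacle to be exactly this surjectivity statement away from perfect fields: the clean Dieudonn\'e/Lubin–Tate description does not literally apply over non-reduced or arithmetic bases, so one must instead run the Frobenius-fixed-coefficient argument above and combine it carefully with a quasi-compactness and connected-component reduction, together with $p$-adic completeness in order to descend from the residue field in the $\Zp$-algebra case.
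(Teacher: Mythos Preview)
The paper does not supply a proof of this lemma; it is stated as a well-known fact and used immediately. So your write-up already goes well beyond what the paper offers, and there is no argument in the paper to compare against.

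Your treatment of $\Fp$-algebras is correct and rather nice: from multiplicativity you get $h(u^p)=h(u)^p$, and combined with $u^p-1=(u-1)^p$ in characteristic $p$ this forces $c_i^p=c_i$ for every $i$; on a connected base each coefficient therefore lies in $\Fp$, so $h$ descends to $\Fp$, and the Lubin--Tate identification $\mathrm{End}(\Gmh/\overline{\Fp})\cong\Zp$ finishes the job.

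The genuine gap is the passage to $\Zp$-algebras. You correctly flag this as the delicate point, but the specific route you propose---``reduce modulo $p^n$ and pass to the limit''---does not work as written: reducing mod $p^n$ lands you in a $\Z/p^n$-algebra, and for $n>1$ there is no Frobenius identity $c_i^p=c_i$ available, so you have nothing to feed into the limit. Two repairs that do work:
\begin{itemize}
\item For the base $\Zp$ itself (which is all the paper needs for the next displayed equation), use that $\Zp$ is $p$-torsion-free: the composite $\mathrm{End}(\Gmh/\Zp)\hookrightarrow\mathrm{End}(\Gmh/\Qp)\cong\Qp$, the last isomorphism coming from the logarithm $\Gmh\cong\Gah$ over $\Qp$, is $f\mapsto f'(0)$ and is injective; since $[a]$ realises every value $a\in\Zp$, one gets $\mathrm{End}(\Gmh/\Zp)=\Zp$ and hence $\aut(\Gmh/\Zp)=\Zpx$.
\item For the functor-of-points statement over $\spf\Zp$, your allusion to Mahler is the right idea and should be developed rather than left as a throwaway: the $p$-completed Hopf algebra $(KU_0KU)^\wedge_p$ is identified with $\mathrm{Cont}(\Zpx,\Zp)$ via Mahler's expansion in the basis $\binom{w}{n}$, and that identification is exactly the assertion $\aut(\Gmh)^\wedge_p\simeq\underline{\Zpx}$ as pro-group schemes.
\end{itemize}
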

	As a result, the sheaf cohomology $H^1(B\aut (\Gmh),\bfo{k})$ reduces to group cohomology when completed at a prime $p$:
	\begin{equation}\label{eqn:e_inv}
		H^1(B\aut (\Gmh)^\wedge_{p},\bfo{k})\simeq H^1\left(B\Zpx,\bfo{k}\right)\simeq H^1_c\left(\Zpx;\left(\Kp\right)_{2k}\right),
	\end{equation}
	where $\Kp$ is the $p$-completion of the complex $K$-theory and $a\in \Zpx$ acts on $\left(\Kp\right)_{2k}$ by multiplication by $a^k$.
	
	\subsection{The homotopy fixed point spectral sequence}\label{Subsec:HFPSS}
	Let $G$ be a finite group. Recall that the group cohomology of $G$ is the derived functor of $G$-fixed points. If $G$ acts on a spectrum $E$, then there is a spectral sequence to compute homotopy groups of $E^{hG}$, the \textbf{homotopy fixed point spectrum} of $E$ under the $G$-action. The $E_2$-page of this spectral sequence is given by the group cohomology of $G$ with coefficients in $\pi_*(E)$.
	 
	\begin{defn} Let $G_+^{\wedge\bullet}\wedge E$ be the group action cosimpicial spectrum. The homotopy fixed points of this action is defined to be the totalization of this cosimplicial spectrum:
		\begin{equation*}
		E^{hG}=\map(\Sigma^\infty EG_+,E)^G\simeq \left(\tot\left[\map(G_+^\bullet, E)\right]\right)^G.
		\end{equation*}
	\end{defn}
	\noindent The Bousfield-Kan spectral sequence associated to this cosimpicial spectrum is called the \textbf{homotopy fixed point spectral sequence} (HFPSS), whose $E_2$-page is identified with
	\begin{equation}\label{eqn:HFPSS}
	E_2^{s,t}=H^s(G;\pi_t(E))\Longrightarrow \pi_{t-s}(E^{hG}).
	\end{equation}
	
	In \eqref{eqn:e_inv}, we showed that the $p$-adic $e$-invariant is in $H^1\left(\Zpx;\left(\Kp\right)_{2k}\right)$, where $\Zpx$ acts on the $p$-adic $K$-theory spectrum by the Adams operations. In \cite{fixedpt}, Devinatz and Hopkins defined $E^{hG}$ for \emph{pro-finite} groups and showed that the $E_2$-page of the associated HFPSS consists of \emph{continuous} group cohomology of $G$. Moreover, they proved  	
	\begin{thm}\label{thm:S_k1}
		Let $\Zpx$ acts on the $p$-adic $K$-theory spectrum by Adams operation. Then the homotopy fixed points $\left(\Kp\right)^{h\Zpx}$ is equivalent to $S^0_{K(1)}$, the \textbf{$K(1)$-local sphere}. Here, $S^0_{K(1)}$ is the Bousfield localization of the sphere spectrum $S^0$ at $K(1)$, the Morava $K$-theory of height $1$ and prime $p$.
	\end{thm}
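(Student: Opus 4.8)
The plan is to identify $\left(\Kp\right)^{h\Zpx}$ with $S^0_{K(1)}$ by exhibiting a map and checking it is an equivalence after $K(1)$-localization, then showing the source is already $K(1)$-local. First I would recall that $\Kp$ is a $K(1)$-local $\einf$-ring with an action of the Morava stabilizer group at height $1$, which for $p$ odd is $\Zpx$ acting through the Adams operations (and for $p=2$ one has $\Z_2^\times\cong \Z/2\times\Z_2$, with the $\Z/2$ accounting for complex conjugation on $KU$; the argument is the same with this bookkeeping). The Devinatz--Hopkins construction of continuous homotopy fixed points for profinite groups, together with the fact that $\Zpx$ acts on $\Kp$ with $\left(\Kp\right)^{h\Zpx}$ computed by a conditionally convergent HFPSS with $E_2$-page $H^s_c(\Zpx;\pi_t(\Kp))$, gives us the object on the left-hand side as a $K(1)$-local spectrum. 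The heart of the matter is then the statement that this homotopy fixed point spectrum \emph{is} the $K(1)$-local sphere, i.e. that the unit map $S^0\to \left(\Kp\right)^{h\Zpx}$ is a $K(1)$-localization.

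The key steps, in order: (i) Observe that $\Kp$ is a faithful $K(1)$-local $\einf$-$S^0_{K(1)}$-algebra, and that the $\Zpx$-action refines to an action by $S^0_{K(1)}$-algebra maps; this is the height $1$ instance of the Goerss--Hopkins--Miller/Devinatz--Hopkins theorem that the Lubin--Tate spectrum $E_n$ carries an action of the Morava stabilizer group $\mathbb{G}_n$ by $\einf$-ring maps. (ii) Invoke the Devinatz--Hopkins result that for the $K(n)$-local sphere, the map $L_{K(n)}S^0\to E_n^{h\mathbb{G}_n}$ is an equivalence; specialized to $n=1$ this reads $S^0_{K(1)}\xrightarrow{\ \sim\ }\left(\Kp\right)^{h\Zpx}$. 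Alternatively, and more self-containedly at height $1$, one can argue directly: the HFPSS for the $\Zpx$-action on $\Kp$ has $E_2^{s,t}=H^s_c(\Zpx;\pi_t\Kp)$, which by \eqref{eqn:e_inv} and the cohomological dimension of $\Zpx$ (namely $1$ for $p$ odd, $2$ for $p=2$) is concentrated in $s=0,1$ (resp. $s\le 2$), and one checks this matches the known homotopy of $S^0_{K(1)}$ as computed from the $J$-homomorphism / image-of-$J$ computations already cited; convergence is automatic since the spectral sequence has a horizontal vanishing line. (iii) Conclude that $\left(\Kp\right)^{h\Zpx}$ is $K(1)$-local (being a homotopy limit of $K(1)$-local spectra) and receives an equivalence from $S^0_{K(1)}$.

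The main obstacle is point (ii): making precise that the comparison map $S^0_{K(1)}\to \left(\Kp\right)^{h\Zpx}$ is an equivalence rather than merely having abstractly isomorphic homotopy groups. The cleanest route is to cite Devinatz--Hopkins \cite{fixedpt} directly, where this is proved for all heights $n$ and primes $p$ as part of setting up continuous homotopy fixed points; the height $1$ case is also classical, going back to the identification of the $K(1)$-local sphere via the fiber sequence $S^0_{K(1)}\to \Kp\xrightarrow{\psi^g-1}\Kp$ for a topological generator $g$ of $\Zpx/(\text{torsion})$ (with an extra fixed-point step for the torsion when $p=2$ or for the $\Z/(p-1)$ when $p$ is odd). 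I would present the fiber sequence formulation as the concrete model: $\left(\Kp\right)^{h\Zpx}\simeq \left(\left(\Kp\right)^{h(1+p^v\Zp)}\right)^{h\zx{p^v}}$, the inner fixed points being computed by the $\psi^g-1$ fiber sequence and the outer by a finite-group HFPSS that degenerates for degree reasons. This also foreshadows the notation $S^0_{KU/p}(p^v)=\left(\Kp\right)^{h(1+p^v\Zp)}$ introduced in the construction of $J(N)$.
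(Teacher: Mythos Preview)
The paper does not give its own proof of this theorem: it is stated as a result of Devinatz--Hopkins, with the sentence immediately preceding the theorem attributing it to \cite{fixedpt}. Your proposal likewise identifies \cite{fixedpt} as the source of the equivalence $S^0_{K(1)}\simeq(\Kp)^{h\Zpx}$, so on the main point you and the paper agree.

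Your additional ``self-contained at height $1$'' sketch is largely fine, but one detail is off: for $p=2$ the group $\Z_2^\times\cong\{\pm1\}\times(1+4\Z_2)$ does \emph{not} have cohomological dimension $2$; the $\{\pm1\}$ factor gives nonvanishing $H^s_c(\Z_2^\times;(KU^\wedge_2)_t)\cong\Z/2$ for all $s\geq 2$ and $t$ even, exactly as the paper records just after \eqref{HSSS}. So the HFPSS is not concentrated in $s\leq 2$ at $E_2$, and there is no horizontal vanishing line at $E_2$; one needs the $d_3$-differentials (or equivalently the intermediate identification $(KU^\wedge_2)^{h\{\pm1\}}\simeq KO^\wedge_2$ followed by the fiber sequence over $1+4\Z_2$, as the paper does) to get convergence and the correct answer. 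Your fiber-sequence formulation via $(\Kp)^{h\Zpx}\simeq((\Kp)^{h(1+p^v\Zp)})^{h\zx{p^v}}$ is the right fix and matches the paper's two-step approach at $p=2$.
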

	In this paper, we will study finite Galois extensions of $S^0_{K(1)}$ in the sense of \cite{galext}. 
	\begin{defn}\label{defn:s_k1_pv}
		Define $S^0_{K(1)}(p^v)$ to be the homotopy fixed point spectrum $\left(\Kp\right)^{h(1+p^v\Zp)}$ under the Adams operations. This notation was used in \cite[Definition 5.10]{Lawson_Naumann_comm}.
	\end{defn}
	The spectrum $S^0_{K(1)}(p^v)$ is a $\zx{p^v}$-Galois extension of $S_{K(1)}^0$. This means that there is a Galois correspondence between open subgroups of $\Zpx$ and finite Galois extensions of $S^0_{K(1)}$. We consider the following family of open subgroups of $\Zpx$ nested in a descending chain for $p>2$:
	\begin{equation*}
	\Zpx\supsetneq 1+p\Zp\supsetneq 1+p^2\Zp\supsetneq 1+p^3\Zp\supsetneq\cdots,
	\end{equation*} 
	and for $p=2$:
	\begin{equation*}
	\Z_2^\x=1+2\Zp\supsetneq 1+2^2\Zp\supsetneq 1+2^3\Zp\supsetneq\cdots.
	\end{equation*} 
	
	Now we are going to compute $\pi_*\left(S^0_{K(1)}(p^v)\right)$ using the HFPSS, whose $E_2$-page is
	\begin{equation}\label{HFPSS}
	E_2^{s,t}=H_c^s\left(1+p^v\Zp;\left(\Kp\right)_{t}\right)\Longrightarrow \pi_{t-s}\left(S^0_{K(1)}(p^v)\right).
	\end{equation}
	\noindent One reference for this computation (and also the HFPSS at general height $n$) is \cite{mini-course}. There are two cases in this computation:
	
	\noindent\textbf{Case I:} $p>2$ or $p=2$ and $v\ge2$. In this case, $\Zpx$ and $1+4\Z_2$ are pro-cyclic. Let $g$ be a topological generator in $\Zpx$ for $p>2$ and in $1+4\Z_2$ for $p=2$. Then for $p>2$, $1+p^v\Zp=\left\langle g^{(p-1)p^{v-1}}\right\rangle$ and for $p=2$, $1+2^v\Z_2=\left\langle g^{2^{v-2}}\right\rangle$. Let $n=1$ if $G=\Zpx$ and $n=(p-1)p^{v-1}$ if $G=1+p^v\Zp$ for $p>2$, and $n=2^{v-2}$ if $G=1+2^v\Z_2$. The minimal continuous projective resolution for $\Zp$ in $\Zp\llb G\rrb$ is 
	\begin{equation}\label{eqn:c_res}
	\begin{tikzcd}
	0\rar&\Zp\llb G\rrb\rar["1-g^{n}"]&\Zp\llb G\rrb\rar["g^{n}\mapsto 1"]&\Zp\rar&0.
	\end{tikzcd}
	\end{equation}
	Since the length of the resolution is $1$, the HFPSS collapses on $E_2$-page. The $p$-adic Adams operations on $\Kp$ realize $\left(\Kp\right)_{2t}$ as the $t$-th power representation of $G$. From this we get when $G=\Zpx$ for $p>2$: 
	\begin{align}\label{k1se2}
		H_c^s(\Zpx;\left(\Kp\right)_t)&=\left\{\begin{array}{cl}
		\Zp,&s=0,1\textup{ and }t=0;\\
		\Z/p^{v_p(k)+1},&s=1\textup{ and } t=2(p-1)k;\\
		0,&\textup{otherwise},
		\end{array}\right.\\
		\label{k1spi} \Longrightarrow\pi_{i}\left(S_{K(1)}^0\right)&=\left\{\begin{array}{cl}
		\Zp, & i=0,-1;\\
		\Z/p^{v_p(k)+1}, & i=2(p-1)k-1;\\
		0,&\textup{otherwise.}
		\end{array}\right.
	\end{align}
	When $G=1+p^v\Zp$ ($v>1$ if $p=2$), we have
	\begin{align}
		H_c^s(1+p^v\Zp;\left(\Kp\right)_t)&=\left\{\begin{array}{cl}
		\Zp, &s=0,1\textup{ and }t=0;\\
		\Z/p^{v_p(k)+v}, &s=1\textup{ and } t=2k\neq 0;\\
		0,&\textup{otherwise},
		\end{array}\right.\nonumber\\
		\label{eqn:pi_Sk1_pv}\Longrightarrow\pi_{i}\left(S_{K(1)}^0(p^v)\right)&=\left\{\begin{array}{cl}
		\Zp, & i=0,-1;\\
		\Z/p^{v_p(k)+v}, & i=2k-1\neq -1;\\
		0,&\textup{otherwise.}
		\end{array}\right.
	\end{align}
	\noindent\textbf{Case II:} $p=2$ and $G=\Z_2^\x$. In this case, $\Z_2^\x$ is not pro-cyclic. Rather, we have 
	\begin{equation*}
		\Z_2^\x\simeq \{\pm 1\}\x (1+4\Z_2).
	\end{equation*}		
	Notice $(KU_2^\wedge)^{h\Z/2}\simeq KO_2^\wedge$, where $\Z/2$ acts by complex conjugation on $KU^\wedge_2$. The homotopy groups of $KO_2^\wedge$ are given by:
	\begin{equation}\label{eqn:pi_ko}
		\begin{array}{c|c|c|c|c|c|c|c|c}
		i\mod 8& 0&1&2&3&4&5&6&7\\\hline
		\pi_i(KO_2^\wedge)&\Z_2&\Z/2&\Z/2&0&\Z_2&0&0&0
		\end{array}
	\end{equation}
	A topological generator $g\in 1+4\Z_2$ acts on $\pi_{4l}$ by multiplication by $g^{2l}$, and on $\pi_{8l+1}$ and $\pi_{8l+2}$ by identity, respectively. The $E_2$-page of the HFPSS is 
	\begin{equation}\label{eqn:hfpss_k1_sp2}
		E_2^{s,t}=H_c^s(1+4\Z_2;\pi_t(KO_2^\wedge))=\left\{\begin{array}{cl}
		\Z_2, & s=0,1 \textup{ and } t=0;\\
		\Z/2, & s=0,1\textup{ and } t\equiv 1, 2 \mod 8;\\
		\Z/2^{v_2(k)+3}, & s=1\textup{ and }t=4k\neq 0;\\
		0,& \textup{otherwise.}
		\end{array}\right.
	\end{equation}
	\begin{prop}\label{prop:extn_prob}
		The extension problems of this spectral sequence are trivial.
	\end{prop}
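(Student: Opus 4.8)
The plan is to first show that the spectral sequence degenerates, so that each homotopy group carries at most one extension problem, and then to resolve the finitely many (up to periodicity) problems that actually occur. Since $1+4\Z_2\simeq\Z_2$ is a procyclic pro-$2$ group, its continuous cohomological dimension is $1$ --- the minimal resolution \eqref{eqn:c_res} has length $1$ --- so \eqref{eqn:hfpss_k1_sp2} is concentrated in the columns $s=0,1$, no differential can be nonzero, and $E_\infty=E_2$. Hence for every integer $n$ there is a short exact sequence
\[ 0\longrightarrow E_2^{1,\,n+1}\longrightarrow \pi_n\!\left(S^0_{K(1)}\right)\longrightarrow E_2^{0,\,n}\longrightarrow 0, \]
and the proposition asks that each such sequence be split. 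Reading off \eqref{eqn:hfpss_k1_sp2}, the group $E_2^{0,n}$ is nonzero only for $n=0$ or $n\equiv 1,2\pmod 8$; and when $n\equiv 2\pmod 8$ one has $n+1\equiv 3\pmod 8$ odd, so $E_2^{1,n+1}=0$ and there is nothing to do. Thus the only cases are $n=0$, where $E_2^{0,0}\simeq\Z_2$ is free over $\Z_2$ and the sequence splits automatically (giving $\pi_0\!\left(S^0_{K(1)}\right)\simeq\Z_2\oplus\Z/2$), and $n\equiv 1\pmod 8$, where the sequence reads $0\to\Z/2\to\pi_n\to\Z/2\to 0$ and one must rule out $\pi_n\simeq\Z/4$.

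The seed case $n=1$ is handled with the unit map. The composite $S^0\to S^0_{K(1)}\to KO_2^\wedge$ (via $S^0_{K(1)}\simeq\left(KO_2^\wedge\right)^{h(1+4\Z_2)}$) is the $2$-completed $KO$-Hurewicz map, and on $\pi_1$ it factors the edge homomorphism $\pi_1\!\left(S^0_{K(1)}\right)\to E_2^{0,1}=\pi_1\!\left(KO_2^\wedge\right)\simeq\Z/2$. Since $\eta\in\pi_1(S^0)$ maps to a generator of $\pi_1\!\left(KO_2^\wedge\right)$ and satisfies $2\eta=0$, its image in $\pi_1\!\left(S^0_{K(1)}\right)$ is an order-$2$ element surjecting onto the quotient $\Z/2$, which splits the sequence: $\pi_1\!\left(S^0_{K(1)}\right)\simeq\Z/2\oplus\Z/2$. (For $n=8j+1$ with $j>0$ one could argue identically using Adams's order-$2$ class $\mu_{8j+1}\in\pi_n(S^0)$, which by \cite{j4} has nonzero $KO_2^\wedge$-Hurewicz image; but it is cleaner to reduce all remaining cases to $n=1$.)

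To propagate from $n=1$ to all $n\equiv 1\pmod 8$ --- in particular to negative $n$, where $S^0$ has no homotopy to push forward --- I would use the mod $2$ Moore spectrum. The Adams self-map $v_1^{4}$ of $S^0/2$ at $p=2$ becomes an equivalence $\Sigma^{8}L_{K(1)}(S^0/2)\to L_{K(1)}(S^0/2)$ after $K(1)$-localization, so $\pi_*\!\left(S^0_{K(1)}/2\right)$ is $8$-periodic. Combining this with the universal-coefficient sequence
\[ 0\longrightarrow \pi_n\!\left(S^0_{K(1)}\right)/2 \longrightarrow \pi_n\!\left(S^0_{K(1)}/2\right)\longrightarrow \pi_{n-1}\!\left(S^0_{K(1)}\right)[2]\longrightarrow 0 \]
coming from the cofiber sequence $S^0_{K(1)}\xrightarrow{\,2\,}S^0_{K(1)}\to S^0_{K(1)}/2$: the already-determined groups $\pi_0\!\left(S^0_{K(1)}\right)\simeq\Z_2\oplus\Z/2$ and $\pi_1\!\left(S^0_{K(1)}\right)\simeq\Z/2\oplus\Z/2$ force $\bigl|\pi_1(S^0_{K(1)}/2)\bigr|=8$, hence $\bigl|\pi_{8j+1}(S^0_{K(1)}/2)\bigr|=8$ for every $j$; and since $E_2^{0,8j}=0$ for $j\ne 0$ gives $\pi_{8j}\!\left(S^0_{K(1)}\right)\simeq\Z/2$ with no extension, the sequence above then forces $\bigl|\pi_{8j+1}(S^0_{K(1)})/2\bigr|=4$. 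A group of order $4$ (being an extension of $\Z/2$ by $\Z/2$) whose mod $2$ reduction has order $4$ must be $\Z/2\oplus\Z/2$. Hence every extension splits.

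The main obstacle is exactly the negative-degree part of the last step: the Adams-class/$KO$-Hurewicz argument only reaches $n\ge 1$, so one genuinely needs the $v_1^4$-periodicity of $L_{K(1)}(S^0/2)$ (equivalently, control of $v_1$-periodic homotopy) to transport the splitting downward. An alternative would be to invoke the $K(1)$-local Brown--Comenetz self-duality of $S^0_{K(1)}$, which relates $\pi_n$ to $\pi_{-2-n}$ up to the exotic element of the $p=2$ Picard group --- and the exotic twist does not change underlying groups --- but since that duality is the subject of the later sections, the Moore-spectrum route is preferable as a self-contained argument here.
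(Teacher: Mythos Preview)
Your proof is correct, and for the seed cases $n=0$ and $n=1$ it coincides with the paper's: the $n=0$ extension splits because the quotient $\Z_2$ is free, and at $n=1$ the image of $\eta\in\pi_1(S^0)$ gives an order-$2$ lift of the generator of $E_2^{0,1}$.

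Where you diverge is in the propagation to the remaining degrees $n\equiv 1\pmod 8$. The paper dispatches these in one line, instructing the reader to ``replace $\eta$ by $\beta^k\cdot\eta\in\pi_{8k+1}(KO)$.'' Taken literally this is not the same argument: $\beta^k\eta$ lives in $\pi_*(KO_2^\wedge)$, not in $\pi_*(S^0)$, so one still has to produce an order-$2$ class in $\pi_{8k+1}\!\left(S^0_{K(1)}\right)$ hitting it. For $k>0$ one can read this as an appeal to Adams's $\mu_{8k+1}\in\pi_{8k+1}(S^0)$ (which you mention), but for $k<0$ there is no sphere class to push forward, and the paper does not say what replaces it. Your route via the $v_1^{4}$-self-map of $S^0/2$ --- deducing $8$-periodicity of $\pi_*\!\left(S^0_{K(1)}/2\right)$, then a cardinality count through the universal-coefficient sequence --- is a genuinely different device that treats all $j\in\Z$ uniformly and closes exactly this gap. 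The cost is that you invoke the Adams self-map and a short bookkeeping argument; the benefit is that nothing is left to the reader, and in particular the negative degrees are handled without any appeal to duality results proved later in the paper.
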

	\begin{proof}
		We need to solve the extension problems when $t-s=0$ or $t-s\equiv 1 \mod 8$. The following explanation is from Mark Behrens.
		
		The extension when $t-s=0$ is trivial, because there is no non-trivial extension of $\Z/2$ by $\Z_2$. 
		
		When $t-s\equiv 1\mod 8$, we recall that the Hopf element $\eta\in \pi_1\left(S^0\right)$ has order $2$. The element $\eta$ is represented in \eqref{eqn:hfpss_k1_sp2} by the non-zero element of $H^0(1+4\Z_2;\pi_1(KO^{\wedge}_2))=\Z/2$. If the extension at $t-s=1$ were nontrivial, then $\pi_1\left(S^0_{K(1)}\right)\simeq \Z/4$. From the short exact sequence
		\begin{equation*}
		\begin{tikzcd}[cramped, sep=small]
		0\rar&H_c^1(1+4\Z_2;\pi_0(KO^{\wedge}_2))\rar&\pi_1\left(S^0_{K(1)}\right)\rar&H_c^0(1+4\Z_2;\pi_1(KO^{\wedge}_2))\rar&0,
		\end{tikzcd}
		\end{equation*}
		$\eta$ would then have order $4$ in $\pi_1\left(S^0_{K(1)}\right)$. This contradicts the fact that the order of $\eta\in \pi_1(S^0)$ is $2$. 
		
		For the general $t-s=8k+1$ case, replace $\eta$ by $\beta^k\cdot\eta\in \pi_{8k+1}(KO)$ in the argument above, where $\beta\in\pi_8(KO)$ is the Bott element.
	\end{proof}
	In conclusion, we get when $p=2$, 
	\begin{equation}\label{pis0_2}
		\pi_i\left(S^0_{K(1)}\right)=\left\{\begin{array}{cl}
		\Z_2\oplus \Z/2, & i=0;\\
		\Z_2,&i=-1;\\
		\Z/2\oplus \Z/2, & i\equiv 1 \mod 8;\\
		\Z/2, & i\equiv 0, 2 \mod 8 \textup{ and } i\neq 0;\\
		\Z/2^{v_2(k)+3}, & i=4k-1\neq -1;\\
		0,& \textup{otherwise.}
		\end{array}\right.
	\end{equation}
	Alternatively, we can apply the HFPSS on $G=\Z_2^\x$ directly. The $E_2$-page is computed using the \textbf{Hochschild-Serre spectral sequence} (HSSS) whose $E_2$-page is 
	\begin{equation}\label{HSSS}
		E_2^{p,q}=H_c^p(1+4\Z_2;H^q(\Z/2;\left(KU^\wedge_2\right)_t))\Longrightarrow H_c^{p+q}(\Z_2^\x; \left(KU^\wedge_2\right)_t).
	\end{equation}
	This spectral sequence collapses on the $E_2$-page and we have
	\begin{equation*}
		H_c^s(\Z_2^\x; \left(KU^\wedge_2\right)_t)=\left\{\begin{array}{cl}
		\Z_2, & s=0, 1\textup{ and } t=0;\\
		\Z/2^{v_2(k)+3}, & s=1 \textup{ and } t=4k\neq 0;\\
		\Z/2, & s=1 \textup{ and } t=4k+2;\\
		\Z/2,& s\geq 2\textup{ and } t \text{ even};\\
		0,& \textup{otherwise.}
		\end{array}\right.
	\end{equation*}
	\section{Constructions of the Dirichlet $J$-spectra and $K(1)$-local spheres}\label{Sec:Dirichlet_J_con}
	Let $\chi\colon \znx\to\Cx$ be a primitive Dirichlet character of conductor $N$. In this section, we construct $J(N)^{h\chi}$, the Dirichlet $J$-spectrum in three steps:
	\begin{enumerate}
		\item Identify an integral model of the $J$-spectrum, a ring spectrum whose Hurewicz map detects the image of the $J$-homomorphism in $\pi_*(S^0)$.
		\item Define $J(N)$, "the $J$-spectrum with $\mu_N$-level structure" using local structures of the finite group scheme $\mu_N$ and the Hopkins-Miller theorem. The spectrum $J(N)$ is endowed with a natural $\znx$-action by assembling the $\zx{p^v}$-Galois action at each prime.
		\item Construct a Moore spectrum $M(\Z[\chi])$ with a $\znx$-action that lifts the $\znx$-action on $\Z[\chi]$ induced by $\chi$. Here $\Z[\chi]$ is the subalgebra of $\Cbb$ generated by the image of $\chi$. This construction is non-trivial, because the Moore spectrum construction is not functorial. We give an explicit construction of Moore spectra with group actions suggested by Charles Rezk.
	\end{enumerate}
	From these data, we define the Dirichlet $J$-spectrum associated to $\chi$ by
	\begin{equation*}
		J(N)^{h\chi}=\map\left(M(\Z[\chi]),J(N)\right)^{h\znx}.
	\end{equation*}
	This definition leads to a spectral sequence whose $E_2$-page consists of derived $\chi$-eigenspaces of $\pi_*(J(N))$: 
	\begin{equation*}
		E_2^{s,t}\simeq \ext^s_{\Z[\znx]}\left(\Z[\chi],\pi_t(J(N))\right)\Longrightarrow \pi_{t-s}\left(J(N)^{h\chi}\right).
	\end{equation*}
	The actual computation of $J(N)^{h\chi}$ is carried out by studying its local structures. Rationally, the Dirichlet $J$-spectra are contractible unless $\chi$ is trivial. Completed at each prime, the $J(N)^{h\chi}$ splits into a wedge sum of Dirichlet $K(1)$-local spheres. The Dirichlet $K(1)$-local spheres are constructed analogously to the Dirichlet $J$-spectra, but the $p$-adic Moore spectra with a prescribed $\znx$-action induced $\chi$ is constructed by Cooke's obstruction theory in \cite{Cooke_obstruction}. This splitting of $p$-completion of integral Moore spectra uses the uniqueness part of Cooke's obstruction theory. 
	\subsection{An integral model of the $J$-spectrum}
	In the previous section, we have explained the relations between the images of the stable $J$-homomorphisms and the $K(1)$-local spheres:
	\begin{equation*}
		\imag(J_{4k-1})^\wedge_{p}\simeq \pi_{4k-1}\left(S^0_{K(1)}\right), k>0.
	\end{equation*}
	We are now going to define an integral $J$-spectrum by assembling the $K(1)$-local spheres at each prime. As the height $1$ Morava $K$-theory $K(1)$ and mod-$p$ complex $K$-theory $KU/p$ are Bousfield equivalent, we will use $(-)_{KU/p}$ or $L_{KU/p}(-)$ to denote $K(1)$-localization at the prime $p$.
	\begin{thm}[{Adams-Baird,} {\cite[Corollary 4.5, 4.6]{Bousfield_localization}}] \label{thm:Bousfield}
		Let $J=S^0_{KU}$, the Bousfield localization of the sphere spectrum $S^0$ at complex $K$-theory.
		\begin{enumerate}
			\item The $J$-spectrum and the $KU/p$-local spheres are related by the arithmetic fracture square:
			\begin{equation}\label{eqn:J_frac_sq}
			\begin{tikzcd}
			J\rar\dar\arrow[dr, phantom, "\lrcorner", very near start]&\prod_{p}S^0_{KU/p}\dar["L_\Q"]\\
			S^0_\Q\rar["h_\Q"']&\left(\prod_{p}S^0_{KU/p}\right)_\Q
			\end{tikzcd}
			\end{equation}
			Here $h_\Q$ is the rational Hurewicz map and $L_\Q$ is the rationalization map.
 			\item Denote the denominator of $B_{2k}/4k$ by $D_{2k}$. We have:
			\begin{equation}\label{eqn:pi_j}
			\pi_i(J)=\left\{\begin{array}{cl}
			\Z\oplus \Z/2,& i=0;\\
			\Q/\Z, &i=-2;\\
			\Z/D_{|2k|}, &i=4k-1\neq -1;\\
			\Z/2\oplus \Z/2, &i\equiv 1\mod 8;\\
			\Z/2, &i\equiv 0,2\mod 8\textup{ and }i\neq 0;\\
			0, & \textup{otherwise.}
			\end{array}\right.
			\end{equation}
		\end{enumerate}
	\end{thm}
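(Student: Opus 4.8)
The plan is to establish part (1) by presenting $S^0_{KU}$ as an arithmetic fracture square assembled from the rational and the $p$-torsion pieces of the Bousfield class $\langle KU\rangle$, and then to deduce part (2) by running the Mayer--Vietoris sequence of that square against the homotopy groups of the $KU/p$-local spheres computed in \Cref{Subsec:HFPSS}, together with the Clausen--von Staudt congruences (\Cref{thm:von_staudt}). For (1), recall that $KU\wedge\Q$ is Bousfield equivalent to $H\Q$ while the rest of $KU$ is $p$-torsion for varying $p$, so $\langle KU\rangle=\langle H\Q\rangle\vee\langle\bigvee_p KU/p\rangle$. The general arithmetic fracture square for localization at a wedge of a rational and a torsion homology theory then presents $L_{KU}S^0$ as the homotopy pullback of $S^0_\Q\to\bigl(L_{\bigvee_p KU/p}S^0\bigr)_\Q\leftarrow L_{\bigvee_p KU/p}S^0$, with left leg the rational Hurewicz map and right leg rationalization. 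Since $KU/q\wedge L_{KU/p}S^0=0$ for $p\neq q$, the $KU/p$-localizations of $S^0$ are mutually Bousfield-orthogonal, so $L_{\bigvee_p KU/p}S^0\simeq\prod_p L_{KU/p}S^0$; and by the Adams splitting for odd $p$ and $K(1)=KU/2$ at $p=2$ one has $L_{KU/p}S^0\simeq S^0_{K(1)}$. This produces the square \eqref{eqn:J_frac_sq}.

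For (2), I would feed the known homotopy groups into the Mayer--Vietoris long exact sequence of \eqref{eqn:J_frac_sq}. Since $\pi_*S^0_\Q$ is $\Q$ in degree $0$ and zero elsewhere, and since $\pi_i\bigl(\prod_p S^0_{K(1)}\bigr)=\prod_p\pi_i S^0_{K(1)}$ is a finite group for every $i\notin\{-1,0\}$ --- only finitely many primes contribute torsion in a fixed degree, by \eqref{k1spi}, \eqref{eqn:pi_Sk1_pv} and \eqref{pis0_2} --- the rationalization at the lower-right corner vanishes in those degrees and the sequence collapses to $\pi_iJ\simeq\prod_p\pi_i S^0_{K(1)}$. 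Reading off the computations of \Cref{Subsec:HFPSS} gives $\Z/2\oplus\Z/2$ for $i\equiv1\bmod 8$ and $\Z/2$ for $i\equiv0,2\bmod 8$ with $i\neq0$, and in degree $i=4k-1\neq-1$ it gives $\Z/2^{v_2(k)+3}\oplus\bigoplus_{p\ \mathrm{odd},\ (p-1)\mid 2k}\Z/p^{v_p(k)+1}$. By \Cref{thm:von_staudt}, $v_p(B_{2k})=-1$ exactly when $(p-1)\mid 2k$, and the denominator $D_{2k}$ of $B_{2k}/4k$ has the same prime divisors as that of $B_{2k}$; therefore $v_p(D_{2k})=v_p(4k)+1$ at each such $p$, i.e.\ $v_p(k)+1$ for odd $p$ and $v_2(k)+3$ at $p=2$. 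The Chinese Remainder Theorem then identifies the group above with $\Z/D_{|2k|}$, giving the $4k-1$ row of \eqref{eqn:pi_j}.

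The degrees $i\in\{-1,0,-2\}$, where the lower-right corner ceases to be finite, must be dealt with directly. Here $\pi_{-1}\prod_p S^0_{K(1)}=\wh{\Z}$ and $\pi_0\prod_p S^0_{K(1)}=\wh{\Z}\oplus\Z/2$, while $\pi_0S^0_\Q=\Q$ and the rationalization of $\prod_p S^0_{K(1)}$ is $\wh{\Z}\otimes\Q$ in both degrees $-1$ and $0$. One checks that the Mayer--Vietoris map $\Q\oplus(\wh{\Z}\oplus\Z/2)\to\wh{\Z}\otimes\Q$ is surjective with kernel $\Z\oplus\Z/2$ (the $\Z/2$ splitting off freely, since $\Q\cap\wh{\Z}=\Z$ inside $\wh{\Z}\otimes\Q$) and that $\wh{\Z}\hookrightarrow\wh{\Z}\otimes\Q$ is injective with cokernel $\colim_n\Z/n\simeq\Q/\Z$; combined with $\pi_1\bigl(\prod_p S^0_{K(1)}\bigr)\otimes\Q=0$, this yields $\pi_0J=\Z\oplus\Z/2$, $\pi_{-1}J=0$ and $\pi_{-2}J=\Q/\Z$, completing \eqref{eqn:pi_j}.

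The main obstacle, I expect, is twofold. First, one has to pin down that the fracture square genuinely takes the displayed shape --- that $L_{\bigvee_p KU/p}S^0$ is indeed the infinite product $\prod_p S^0_{K(1)}$, and that rationalization commutes with this product outside degrees $-1$ and $0$, so that the finiteness reduction actually applies. Second, the whole point of the computation --- that the answer in degree $4k-1$ is $\Z/D_{|2k|}$ on the nose rather than a group off by a power of $2$ --- rests entirely on matching $p$-adic valuations exactly: it is Clausen--von Staudt together with the $KO$-theoretic input behind \eqref{pis0_2} (the source of the extra factor $4$ at $p=2$, via the HFPSS of \Cref{Subsec:HFPSS}) that makes the two sides coincide.
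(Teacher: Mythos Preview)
Your proposal is correct and follows essentially the same approach as the paper's proof: both establish the fracture square by identifying the rational part of $\langle KU\rangle$ with $\langle H\Q\rangle$ (the paper via the Chern character $KU\Q\simeq HP\Q$, you via the Bousfield class decomposition), then run the Mayer--Vietoris long exact sequence, using finiteness of $\prod_p\pi_i S^0_{KU/p}$ away from $i\in\{-2,-1,0\}$ to collapse it and handling those three degrees by the explicit surjectivity/injectivity of $h_0$ and $h_{-1}$. Your write-up is in fact more explicit than the paper's on the valuation matching $v_p(D_{2k})=v_p(k)+1$ (resp.\ $v_2(k)+3$), which the paper leaves implicit in its appeal to \Cref{thm:von_staudt}; the only step you should tighten is the passage ``mutually Bousfield-orthogonal, so $L_{\bigvee_p KU/p}S^0\simeq\prod_p L_{KU/p}S^0$,'' which is true but not a formal consequence of orthogonality alone---cite Bousfield directly for it.
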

	\begin{cor}
		$J^\wedge_{p}\simeq S^0_{KU/p}$ and $J_{(p)}\simeq S^0_{E(1)}$ is the Bousfield localization of $S^0$ at $E(1)=BP\langle 1\rangle$.
	\end{cor}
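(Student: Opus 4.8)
The plan is to feed the arithmetic fracture square \eqref{eqn:J_frac_sq} through the $p$-completion and $p$-localization functors and identify the resulting corners using the homotopy groups recorded in \eqref{k1spi} and \eqref{pis0_2}.

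\emph{The $p$-complete statement.} I would first note that $(-)^\wedge_p$ is itself a homotopy limit, hence preserves the homotopy pullback \eqref{eqn:J_frac_sq}. The two corners in the bottom row of \eqref{eqn:J_frac_sq} are rational, so their $p$-completions are contractible; and since a product is a homotopy limit and smashing with the finite spectrum $S^0/p^n$ commutes with products, the top right corner $p$-completes to $\prod_q(S^0_{KU/q})^\wedge_p$. For $q\neq p$ the groups $\pi_*(S^0_{KU/q})$ are finitely generated $\Z_q$-modules by \eqref{k1spi}--\eqref{pis0_2}, on which $p$ acts invertibly, so $S^0_{KU/q}/p^n\simeq 0$ and $(S^0_{KU/q})^\wedge_p\simeq 0$; while $S^0_{KU/p}$ is $K(1)$-local, hence $p$-complete. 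So the $p$-completed square reads $J^\wedge_p\to S^0_{KU/p}$ with the other two corners contractible, forcing $J^\wedge_p\simeq S^0_{KU/p}$.

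\emph{The $p$-local statement.} Here $p$-localization does not commute with the infinite product in \eqref{eqn:J_frac_sq}, so I would instead argue with Bousfield classes. Since $L_{KU}$ is smashing, $J_{(p)}=J\wedge M(\Z_{(p)})\simeq L_{KU}(M(\Z_{(p)}))$ is simultaneously $KU$-local and $p$-local. The crucial input is then a locality lemma: \emph{a spectrum $X$ that is both $KU$-local and $p$-local is automatically $KU_{(p)}$-local}. Indeed, for $W$ with $(KU_{(p)})_*W=0$, let $C$ be the cofibre of the unit $S^0\to M(\Z_{(p)})$, so the cofibre of $W\to W_{(p)}$ is $W\wedge C$, a spectrum whose $p$-localization is contractible; by naturality of $Y\mapsto Y\wedge M(\Z_{(p)})$ such a "prime-to-$p$" spectrum maps trivially into the $p$-local $X$, so $[W,X]\simeq[W_{(p)},X]$. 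But $W_{(p)}$ is $KU$-acyclic, since $KU_*(W_{(p)})\simeq KU_*(W)\otimes\Z_{(p)}\simeq(KU_{(p)})_*(W)=0$, hence $[W_{(p)},X]=0$. Applying the lemma to $X=J_{(p)}$ shows $J_{(p)}$ is $KU_{(p)}$-local. Since $S^0\to J$ is a $KU$-equivalence, hence a $KU_{(p)}$-equivalence, and $J\to J_{(p)}$ is a $KU_{(p)}$-equivalence (because $KU_{(p)}$ is already $p$-local), the composite $S^0\to J_{(p)}$ realizes $J_{(p)}$ as $L_{KU_{(p)}}S^0$. Finally, the Adams splitting makes $KU_{(p)}$ Bousfield-equivalent to $E(1)=BP\langle 1\rangle$ (as recalled in the notations footnote), so $L_{KU_{(p)}}=L_1$ and $J_{(p)}\simeq S^0_{E(1)}$.

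I expect the $p$-local statement to be the main obstacle, and within it the two structural inputs that $L_{KU}$ is smashing and that $KU$-local plus $p$-local forces $KU_{(p)}$-locality; granting these, everything else is bookkeeping with \eqref{k1spi}--\eqref{pis0_2} and formal properties of Bousfield localization. As a consistency check for the last step one can also observe that $S^0\to L_{KU}S^0$ is in fact an $E(1)$-equivalence, since every $KU$-acyclic spectrum is $E(1)$-acyclic ($\langle E(1)\rangle=\langle KU_{(p)}\rangle\le\langle KU\rangle$), so that $L_1S^0\simeq L_1J$ — in line with $J_{(p)}\simeq S^0_{E(1)}$ once one knows $J_{(p)}$ is itself $E(1)$-local.
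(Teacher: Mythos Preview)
The paper gives no proof for this corollary—it is stated as an immediate consequence of Bousfield's theorem, and the proof environment that follows the remark addresses the theorem (the fracture square and the computation of $\pi_*(J)$), not this corollary. Your argument is correct and fills in what the paper leaves implicit. The $p$-complete half is the natural reading of the fracture square and is exactly what one would expect: $p$-completion $\holim_n(-)/p^n$ is a limit of exact functors, so it preserves the pullback, annihilates the rational corners, and isolates the $q=p$ factor of the product. For the $p$-local half, your route through Bousfield classes—using that $L_{KU}$ is smashing to see that $J_{(p)}$ stays $KU$-local, then the locality lemma ``$KU$-local and $p$-local $\Rightarrow$ $KU_{(p)}$-local'', then $\langle KU_{(p)}\rangle=\langle E(1)\rangle$—is a clean way to sidestep the failure of $p$-localization to commute with the infinite product in \eqref{eqn:J_frac_sq}; all three inputs are standard (the first from Bousfield, the last from the Adams splitting together with $\langle BP\langle 1\rangle\rangle=\langle v_1^{-1}BP\langle 1\rangle\rangle$). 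The one place to be careful is that the smashing property is genuinely needed: $KU$-local spectra are not a priori closed under the filtered colimit $(-)\wedge M(\Z_{(p)})$, so without it you do not know $J_{(p)}$ is $KU$-local.
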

	\begin{rem}
		$J=S^0_{KU}$ is an $\einf$-ring spectrum since it is a localization of an $\einf$-ring spectrum by \cite[page 155]{EKMM}.
	\end{rem}
	\begin{proof}
		\eqref{eqn:J_frac_sq} is the almost same homotopy pullback diagram for $S^0_{KU}$ as in the proof of \cite[Corollary 4.7]{Bousfield_localization}, except for the lower left corner -- the rationalization of $S^0_{KU}$ is a priori $S^0_{KU\Q}$, where $KU\Q=KU\wedge M\Q$ is the rational $K$-spectrum. Now it remains to show $KU\Q$ and $H\Q$ are Bousfield equivalent. This follows from the facts that $KU\Q$ and the periodic $HP\Q=\bigvee_i\Sigma^{2i}H\Q$ are equivalent cohomology theories via the Chern character map and that $HP\Q$ is Bousfield equivalent to $H\Q$.
		
		The computation of $\pi_*(J)$ is the integral version of that of the $\pi_*\left(S^0_{E(1)}\right)$ in \cite[Theorem 6, Lecture 35]{CHT}. The arithmetic fracture square \eqref{eqn:J_frac_sq} induces a long exact sequence of homotopy groups:
		\begin{equation*}			
		\cdots\to\pi_i(J)\longrightarrow \pi_i\left(S^0_\Q\right)\oplus\prod_{p}\pi_i\left(S^0_{KU/p}\right)\longrightarrow \left(\prod_{p}\pi_i\left(S^0_{KU/p}\right)\right)\otimes\Q\longrightarrow\pi_{i-1}(J)\to\cdots			
		\end{equation*}
		Notice that $\left(\prod_{p}\pi_i\left(S^0_{KU/p}\right)\right)\otimes\Q=0$ unless $i=0$ or $-1$ and $\pi_i\left(S^0_\Q\right)=0$ unless $i=0$, we have $\pi_i(J)\simeq \prod_{p}\pi_i\left(S^0_{KU/p}\right)$ unless $i\in\{-2,-1,0\}$. In those three cases, there is an exact sequence: 
		\begin{equation*}
		0\to\pi_0(J)\to\Q\oplus\prod_{p}\Zp\oplus \Z/2\xrightarrow{h_0} \prod_{p}\Qp\to \pi_{-1}(J)\to \prod_{p}\Zp\xrightarrow{h_{-1}}\prod_{p}\Qp\to \pi_{-2}(J)\to 0.
		\end{equation*}
		As $h_0$ is surjective and $h_{-1}$ is injective, we have 
		\begin{equation*}
		\pi_0(J)\simeq \Z\oplus \Z/2, \quad \pi_{-1}(J)=0,\quad \pi_{-2}(J)\simeq \Q/\Z.
		\end{equation*}
		For $i\neq 0,-1,-2$, we recover $\pi_i(J)$ from \Cref{Subsec:HFPSS} and \Cref{thm:von_staudt}.
	\end{proof}
	\begin{rem}
		We call $S^0_{KU}$ the $J$-spectrum because the Hurewicz map (also the $KU$-localization map) $S^0\longrightarrow S^0_{KU}$ detects the image of $J_{4k-1}$. But $\pi_k(J)$ is not the same as the image of the stable $J$-homomorphism in general. The spectrum $J$ is non-connective and has an extra $\Z/2$-summand in $\pi_{0}(J)$ and $\pi_{8k+1}(J)$ when $k>0$. For details, see \cite{j4}. 
	\end{rem}
	
	\subsection{$J$-spectra with level structures}\label{Subsec:jn}
	We will now add level structures to the $J$-spectrum. Let $\mu_N$ be the $N$-torsion sub-group scheme of $\Gmh$. Define $\M_{mult}(N)$ to be the moduli stack of globally height $1$ formal groups with $\mu_N$-level structures.  More precisely, let $\M_{mult}$ be the stack associated to the Hopf algebroid $(KU_0,KU_0)$. By \Cref{cor:BAutGm} and descent theory for formal groups, $\M_{mult}$ classifies formal groups that are height $1$ at all finite primes of the coefficient ring. Let $\M_{fin~gp}$ be the moduli stack of finite flat group schemes. Then $\M_{mult}(N)$ is the 2-categorical pullback:
	\begin{equation*}
		\begin{tikzcd}
			\M_{mult}(N)\ar[dr,phantom,"\lrcorner",very near start]\rar\dar&*\dar["\mu_N"]\\
			\M_{mult}\rar["{\Gh\mapsto \Gh{[N]}}"']&\M_{fin~gp},
		\end{tikzcd}
	\end{equation*}
	Here $*$ means the stack representing the constant of sheaf of groupoids with one object and the identity morphism. By the definition of 2-categorical pullbacks of stacks, objects in the  $R$-points of $\M_{mult}(N)$ are given by:
	\begin{equation*}
	\M_{mult}(N)(R)=\left\{\left(\Gh,\eta\colon \mu_N\simto\Gh[N]\right)\left\mid \begin{array}{cc}
	\text{$\Gh$ is a formal group over $R$}\\ \text{that has height $1$ at all primes}
	\end{array}\right.\right\}.
	\end{equation*}
	The local structures of $\M_{mult}(N)$ are determined by the local behaviors of $\mu_N$.	
	\begin{lem} 
		The multiplicative formal group $\Gmh$ has no non-trivial finite subgroup over $\Q$. Over $\Zp$, finite subgroups of $\Gmh$ are of the form $\mu_{p^v}$ for some $v\ge 0$. As a result, $(\mu_N)_\Q\simeq 0$ for all $N$ and $\left(\mu_N\right)^\wedge_{p}\simeq \mu_{p^{v}}$, where $v=v_p(N)$.
	\end{lem}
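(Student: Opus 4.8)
The plan is to reduce the statement to the $n$-torsion subschemes $\Gmh[n]$ together with an elementary power-series computation. Over any ring $R$, write $\Gmh=\spf R\llb t\rrb$ with the group law $F(t_1,t_2)=(1+t_1)(1+t_2)-1$; then multiplication by $n$ is the power series $[n](t)=(1+t)^n-1$, and its kernel is $\Gmh[n]=\spec R[t]/\big((1+t)^n-1\big)\simeq\spec R[x]/(x^n-1)=:\mu_n$, the usual $n$-th roots of unity. The key point is that the linear coefficient of $[n](t)$ is $n$; so whenever $n\in R^\times$ the series $[n](t)$ has a unit linear term, is invertible under composition, and hence $[n]$ is an automorphism of $\Gmh$ with $\Gmh[n]=0$.

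Over $\Q$ every $n\ge1$ is a unit, so $\Gmh[n]=0$ for all $n$; since a finite flat commutative subgroup scheme $H\subseteq\Gmh$ is annihilated by its order (a theorem of Deligne), $H\subseteq\Gmh[|H|]=0$, which proves the first claim. Over $\Zp$ the same observation gives $\Gmh[m]=0$ whenever $p\nmid m$; writing $a=v_p(|H|)$ and using that the prime-to-$p$ part of $|H|$ acts invertibly, $\Gmh[|H|]=\Gmh[p^a]=\mu_{p^a}$, so $H\subseteq\mu_{p^a}$ and in particular $|H|$ is a power of $p$. It then remains to classify the finite flat subgroup schemes of $\mu_{p^a}$ over $\Zp$, and I would do this by Cartier duality: $\mu_{p^a}^\vee\simeq\underline{\Z/p^a}$, Cartier duality is an exact anti-equivalence carrying a subgroup scheme $H\hookrightarrow\mu_{p^a}$ to a quotient group scheme $\underline{\Z/p^a}\twoheadrightarrow H^\vee$, and over the connected base $\spec\Zp$ a finite flat subgroup scheme of the constant group scheme $\underline{\Z/p^a}$ is again constant, say $\underline{K}$ for a subgroup $K\le\Z/p^a$; dualizing $\underline{\Z/p^a}\twoheadrightarrow\underline{(\Z/p^a)/K}\simeq\underline{\Z/p^b}$ back then gives $H\simeq(\underline{\Z/p^b})^\vee\simeq\mu_{p^b}$. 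The stated consequences follow at once: $(\mu_N)_\Q=\Gmh[N]_\Q=0$ by the case $R=\Q$, while over $\Zp$ one writes $N=p^vN'$ with $p\nmid N'$, notes that $[N']$ is an automorphism of $(\Gmh)^\wedge_p$, and gets $(\mu_N)^\wedge_p=\Gmh[N]^\wedge_p=\Gmh[p^v]^\wedge_p=\mu_{p^v}$ with $v=v_p(N)$.

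The only non-formal step, and the one I expect to be the main obstacle, is the classification of finite flat subgroup schemes of $\mu_{p^a}$ over $\Zp$ --- in effect, ruling out ``exotic'' subgroups not of the form $\mu_{p^b}$. Through the Cartier-duality reduction this amounts to the claim that a finite flat subgroup scheme of a constant group scheme over the connected base $\spec\Zp$ is constant; to justify it one checks that such a subgroup scheme is finite \'etale over $\spec\Zp$ and a closed subscheme of $\coprod_{g}\spec\Zp$, hence a union of components, and that the subgroup condition then forces the indexing set to be an honest subgroup of $\Z/p^a$. Everything else in the argument is formal bookkeeping with the power series $(1+t)^n-1$.
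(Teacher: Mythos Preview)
Your proof is correct and follows essentially the same idea as the paper's, though you are far more explicit. The paper's entire proof is one line: ``This follows from the facts that $\mathrm{End}_{\Q}(\Gmh)\simeq \Q$ and $\mathrm{End}_{\Zp}(\Gmh)\simeq\Zp$.'' In other words, the paper simply observes that $[n]\in\mathrm{End}_R(\Gmh)$ is invertible iff $n\in R^\times$, which is exactly the content of your power-series computation of $[n](t)=(1+t)^n-1$. Your Cartier-duality argument classifying all finite flat subgroups of $\mu_{p^a}$ over $\Zp$ goes beyond what the paper spells out --- the paper leaves that step implicit in the endomorphism-ring assertion, whereas you actually justify why no exotic subgroups arise. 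For the application in the paper (namely the identification of $(\mu_N)_\Q$ and $(\mu_N)^\wedge_p$) only the torsion-subscheme computation is needed, so your extra care on the full classification is a bonus rather than a different route.
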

	\begin{proof}
		This follows from the facts that the endomorphism rings of $\Gmh$ over $\Q$ and over $\Zp$ are $\mathrm{End}_{\Q}(\Gmh)\simeq \Q$ and $\mathrm{End}_{\Zp}(\Gmh)\simeq\Zp$, respectively.
	\end{proof}
	\begin{prop}\label{Prop:M_mult_N_structure}
		 Rationally, we have $(\M_{mult}(N))_\Q\simeq (\M_{mult})_\Q$. Fix a prime $p$ and let $v=v_p(N)$, we have
		 \begin{equation*}
		 	\M_{mult}(N)^\wedge_{p}\simeq\M_{mult}(p^v)^\wedge_{p}\simeq B(1+p^v\Zp).
		 \end{equation*}
	\end{prop}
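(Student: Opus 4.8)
The plan is to prove \Cref{Prop:M_mult_N_structure} by reducing the global statement to local ones via an arithmetic fracture square for the moduli stack, and then analyzing each local piece using the preceding lemma on finite subgroups of $\Gmh$.

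First I would establish the rational statement $(\M_{mult}(N))_\Q\simeq(\M_{mult})_\Q$. By the preceding lemma, $(\mu_N)_\Q\simeq 0$, so over any $\Q$-algebra $R$ there is a unique map $\mu_N\to\Gh[N]$ (the zero map), and this is an isomorphism precisely when $\Gh[N]=0$, which holds automatically since $\Gh$ has height $1$ at all primes but we are working rationally where there are no primes to obstruct — more precisely, a height-$1$ formal group over a $\Q$-algebra has $\Gh[N]\simeq 0$ because multiplication by $N$ is an isomorphism on $\Gh$ (it is an isomorphism on the Lie algebra, hence on the formal group over a $\Q$-algebra). Thus the forgetful map $(\M_{mult}(N))_\Q\to(\M_{mult})_\Q$ is an equivalence of groupoids on every $R$-point, hence an equivalence of stacks.

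Next I would treat the $p$-complete statement. Working over $\Zp$-algebras, the preceding lemma gives $(\mu_N)^\wedge_p\simeq\mu_{p^v}$ with $v=v_p(N)$, so a $\mu_N$-level structure on a globally height-$1$ formal group $\Gh$ is the same as a $\mu_{p^v}$-level structure, giving $\M_{mult}(N)^\wedge_p\simeq\M_{mult}(p^v)^\wedge_p$. For the identification with $B(1+p^v\Zp)$, I would argue as follows: over $\Zp$, every globally height-$1$ formal group is, locally on the base, isomorphic to $\Gmh$ (height-$1$ formal groups over a $p$-complete ring are forms of $\Gmh$; in fact over $\Zp$ itself $\Gmh$ is the unique such up to isomorphism by the Lubin-Tate/Hopkins-Miller deformation theory, since the height-$1$ Lubin-Tate ring is $\Zp$). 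A $\mu_{p^v}$-level structure on $\Gmh$ is the canonical inclusion $\mu_{p^v}\hookrightarrow\Gmh[p^v]=\mu_{p^v}$, which is rigid, so the pair $(\Gmh,\mathrm{can})$ has automorphism group equal to the subgroup of $\aut(\Gmh)\simeq\underline{\Zpx}$ (by the lemma in the excerpt identifying $\aut(\Gmh)$ over $\Zp$ with the constant pro-group scheme $\underline{\Zpx}$) that fixes the level structure. Since an automorphism $a\in\Zpx$ acts on $\mu_{p^v}=\Gmh[p^v]$ by multiplication by $a\bmod p^v$, it fixes the canonical level structure iff $a\equiv1\bmod p^v$, i.e. iff $a\in1+p^v\Zp$. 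Hence $\M_{mult}(p^v)^\wedge_p$ is the classifying stack of the pro-group $1+p^v\Zp$, i.e. $\spec\Zp/\!/(1+p^v\Zp)\simeq B(1+p^v\Zp)$.

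The main obstacle I expect is the rigor of the deformation-theoretic input: precisely justifying that $\M_{mult}^\wedge_p\simeq B\Zpx$ (equivalently that every globally height-$1$ formal group over a $p$-complete ring is fppf-locally isomorphic to $\Gmh$ with automorphisms exactly $\underline{\Zpx}$), so that adding the rigidifying $\mu_{p^v}$-level structure cuts the automorphisms down to $1+p^v\Zp$ without introducing new components or stackiness. This is where one must invoke the Hopkins-Miller/Lubin-Tate theory carefully — the crystalline-ness of the Lubin-Tate ring at height $1$ being $\Zp$ — rather than by hand. Everything else (the fracture square assembling the rational and $p$-complete pieces, and the bookkeeping $v=v_p(N)$) is formal given the lemma already proved.
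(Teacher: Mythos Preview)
The paper states this proposition without proof, treating it as a direct consequence of the preceding lemma on the local structure of $\mu_N$ together with the earlier identification $\aut(\Gmh)\simeq\underline{\Zpx}$ over $\Zp$ (and $\M_{mult}\simeq B\aut(\Gmh)$). Your argument correctly supplies exactly the details the paper leaves implicit: rationally $(\mu_N)_\Q=0$ and $\Gh[N]=0$ force the forgetful map to be an equivalence; $p$-adically $(\mu_N)^\wedge_p\simeq\mu_{p^v}$ gives the first equivalence, and the stabilizer of the canonical $\mu_{p^v}$-level structure inside $\Zpx$ is $1+p^v\Zp$, yielding $B(1+p^v\Zp)$. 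This is the intended (unwritten) proof, and your concern about the deformation-theoretic input is already handled by the paper's prior material on $B\aut(\Gmh)$ and the lemma identifying $\aut(\Gmh)$ over $\Zp$.
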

	\begin{cor}
		For any odd number $N$, we have an equivalence of stacks $\M_{mult}(N)\simeq \M_{mult}(2N)$.
	\end{cor}
	\begin{proof}
		This follows from the fact $\zx{2N}$ is canonically isomorphic $\zx{N}$ if $N$ is odd.
	\end{proof}
	
	\begin{thm}[Hopkins-Miller, Goerss-Hopkins, {\cite[Theorem 2.1]{Rezk_Hopkins-Miller}}, \mbox{\cite{Goerss-Hopkins_moduli_spaces}}] 
		Let $\mathcal{FG}$ denote the category whose objects are pairs $(\kappa,\Gamma)$, where $\Gamma$ is a finite height formal group over a finite field $k$ of characteristic $p$ and whose morphisms are pairs of maps $(i,f)\colon(\kappa_1,\Gamma_1)\to(\kappa_2,\Gamma_2)$, where $i\colon\kappa_1\to \kappa_2$ is a ring homomorphism and $f\colon  \Gamma_1\simto i^*\Gamma_2$ is an isomorphism of formal groups. 
		
		Then there exists a functor $(\kappa,\Gamma)\to E_{\kappa,\Gamma}$ from $\mathcal{FG}^\mathrm{op}$ to the category of $\einf$-ring spectra, such that
		\begin{enumerate}
			\item $E_{\kappa,\Gamma}$ is a commutative ring spectra.
			\item There is a unit in $\pi_2(E_{\kappa,\Gamma})$.
			\item $\pi_\mathrm{odd}E_{\kappa,\Gamma}=0$, which implies $E_{\kappa,\Gamma}$ is complex-oriented.
			\item The formal group associated to $E_{\kappa,\Gamma}$ is the universal deformation of $(\kappa,\Gamma)$.
		\end{enumerate}
	\end{thm}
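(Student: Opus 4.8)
The plan is to invoke the Goerss--Hopkins--Miller obstruction theory, following the account in \cite{Rezk_Hopkins-Miller} and its $\einf$-refinement. The algebraic starting point is Lubin--Tate theory: for each $(\kappa,\Gamma)$ with $\Gamma$ of finite height $n$ one has a complete local ring $E_0 = W(\kappa)\llb u_1,\dots,u_{n-1}\rrb$ carrying the universal deformation of $\Gamma$, and this assignment is contravariantly functorial on $\mathcal{FG}$. Setting $E_* = E_0[u^{\pm 1}]$ with $|u| = -2$, the pair $(E_*,\Gh)$ is Landweber exact, so the Landweber exact functor theorem already produces a homotopy-commutative, complex-oriented, even-periodic ring spectrum $E_{\kappa,\Gamma}$ with $\pi_* E_{\kappa,\Gamma} \cong E_*$. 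The content of the theorem is to rigidify this homotopy ring to an $\einf$-ring and to upgrade the pointwise assignment $(\kappa,\Gamma)\mapsto E_{\kappa,\Gamma}$ to an honest functor into $\einf$-ring spectra.

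For the rigidification I would run the $E$-based (rather than $H\Fp$-based) Goerss--Hopkins obstruction theory. One builds the moduli space of $\einf$-realizations of the commutative algebra $E_*$ in the category of $E_*E$-comodules and filters it by a Postnikov-type tower whose layers are controlled by André--Quillen cohomology groups of the shape $H^{s}_{\mathrm{AQ}}\!\left(E_*;\,\Sigma^{t}E_*\right)$; in the standard indexing the obstructions to existence sit in bidegree $(s+2,s)$ and those to rigidity in $(s+1,s)$, so it suffices to make these groups vanish for $s\ge 1$. This is where the geometry of Lubin--Tate space enters decisively: deformations of a finite-height formal group are unobstructed, i.e.\ $E_0$ is formally smooth over $W(\kappa)$ of the expected dimension, so the relevant algebraic cotangent complex is concentrated in degree $0$ and free, and a descent/base-change comparison identifies the topological André--Quillen cohomology with this algebraic one, which then vanishes in positive cohomological degree.

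I expect this comparison---identifying topological André--Quillen cohomology with the algebraic André--Quillen cohomology of the Lubin--Tate ring and proving the vanishing---to be the main obstacle; it is the technical heart of \cite{Rezk_Hopkins-Miller} and of Goerss--Hopkins. Concretely it requires understanding $E_*E$ as the Hopf algebroid of the Morava stabilizer groupoid, checking that $E_*$ is smooth in the appropriate $E_*E$-comodule sense, and controlling the spectral sequence relating the two flavors of cohomology. The four listed properties then come for free from the construction: even-periodicity gives the unit in $\pi_2$ and complex-orientability, $\pi_{\mathrm{odd}} E_* = 0$ gives (3), and (4) holds by the very choice of $E_0$ as the universal deformation.

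Finally, for functoriality I would argue at the level of mapping spaces. Given two objects, the space $\map_{\einf}\!\left(E_{\kappa_1,\Gamma_1},E_{\kappa_2,\Gamma_2}\right)$ is analyzed by an obstruction-theoretic Bousfield--Kan spectral sequence whose input is again André--Quillen cohomology of the Lubin--Tate ring; the same vanishing shows this mapping space is homotopy discrete with $\pi_0$ naturally identified with the set of $\mathcal{FG}$-morphisms $(\kappa_1,\Gamma_1)\to(\kappa_2,\Gamma_2)$, equivalently maps of Lubin--Tate rings compatible with the universal deformations. Rigidity of all these mapping spaces lets one promote the required coherences and assemble the pointwise-defined $E_{\kappa,\Gamma}$ into a functor from $\mathcal{FG}^{\mathrm{op}}$ to $\einf$-ring spectra, as claimed.
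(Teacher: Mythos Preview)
The paper does not prove this theorem at all: it is stated with a citation to \cite[Theorem 2.1]{Rezk_Hopkins-Miller} and used as a black box to build the sheaf $\otop_{K(1)}$ in \Cref{prop:otop_section}. There is therefore nothing in the paper to compare your proposal against.

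That said, your outline is a faithful sketch of how the cited references establish the result: Lubin--Tate theory supplies the coefficient ring and universal deformation, Landweber exactness gives a homotopy-commutative realization, and the Goerss--Hopkins obstruction machinery (with the key vanishing of Andr\'e--Quillen cohomology coming from formal smoothness of the Lubin--Tate ring) upgrades this to an $\einf$-ring and makes the assignment functorial via the mapping-space analysis. For the purposes of this paper you should simply cite the theorem as the author does rather than attempt to reprove it.
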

	\begin{prop}[{\mbox{\cite[Theorem 2.7]{Goerss_O_top}}. See also \mbox{\cite[Lemma 3.8]{BCM_rmk_k1_alg_k}}}]\label{prop:otop_section}
		There is a sheaf $\otop_{K(1)}$ of $K(1)$-local $\einf$-ring spectra over the stack $\Hh{1}\simeq B\Zpx= \spf \Zp//\Zpx$ such that
		\begin{equation*}
		\Gamma\left(\otop_{K(1)},B\Zpx\right)\simeq S^0_{K(1)},\quad \Gamma\left(\otop_{K(1)},B(1+p^v\Zp)\right)\simeq S^0_{K(1)}(p^v)=\left(\Kp\right)^{h(1+p^v\Zp)}.
		\end{equation*}
	\end{prop}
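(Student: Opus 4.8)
The plan is to produce $\otop_{K(1)}$ as the $K(1)$-local refinement of the Goerss--Hopkins--Miller sheaf on the height $1$ stack, using that, by \Cref{Prop:M_mult_N_structure} together with the identification $\aut(\Gmh)^\wedge_p\simeq\underline{\Zpx}$ of the preceding lemma, $\Hh{1}$ is the quotient $\spf\Zp//\Zpx$ of a single formal affine scheme by the profinite group $\Zpx$. Consequently a sheaf of $K(1)$-local $\einf$-ring spectra on (the small pro-\'etale site of) $\Hh{1}$ is precisely a single $\einf$-ring equipped with a continuous $\Zpx$-action, and the sections over the open substacks $B(1+p^v\Zp)$, which correspond to the open subgroups $1+p^v\Zp\le\Zpx$, are recovered as homotopy fixed points.

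Accordingly, the first step is to feed the pair $(\Fp,\Gmh)$ into the Hopkins--Miller / Goerss--Hopkins theorem \cite[Theorem 2.1]{Rezk_Hopkins-Miller}: the universal deformation of $\Gmh$ over $\Fp$ is $\Gmh$ over $\Zp$, whose associated Lubin--Tate (height $1$ Morava $E$-theory) spectrum is $\Kp$, the $p$-complete complex $K$-theory, and functoriality over $\mathcal{FG}^{\mathrm{op}}$ equips $\Kp$ with an action of $\aut(\Gmh/\Fp)\simeq\Zpx$ through maps of $\einf$-ring spectra; the Galois factor of the stabilizer group is trivial here because $\Gmh$ and all its automorphisms are already defined over the prime field $\Fp$. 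This is the essentially unique $\einf$-lift of the Adams-operation action on $\pi_*\Kp$. The second step is to define, following Devinatz--Hopkins \cite{fixedpt}, the continuous homotopy fixed point functor $H\mapsto(\Kp)^{hH}$ on open subgroups $H\le\Zpx$ and to promote it to a sheaf on the site of $\Hh{1}$, whose covers are generated by the finite Galois covers $BH\to B\Zpx$ for $H$ open normal. This assignment is a sheaf because $\Kp$ is a (pro-)Galois extension of $S^0_{K(1)}$ and the Devinatz--Hopkins homotopy fixed points satisfy Galois descent in the sense of \cite{galext}; its values are $K(1)$-local $\einf$-rings since the entire construction is performed $K(1)$-locally and homotopy fixed points preserve both the $\einf$-structure and $K(1)$-locality. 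The asserted identifications $\Gamma(\otop_{K(1)},B\Zpx)=(\Kp)^{h\Zpx}\simeq S^0_{K(1)}$ and $\Gamma(\otop_{K(1)},B(1+p^v\Zp))=(\Kp)^{h(1+p^v\Zp)}=S^0_{K(1)}(p^v)$ then follow at once from \Cref{thm:S_k1} and \Cref{defn:s_k1_pv}.

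I expect the main obstacle to be the sheaf axiom itself --- upgrading ``an $\einf$-ring with a continuous group action'' to a genuinely functorial, descent-satisfying assignment of $\einf$-ring spectra over the site. At height $1$ this is comparatively mild: one may invoke the Devinatz--Hopkins machinery directly, as it is engineered to output exactly these homotopy fixed point rings with their multiplicative structure and to satisfy descent, and the Goerss--Hopkins obstruction groups are in any case harmless since $\Kp$ is even periodic with Landweber-exact homotopy. A secondary point to verify is the compatibility of the two descriptions of $\Hh{1}$, as the height $1$ stratum among formal groups (equivalently $(\M_{mult})^\wedge_p$) and as $B\Zpx$, which is precisely \Cref{Prop:M_mult_N_structure} together with $\aut(\Gmh)^\wedge_p\simeq\underline{\Zpx}$.
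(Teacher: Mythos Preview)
Your proposal is correct and aligns with the paper's approach. The paper does not actually supply a detailed proof of this proposition: it states the Hopkins--Miller/Goerss--Hopkins theorem immediately beforehand, asserts the proposition, and then in the following remark indicates that the sheaf $\otop_{K(h)}$ on $\Hh{h}$ exists as a consequence of those theorems (with a reference to \cite[Chapter 7]{modfg} for the algebro-geometric input). Your write-up is thus a fleshed-out version of exactly the argument the paper gestures at --- identifying $\Hh{1}\simeq B\Zpx$, invoking Goerss--Hopkins--Miller to get the $\einf$-$\Zpx$-action on $\Kp$, and using Devinatz--Hopkins \cite{fixedpt} for the continuous homotopy fixed points and descent --- and there is nothing to correct.
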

	\begin{rem}[{\mbox{\cite[Theorem 2.7]{Goerss_O_top}}}]
		Let $\Hh{h}$ be the moduli stack of formal groups over $p$-complete local rings with height $h$ reductions modulo the maximal ideal. The Hopkins-Miller theorem and the Goerss-Hopkins theorem imply there is a sheaf of $K(h)$-local $\einf$-ring spectra $\otop_{K(h)}$ over $\Hh{h}$ whose global section is the $K(h)$-local sphere $S^0_{K(h)}$. For the algebro-geometric properties of the stack $\Hh{h}$, see \cite[Chapter 7]{modfg}.
	\end{rem}
	\Cref{cor:jn_structure} implies $\M_{mult}(N)^\wedge_{p}\simeq\M_{mult}(p^v)^\wedge_{p}\to (\M_{mult})^\wedge_{p}$ is a $\zx{p^v}$-torsor for each prime $p$. Thus by \Cref{prop:otop_section} we can define $J(N)$, the \textbf{$J$-spectrum with $\mu_N$-level structure} by setting $J(N)^\wedge_p=\otop_{K(1)}(\M_{mult}(p^v))\simeq S^0_{KU/p}(p^v)$ and $J(N)_\Q=S^0_\Q$ as follows:
	\begin{con}\label{con:jn}
		The spectrum $J(N)$ is the homotopy pullback of the following arithmetic fracture square as in \eqref{eqn:J_frac_sq}: 
		\begin{equation}\label{eqn:jn}
		\begin{tikzcd}
		J(N)\rar\dar\arrow[dr, phantom, "\lrcorner", very near start]&\prod_{p}S^0_{KU/p}\left(p^{v_p(N)}\right)\dar["L_\Q"]\\
		S^0_\Q\rar["h_\Q"']&\left(\prod_{p}S^0_{KU/p}\left(p^{v_p(N)}\right)\right)_\Q
		\end{tikzcd}
		\end{equation}
		Here $h_\Q$ is the rational Hurewicz map and $L_\Q$ is the rationalization map. The map $h_\Q$ exists because the lower right corner in the diagram is a rational ring spectrum.
	\end{con}
 	The spectrum $J(N)$ defined above satisfies the prescribed local properties:
 	\begin{cor}\label{cor:jn_structure} 
 		$J(N)_\Q\simeq S^0_\Q$ for all $N$ and $J(N)^\wedge_{p}\simeq S^0_{K(1)}(p^v)$, where $v=v_p(N)$. Moreover, $J(N)\simeq J(2N)$ for any odd number $N$.
 	\end{cor}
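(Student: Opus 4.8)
The plan is to verify the two claimed properties of $J(N)$ — the rationalization $J(N)_\Q\simeq S^0_\Q$, the $p$-completions $J(N)^\wedge_p\simeq S^0_{K(1)}(p^v)$, and the identification $J(N)\simeq J(2N)$ for odd $N$ — directly from the defining homotopy pullback square \eqref{eqn:jn}. All three are, in effect, built into the construction, so the work is to make that precise using standard facts about arithmetic fracture squares and the local analysis of $\M_{mult}(N)$ already recorded in the excerpt.

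First I would compute $J(N)_\Q$. Rationalization is smashing and commutes with finite products, and it preserves homotopy pullbacks of a square one of whose legs is already a rationalization (indeed, applying $(-)_\Q$ to \eqref{eqn:jn} makes the right vertical map an equivalence). Hence $J(N)_\Q$ is equivalent to the lower-left corner $S^0_\Q$. Next, for the $p$-completion, I would apply $(-)^\wedge_p$ to \eqref{eqn:jn}. The $p$-completion of the rational corners vanishes (a rational spectrum is $p$-complete-acyclic), and $p$-completion of the product $\prod_q S^0_{KU/q}(q^{v_q(N)})$ picks out exactly the $q=p$ factor, since $S^0_{KU/q}(q^{v_q(N)})$ is already $q$-local hence $p$-complete-trivial for $q\ne p$ and is its own $p$-completion for $q=p$ (it is $K(1)$-local at $p$, in particular $p$-complete). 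Therefore the $p$-completed square exhibits $J(N)^\wedge_p$ as the fiber of a map into $S^0_{KU/p}(p^{v_p(N)})$ from the zero spectrum up to the contractible corners, giving $J(N)^\wedge_p\simeq S^0_{KU/p}(p^{v_p(N)}) = S^0_{K(1)}(p^{v_p(N)})$ by \Cref{defn:s_k1_pv}. I should be slightly careful here and instead argue as in the proof of \Cref{thm:Bousfield}: the fracture square is an arithmetic fracture square, so $J(N)$ is $KU$-local with the prescribed rationalization and $p$-completions, which is precisely the assertion $J(N)_\Q\simeq S^0_\Q$ and $J(N)^\wedge_p\simeq S^0_{K(1)}(p^v)$.

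Finally, for $J(N)\simeq J(2N)$ with $N$ odd, I would compare the two defining pullback squares termwise. The rational corners agree on the nose. For the products over primes, the only difference between the $N$ and $2N$ data is the factor at $p=2$: for $J(N)$ it is $S^0_{KU/2}(2^{v_2(N)}) = S^0_{KU/2}(2^0) = S^0_{KU/2}$, and for $J(2N)$ it is $S^0_{KU/2}(2^{v_2(2N)}) = S^0_{KU/2}(2)$. But $S^0_{KU/2}(2) = (KU^\wedge_2)^{h(1+2\Z_2)} = (KU^\wedge_2)^{h\Z_2^\x} = S^0_{KU/2}$ since $\Z_2^\x = 1+2\Z_2$; equivalently this is the statement $\M_{mult}(2)^\wedge_2 \simeq B(1+2\Z_2) = B\Z_2^\x \simeq \M_{mult}(1)^\wedge_2$ from \Cref{Prop:M_mult_N_structure}, reflecting that $\zx{2N}\to\zx{N}$ is an isomorphism for odd $N$. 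At all odd primes $p$ the factors are literally identical. Since the two fracture squares are equivalent corner-by-corner, and the identifications are compatible with the rationalization and Hurewicz maps assembling them, their homotopy pullbacks are equivalent, giving $J(N)\simeq J(2N)$.

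I do not expect a genuine obstacle here; the statement is essentially a bookkeeping consequence of the construction. The one point requiring a little care is the naturality/compatibility of the corner-wise equivalences with the structure maps of the fracture square (so that one genuinely gets an equivalence of pullback diagrams, not merely corner-wise equivalences), and making sure the identification $S^0_{KU/2}(2)\simeq S^0_{KU/2}$ is the one induced by the level-structure forgetful map so that it is compatible with the Galois/$\znx$-actions — this last point matters for later statements about $J(N)$ as a $\znx$-spectrum, and is exactly what \Cref{Prop:M_mult_N_structure} and its corollary supply.
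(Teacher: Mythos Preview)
Your proposal is correct and matches the paper's approach: the paper states this corollary without proof, treating it as immediate from the arithmetic fracture square \eqref{eqn:jn} and the earlier observation that $\zx{2N}\simeq\zx{N}$ for odd $N$ (equivalently $1+2\Z_2=\Z_2^\x$). You have simply spelled out the standard fracture-square manipulations that the paper leaves implicit; your caution about compatibility of the corner-wise equivalences with the structure maps is appropriate but, as you note, not a genuine obstacle.
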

	\begin{prop}\label{prop:jn_znx_action}
		$J(N)$ admits a natural $\znx$-action such that
		\begin{itemize}
			\item $\znx$ acts on $J(N)_\Q$ trivially.
			\item $\znx$ acts on $J(N)^\wedge_{p}\simeq S^0_{K(1)}(p^v)$ by the Galois action of its quotient group $\zx{p^v}$.
		\end{itemize}
	\end{prop}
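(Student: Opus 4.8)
The plan is to promote the arithmetic fracture square \eqref{eqn:jn} to a $\znx$-equivariant diagram of $\einf$-ring spectra and then take its homotopy pullback, which is functorial in the diagram. By the Chinese remainder theorem $\znx\simeq\prod_{p\mid N}\zx{p^{v_p(N)}}$ (with all but finitely many factors trivial), so it suffices to produce commuting actions of the groups $\zx{p^{v_p(N)}}$. The corner carrying the essential structure is the upper right one: by the discussion following \Cref{defn:s_k1_pv}, each $S^0_{KU/p}(p^v)=(\Kp)^{h(1+p^v\Zp)}$ is a $\zx{p^v}$-Galois extension of $S^0_{KU/p}$, so it comes with an action of $\zx{p^v}\simeq\Zpx/(1+p^v\Zp)$ by maps of $\einf$-ring spectra --- namely the residual action coming from the $\Zpx$-action on $\Kp$ by Adams operations, equivalently the deck transformations of the $\zx{p^v}$-torsor of stacks $\M_{mult}(p^v)^\wedge_p\simeq B(1+p^v\Zp)\to B\Zpx$ acting on global sections of $\otop_{K(1)}$. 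I would give $\prod_p S^0_{KU/p}(p^{v_p(N)})$ the product $\znx$-action in which the component $\zx{p^{v_p(N)}}$ acts on the $p$-th factor by this Galois action and trivially on all the other factors. Since $(-)_\Q$ is a functor and the localization maps $X\to X_\Q$ form a natural transformation, the lower right corner $\bigl(\prod_p S^0_{KU/p}(p^{v_p(N)})\bigr)_\Q$ then inherits a $\znx$-action for which the rationalization map $L_\Q$ in \eqref{eqn:jn} is equivariant.

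It remains to handle the bottom edge $h_\Q$. Equip the lower left corner $S^0_\Q$ with the trivial $\znx$-action. Since $S^0_\Q$ is the unit --- hence the initial object --- among rational $\einf$-ring spectra, $h_\Q$ is the essentially unique map of $\einf$-rings out of $S^0_\Q$; consequently, for any action of $\znx$ on its target by maps of $\einf$-rings, $h_\Q$ canonically upgrades to a $\znx$-equivariant arrow with trivial action on the source (formally, the slice $\infty$-category under an initial object is the whole $\infty$-category). With the three corners of the cospan and both of its maps now equivariant, the cospan underlying \eqref{eqn:jn} becomes a functor from $B\znx$ to the $\infty$-category of cospans of $\einf$-ring spectra; composing with the homotopy-pullback functor equips $J(N)$ with a natural $\znx$-action by maps of $\einf$-rings.

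Finally I would verify the two asserted local properties. Both $(-)_\Q$ and $(-)^\wedge_p$ commute with finite homotopy limits, hence with the pullback defining $J(N)$. Rationalizing \eqref{eqn:jn} turns the right vertical map into an equivalence, so the rationalized pullback is computed by the lower left corner $S^0_\Q$; as this corner carries the trivial action and the comparison is natural, $J(N)_\Q\simeq S^0_\Q$ with trivial $\znx$-action. Completing \eqref{eqn:jn} at a prime $p$ annihilates the bottom row, since rational spectra have trivial $p$-completion; thus $J(N)^\wedge_p$ is the $p$-completion of $\prod_q S^0_{KU/q}(q^{v_q(N)})$. The factor at $q=p$ is a homotopy fixed-point spectrum of $\Kp$, hence already $p$-complete, while each factor at $q\neq p$ is $q$-complete and has trivial $p$-completion, so $J(N)^\wedge_p\simeq S^0_{KU/p}(p^{v_p(N)})=S^0_{K(1)}(p^v)$ for $v=v_p(N)$, equivariantly. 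On this summand only the component $\zx{p^{v_p(N)}}$ of $\znx$ acts nontrivially, and it acts by the $\zx{p^v}$-Galois action, which is exactly the description in the statement.

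The step I expect to be the main obstacle is coherence: one must produce an honest functor out of $B\znx$, not merely an action "up to homotopy" on each corner. This is why I would take the Galois actions from the sheaf $\otop_{K(1)}$ (equivalently, from the $\Zpx$-action by Adams operations realized on $\einf$-ring spectra via the Devinatz-Hopkins construction \cite{fixedpt}) rather than splice together homotopies by hand, and why I would deduce the equivariance of $h_\Q$ formally from the universal property of $S^0_\Q$ as the initial rational $\einf$-ring rather than by choosing a contraction; once those inputs are in place, the passage to the homotopy limit is purely formal.
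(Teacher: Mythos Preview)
Your argument is correct and follows the same overall architecture as the paper: equip the upper right corner with the product of the Galois actions, let the lower right corner inherit its action, give $S^0_\Q$ the trivial action, and take the homotopy pullback.

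The one genuine difference is how you establish equivariance of $h_\Q$. The paper argues via Cooke's obstruction theory: since both spectra are rational, equivariance can be checked on homotopy groups; as $\pi_*(S^0_\Q)$ is concentrated in degree $0$, this reduces to verifying that $\znx$ acts trivially on $\pi_0\bigl((\prod_p S^0_{KU/p}(p^{v_p(N)}))_\Q\bigr)$, which the paper reads off from the HFPSS (the Adams operations act trivially on $\pi_0(\Kp)$). Your route is instead purely formal: $S^0_\Q$ is the initial rational $\einf$-ring, so the unit map into any $\znx$-equivariant rational $\einf$-ring is canonically $\znx$-equivariant with trivial source action. Your approach is shorter, avoids invoking Cooke, and directly produces a coherent equivariant diagram rather than equivariance in the homotopy category; the paper's approach, on the other hand, actually computes the action on $\pi_0$ of the rationalized target and makes explicit why the Hurewicz map sees nothing of the Galois action. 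Both are valid, and your coherence discussion is a welcome addition that the paper treats more lightly.
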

	\begin{proof}
		Since the spectrum $S^0_{K(1)}(p^v)$ is a $\zx{p^v}$-Galois extension of $S^0_{K(1)}$, it admits a natural $\zx{p^v}$-action. As a result the product $\prod_{p}S^0_{KU/p}\left(p^{v_p(N)}\right)$ admits a natural $\znx\simeq \prod_{p\mid N}\zx{p^v}$-action. (When $p\nmid N$, $\znx$ acts on $S^0_{KU/p}$ trivially). The spectrum $\left(\prod_{p}S^0_{KU/p}\left(p^{v_p(N)}\right)\right)_\Q$ in the lower right corner of \eqref{eqn:jn} then inherits a $\znx$-action from that on $\prod_{p}S^0_{KU/p}\left(p^{v_p(N)}\right)$.
		
		We now need to check the rational Hurewicz map $h_\Q$ in \eqref{eqn:jn} is $\znx$-equivariant. As both spectra are rational, it suffices to check the induced maps on homotopy groups are equivariant by Cooke's obstruction theory (see \Cref{Subsec:Moore}). Since $\pi_*(S^0_\Q)$ is concentrated in $\pi_0$ and $\znx$ acts on it trivially, it suffices to check $\znx$ acts $\pi_0\left(S^0_{KU/p}\left(p^{v_p(N)}\right)_\Q\right)$ trivially. Recall from \Cref{defn:s_k1_pv}, $S^0_{KU/p}(p^v)= \left(\Kp\right)^{h(1+p^v\Zp)}$. The HFPSS in \Cref{Subsec:HFPSS} shows
		\begin{equation*}
			\pi_0\left(S^0_{KU/p}\left(p^{v}\right)_\Q\right)\simeq H_c^0\left(1+p^v\Zp;\pi_0\left(\Kp\right)\right)\otimes \Q.
		\end{equation*}
		As the Adams operation $\psi^a$ acts on $\pi_0\left(\Kp\right)$ trivially for all $a\in\Zpx$, the residual $\zx{p^v}$-action on the group cohomology $H_{{c}}^*\left(1+p^v\Zp;\pi_0\left(\Kp\right)\right)$ is also trivial. Hence $\zx{p^v}$ acts trivially on $\pi_0\left(S^0_{KU/p}\left(p^{v}\right)_\Q\right)$. 
		
		We have shown the rational Hurewicz map $h_\Q$ is $\znx$-equivariant. Then $J(N)$ as the homotopy pullback in \eqref{eqn:jn} of a diagram of $\znx$-equivariant maps of spectra has a natural $\znx$-action with the prescribed local properties.
	\end{proof}
	\begin{prop}\label{prop:JN_k_loc}
		$J(N)$ is a $KU$-local $\einf$-ring spectrum, with $\znx$ acting on it by $\einf$-ring automorphisms as described in \Cref{prop:jn_znx_action}.
	\end{prop}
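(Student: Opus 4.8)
The plan is to promote the arithmetic fracture square \eqref{eqn:jn} from a diagram of spectra to a diagram in $\mathrm{Fun}(B\znx,\mathrm{CAlg}(\Sp))$ --- $\einf$-ring spectra equipped with $\znx$-actions by $\einf$-ring automorphisms --- and to form $J(N)$ as its limit there. This works because of two soft facts. First, the forgetful functor $\mathrm{Fun}(B\znx,\mathrm{CAlg}(\Sp))\to\mathrm{Fun}(B\znx,\Sp)$ preserves limits, so the underlying $\znx$-spectrum of the refined limit is the homotopy pullback \eqref{eqn:jn} with the action of \Cref{prop:jn_znx_action}; the refinement then endows \emph{that} $J(N)$ with the claimed structure, and the essential uniqueness of such refinements means no new choice is made. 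Second, $\Sp_{KU}\subseteq\Sp$ is closed under limits, and all three vertices of \eqref{eqn:jn} are $KU$-local: a $K(1)=KU/p$-local spectrum is $KU$-local since a $KU$-acyclic spectrum is $KU/p$-acyclic, and $S^0_\Q\simeq H\Q$ is $KU$-local since a $KU$-acyclic spectrum is rationally acyclic hence $H\Q$-acyclic. Hence the limit $J(N)$ is automatically $KU$-local.

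I would first supply the structure on the vertices. By \Cref{prop:otop_section}, $S^0_{KU/p}(p^v)=\left(\Kp\right)^{h(1+p^v\Zp)}$ is a $K(1)$-local $\einf$-ring spectrum, being the global sections of the sheaf $\otop_{K(1)}$ of $\einf$-ring spectra over $B(1+p^v\Zp)$; and since the quotient $B(1+p^v\Zp)\to B\Zpx$ exhibits $\M_{mult}(p^v)^\wedge_p\to(\M_{mult})^\wedge_p$ as a $\zx{p^v}$-torsor, applying $\otop_{K(1)}$ makes the residual group $\zx{p^v}$ act on $S^0_{KU/p}(p^v)$ over $S^0_{K(1)}$ by $\einf$-ring automorphisms (equivalently, the $\zx{p^v}$-Galois action on $S^0_{KU/p}(p^v)/S^0_{K(1)}$). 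Taking the product over $p\mid N$ realizes $\prod_p S^0_{KU/p}(p^{v_p(N)})$ as an $\einf$-ring on which $\znx\simeq\prod_{p\mid N}\zx{p^v}$ acts by $\einf$-ring automorphisms. The vertex $S^0_\Q=L_\Q S^0$ is an $\einf$-ring as a smashing localization of the initial $\einf$-ring $S^0$, with trivial $\znx$-action, and the bottom-right vertex is its rationalization, which inherits a $\znx$-action by $\einf$-ring automorphisms from the top vertex via functoriality of $L_\Q$.

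It remains to lift the two maps $\znx$-equivariantly. The rationalization $L_\Q$ is a natural transformation of $\einf$-ring-valued functors, hence an $\einf$-ring map, and it is $\znx$-equivariant by naturality. For $h_\Q\colon S^0_\Q\to R:=\bigl(\prod_p S^0_{KU/p}(p^{v_p(N)})\bigr)_\Q$, I would use that $R$ is rational, so that $\mathrm{Map}_{\mathrm{CAlg}(\Sp)}(S^0_\Q,R)\simeq\mathrm{Map}_{\mathrm{CAlg}(\Sp)}(S^0,R)\simeq\ast$: there is an essentially unique $\einf$-ring map $h_\Q$, which on $\pi_0$ is the unit $\Q\to\pi_0R$ (the rational Hurewicz map), and contractibility of this mapping space also forces $h_\Q$ to be $\znx$-equivariant for the trivial action on the source. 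Forming the limit of the cospan $S^0_\Q\xrightarrow{h_\Q}R\xleftarrow{L_\Q}\prod_p S^0_{KU/p}(p^{v_p(N)})$ in $\mathrm{Fun}(B\znx,\mathrm{CAlg}(\Sp))$ then exhibits $J(N)$ as a $KU$-local $\einf$-ring spectrum with $\znx$ acting by $\einf$-ring automorphisms, compatibly with \Cref{prop:jn_znx_action}.

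The main obstacle is that the two non-formal inputs must genuinely be in place: (a) the $\einf$-ring structure and the $\zx{p^v}$-Galois action on $S^0_{K(1)}(p^v)$, which rest on the Goerss-Hopkins-Miller machinery packaged in the sheaf $\otop_{K(1)}$ of \Cref{prop:otop_section}; and (b) the promotion of $h_\Q$ to a $\znx$-equivariant $\einf$-ring map, which relies on initiality of $S^0$ among $\einf$-rings together with rationality of the target. Granting these, the rest --- closure of $\einf$-rings and of $KU$-local spectra under limits, and compatibility of the refined limit with the spectrum-level construction of \Cref{prop:jn_znx_action} --- is formal.
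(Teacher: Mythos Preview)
Your proposal is correct and follows essentially the same strategy as the paper: establish that each corner of the fracture square \eqref{eqn:jn} is an $\einf$-ring spectrum with a $\znx$-action by $\einf$-ring automorphisms (via Goerss--Hopkins for the $p$-complete corners), check that the maps are $\znx$-equivariant $\einf$-ring maps, and conclude by closure of these structures under limits. Your treatment is in fact more careful than the paper's in two places: you argue $KU$-locality by observing that all three corners are $KU$-local (the paper only mentions the $p$-complete corners), and you justify the $\znx$-equivariance of $h_\Q$ as an $\einf$-map via contractibility of $\mathrm{Map}_{\mathrm{CAlg}(\Sp)}(S^0_\Q,R)$, whereas the paper's proof of \Cref{prop:jn_znx_action} handles equivariance only at the level of homotopy groups and the proof of \Cref{prop:JN_k_loc} is terse on this point.
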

	\begin{proof}
		This proposition contains three parts:
		\begin{enumerate}
			\item $J(N)$ is an $\einf$-ring spectrum since it is the homotopy pullback of a diagram of $\einf$ maps between $\einf$-ring spectra. 
			\item $J(N)$ is $KU$-local since $J(N)^\wedge_{p}\simeq S^0_{KU/p}\left(p^{v_p(N)}\right)$ is $KU/p$-local for all primes $p$ by \Cref{cor:jn_structure}.
			\item The action of $\zx{p^{v_p(N)}}$ on $J(N)^\wedge_{p}\simeq S^0_{KU/p}\left(p^{v_p(N)}\right)$ is $\einf$ by the Goerss-Hopkins theorem. Thus the action of $\znx\simeq\prod_{p\mid N}\zx{p^{v_p(N)}}$ is $\einf$ on the upper right corner of \eqref{eqn:jn}. This implies the induced $\znx$-action on lower right corner is also $\einf$. The trivial $\znx$-action on $S^0_\Q$ is $\einf$. We conclude $\znx$ acts by $\einf$-ring maps on $J(N)$ in \Cref{prop:jn_znx_action}, since the action is assembled from $\einf$-actions on the other three corners of \eqref{eqn:jn}.
		\end{enumerate}		
	\end{proof}
	\begin{prop}
		The homotopy fixed point spectrum $J(N)^{h\znx}$ is equivalent to $J$ after inverting $\prod_{p\mid N}(p-1)$. In particular, $J(N)^{h\znx}\simeq J$ when $N$ is a power of $2$.
	\end{prop}
	\begin{proof}
		Denote the product $\prod_{p\mid N}(p-1)$ by $\Pi$. It is equal to $1$ iff $N$ is a power of $2$. We need to verify the equivalence rationally and at primes not dividing $\Pi$. Rationally,  $\znx$ is a finite group acting trivially on $J(N)_\Q$ by \mbox{\Cref{prop:jn_znx_action}}. As a result, the $E_2$-page of the HFPSS to compute $\pi_*(J(N)_\Q)^{h\znx}$ is concentrated in the $0$-line, yielding
		\begin{equation*}
			\pi_*\left((J(N)_\Q)^{h\znx}\right)=\pi_*\left(J(N)_\Q\right)^{\znx}=\pi_*\left(J(N)_\Q\right).
		\end{equation*}  		
		This shows the natural map $(J(N)_\Q)^{h\znx}\to J(N)_\Q$ is a weak equivalence. Now fix a prime $p$ not dividing $\Pi$ and write $N=p^v\cdot N'$, where $p\nmid N'$. Then the group $(\Z/p^v)^\times$ is a summand of $\znx$. We have:
		\begin{equation*}
			\left(J(N)^{h\znx}\right)^\wedge_{p}\simeq S^0_{K(1)}(p^v)^{h\znx}
			\simeq  \left(S^0_{K(1)}(p^v)^{h(\Z/p^v)^\times}\right)^{h(\Z/N')^\times}
			\simeq  \left(S^0_{K(1)}\right)^{h(\Z/N')^\times},
		\end{equation*}
		where the group $(\Z/N')^\times$ acts trivially on $S^0_{K(1)}$. When the order of this group is coprime to $p$, the same argument as in that rational case shows the homotopy fixed point spectrum $\left(S^0_{K(1)}\right)^{h(\Z/N')^\times}$ is equivalent to $S^0_{K(1)}$. We claim the group order $|(\Z/N')^\times|$ and the product $\Pi=\prod_{q\mid N}(q-1)$ has the same $p$-valuations. To see this, notice 
		\begin{equation*}
			|(\Z/N')^\times|=\phi(N')=\frac{\phi(N)}{\phi(p^v)}=\prod_{q\mid N,q\ne p}(q-1)q^{v_q(N)-1},
		\end{equation*}
		where $\phi$ is the Euler's totient function. This implies $v_p(\phi(N'))=v_p\left(\prod_{q\mid N,q\ne p}(q-1)\right)$. Denote the latter product by $\Pi'$. The two products are related by
		\begin{equation*}
			\Pi=\left\{\begin{array}{cl}
				\Pi',& p\nmid N;\\
				(p-1)\Pi',& p\mid N
			\end{array}\right.
		\end{equation*}
		 Since $(p,p-1)=1$, we have $v_p(\Pi)=v_p(\Pi')=v_p(\phi(N'))$. This implies $p\nmid |(\Z/N')^\times|=\phi(N')$.
	\end{proof}
	\begin{prop}\label{prop:jn_not_gal}
		Let $\Pi$ be as above and let $p$ be a prime dividing $\Pi$. Then the $p$-completion of the homotopy fixed point spectrum $J(N)^{h\znx}$ is not equivalent to $J^\wedge_{p}\simeq S^0_{K(1)}$. As a result, $J(N)$ is NOT a $\znx$-Galois extension of $J$ unless $N$ is a power of $2$.
	\end{prop}
	\begin{proof}
		First by the ambidexterity theorem in the $K(1)$-local category \mbox{\cite{Greenlees-Sadofsky_Tate_cohomology,Hovey-Sadofsky_Tate_cohomology}}, we have
		\begin{equation*}
			\left(J(N)^{h\znx}\right)^\wedge_{p}\simeq \left(S^0_{K(1)}\right)^{h(\Z/N')^\times}\simeq \left(S^0_{K(1)}\right)_{h(\Z/N')^\times}\simeq (B(\Z/N')^\times_+)_{K(1)}.
		\end{equation*}
		Using \mbox{\Cref{lem:BA_k1}} that will be proved later, one can check that
		\begin{equation*}
			(B(\Z/N')^\times_+)_{K(1)}\simeq (S^0_{K(1)})^{\vee p^{v_p(|(\Z/N')^\times|)}}=(S^0_{K(1)})^{\vee p^{v_p(\Pi)}}.
		\end{equation*}
		This is not equivalent to $S^0_{K(1)}$ since $p$ divides $\Pi$ by assumption.
	\end{proof}	
	Parallel to \eqref{eqn:pi_j}, we now compute $\pi_*(J(N))$.
 	\begin{prop}\label{prop:pi_jn}
 		The computation of $\pi_*(J(N))$ has two cases: $4\mid N$ and $N$ is odd (since $J(N)\simeq J(2N)$ for odd $N$). Let $\Pi_{2k,N}$ be the product of all prime divisors $p$ of $N$ which satisfy the condition that $(p-1)$ divides $2k$. Define $D_{2k,N}$ by
 		\begin{equation*}
 			D_{2k,N}=\left\{\begin{array}{cl}
 			ND_{2k}/(2\Pi_{2k,N}),& \text{if }4\mid N;\\
 			ND_{2k}/\Pi_{2k,N},&\text{if }2\nmid N.
 			\end{array}\right. 
 		\end{equation*}
 		When $4\mid N$, we get
 		\begin{equation}\label{eqn:pi_J4N}
 		\pi_i(J(N))=\left\{\begin{array}{cl}
 		\Z, &i=0;\\
 		\Q/\Z, &i=-2;\\
 		\Z/D_{|2k|,N},&i=4k-1\neq -1;\\
 		\Z/N,&i\equiv 1\mod 4;\\
 		0,&\textup{otherwise.}
 		\end{array}\right.
 		\end{equation}
 		When $N$ is odd, we get
 		\begin{equation*}
 		\pi_i(J(N))=\left\{\begin{array}{cl}
 		\Z\oplus \Z/2, &i=0;\\
 		\Q/\Z, &i=-2;\\
 		\Z/D_{|2k|,N},&i=4k-1\neq -1;\\
 		\Z/N\oplus \Z/2\oplus \Z/2,&i\equiv 1\mod 8;\\
 		\Z/N, &i\equiv 5\mod 8;\\
 		\Z/2, &i\equiv 0,2\mod 8\textup{ and }i\neq 0;\\
 		0,&\textup{otherwise.}
 		\end{array}\right.
 		\end{equation*}
 	\end{prop}
 	\begin{rem}\label{rem:jn_duality}
 		One can check from \eqref{eqn:pi_J4N} that 
 		\begin{equation*}
 		\hom(\pi_{i}(J(4N)),\Q/\Z)\simeq (\pi_{-2-i}(J(4N)))^{\wedge}
 		\end{equation*}
 		holds for all $N$ and $i$, where $(-)^{\wedge}$ is the profinite completion of a group. The formula is true up to summands of $\Z/2$ for $J(N)$ when $N$ is odd. We will see in \Cref{cor:J_4N_BC_dual} that this isomorphism is the result of Brown-Comenetz duality $I_{KU}(J(4N))\simeq \Sigma^{2}J(4N)\wedge M(\wh{\Z})$. In particular, $\pi_{4k-1}(J(4))\simeq \pi_{4k-1}(J)=\Z/D_{|2k|}$, whose order is equal to the denominator of $\zeta(1-2k)$ (expressed as a fraction in lowest terms). This  Brown-Comenetz duality for $J(4)$  that relates $\pi_{4k-1}(J(4))$ with $\pi_{-1-4k}(J(4))$ is similar to the functional equation of the Riemann $\zeta$-function that relates $\zeta(1-2k)$ with $\zeta(2k)$:
 		\begin{equation*}
 			\zeta(2k)=\frac{(2\pi i)^{2k}}{2(2k-1)!}\cdot \zeta(1-2k).
 		\end{equation*}		
 	\end{rem}	
 	\subsection{Constructing Moore spectra with group actions}\label{Subsec:Moore}
 	Another ingredient needed to construct the Dirichlet $J$-spectra and $K(1)$-local spheres is a Moore spectrum with a $\znx$-action induced by a ($p$-adic) Dirichlet character $\chi\colon \znx\to \Cx$ (or $\Cpx$). The first observation is following:
 	\begin{lem}\label{lem:chi_fac}
 		There is a unique number $n$ such that $\chi$ factors as 
 		\begin{equation*}
 			\begin{tikzcd}[row sep=0 em]
 			\chi\colon  \znx\rar[twoheadrightarrow]&C_n\rar[hook]& (\Z[\zeta_n])^\x\rar[hook]&\Cx,&\text{when $\chi$ is $\Cbb$-valued;}\\
			\chi\colon  \znx\rar[twoheadrightarrow]&C_n\rar[hook]& (\Zp[\zeta_n])^\x\rar[hook]&\Cpx,&\text{when $\chi$ is $\Cp$-valued,} 
 			\end{tikzcd}
 		\end{equation*}
 		where $C_n$ is the cyclic group of order $n$ and the second maps send a generator $g\in C_n$ to a primitive $n$-th root of unity $\zeta_n$.
 	\end{lem}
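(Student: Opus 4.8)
\emph{Proof proposal.} The plan is to reduce the statement to the elementary fact that a finite subgroup of the unit group of a field is cyclic. First I would note that $\znx$ is finite, hence $\imag(\chi)$ is a finite subgroup of $\Cx$ (resp.\ of $\Cpx$); since $\Cbb$ and $\Cp$ are fields, $\imag(\chi)$ is cyclic, say of order $n$, and is generated by a primitive $n$-th root of unity $\zeta$. Choosing an abstract cyclic group $C_n$ with a fixed generator $g$ and decreeing $g\mapsto\zeta$ identifies $C_n$ with $\imag(\chi)$, so that $\chi$ factors as the surjection $\znx\twoheadrightarrow\imag(\chi)$ followed by this identification and the tautological inclusion $\imag(\chi)\hookrightarrow\Cx$ (resp.\ $\Cpx$).

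Next I would refine the target of the last map. Being a root of unity, $\zeta$ is an algebraic integer, and roots of unity are units in every ring of algebraic integers; hence $\zeta\in(\Z[\zeta_n])^\x$, where $\Z[\zeta_n]=\mathcal{O}_{\Q(\zeta_n)}$ is viewed inside $\Cbb$ via the embedding sending the distinguished primitive root $\zeta_n$ to $\zeta$. In the $p$-adic case the same reasoning applies verbatim (the elements of $\imag(\chi)\subseteq\Cpx$ are already $n$-th roots of unity lying in $\Cp$), placing $\zeta$ in $(\Zp[\zeta_n])^\x$ with $\Zp[\zeta_n]$ embedded in $\Cp$ analogously. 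Thus the map $C_n\hookrightarrow(\Z[\zeta_n])^\x$ (resp.\ $(\Zp[\zeta_n])^\x$), $g\mapsto\zeta_n$, followed by the inclusion into $\Cx$ (resp.\ $\Cpx$), reproduces $\imag(\chi)\hookrightarrow\Cx$ (resp.\ $\Cpx$), yielding the asserted four-term factorization of $\chi$.

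For uniqueness I would only observe that in any such factorization the first arrow is required to be surjective while every later arrow is injective (the composite $C_n\hookrightarrow\Cx$ sends the generator $g$ to an element of exact order $n$), so $n=|C_n|=|\imag(\chi)|$ is forced and is visibly an invariant of $\chi$. I do not expect a serious obstacle in this argument; the only things to keep straight are the consistency of the various embeddings of cyclotomic orders into $\Cbb$ and $\Cp$, and the fact that it is the integer $n$---and not the surjection $\znx\twoheadrightarrow C_n$ or the choice of generator $g\mapsto\zeta_n$---that is being asserted to be unique.
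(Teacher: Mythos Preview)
Your argument is correct and is exactly the standard one: the image of $\chi$ is a finite subgroup of the units of a field, hence cyclic of some order $n$, and $n$ is forced because the outer maps are injective. The paper states this lemma without proof, treating it as an elementary observation, so there is nothing further to compare.
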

 	Then it suffices to construct the Moore spectra $M(\Z[\zeta_n])$ and $M(\Zp[\zeta_n])$ with $C_n$-actions such that the induced $C_n$-action on $H_0$ (equivalently $\pi_0$) is equivalent to that on $\Z[\zeta_n]$ and $\Zp[\zeta_n]$, where $a\in C_n$ acts by multiplication by $\zeta_n^a$. The latter is called the integral/$p$-adic cyclotomic representation of $C_n$. Properties of such representations needed in this subsection are summarized in \Cref{appen:cyclo_rep}. 
 	
 	We can further reduce to cases $n=p^v$ by noting from \Cref{lem:cyclo_decomp}:
 	\begin{equation*}
 		\begin{array}{rcc}
 		&\displaystyle\Z[\zeta_n]\simeq \bigotimes_{p\mid n}\Z[\zeta_{p^{v_p(n)}}]& \displaystyle\Zp[\zeta_n]\simeq \bigotimes_{q\mid n}\Zp[\zeta_{q^{v_{q}(n)}}],\\
 		\xRightarrow{\text{non-equivariantly}} &\displaystyle M(\Z[\zeta_n])\simeq \bigwedge_{p\mid n} M(\Z[\zeta_{p^{v_p(n)}}])&\displaystyle M(\Zp[\zeta_n])\simeq \bigwedge_{q\mid n} M(\Zp[\zeta_{q^{v_q(n)}}]).
 		\end{array}
 	\end{equation*}
 	The constructions now split into three cases:
 	\begin{enumerate}
 		\item In the integral case, we give an explicit construction suggested by Charles Rezk.
 		\item The $p$-adic case where $n=p^v$ is the $p$-completion of the corresponding integral case.
 		\item The $p$-adic case where $(n,p)=1$ uses Cooke's obstruction theory \cite{Cooke_obstruction} to lift group actions on homotopy groups to the homotopy category of spectra. The comparison of this case with the integral case uses the obstruction theory to uniqueness of the lifting.
 	\end{enumerate}
	\textbf{The integral case.}\quad 
	\begin{con}[Charles Rezk]\label{con:integral_moore}
		From the short exact sequence of $C_{p^v}$-representations in \Cref{lem:cyclo_cyclic_res}:
		\begin{equation}\label{eqn:cyclo_rep_resolution}
		\begin{tikzcd}
		0\rar&\Z[\zeta_{p^v}]\rar&\Z[C_{p^v}]\rar&\Z[C_{p^{v-1}}]\rar&0,
		\end{tikzcd}
		\end{equation}
		we define $M(\Z[\zeta_{p^v}])$ as the de-suspension of the cofiber of the quotient map $C_{p^v}\twoheadrightarrow C_{p^{v-1}}$. That is, there is a cofiber sequence:
		\begin{equation}\label{eqn:cofib}
		\begin{tikzcd}
		S^0\bigwedge (C_{p^v})_+\rar&S^0\bigwedge (C_{p^{v-1}})_+\rar&\Sigma M(\Z[\zeta_{p^v}]).
		\end{tikzcd}
		\end{equation} 
		$M(\Z[\zeta_{p^v}])$ inherits a natural $C_{p^v}$-action from its suspension as the cofiber of a $C_{p^v}$-equivariant map.
	\end{con}
	\begin{prop}
		$M(\Z[\zeta_{p^v}])$ constructed above is a Moore spectrum for $\Z[\zeta_{p^v}]$. The induced $C_{p^v}$-action on $H_0(M(\Z[\zeta_{p^v}]);\Z)$  is equivalent to the cyclotomic action.
	\end{prop}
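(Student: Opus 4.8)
The plan is to read the homotopy and homology of $M(\Z[\zeta_{p^v}])$ directly off the defining cofiber sequence \eqref{eqn:cofib}, working throughout in spectra equipped with a $C_{p^v}$-action (functors $BC_{p^v}\to\mathrm{Sp}$), the category in which that cofiber is formed. First I would record that for each $m$ the suspension spectrum $S^0\wedge(C_{p^m})_+\simeq\bigvee^{p^m}S^0$ has integral homology concentrated in degree $0$, where it is the free abelian group on the points of $C_{p^m}$; regarding $C_{p^m}$ as a $C_{p^v}$-set by left translation (through the quotient $C_{p^v}\twoheadrightarrow C_{p^m}$ when $m=v-1$), this is $\Z[C_{p^m}]$ as a $C_{p^v}$-module. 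The same description gives $\pi_0(S^0\wedge(C_{p^m})_+)\cong\Z[C_{p^m}]$ and $\pi_i(S^0\wedge(C_{p^m})_+)\cong\pi_i^s(S^0)^{\oplus p^m}$ for $i>0$, all $C_{p^v}$-equivariantly.

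Next I would feed \eqref{eqn:cofib} into the long exact sequences in $\pi_*$ and in $H_*(-;\Z)$. The quotient map $C_{p^v}\twoheadrightarrow C_{p^{v-1}}$ induces a surjection $f_*\colon\Z[C_{p^v}]\twoheadrightarrow\Z[C_{p^{v-1}}]$ on $\pi_0$ and on $H_0$, so the cofiber $\Sigma M(\Z[\zeta_{p^v}])$ has $\pi_{<0}=0$ and $\pi_0=\mathrm{coker}(f_*)=0$; hence $M(\Z[\zeta_{p^v}])$ is connective. Since $H_i(S^0\wedge(C_{p^m})_+;\Z)=0$ for $i\neq0$, the homology long exact sequence collapses to a short exact sequence
\begin{equation*}
0\longrightarrow H_0\bigl(M(\Z[\zeta_{p^v}]);\Z\bigr)\longrightarrow\Z[C_{p^v}]\xrightarrow{\,f_*\,}\Z[C_{p^{v-1}}]\longrightarrow0,
\end{equation*}
together with $H_i(M(\Z[\zeta_{p^v}]);\Z)=0$ for $i\neq0$. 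This identifies $H_0(M(\Z[\zeta_{p^v}]);\Z)$ with $\ker f_*$ as a $C_{p^v}$-module, which by \Cref{lem:cyclo_cyclic_res} is precisely the left-hand term of \eqref{eqn:cyclo_rep_resolution}, namely the cyclotomic representation $\Z[\zeta_{p^v}]$. Together with connectivity and the Hurewicz isomorphism $\pi_0\cong H_0$, this shows that $M(\Z[\zeta_{p^v}])$ is a Moore spectrum for $\Z[\zeta_{p^v}]$ carrying the asserted action on $H_0$; non-equivariantly it is then a wedge of $\phi(p^v)$ spheres, as the Moore spectrum of a free abelian group of that rank must be.

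The only step that is more than a diagram chase is the bookkeeping of $C_{p^v}$-equivariance: one forms the cofiber in $\mathrm{Fun}(BC_{p^v},\mathrm{Sp})$ and must verify that the long exact sequences in $\pi_*$ and $H_*$, including their boundary maps, are exact sequences of $C_{p^v}$-modules, so that the isomorphism $H_0(M(\Z[\zeta_{p^v}]);\Z)\cong\ker f_*$ is one of representations and not merely of abelian groups. Once this is in place, the comparison with \eqref{eqn:cyclo_rep_resolution} is immediate and the cyclotomic $C_{p^v}$-action is recovered on $H_0$ on the nose.
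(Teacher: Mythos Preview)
Your proof is correct and follows exactly the approach the paper sketches: apply $H_*(-;\Z)$ to the cofiber sequence \eqref{eqn:cofib} and invoke the short exact sequence \eqref{eqn:cyclo_rep_resolution} to identify $H_0$ equivariantly. You have simply supplied the details (connectivity via $\pi_*$, the collapse of the homology long exact sequence, and the equivariance bookkeeping) that the paper's two-sentence proof leaves implicit.
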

	\begin{proof}
		Applying $H_*(-;\Z)$ to the cofiber sequence \eqref{eqn:cofib}, we can show that $M(\Z[\zeta_{p^v}])$ is a Moore spectrum. The rest follows from \eqref{eqn:cyclo_rep_resolution}. 
	\end{proof}
	Below are some examples of the $C_{p^v}$-equivariant cell structures of $\Sigma M(\Z[\zeta_{p^v}])$: 
	\begin{figure}[h]
		\begin{tikzcd}[row sep=2.5 em]
		\star\ar[dd,dash,bend right=45,"0" description]\ar[dd,dash,bend left=45,"{1}" description]\\\\{[0]_1}
		\end{tikzcd}\quad\begin{tikzcd}[row sep=2.5 em]
		\star\ar[dd,dash,bend right=45,"0" description]\ar[dd,dash,"1" description]\ar[dd,dash,bend left=45,"2" description]\\\\{[0]_1}
		\end{tikzcd}\quad
		\begin{tikzcd}[row sep=2.5 em]
		\star\ar[dd,dash,bend right=72,"0" description]\ar[dd,dash,bend right=45,"1" description]\ar[dd,dash,bend right=24,"2" description]\ar[dd,dash,"3" description]\ar[dd,dash,bend left=24,"4" description]\ar[dd,dash,bend left=45,"5" description]\ar[dd,dash,bend left=72,"6" description]\\\\{[0]_1}
		\end{tikzcd}\quad		
		\begin{tikzcd}[sep=1.5 em]
		{[1]_4}&&{[0]_4}\\&\star\ar[ur,dash,bend right=40,"0" description]\ar[ur,dash,bend left=40,"4" description]\ar[ul,dash,bend right=40,"1" description]\ar[ul,dash,bend left=40,"5" description]\ar[dl,dash,bend right=40,"2" description]\ar[dl,dash,bend left=40,"6" description]\ar[dr,dash,bend right=40,"3" description]\ar[dr,dash,bend left=40,"7" description]&\\{[2]_4}&&{[3]_4}
		\end{tikzcd}\quad
		\begin{tikzcd}[column sep=1.73205 em, row sep=1 em]
		&{[0]_3}\ar[dd,bend right=40,dash,"6" description]\ar[dd,dash,"3" description]\ar[dd,bend left=40,dash,"0" description]&\\&&\\
		&\star\ar[dl,bend left=40,dash,"7" description]\ar[dl,dash,"4" description]\ar[dl,bend right=40,dash,"1" description]\ar[dr,bend left=40,dash,"8" description]\ar[dr,dash,"5" description]\ar[dr,bend right=40,dash,"2" description]\\{[1]_3}&&{[2]_3}
		\end{tikzcd}
		\caption{$C_{p^v}$-cell structures of $\Sigma M(\Z[\zeta_{p^v}])$ for $p^v=2,3,7,8,9$}\label{fig:moore_cell}
		\begin{itemize}
			\item $\star$ is the base point and is fixed by the $C_n$-action.
			\item $[a]_b= (a \mod b)$ is the label of (non-equivariant) $0$-cells.
			\item $a=( a\mod n)$ is the label of (non-equivariant) $1$-cells.
			\item $g\in C_n\simeq \Z/n$ acts on the labels by mapping $(a\mod b)$ to $(a+g\mod b)$.
		\end{itemize}
	\end{figure}
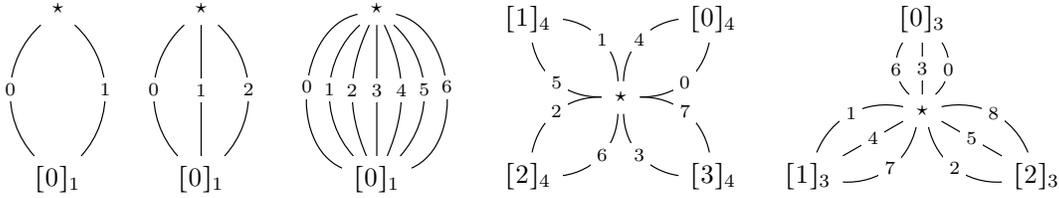

	Here is another description of this construction:
		\begin{enumerate}
			\item $M(\Z[\zeta_2])\simeq S^{\sigma-1}$, where $\sigma$ is the sign representation of $C_2$.
			\item $C_n$ acts on $\Cbb$ by multiplication by $n$-th roots of unity. Denote the associated $C_n$-representation by $\rho_{\text{cyclo}}$ and the representation sphere by $S^{\rho_\text{cyclo}}$. When $n=p$, the $C_p$-cell structure of $\Sigma M(\Z[\zeta_p])$ above shows
			\begin{equation*}
				S^{\rho_\text{cyclo}}\simeq \Sigma M(\Z[\zeta_p])\cup (C_p\x D^2).
			\end{equation*}   
			As a result,  $M(\Z[\zeta_p])$ is the $1$-skeleton in this equivariant cell decomposition of the representation sphere $S^{\rho_\text{cyclo}}$.
			\item Foling Zou has observed and proved the following relation between $M(\Z[\zeta_{p^v}])$ and $M(\Z[\zeta_p])$ via private conversations with the author:
			\begin{prop}[Foling Zou]\label{prop:MZpv}
				There is a $C_{p^v}$-equivariant equivalence:
				\begin{equation*}
				M(\Z[\zeta_{p^v}])\simeq (C_{p^v})_{+}\bigwedge_{C_p}M(\Z[\zeta_p]),
				\end{equation*}
				where $a\in \Z/p\simeq C_p$ acts on $\Z/p^v\simeq C_{p^v}$ by sending $(b\mod p^v)$ to $(b+ap^{v-1}\mod p^v)$.
			\end{prop}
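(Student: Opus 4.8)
The plan is to exhibit both sides as the cofiber of one and the same $C_{p^v}$-equivariant map, exploiting exactness of an induction functor. Recall from \Cref{con:integral_moore} that $\Sigma M(\Z[\zeta_{p^v}])$ is by definition the cofiber of the map $q_v\colon (C_{p^v})_+\to (C_{p^{v-1}})_+$ induced by the standard quotient $C_{p^v}\twoheadrightarrow C_{p^{v-1}}$, equivariant for the left-translation action on the source and the residual action on the target; in particular, for $v=1$ this reads $\Sigma M(\Z[\zeta_p])\simeq \mathrm{cofib}\bigl((C_p)_+\xrightarrow{\,\epsilon\,}S^0\bigr)$, where $\epsilon$ is the augmentation, $C_p$ acting by translation on $(C_p)_+$ and trivially on $S^0$.

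Now fix the embedding $\iota\colon C_p\hookrightarrow C_{p^v}$, $a\mapsto ap^{v-1}$. Its image is exactly $\ker\bigl(C_{p^v}\twoheadrightarrow C_{p^{v-1}}\bigr)=p^{v-1}\Z/p^v\Z$, so the $C_p$-action on $C_{p^v}$ it induces by translation is precisely the one in the statement, and the resulting quotient $C_{p^v}/\iota(C_p)$ is $C_{p^{v-1}}$ \emph{via the standard quotient map}. I would then apply the induction functor $\mathrm{Ind}:=(C_{p^v})_+\wedge_{C_p}(-)$ from $C_p$-spectra to $C_{p^v}$-spectra to the $v=1$ cofiber sequence. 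Being left adjoint to restriction along $\iota$, $\mathrm{Ind}$ preserves pushouts — hence cofiber sequences — and commutes with suspension. On the terms, $\mathrm{Ind}\bigl((C_p)_+\bigr)\simeq (C_{p^v})_+$ and $\mathrm{Ind}(S^0)\simeq (C_{p^v}/\iota(C_p))_+\simeq (C_{p^{v-1}})_+$, and under these identifications — already visible at the level of pointed $C_{p^v}$-sets, where $C_{p^v}\times_{C_p}C_p\cong C_{p^v}$ and $C_{p^v}\times_{C_p}\mathrm{pt}\cong C_{p^v}/\iota(C_p)$ — the map $\mathrm{Ind}(\epsilon)$ becomes literally the quotient map $q_v$. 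Consequently
\begin{equation*}
\Sigma\,\mathrm{Ind}\bigl(M(\Z[\zeta_p])\bigr)\;\simeq\;\mathrm{Ind}\bigl(\Sigma M(\Z[\zeta_p])\bigr)\;\simeq\;\mathrm{cofib}(q_v)\;\simeq\;\Sigma M(\Z[\zeta_{p^v}])
\end{equation*}
$C_{p^v}$-equivariantly, and desuspending yields the asserted equivalence. As a sanity check, restricting the action to the trivial group identifies the underlying spectrum of the left-hand side with $\bigvee_{C_{p^{v-1}}}M(\Z[\zeta_p])$, a Moore spectrum for a free abelian group of rank $p^{v-1}(p-1)=\phi(p^v)$, and on $\pi_0$ one recovers the induced representation $\Z[C_{p^v}]\otimes_{\Z[C_p]}\Z[\zeta_p]$, which is the cyclotomic representation by \Cref{appen:cyclo_rep}; this matches the computation of $\pi_0 M(\Z[\zeta_{p^v}])$ already recorded.

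The only subtle point — and the place to be careful — is the bookkeeping of which copy of $C_p$ sits inside $C_{p^v}$: one must use the subgroup $\iota(C_p)$, and crucially recognize it as $\ker(C_{p^v}\twoheadrightarrow C_{p^{v-1}})$, so that $\mathrm{Ind}(\epsilon)$ is genuinely the defining map $q_v$ of $M(\Z[\zeta_{p^v}])$ and not merely some map with an abstractly isomorphic cofiber. To upgrade "same cofiber" to an honest $C_{p^v}$-equivariant equivalence, I would run the whole argument with cofiber taken as a functor on $C_{p^v}$-spectra (e.g. in the $\infty$-category $\mathrm{Fun}(BC_{p^v},\mathrm{Sp})$), where exactness of $\mathrm{Ind}$ gives $\mathrm{Ind}\bigl(\mathrm{cofib}(\epsilon)\bigr)\simeq\mathrm{cofib}\bigl(\mathrm{Ind}(\epsilon)\bigr)$ canonically rather than only up to noncanonical isomorphism in a homotopy category.
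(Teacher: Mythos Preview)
Your proof is correct and is essentially the same as the paper's: both identify $(C_{p^v})_+\simeq (C_{p^v})_+\wedge_{C_p}(C_p)_+$ and $(C_{p^{v-1}})_+\simeq (C_{p^v})_+\wedge_{C_p}S^0$, and then use that $(C_{p^v})_+\wedge_{C_p}(-)$ commutes with taking cofibers. You phrase this as exactness of the induction functor, the paper simply pulls the factor $(C_{p^v})_+\wedge_{C_p}(-)$ out of the cofiber by hand, but the content is identical; your added bookkeeping about which copy of $C_p$ sits inside $C_{p^v}$ and the $\infty$-categorical remark are helpful clarifications not spelled out in the paper.
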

			\begin{proof} Notice that $C_{p^{v-1}}\simeq C_{p^v}/C_p$, we can rewrite this quotient as pointed sets by \begin{equation*}
					(C_{p^{v-1}})_+\simeq S^0\bigwedge_{C_p} (C_{p^{v}})_+,
				\end{equation*}
				where $C_p$ acts on $C_{p^v}$ as described in the proposition. From this we get:
			\begin{align*}
				\Sigma M(\Z[\zeta_{p^{v}}])&=\mathrm{Cofib}\left(S^0\bigwedge (C_{p^v})_+\longrightarrow S^0\bigwedge (C_{p^{v-1}})_+ \right)\\
				\simeq &\mathrm{Cofib}\left(S^0\bigwedge (C_p)_+\bigwedge_{C_p} (C_{p^v})_+\longrightarrow S^0\bigwedge S^0\bigwedge_{C_p} (C_{p^{v}})_+ \right)\\
				\simeq &\mathrm{Cofib}\left(S^0\bigwedge (C_p)_+\longrightarrow S^0\bigwedge S^0 \right)\bigwedge_{C_p} (C_{p^{v}})_+\\
				\simeq &\Sigma M(\Z[\zeta_p])\bigwedge_{C_p} (C_{p^v})_+.
			\end{align*}
			\end{proof}
		\end{enumerate}
	
	Taking external smash product of $M(\Z[\zeta_{p^v}])$ with the prescribed $C_{p^v}$-actions over all $p\mid n$, we have constructed a Moore spectrum $M(\Z[\zeta_n])$ with a $C_n$-action such that the induced action on $H^0(-;\Z)$ is equivalent to the cyclotomic action of $C_n$. We now give an explicit description of the $C_n$-equivariant simplicial structure of $M(\Z[\zeta_n])$.
	
	Write $n=p_1^{v_1}\cdots p_m^{v_m}$. The $(\Z/N)^\times$-equivariant cell decomposition of $X_n=\Sigma^m M(\Z[\zeta_n])$ is constructed as follows:
	\begin{enumerate}
		\item Set the $0$-th skeleton by $\sk_0 X_n= \star\coprod C_n/C_{p_1\cdots p_m}$, where $\star$ is the base point fixed by the $\znx$-action.
		\item Assuming we have defined $\sk_{k-1} X_n$ for $k<m$, then define the $k$-th skeleton to be:
		\begin{equation*}
			\sk_{k} X_n= \sk_{k-1} X_n\bigcup \left(\coprod_{~~{i_1}<\cdots<{i_{m-k}}}C_n/C_{p_{i_1}\cdots p_{i_{m-k}}}\right)\x \Delta^k.
		\end{equation*}
		The attaching map of an equivariant $k$-simplex $C_n/C_{p_{i_1}\cdots p_{i_{m-k}}}\x \Delta^k$ is described by the following:
		\begin{itemize}
			\item The $0$-th face $C_n/C_{p_{i_1}\cdots p_{i_{m-k}}}\x \Delta^k_{[0]}$ is attached to the base point $\star$.
			\item Let $\{j_1<\cdots<j_k\}$ be the complement of $\{i_1,\cdots i_{m-k}\}\subseteq\{1,\cdots,m\}$. Then the $l$-th face $C_n/C_{p_{i_1}\cdots p_{i_{m-k}}}\x \Delta^k_{[l]}$ for $1\le l\le k$ is attached to the equivariant $(k-1)$-complex \begin{equation*}C_n/C_{p_{i_1}\cdots p_{i_{m-k}}\cdot p_{j_l}}\x \Delta^{k-1}\end{equation*} via the quotient map of orbits.
		\end{itemize}
		\item The top simplex is $C_n\x \Delta^m$. The $0$-th face $C_n\x \Delta^m_{[0]}$ is attached to the base point $\star$. The $l$-th face $C_n\x \Delta^m_{[l]}$ for $1\le l\le m$ is attached to the $(m-1)$-equivariant simplex $C_n/C_{p_l}\x \Delta^{m-1}$ via the quotient map $C_n\twoheadrightarrow C_n/C_{p_l}$.
	\end{enumerate}
	\begin{rem}
		The non-equivariant Euler number of $X_n=\Sigma^mM(\Z[\zeta_n])$ is equal to $1+(-1)^m\phi(n)$ since it is non-equivariantly a wedge sum of $\phi(n)$ many copies of $S^m$. On the other hand, by counting the number of non-equivariant simplices in each dimension from the above construction, we get 
		\begin{align*}
			1+(-1)^m\phi(n)&=e(X_n)=1+\sum_{k=0}^{m-1} \left((-1)^k \sum_{~~{i_1}<\cdots<{i_{m-k}}}\frac{n}{p_{i_1}\cdots p_{i_{m-k}}}\right)+(-1)^m n\\
			\implies\quad \phi(n)&=n+\sum_{k=1}^{m}  \left((-1)^k\sum_{{i_1}<\cdots<{i_{k}}}\frac{ n}{p_{i_1}\cdots p_{i_{k}}}\right).
		\end{align*}
		This is precisely the formula of $\phi(n)=|\{a\in \mathbb{N}\mid 1\le a\le n, (a,n)=1\}|$ via the Inclusion and Exclusion Principle.  
	\end{rem}
	\begin{rem}\label{rem:C2_action_S0}
		The construction above is not unique. For example when $n=2$, $M(\Z[\zeta_2])$ is by definition $S^0$ with a $C_2$-action such that the induced action of $C_2$ on $\pi_*(S^0)$ is the sign representation in all degrees. \Cref{fig:moore_cell} shows our model for $M(\Z[\zeta_2])$ is $S^{\sigma-1}$. But one can check $S^{(2k-1)(\sigma-1)}$ also satisfies the assumptions for all $k\in \Z$ and these are non-equivalent $C_2$-actions on $S^0$.
	\end{rem}
	\textbf{The $p$-adic case when $n=p^v$.}\quad By \Cref{cor:Qp_pv}, $\left(\Z[\zeta_{p^v}]\right)^\wedge_{p}\simeq \Zp[\zeta_{p^v}]$. From this we can simply define the Moore spectrum with a $C_{p^v}$-action by setting
	\begin{equation}\label{eqn:Moore_p^v}
		M(\Zp[\zeta_{p^v}])=M(\Z[\zeta_{p^v}])^\wedge_{p}.
	\end{equation}
	\textbf{The $p$-adic case when $p\nmid n$.}\quad In this case, \Cref{prop:gal_Qp_zeta} implies that $(\Z[\zeta_{n}])^\wedge_{p}\not\simeq \Zp[\zeta_{n}]$, since the two sides have different ranks as $\Zp$-modules. As a result, the construction in the $n=p^v$ case does not apply. Instead, we use Cooke's obstruction theory in \cite{Cooke_obstruction} to lift the $C_n$-action on $\Zp[\zeta_n]=\pi_0(M(\Zp[\zeta_{n}]))$ to the Moore spectrum $M(\Z[\zeta_n])$.
	
	Let $X$ be a spectrum and $h\aut(X)$ be the group of its self-homotopy equivalences. Then $h\aut(X)$ is an associative $H$-space and $\pi_0(h\aut(X))$ is the group of homotopy classes of homotopy equivalences of $X$. Denote the identity component of $h\aut(X)$ by $h\aut_1(X)$. We have a short exact sequence of $H$-spaces:
	\begin{equation*}
		\begin{tikzcd}
		1\rar&h\aut_1(X)\rar&h\aut(X)\rar&\pi_0(h\aut(X))\rar &1.
		\end{tikzcd}
	\end{equation*}
	This induces a fiber sequence by taking classifying spaces:
	\begin{equation*}
	\begin{tikzcd}
	Bh\aut_1(X)\rar&Bh\aut(X)\rar&B\pi_0(h\aut(X)).
	\end{tikzcd}
	\end{equation*}
	An action of a group $G$ on $\pi_0(X)$ is then a group homomorphism $\alpha\colon  G\to \pi_0(h\aut(X))$. 
	\begin{thm}[{\cite[Theorem 1.1]{Cooke_obstruction}}] \label{thm:cooke}
		There is an obstruction theory to lift $\alpha$ to an action on $X$: 
		\begin{equation*}
			\begin{tikzcd}[column sep=4 em]
			&Bh\aut(X)\dar\\
			BG\rar["B\alpha"']\ar[dashed,ur,end anchor=south west]&B\pi_0(h\aut(X)).
			\end{tikzcd}
		\end{equation*}
		The obstruction classes to the existence of such liftings live in 
		\begin{equation*}
		H^n(G;\{\pi_{n-2}(h\aut_1(X))\}),\quad n\ge 3.
		\end{equation*} 
		In particular, one can always lift a $G$-action on $\pi_0(X)$ to $X$ if $G$ is finite and $|G|$ is invertible in $\pi_{n}(h\aut_1(X))$ for all $n\ge 1$.
	\end{thm}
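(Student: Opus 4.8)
The plan is to translate the lifting problem into the classical question of sections of a fibration and then run skeletal obstruction theory on $BG$.

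First I would invoke the standard dictionary identifying a (homotopy-coherent) action of the discrete group $G$ on the spectrum $X$ with a pointed map $BG\to Bh\aut(X)$ carrying the basepoint to $X$, where $h\aut(X)$ is viewed as a grouplike $A_\infty$-monoid. Under this dictionary, composition with the canonical map $Bh\aut(X)\to B\pi_0(h\aut(X))$ sends such an action to the classifying map $B\alpha$ of the underlying homomorphism $\alpha\colon G\to\pi_0(h\aut(X))$ on homotopy classes of self-equivalences. Thus lifting the homotopy action $\alpha$ to a genuine action on $X$ is exactly the lifting problem in the statement: producing a map $BG\to Bh\aut(X)$ over $B\alpha$ along the fibration $p\colon Bh\aut(X)\to B\pi_0(h\aut(X))$, whose fiber is $Bh\aut_1(X)$.

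Next I would pull $p$ back along $B\alpha$ to a fibration $E\to BG$ with fiber $F:=Bh\aut_1(X)$, so that a lift of $B\alpha$ is precisely a section of $E\to BG$. Working with a CW model of $BG$ (the bar construction, say), I would build a section cell by cell. Since $h\aut_1(X)$ is connected by definition, $F$ is simply connected and $\pi_j(F)\cong\pi_{j-1}(h\aut_1(X))$ for $j\ge 1$; the conjugation action of $\pi_0(h\aut(X))$ on $h\aut_1(X)$, restricted along $\alpha$, equips each $\pi_j(F)$ with a $\Z[G]$-module structure, which is the local coefficient system appearing in the obstruction groups. The usual obstruction theory for sections of a fibration then shows that, given a section over the $(n-1)$-skeleton, the obstruction to extending it over the $n$-skeleton of $BG$ lies in $H^n\bigl(G;\pi_{n-1}(F)\bigr)\cong H^n\bigl(G;\pi_{n-2}(h\aut_1(X))\bigr)$ with these twisted coefficients. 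Because $F$ is $1$-connected, these groups vanish for $n\le 2$, so a section automatically exists over the $2$-skeleton and the first potentially obstructing class appears in degree $n=3$, giving the stated range.

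For the final clause, if $G$ is finite and $|G|$ is invertible in $\pi_j(h\aut_1(X))$ for every $j\ge 1$, then each coefficient module $\pi_{n-2}(h\aut_1(X))$ (with $n\ge 3$) is a $\Z[1/|G|]$-module, and the higher group cohomology of a finite group is annihilated by its order, so $H^n(G;-)$ vanishes on all of these. Every obstruction class therefore vanishes and a lift --- equivalently, a $G$-action on $X$ inducing $\alpha$ on $\pi_*$ --- exists; moreover the difference obstructions controlling the set of homotopy classes of lifts lie in higher group cohomology of $G$ with coefficients in the same modules and vanish for the same reason, yielding uniqueness up to homotopy. I expect the genuine content, and the main potential pitfall, to be the bookkeeping in the first step: making precise the equivalence between homotopy-coherent $G$-actions on $X$ and pointed maps out of $BG$, and verifying that its composite to $B\pi_0(h\aut(X))$ really is $B\alpha$. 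Once that is in place, the identification of the fiber of $p$, the recognition of the local system as the $\alpha$-twisted conjugation action, and the skeletal obstruction count are all standard.
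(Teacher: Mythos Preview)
Your proposal is correct and follows the standard argument for Cooke's theorem: translate the lifting problem into sections of the pullback fibration with fiber $Bh\aut_1(X)$, then run skeletal obstruction theory on $BG$ to place the obstructions in $H^n(G;\pi_{n-1}(Bh\aut_1(X)))\cong H^n(G;\pi_{n-2}(h\aut_1(X)))$ for $n\ge 3$. The paper itself does not supply a proof of this statement; it is quoted as \cite[Theorem 1.1]{Cooke_obstruction} and used as a black box, so there is nothing further to compare against.
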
 
	\begin{cor}\label{cor:Cn_padic}
		When $p\nmid n$, any $C_{n}$-action on $\pi_*$ of a $p$-complete spectrum can be lifted to an action on the spectrum itself. 
	\end{cor}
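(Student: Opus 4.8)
The plan is to feed the situation into Cooke's obstruction theory, \Cref{thm:cooke}, and check that in this case every obstruction group is forced to vanish. A $C_n$-action on $\pi_*(X)$ through homotopy self-equivalences is the same datum as a group homomorphism $\alpha\colon C_n\to \pi_0(h\aut(X))$, that is, a map $B\alpha\colon BC_n\to B\pi_0(h\aut(X))$, and promoting it to an honest $C_n$-action on $X$ amounts to lifting $B\alpha$ through $Bh\aut(X)\to B\pi_0(h\aut(X))$. By \Cref{thm:cooke} the obstructions to such a lift lie in
\[
H^m\bigl(C_n;\,\pi_{m-2}(h\aut_1(X))\bigr),\qquad m\ge 3,
\]
so it suffices to prove these groups are zero.

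First I would pin down the coefficient modules. Since $h\aut_1(X)$ is the identity component of the infinite loop space $\Omega^\infty F(X,X)$, where $F(X,X)$ is the endomorphism spectrum, one has $\pi_m(h\aut_1(X))\cong \pi_m F(X,X)$ for every $m\ge 1$. Moreover $F(X,X)$ is $p$-complete whenever $X$ is: because $S^0/p^k$ is dualizable one gets $F(X,X)/p^k\simeq F(X,X/p^k)$, and hence $F(X,X)^\wedge_p\simeq\lim_k F(X,X/p^k)\simeq F(X,X^\wedge_p)\simeq F(X,X)$. It follows that each $\pi_m F(X,X)$ is the homotopy group of a $p$-complete spectrum, hence a $\Zp$-module.

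The remaining step is purely algebraic. For $m\ge 3$ the coefficient module $\pi_{m-2}(h\aut_1(X))$ sits in degree $m-2\ge 1$, so it is a $\Zp$-module; and since $p\nmid n$, the integer $n=|C_n|$ is a unit in $\Zp$, so multiplication by $n$ acts invertibly on it. For a finite group $G$ and a $\Z[G]$-module $M$ on which $|G|$ acts invertibly one has $H^j(G;M)=0$ for all $j\ge 1$, by the standard averaging argument (multiplication by $|G|$ both annihilates and is invertible on $H^{>0}(G;M)$). Hence all the obstruction groups vanish, the lift $BC_n\to Bh\aut(X)$ exists, and it is precisely a $C_n$-action on $X$ refining the given action on homotopy. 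Equivalently, the hypothesis in the last sentence of \Cref{thm:cooke} is met: $|C_n|$ is invertible in $\pi_m(h\aut_1(X))$ for every $m\ge 1$. I do not expect any genuine difficulty here; the only point requiring a little care is the identification $\pi_m(h\aut_1(X))\cong\pi_m F(X,X)$ together with the $p$-completeness of $F(X,X)$, after which the vanishing is automatic.
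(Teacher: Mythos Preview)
Your proof is correct and follows essentially the same approach as the paper: apply Cooke's obstruction theory and observe that the obstruction groups $H^m(C_n;\pi_{m-2}(h\aut_1(X)))$ vanish because $n$ is invertible on the coefficient modules. The paper's proof is a two-line version of yours that simply asserts the coefficient modules are $\Zp$-modules and cites the vanishing of $H^{>0}(C_n;-)$; your additional justification that $F(X,X)$ is $p$-complete (and hence $\pi_{m}(h\aut_1(X))\cong\pi_m F(X,X)$ are $\Zp$-modules for $m\ge 1$) fills in exactly the step the paper leaves implicit.
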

	\begin{proof}
		As $n$ is invertible in $\Zp$, group cohomology of $C_n$ with coefficients in $\Zp$-modules vanishes in positive degrees. As a result, the obstruction classes in \Cref{thm:cooke} all vanish.
	\end{proof}
	As a result, there exists a $C_n$-action on the $p$-adic Moore spectrum $M(\Zp[\zeta_n])$ such that the induced action on $\pi_0$ agrees with $p$-adic cyclotomic representation of $C_n$.
	
	One last thing to check is the compatibility of the constructions in the integral and $p$-adic cases when $p\nmid n$. Fix an embedding $\iota\colon\Z[\zeta_n]\hookrightarrow \Zp[\zeta_n]$. It induces a map of Galois groups:
	\begin{equation*}
		\begin{tikzcd}
			\iota^*\colon \gal(\Qp(\zeta_{n})/\Qp)\rar[hook]& \gal(\Q(\zeta_{n})/\Q).
		\end{tikzcd}
	\end{equation*}
	By \Cref{prop:Phi_n_fac}, there is an equivalence of $p$-adic $C_n$-representations: 
	\begin{equation}\label{eqn:Cn_rep_padic}
		\Z[\zeta_n]\otimes \Zp\simeq \bigoplus_{[\sigma]\in \cok \iota^*} (\Zp[\zeta_n])_{\iota\circ \sigma},
	\end{equation}
	where $C_n$ acts on the summand $(\Zp[\zeta_n])_{\iota\circ \sigma}$ by
	\begin{equation*}
		\begin{tikzcd}
		C_n\rar[hook]& (\Z[\zeta_n])^\x\rar["\sigma"]&(\Z[\zeta_n])^\x\rar["\iota"]&(\Zp[\zeta_n])^\x.
		\end{tikzcd}
	\end{equation*}
	By \Cref{cor:Cn_padic}, there is a $C_n$-action on $M(\Zp[\zeta_n])^{\vee |\cok \iota^*|}$ such that the induced $C_n$-action on $\pi_0$ agrees with the right hand side of \eqref{eqn:Cn_rep_padic}. On the other hand, the $C_n$-action $M(\Z[\zeta_n])^\wedge_{p}$ induces an equivalent $C_n$-representation on $\pi_0$. To check the two $C_n$-actions on the $p$-adic Moore spectrum are equivalent, we use the uniqueness part of Cooke's obstruction theory.
	\begin{prop}
		In \Cref{thm:cooke}, the obstruction classes to the uniqueness of the liftings live in
		\begin{equation*}
		H^n(G;\{\pi_{n-1}(h\aut_1(X))\}),\quad n\ge 2.
		\end{equation*}
	\end{prop}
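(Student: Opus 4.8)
The plan is to run Cooke's obstruction theory for the \emph{existence} of liftings (\Cref{thm:cooke}) "one degree lower" to control \emph{homotopies} between liftings; this companion statement is part of the package of \cite{Cooke_obstruction}, so one option is simply to cite it, but I would want to record the argument as follows. Write $F:=Bh\aut_1(X)$ for the fiber of the fibration $Bh\aut(X)\to B\pi_0(h\aut(X))$ of \Cref{thm:cooke}. Since $h\aut_1(X)$ is by definition the identity component of $h\aut(X)$, it is connected, so $F$ is connected and $\pi_1(F)=\pi_0(h\aut_1(X))=0$, while in general $\pi_k(F)\cong\pi_{k-1}(h\aut_1(X))$ as a module over $\pi_0(h\aut(X))$, hence as a $G$-module via $\alpha\colon G\to\pi_0(h\aut(X))$.

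First I would reformulate the uniqueness question: two liftings $f_0,f_1\colon BG\to Bh\aut(X)$ of $B\alpha$ represent the same homotopy class of lifting exactly when there is a fiberwise homotopy $H\colon BG\times I\to Bh\aut(X)$ with $H|_{BG\times\{i\}}=f_i$ and with the composite to $B\pi_0(h\aut(X))$ equal to $B\alpha$ pulled back along the projection. Constructing such an $H$ is a \emph{relative} lifting problem against $Bh\aut(X)\to B\pi_0(h\aut(X))$: one must extend the lifting already defined on the subcomplex $BG\times\partial I$ over all of $BG\times I$. I would then solve this cell by cell over the relative CW structure of the pair $(BG\times I,\ BG\times\partial I)$, whose $n$-cells are the products $e^{n-1}\times I$ with $e^{n-1}$ an $(n-1)$-cell of $BG$.

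The key step is the standard obstruction computation for a lift against a fibration with (non-simple) fiber $F$: given an extension of the lifting over the $(n-1)$-skeleton of the pair, the obstruction to extending over the $n$-skeleton is a class in $H^n\bigl(BG\times I,\ BG\times\partial I;\ \pi_{n-1}(F)\bigr)$, with local coefficients given by the $G$-action above. Using the suspension isomorphism $H^n(BG\times I,BG\times\partial I;M)\cong H^{n-1}(BG;M)$ together with $\pi_{n-1}(F)\cong\pi_{n-2}(h\aut_1(X))$, this group is $H^{n-1}\bigl(G;\pi_{n-2}(h\aut_1(X))\bigr)$; reindexing by $m=n-1$ places the obstructions in $H^m\bigl(G;\pi_{m-1}(h\aut_1(X))\bigr)$. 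Finally I would check that the effective range is $m\ge 2$: at $m=1$ the group is $H^1(G;\pi_0(h\aut_1(X)))=0$ because $h\aut_1(X)$ is connected, and for $m\le 0$ the relative complex has no cells below dimension $1$ (equivalently, $H$ may be chosen freely over the $0$-cells of $BG$), so nothing obstructs the base of the induction.

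The main obstacle I anticipate is bookkeeping rather than anything conceptual: $F=Bh\aut_1(X)$ need not be a simple space, so the $G$-module structure on $\pi_*(h\aut_1(X))$ must be carried through every stage, and one must verify that the relative cochain complex computing $H^*(BG\times I,BG\times\partial I;-)$ is genuinely a degree shift of the standard complex for group cohomology of $G$ with twisted coefficients — this is the $(I,\partial I)$-suspension isomorphism, and it is where indexing is easiest to get wrong. Once this is settled, the corollary in \Cref{thm:cooke} for finite $G$ follows at once: if $|G|$ is invertible in $\pi_n(h\aut_1(X))$ for all $n\ge 1$, then every group $H^m(G;\pi_{m-1}(h\aut_1(X)))$ vanishes, so the lift of a $G$-action on $\pi_*(X)$ to $X$ is unique up to equivalence, which is exactly what is needed to identify the two $C_n$-actions on the $p$-adic Moore spectrum when $p\nmid n$.
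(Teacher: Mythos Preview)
The paper does not supply a proof of this proposition; it simply states it as the uniqueness companion to \Cref{thm:cooke}, treating both as part of the obstruction-theory package of \cite{Cooke_obstruction}. You anticipated this option yourself (``one option is simply to cite it''), and that is precisely what the paper does.

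Your recorded argument goes further and is correct: recasting uniqueness of liftings as a relative section-extension problem over $(BG\times I,\,BG\times\partial I)$ and then invoking the suspension isomorphism to shift the existence obstructions down one cohomological degree is the standard derivation, and your indexing checks out against both the existence range $n\ge 3$ in \Cref{thm:cooke} and the uniqueness range $n\ge 2$ here. So your approach is strictly more detailed than the paper's; nothing needs to be changed.
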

	\begin{cor}
		Let $X$ be a $p$-complete spectrum. When $p\nmid n$, any two lifts of a $C_n$-action from $\pi_*(X)$ to $X$ are $C_n$-equivariantly equivalent.
	\end{cor}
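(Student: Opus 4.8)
The plan is to run, for the uniqueness obstruction groups, the same vanishing argument already used to prove \Cref{cor:Cn_padic} for the existence obstruction groups of \Cref{thm:cooke}. By the proposition just stated, any two lifts of a fixed $C_n$-action on $\pi_*(X)$ to $X$ are related by a sequence of obstructions lying in the (possibly twisted) group cohomology groups $H^m\bigl(C_n;\pi_{m-1}(h\aut_1(X))\bigr)$ for $m\ge 2$; so it suffices to show each of these groups vanishes.

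First I would pin down the coefficient groups. The identity component $h\aut_1(X)$ is the basepoint component of $\Omega^\infty F(X,X)$, since the self-equivalences of $X$ form exactly the union of those components of $\Omega^\infty F(X,X)$ sitting over the units of $\pi_0 F(X,X)$. Hence for $m\ge 2$ (so that $m-1\ge 1$) we have $\pi_{m-1}(h\aut_1(X))\cong \pi_{m-1}F(X,X)\cong[\Sigma^{m-1}X,X]$. Since $X$ is $p$-complete and mapping spectra into a $p$-complete spectrum are again $p$-complete, $F(X,X)\simeq F(X,X^\wedge_p)$ is $p$-complete, so every one of these homotopy groups is an $\mathrm{Ext}$-$p$-complete abelian group.

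The remaining step is purely algebraic and is precisely the one invoked in the proof of \Cref{cor:Cn_padic}: on an $\mathrm{Ext}$-$p$-complete abelian group $M$, multiplication by any integer prime to $p$ is invertible (it is an automorphism of $\mathbb{Z}/p^\infty$, hence of $M\cong\mathrm{Ext}^1(\mathbb{Z}/p^\infty,M)$). As $p\nmid n$, the order $n=|C_n|$ therefore acts invertibly on $M=\pi_{m-1}(h\aut_1(X))$, and the usual transfer (averaging) argument gives $H^m(C_n;M)=0$ for all $m\ge 1$, even with twisted coefficients. All obstruction classes thus vanish, and by the preceding proposition the two lifts are $C_n$-equivariantly equivalent.

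I do not expect a genuine obstacle: the only point needing a little care is the identification $\pi_{m-1}(h\aut_1(X))\cong\pi_{m-1}F(X,X)$ together with the observation that these groups inherit $p$-completeness from $X$. Once that bookkeeping is in place, the vanishing of $H^{\ge1}(C_n;-)$ for $p\nmid n$ is immediate, and the corollary follows formally from the proposition on uniqueness obstructions.
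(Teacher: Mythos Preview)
Your proof is correct and follows the same approach as the paper, which leaves the corollary unproved but clearly intends the vanishing argument already used for the existence statement (the proof of \Cref{cor:Cn_padic}). Your treatment is in fact slightly more careful: you note that the coefficient groups $\pi_{m-1}(h\aut_1(X))$ are Ext-$p$-complete rather than asserting they are literally $\Zp$-modules, but the essential point --- that multiplication by $n$ is invertible on them when $p\nmid n$, so $H^{\ge 1}(C_n;-)$ vanishes --- is identical.
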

	As a result, there is a $C_n$-equivalence:
	\begin{equation*}
		M(\Z[\zeta_n])^\wedge_{p}\simeq \bigvee_{[\sigma]\in \cok \iota^*} (M(\Zp[\zeta_n]))_{\iota\circ \sigma}.
	\end{equation*}
	\begin{rem}
		When $n=p^v$, there could be non-equivalent $C_{p^v}$-actions on $M(\Zp[\zeta_{p^v}])$ inducing the same action on $\pi_0$. One counterexample in the integral case is $C_2$-equivariant spheres $S^{2\sigma-2}$ and $S^0$ -- both induce the trivial action on the homotopy groups.
	\end{rem}
	Pre-composing with the map $\znx\twoheadrightarrow C_n$ in \Cref{lem:chi_fac}, we have shown in this subsection:
	\begin{thm}\label{thm:moore_spec_action}
		Let $\chi\colon  \znx\to \Cx$ or $\Cpx$ be a Dirichlet character.
		\begin{enumerate}
			\item  There is a Moore spectrum $M(\Z[\chi])$ or $M(\Zp[\chi])$ with a $\znx$-action such that the induced action on $\pi_0$ is equivalent to that induced by $\chi$.
			\item  Let $\iota:\Z[\chi]\hookrightarrow \Zp[\chi]$ be an embedding. There is a $\znx$-equivariant equivalence:
			\begin{equation}\label{eqn:padic_decomp}
				M(\Z[\chi])^\wedge_{p}\simeq \bigvee_{[\sigma]\in \cok \iota^*} M(\Zp[\iota\circ \sigma\circ \chi]).
			\end{equation}
		\end{enumerate}
	\end{thm}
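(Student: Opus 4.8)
The plan is to assemble the ingredients developed over the course of this subsection. First I would invoke \Cref{lem:chi_fac} to factor $\chi$ as a surjection $\znx\twoheadrightarrow C_n$ followed by the cyclotomic embedding $C_n\hookrightarrow(\Z[\zeta_n])^\x$ (respectively $C_n\hookrightarrow(\Zp[\zeta_n])^\x$). Since any $\znx$-action realizing the target representation on $\pi_0$ can be obtained by inflating a $C_n$-action along this surjection, it suffices to construct $M(\Z[\zeta_n])$ (resp.\ $M(\Zp[\zeta_n])$) with a $C_n$-action whose induced action on $H_0$ is the (integral, resp.\ $p$-adic) cyclotomic representation. Next, using the tensor decomposition $\Z[\zeta_n]\simeq\bigotimes_{p\mid n}\Z[\zeta_{p^{v_p(n)}}]$ from \Cref{lem:cyclo_decomp}, I would reduce further to the prime-power case: the external smash product $\bigwedge_{p\mid n}M(\Z[\zeta_{p^{v_p(n)}}])$, with each factor carrying its $C_{p^{v_p(n)}}$-action, is a Moore spectrum for the tensor product and carries an action of $\prod_{p\mid n}C_{p^{v_p(n)}}$; restricting along the diagonal $C_n\hookrightarrow\prod_{p\mid n}C_{p^{v_p(n)}}$ yields the desired $C_n$-action.

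For the prime-power pieces there are two sub-cases. When the residue characteristic matches, i.e.\ for $M(\Z[\zeta_{p^v}])$ or its $p$-completion $M(\Zp[\zeta_{p^v}])$ (legitimate because $(\Z[\zeta_{p^v}])^\wedge_p\simeq\Zp[\zeta_{p^v}]$ by \Cref{cor:Qp_pv}), I would use Rezk's \Cref{con:integral_moore}: desuspend the cofiber of the $C_{p^v}$-equivariant quotient $(C_{p^v})_+\to(C_{p^{v-1}})_+$, whose effect on $H_*$ is precisely the short exact sequence of cyclotomic representations $0\to\Z[\zeta_{p^v}]\to\Z[C_{p^v}]\to\Z[C_{p^{v-1}}]\to 0$. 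When $p\nmid n$ — which is only relevant for the $p$-adic Moore spectrum $M(\Zp[\zeta_n])$ with $n$ coprime to $p$ — the $p$-completion of the integral model has the wrong $\Zp$-rank, so instead I would invoke Cooke's obstruction theory \Cref{thm:cooke}: as in \Cref{cor:Cn_padic}, the relevant obstruction groups $H^{\ge 3}\!\left(C_n;\pi_{*}(h\aut_1(-))\right)$ vanish since $|C_n|$ is invertible in $\Zp$, so the $p$-adic cyclotomic $C_n$-representation on $\pi_0$ lifts to an action on $M(\Zp[\zeta_n])$. Smashing all prime-power pieces together and inflating along $\znx\twoheadrightarrow C_n$ then proves part (1).

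For part (2), after fixing $\iota\colon\Z[\chi]\hookrightarrow\Zp[\chi]$, I would compare two $C_n$-actions on the same $p$-complete spectrum: the $p$-completion $M(\Z[\zeta_n])^\wedge_p$ of Rezk's model, and the wedge $\bigvee_{[\sigma]\in\cok\iota^*}(M(\Zp[\zeta_n]))_{\iota\circ\sigma}$ of twisted copies built via \Cref{cor:Cn_padic}. By \Cref{prop:Phi_n_fac} (the factorization of $\Phi_n$ over $\Qp$), both induce the same representation on $\pi_0$, namely $\Z[\zeta_n]\otimes\Zp\simeq\bigoplus_{[\sigma]}(\Zp[\zeta_n])_{\iota\circ\sigma}$. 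Since $p\nmid n$, the uniqueness half of Cooke's theory — obstructions living in $H^{\ge 2}\!\left(C_n;\pi_{*}(h\aut_1(-))\right)=0$ — forces the two actions to be $C_n$-equivariantly equivalent; pulling back along $\znx\twoheadrightarrow C_n$ gives \eqref{eqn:padic_decomp}, and combining with the prime-by-prime smash decomposition of \Cref{lem:cyclo_decomp} handles composite $n$.

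The hardest part, I expect, will be two bookkeeping issues forced by the non-functoriality of Moore spectra: (i) checking that restricting the external-smash $\prod_{p\mid n}C_{p^{v_p(n)}}$-action along the diagonal recovers the cyclotomic $C_n$-representation \emph{on the nose} on $H_0$, rather than merely an abstractly isomorphic one, which requires tracking the $H_0$-level identifications through all the cofiber sequences; and (ii) verifying that the hypotheses of Cooke's theorem genuinely apply, i.e.\ understanding $\pi_* h\aut_1(M(\Zp[\zeta_n]))$ well enough to see it is a $\Zp$-module so that $C_n$-cohomology vanishes in the relevant degrees — this single input underlies both the existence and the uniqueness obstruction arguments.
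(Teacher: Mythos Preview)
Your proposal is correct and follows essentially the same route as the paper: factor through $C_n$ via \Cref{lem:chi_fac}, reduce to prime powers via \Cref{lem:cyclo_decomp}, build the integral pieces with Rezk's \Cref{con:integral_moore}, handle the $p$-adic prime-to-$p$ pieces with Cooke's existence obstruction (\Cref{cor:Cn_padic}), and prove part~(2) by invoking the uniqueness half of Cooke together with \Cref{prop:Phi_n_fac}. One small correction: the map $C_n\to\prod_{p\mid n}C_{p^{v_p(n)}}$ is an isomorphism (Chinese remainder), not merely an inclusion, so your bookkeeping concern~(i) reduces directly to \Cref{lem:cyclo_decomp}; and for concern~(ii), since $M(\Zp[\zeta_n])$ is $p$-complete its self-equivalence space has $\Zp$-module homotopy groups, which is all Cooke needs.
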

 	\subsection{The homotopy eigen-spectra}
 	Now we are ready to twist the $J$-spectrum and the $K(1)$-local spheres with a Dirichlet character. Analogous to \Cref{Prop:Dirichlet_char_moduli}, the twisting is realized as the "homotopy $\chi$-eigen-spectrum".
	\begin{con}\label{con:twisted_j}
	Let $\chi\colon \znx\to \Cx$ be a primitive Dirichlet character of conductor $N$. We define the \textbf{Dirichlet $J$-spectrum} by:
		\begin{equation}\label{eqn:twisted_j}
			J(N)^{h\chi}=\map(M(\Z[\chi]),J(N))^{h\znx},
		\end{equation}		
		Let $\chi\colon \znx\to \Cpx$ be a primitive $p$-adic Dirichlet character of conductor $N$ and set $v=v_p(N)$. We define the \textbf{Dirichlet $K(1)$-local sphere} to be
		\begin{equation}\label{eqn:twisted_k1}
			S^0_{K(1)}(p^v)^{h\chi}=\map_{\Zp}\left(M(\Z_p[\chi]),S^0_{K(1)}(p^v)\right)^{h\znx}.
		\end{equation}
		The $\znx$-actions on the Moore spectrum and $J(N)$ are described in \Cref{thm:moore_spec_action} and \Cref{prop:jn_znx_action}, respectively.  The group $\znx$ acts on $S^0_{K(1)}(p^v)$ through the Galois action of its quotient group $\zx{p^v}$.
	\end{con}
	\begin{rem}
		The spectra $J(N)^{h\chi}$ and $S^0_{K(1)}(p^v)^{h\chi}$ depend on the constructions of the $\znx$-actions on $M(\Z[\chi])$ and $M(\Zp[\chi])$, which is not unique in general as illustrated in \Cref{rem:C2_action_S0}. When $N=4, p=2$ and $\chi\colon \zx{4}\simeq C_2\to\Cx_2$, different models of $M(\Z_2[\chi])$ lead to different $S^0_{K(1)}(4)^{h\chi}$. We will explain the differences in more detail in \Cref{rem:exotic_pic}.
	\end{rem}
	One immediate consequence of this construction is
	\begin{prop}\label{prop:Dirichlet_J_isom_rep}
		If $\chi_1$ and $\chi_2$ are Dirichlet characters of conductor $N$ with isomorphic induced representations, then $J(N)^{h\chi_1}\simeq J(N)^{h\chi_2}$. In particular, $J(N)^{h\chi}\simeq J(N)^{h(\sigma\circ\chi)}$ for any $\sigma\in \gal(\Q(\chi)/\Q)$.
	\end{prop}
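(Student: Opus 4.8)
\emph{Proof proposal.} The plan is to reduce the whole statement to a single $\znx$-equivariant equivalence of Moore spectra. Since $\map(-,J(N))$ carries $\znx$-equivariant equivalences to $\znx$-equivariant equivalences and $(-)^{h\znx}$ then carries these to equivalences of spectra, and since by \eqref{eqn:twisted_j} the two Dirichlet $J$-spectra are obtained from $M(\Z[\chi_1])$ and $M(\Z[\chi_2])$ by applying $\map(-,J(N))^{h\znx}$, it suffices to produce a $\znx$-equivariant equivalence $M(\Z[\chi_1])\simeq M(\Z[\chi_2])$.

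Next I would unwind the hypothesis. Writing $n_i$ for the order of $\chi_i$, the underlying abelian group of $\Z[\chi_i]$ is $\Z[\zeta_{n_i}]$, the kernel of the $\znx$-action on it is $\ker\chi_i$, and its image in $\mathrm{GL}_{\phi(n_i)}(\Z)$ is the cyclic subgroup of order $n_i$ generated by multiplication by $\zeta_{n_i}$. Hence an isomorphism of $\Z[\znx]$-modules $\Z[\chi_1]\cong\Z[\chi_2]$ forces $n_1=n_2=:n$ and $\ker\chi_1=\ker\chi_2$, and conjugates the two homomorphisms $\znx\to\mathrm{GL}_{\phi(n)}(\Z)$ into one another. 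Restricting this conjugation to their common cyclic image shows it implements an automorphism $\psi$ of $C_n$ with $\overline{\chi}_2=\psi\circ\overline{\chi}_1$, where $\overline{\chi}_i\colon\znx\twoheadrightarrow C_n$ is the surjection from \Cref{lem:chi_fac}. Under the identification $\aut(C_n)\cong\gal(\Q(\chi_1)/\Q)$ this is precisely the statement that $\chi_2=\sigma\circ\chi_1$ for some $\sigma$, so the general claim and the ``in particular'' are the same assertion, and it remains to produce the equivalence of Moore spectra out of such a $\psi$.

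The key step is that Rezk's model $M(\Z[\zeta_n])$ of \Cref{con:integral_moore}, regarded as a $C_n$-spectrum, is invariant under $\aut(C_n)$: for each $\psi\in\aut(C_n)$ there is a $C_n$-equivariant equivalence $\psi^{*}M(\Z[\zeta_n])\simeq M(\Z[\zeta_n])$, where $\psi^{*}$ denotes restriction of the action along $\psi$. Since $M(\Z[\zeta_n])$ is the external smash product of the $M(\Z[\zeta_{p^{v_p(n)}}])$ over $p\mid n$ and $\aut(C_n)=\prod_{p\mid n}\aut(C_{p^{v_p(n)}})$, this reduces to the prime power case. There $\Sigma M(\Z[\zeta_{p^v}])$ is by construction the cofiber of $\Sigma^\infty(C_{p^v})_+\to\Sigma^\infty(C_{p^{v-1}})_+$ induced by the quotient $C_{p^v}\twoheadrightarrow C_{p^{v-1}}$; any $\psi\in\aut(C_{p^v})$ preserves the unique subgroup of order $p$, hence descends to an automorphism $\overline{\psi}$ of $C_{p^{v-1}}$ making the square of quotient maps commute, and applying $\Sigma^\infty(-)_+$ and passing to cofibers yields the desired $\psi$-twisted self-equivalence. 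Pulling the resulting $C_n$-equivariant equivalence back along $\overline{\chi}_1$ then gives $M(\Z[\chi_2])=(\psi\circ\overline{\chi}_1)^{*}M(\Z[\zeta_n])\simeq(\overline{\chi}_1)^{*}M(\Z[\zeta_n])=M(\Z[\chi_1])$ as $\znx$-spectra, which with the first paragraph finishes the proof. I expect the main obstacle to be making this last step precise: one must check that an automorphism of $C_{p^v}$ gives an honest $C_{p^v}$-equivariant self-equivalence of the Moore spectrum rather than merely an isomorphism on $\pi_0$. For the prime-to-$p$ factors this also follows abstractly from the uniqueness part of Cooke's obstruction theory, but for the $p$-power factor the explicit cofiber description is needed, so it is worth isolating the $\aut(C_n)$-invariance as a standalone lemma.
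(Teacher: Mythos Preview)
The paper does not prove this proposition; it is introduced only with the phrase ``One immediate consequence of this construction is'' and no further argument. Your proposal correctly supplies the details the paper elides, and the approach is the natural one: reduce to a $\znx$-equivariant equivalence of Moore spectra, then verify that Rezk's model in \Cref{con:integral_moore} is invariant under $\aut(C_n)$. Your key observation --- that any $\psi\in\aut(C_{p^v})$ preserves the unique subgroup of order $p$, hence descends to the quotient $C_{p^{v-1}}$ and induces a $C_{p^v}$-equivariant self-equivalence of the defining cofiber sequence \eqref{eqn:cofib} --- is exactly what makes the statement ``immediate'' once the construction is in hand. Your unwinding of the hypothesis is also correct and in fact shows more than the paper states: isomorphic induced representations force equal order $n$ and equal kernel, so the two surjections $\znx\twoheadrightarrow C_n$ differ by an automorphism of $C_n$, which under $\aut(C_n)\cong\gal(\Q(\zeta_n)/\Q)$ means $\chi_2=\sigma\circ\chi_1$ for some $\sigma$. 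Thus the two clauses of the proposition are equivalent, not merely that the second is a special case of the first. The concern you flag at the end --- that one needs an honest equivariant equivalence, not just an isomorphism on $\pi_0$ --- is well-placed, and your cofiber argument for the $p$-power factors resolves it cleanly.
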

	\begin{rem}
		As $S^0_{K(1)}(p^v)$ is $K(1)$-local, we have
		\begin{equation*}
			S^0_{K(1)}(p^v)^{h\chi}\simeq \map_{K(1)\text{-loc}}\left(M(\Z_p[\chi])_{K(1)},S^0_{K(1)}(p^v)\right)^{h\znx}
		\end{equation*}
		is also $K(1)$-local.
	\end{rem}
	\begin{prop}\label{prop:hess}
		The $E_2$-pages of the HFPSS to compute $\pi_*\left((J(N))^{h\chi}\right)$ and $\pi_*\left(S^0_{K(1)}(p^v)^{h\chi}\right)$ are identified with 
		\begin{align}\label{eqn:hess_e2}
			E_2^{s,t}\simeq \ext^s_{\Z[\znx]}\left(\Z[\chi],\pi_t(J(N))\right)&\Longrightarrow \pi_{t-s}\left(J(N)^{h\chi}\right),\\\label{eqn:hess_e2_p}
			E_2^{s,t}\simeq \ext^s_{\Zp[\znx]}\left(\Zp[\chi],\pi_t\left(S^0_{K(1)}(p^v)\right)\right)&\Longrightarrow\pi_{s-t}\left(S^0_{K(1)}(p^v)^{h\chi}\right),
		\end{align}
		where $a\in\znx$ acts on $\Z[\chi]$ and $\Zp[\chi]$ by multiplication by $\chi(a)$.
	\end{prop}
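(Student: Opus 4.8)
The plan is to run the ordinary homotopy fixed point spectral sequence of \Cref{Subsec:HFPSS} for the \emph{finite} group $\znx$ acting diagonally on the function spectrum
\begin{equation*}
Y:=\map\!\left(M(\Z[\chi]),J(N)\right)\qquad\bigl(\text{resp. }Y:=\map_{\Zp}\!\left(M(\Zp[\chi]),S^0_{K(1)}(p^v)\right)\bigr),
\end{equation*}
which produces a spectral sequence $E_2^{s,t}=H^s\!\left(\znx;\pi_t(Y)\right)\Longrightarrow\pi_{t-s}\!\left(Y^{h\znx}\right)$, and then to reinterpret the $E_2$-page as the asserted $\ext$-group. Thus the two things I need are: (i) the computation of $\pi_t(Y)$ together with its residual $\znx$-action, and (ii) a comparison of group cohomology of a $\hom$-module with $\ext$ over the group ring.

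For step (i) I would use that, by \Cref{lem:chi_fac}, $\Z[\chi]=\Z[\zeta_n]$ for the appropriate $n$, so $\Z[\chi]$ is a free $\Z$-module of finite rank and $M(\Z[\chi])$ is non-equivariantly a finite wedge of spheres. The universal coefficient theorem (with $H_*(M(\Z[\chi]))$ free and concentrated in degree zero, so no $\ext^1_\Z$-contribution) then gives a natural isomorphism $\pi_t(Y)\cong\hom_\Z\!\left(\Z[\chi],\pi_t(J(N))\right)$. To identify the $\znx$-module structure I would invoke \Cref{thm:moore_spec_action} (the action on $\pi_0 M(\Z[\chi])$ is the $\chi$-action) and \Cref{prop:jn_znx_action} (the action on $\pi_t(J(N))$ is the assembled Galois action): the diagonal action on $Y$ then induces precisely the conjugation module $\hom_\Z(\Z[\chi],\pi_t(J(N)))$ in which $a\in\znx$ acts on $\Z[\chi]$ by $\chi(a)$.

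For step (ii) I would prove the general fact that $H^s\!\left(G;\hom_\Z(A,B)\right)\cong\ext^s_{\Z[G]}(A,B)$ whenever $A$ is $\Z$-projective. Pick a $\Z[G]$-projective resolution $P_\bullet\to\Z$; since $A$ is $\Z$-flat the complex $P_\bullet\otimes_\Z A$ with diagonal action still resolves $A$, and each term is $\Z[G]$-projective because $\Z[G]\otimes_\Z A$ untwists to a summand of a free $\Z[G]$-module. The adjunction $\hom_{\Z[G]}(P_\bullet\otimes_\Z A,B)\cong\hom_{\Z[G]}(P_\bullet,\hom_\Z(A,B))$ identifies the two derived functors; taking $A=\Z[\chi]$ and $B=\pi_t(J(N))$ gives \eqref{eqn:hess_e2}. (Equivalently: the Grothendieck spectral sequence for $(-)^G\circ\hom_\Z(A,-)$ collapses because $\hom_\Z(A,-)$ is exact on $\Z$-projective $A$.) The $p$-adic statement \eqref{eqn:hess_e2_p} follows by the same argument with $\Z$ replaced by $\Zp$, using that $\Zp[\chi]=\Zp[\zeta_n]$ is finite free over $\Zp$ and that $M(\Zp[\chi])$ is a finite wedge of $p$-complete spheres.

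I expect the only subtle point to be part of step (i): verifying that the diagonal $\znx$-action on $Y$ really induces the \emph{naive} conjugation module on homotopy groups rather than something with extra twists. This is clean here precisely because $M(\Z[\chi])$ is a finite spectrum, so $\map(M(\Z[\chi]),-)$ is non-equivariantly a finite product and the action on $\pi_*$ is computed degreewise with no higher coherence getting in the way; with that in hand the rest is formal homological algebra.
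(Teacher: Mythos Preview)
Your proposal is correct and follows essentially the same route as the paper: start from the HFPSS for the finite group $\znx$ acting on the mapping spectrum, identify $\pi_t(Y)\cong\hom_\Z(\Z[\chi],\pi_t(J(N)))$ using that $M(\Z[\chi])$ has free homology concentrated in degree zero (the paper phrases this via the collapsing Atiyah--Hirzebruch spectral sequence, you via the universal coefficient theorem), and then pass from group cohomology of the $\hom$-module to $\ext$ over the group ring. The only cosmetic difference is that the paper invokes the collapsing Grothendieck spectral sequence for the last step, whereas you give the direct tensor--hom adjunction argument with the untwisted resolution $P_\bullet\otimes_\Z A$; these are of course equivalent, and your version is arguably cleaner since it makes the projectivity hypothesis on $A$ do the work explicitly.
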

	\begin{proof}
		We give a proof of \eqref{eqn:hess_e2}. The proof of \eqref{eqn:hess_e2_p} is similar. By construction, the $E_2$-page of the HFPSS for \eqref{eqn:twisted_j} is 
		\begin{equation*}
			E_2^{s,t}=H^s(\znx;\pi_t(\map(M(\Z[\chi]),J(N)))).
		\end{equation*}
		Denote the rank of $\Z[\chi]$ as a free $\Z$-module by $r$. Then $M(\Z[\chi])$ is non-equivariantly equivalent to $\left(S^0\right)^{\vee r}$. The Atiyah-Hirzebruch spectral sequence:
		\begin{equation*}
			E_2^{s,t}=H^s(M(\Z[\chi]);\pi_{t}(J(N)))\Longrightarrow \pi_{s+t}(\map(M(\Z[\chi]),J(N)))
		\end{equation*} 
		collapses on the $E_2$-page since $H^*(M(\Z[\chi]);-)$ is concentrated in degree $0$.
		Together with the universal coefficient theorem, this implies:
		\begin{align*}
			\pi_t(\map(M(\Z[\chi]),J(N)))\simeq &H^0(M(\Z[\chi]);\pi_{t}(J(N)))\\
			\simeq&\hom_\Z(H^0(M(\Z[\chi]);\Z),\pi_t(J(N)))\\
			\simeq& \hom_\Z(\Z[\chi],\pi_t(J(N))).
		\end{align*}
		By \Cref{thm:moore_spec_action}, $\znx$ acts on $\Z[\chi]\simeq H^0(M(\Z[\chi]);\Z)$ by $\chi$. Since $\Z[\chi]$ is a finite free $\Z$-module, the Grothendieck spectral sequence
		\begin{align*}
		E^{s,t}_2=H^s(\znx;\ext^t_\Z(\Z[\chi],\pi_t(J(N))))\Longrightarrow \ext^{s+t}_{\Z[\znx]}\left(\Z[\chi],\pi_t(J(N))\right)
		\end{align*}
		collapses on the $E_2$-page, yielding
		\begin{equation*}
			H^s(\znx;\hom_\Z(\Z[\chi],\pi_t(J(N))))\simeq  \ext^{s}_{\Z[\znx]}\left(\Z[\chi],\pi_t(J(N))\right).
		\end{equation*}
	\end{proof}
	\begin{rem}
		The $E_2$-page of \eqref{eqn:hess_e2} consists of the derived $\chi$-eigenspaces of $\pi_*(J(N))$. Moreover, $J(N)^{h\chi}$ is defined as the \textbf{homotopy $\chi$-eigen-spectrum} of $J(N)$. In this sense, we will call \eqref{eqn:hess_e2} the \textbf{homotopy eigen spectral sequence} (HESS). \footnote{The alternative name "homotopy eigen-spectrum spectral sequence" would be too redundant.}
	\end{rem}
	\subsection{Local structures of the Dirichlet $J$-spectra}
	While it is not hard to compute the $E_2$-page of \eqref{eqn:hess_e2} directly, the differentials are non-trivial as the cohomological dimension of $\znx$ with coefficients in $\Z$-modules is infinite. Instead, we will compute $\pi_*\left(J(N)\right)^{h\chi}$ rationally and completed at each prime $p$. 
	
	Over $\Q$, the spectral sequence is concentrated in the $0$-th line, since $\znx$ is a finite group. By \Cref{cor:jn_structure}, $J(N)_\Q\simeq S^0_\Q$ and  $\znx$ acts on it trivially. We conclude from these facts:
	\begin{prop}\label{prop:JN_hchi_Q_contractible}
		The homotopy groups of $\left(J(N)^{h\chi}\right)_\Q$ are given by
		\begin{equation*}
			\pi_i\left(\left(J(N)^{h\chi}\right)_\Q\right)\simeq \left\{\begin{array}{cl}
			\Q,& i=0 \textup{ and }\chi=\chi^0;\\
			0,& \textup{otherwise.}
			\end{array}\right.
		\end{equation*}
	\end{prop}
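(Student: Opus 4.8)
The plan is to reduce to a rational homotopy-fixed-point computation that collapses for degree reasons, and then to a one-line fact about $\Q[\znx]$-modules. Write $G=\znx$, so $|G|=\phi(N)$ is invertible in $\Q$. For any Borel $G$-spectrum $X$ the Tate spectrum $X^{tG}$ vanishes after inverting $|G|$ (it is annihilated by $|G|$ via the transfer), so the norm map gives $(X^{hG})[1/|G|]\simeq (X_{hG})[1/|G|]$; homotopy orbits commute with the colimit $-\wedge S_\Q$, and $(X_\Q)_{hG}\simeq (X_\Q)^{hG}$ again by Tate vanishing. Hence rationalization commutes with $(-)^{hG}$, and applying this with $X=\map(M(\Z[\chi]),J(N))$ gives
\[(J(N)^{h\chi})_\Q\;\simeq\;\bigl(\map(M(\Z[\chi]),J(N))_\Q\bigr)^{hG}.\]

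Next I would identify the spectrum being fixed. The Moore spectrum $M(\Z[\chi])$ is non-equivariantly a wedge of $[\Q(\chi):\Q]$ copies of $S^0$, hence a finite, dualizable spectrum, so $\map(M(\Z[\chi]),J(N))_\Q\simeq DM(\Z[\chi])\wedge J(N)_\Q$, where $DM(\Z[\chi])$ is the Spanier--Whitehead dual. By \Cref{cor:jn_structure} this is $DM(\Z[\chi])\wedge S^0_\Q$, and by \Cref{prop:jn_znx_action} the $G$-action is carried entirely by $M(\Z[\chi])$. Therefore $\pi_t\bigl(\map(M(\Z[\chi]),J(N))_\Q\bigr)=0$ for $t\neq 0$, while $\pi_0\cong\hom_\Q(\Q(\chi),\Q)$ with $G$ acting by the contragredient of the representation $a\mapsto(\text{multiplication by }\chi(a))$ on $\Q(\chi):=\Z[\chi]\otimes\Q$.

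Now I would run the homotopy fixed point spectral sequence for $(-)^{hG}$ applied to this spectrum, with $E_2^{s,t}=H^s(G;\pi_t(-))$. Each $\pi_t$ is a $\Q$-vector space and $|G|$ is invertible in $\Q$, so $H^s$ vanishes for $s>0$; the spectral sequence collapses onto $s=0$ and $\pi_i\bigl((J(N)^{h\chi})_\Q\bigr)\cong H^0(G;\pi_i(-))$. This is $0$ for $i\neq0$, and for $i=0$ it is $\hom_\Q(\Q(\chi),\Q)^{G}=\hom_{\Q[G]}(\Q(\chi),\Q)$ with $\Q$ the trivial module. If $\chi=\chi^0$ then $\Q(\chi)=\Q$ and this $\hom$ is $\Q$. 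If $\chi\neq\chi^0$, pick $a\in G$ with $\chi(a)\neq 1$; any $G$-equivariant $\Q$-linear $f\colon\Q(\chi)\to\Q$ satisfies $f\bigl((\chi(a)-1)x\bigr)=0$ for all $x\in\Q(\chi)$, and since $\chi(a)-1$ is a nonzero element of the field $\Q(\chi)$, hence a unit, this forces $f=0$. This gives exactly the asserted homotopy groups.

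The only step that needs genuine care is the first one — that rationalization commutes with the homotopy fixed points — and it is handled by the invertibility of $\phi(N)$. Alternatively, one can apply the exact functor $-\otimes\Q$ directly to the homotopy eigen spectral sequence of \Cref{prop:hess}, using that $\Z[\chi]$ is finitely generated over $\Z[\znx]$ and that $\Q[\znx]$ is semisimple, so that $\ext^{s}_{\Q[\znx]}(\Q(\chi),-)=0$ for $s>0$ and the rationalized $E_2$-page is concentrated at $(s,t)=(0,0)$ with value $\Q$ precisely when $\chi=\chi^0$.
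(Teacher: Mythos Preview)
Your proof is correct, and your alternative at the end is exactly the paper's argument: rationalize the homotopy eigen spectral sequence of \Cref{prop:hess}, note that $\pi_t(J(N))\otimes\Q$ vanishes for $t\neq 0$ by \Cref{cor:jn_structure}, that $\ext^s$ vanishes for $s>0$ by semisimplicity of $\Q[\znx]$, and that the remaining entry $\hom_{\Q[\znx]}(\Q(\chi),\Q)\simeq\Q(\chi^{-1})^{\znx}$ is $\Q$ or $0$ according as $\chi$ is trivial or not.

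Your main route differs only in that you first justify, via Tate vanishing for finite $G$, that rationalization commutes with $(-)^{hG}$, and then run the HFPSS for the already-rationalized spectrum. The paper instead rationalizes the spectral sequence directly without spelling out this commutation. Your version is slightly more careful on this point; the paper's version is shorter. Both land on the same one-line representation-theoretic computation, and your argument that $\chi(a)-1$ is a unit in the field $\Q(\chi)$ is a clean way to see that $\hom_{\Q[G]}(\Q(\chi),\Q)=0$ for nontrivial $\chi$.
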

	\begin{cor}\label{cor:dirichlet_J_Q}
		$\left(J(N)^{h\chi}\right)_\Q$ is contractible unless $\chi=\chi^0$ is trivial. In that case, $N=0$ and $J(N)^{h\chi}_\Q\simeq J_\Q\simeq S^0_\Q$.
	\end{cor}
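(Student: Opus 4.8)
The plan is to obtain this as an immediate consequence of \Cref{prop:JN_hchi_Q_contractible}, which already records all homotopy groups of $\left(J(N)^{h\chi}\right)_\Q$. The only elementary input needed is the Whitehead theorem for spectra: a spectrum all of whose homotopy groups vanish is contractible. So for $\chi\neq\chi^0$, the vanishing $\pi_i\left(\left(J(N)^{h\chi}\right)_\Q\right)=0$ for all $i$ from \Cref{prop:JN_hchi_Q_contractible} forces $\left(J(N)^{h\chi}\right)_\Q\simeq 0$, and there is nothing more to do in this case.

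For the remaining case $\chi=\chi^0$, I would simply unwind \Cref{con:twisted_j}. The trivial Dirichlet character has conductor $1$, so here $N=1$, the group $\znx$ is trivial, $\Z[\chi^0]=\Z$, and the prescribed $\znx$-action on the Moore spectrum $M(\Z)\simeq S^0$ is trivial. Hence
\[
J(1)^{h\chi^0}=\map(M(\Z),J(1))^{h\{e\}}\simeq\map(S^0,J)\simeq J=S^0_{KU}.
\]
Rationalizing and invoking the corollary to \Cref{thm:Bousfield} — equivalently, reading off the lower-left corner of the fracture square \eqref{eqn:J_frac_sq}, using that $KU_\Q$ and $H\Q$ are Bousfield equivalent via the Chern character — gives $J_\Q\simeq S^0_\Q$, which completes the statement.

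I do not expect a genuine obstacle here: once \Cref{prop:JN_hchi_Q_contractible} is granted, the corollary is a formality. The one point that deserves a line of care is internal to \Cref{prop:JN_hchi_Q_contractible} itself, namely that $\znx$ is finite so the homotopy fixed point spectral sequence computing $\pi_*\left(\left(J(N)^{h\chi}\right)_\Q\right)$ is concentrated on the zero line, together with the fact that $J(N)_\Q\simeq S^0_\Q$ carries the trivial $\znx$-action (\Cref{cor:jn_structure} and \Cref{prop:jn_znx_action}); both are already in hand, so the proof of the corollary is essentially a one-liner invoking the previous proposition and the Whitehead theorem.
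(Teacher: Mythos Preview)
Your proposal is correct and takes essentially the same approach as the paper: both arguments reduce to the vanishing of the rational homotopy groups computed in \Cref{prop:JN_hchi_Q_contractible} (the paper re-derives this via the spectral sequence \eqref{eqn:hess_e2} rather than citing the proposition, but the content is identical), and then conclude contractibility. Your treatment of the $\chi=\chi^0$ case by unwinding \Cref{con:twisted_j} is slightly more explicit than the paper's, and you correctly note the conductor is $1$ (the ``$N=0$'' in the statement appears to be a typo for $N=1$).
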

	\begin{proof}
		By \Cref{cor:jn_structure}, $J(N)_\Q\simeq S^0_\Q$. Then $E_2^{s,t}\otimes \Q=0$ for all $(s,t)\ne (0,0)$ \eqref{eqn:hess_e2}. The remaining entry
		$E_2^{0,0}\simeq \Q(\chi^{-1})^{\znx}$
		is zero unless $\chi=\chi^0$ is trivial, yielding the claim.
	\end{proof}
	\begin{prop}\label{prop:jnchi_p_decomposition}
		Fix an embedding $\iota:\Q(\chi)\hookrightarrow \Cp$. The $p$-completion of the Dirichlet $J$-spectrum decomposes as 
		\begin{equation*}
		\left(J(N)^{h\chi}\right)^{\wedge}_p\simeq \bigvee_{[\sigma]\in\cok \iota^* }S^0_{K(1)}(p^v)^{h(\iota\circ\sigma \circ \chi)},
		\end{equation*}
		where $\iota^*\colon\gal(\Qp({\iota\circ\chi})/\Qp)\hookrightarrow \gal(\Q(\chi)/\Q)$ is defined in \eqref{eqn:iota_star}.
	\end{prop}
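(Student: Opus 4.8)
The plan is to push the $p$-completion through the definition $J(N)^{h\chi}=\map(M(\Z[\chi]),J(N))^{h\znx}$, substitute the local descriptions of $J(N)$ and of $M(\Z[\chi])$ that are already available, and recognize the resulting wedge summands as Dirichlet $K(1)$-local spheres.

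First I would use that $p$-completion commutes with the homotopy fixed points of the \emph{finite} group $\znx$ (as recorded in \Cref{rem:jn_not_gal}): the fibre of $Y\to Y^\wedge_p$ is acyclic for the mod $p$ Moore spectrum, and this property is preserved by $(-)^{h\znx}$ (it amounts to $p$ acting invertibly on the relevant spectrum, which persists after taking homotopy fixed points), so $Y^{h\znx}\to(Y^\wedge_p)^{h\znx}$ becomes an equivalence after $p$-completion. Hence $(J(N)^{h\chi})^\wedge_p\simeq\bigl(\map(M(\Z[\chi]),J(N))^\wedge_p\bigr)^{h\znx}$. Since $M(\Z[\chi])$ is a finite (dualizable) spectrum — non-equivariantly a finite wedge of spheres, because $\Z[\chi]$ is finite free over $\Z$ — the functor $\map(M(\Z[\chi]),-)$ is given by smashing with the $\znx$-equivariant dual $DM(\Z[\chi])$, hence commutes with $p$-completion; and since $J(N)^\wedge_p\simeq S^0_{K(1)}(p^v)$ by \Cref{cor:jn_structure} is $K(1)$-local, hence $p$-complete, the cofibre of $M(\Z[\chi])\to M(\Z[\chi])^\wedge_p$ is rational and so maps trivially into the target. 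Putting these together yields a $\znx$-equivariant equivalence
\begin{equation*}
\bigl(\map(M(\Z[\chi]),J(N))\bigr)^\wedge_p\;\simeq\;\map\bigl(M(\Z[\chi])^\wedge_p,\,S^0_{K(1)}(p^v)\bigr),
\end{equation*}
where the $\znx$-action on the left-hand mapping spectrum is induced by the action on $M(\Z[\chi])$ from \Cref{thm:moore_spec_action} and on $S^0_{K(1)}(p^v)$ by the Galois action of the quotient $\zx{p^v}$ from \Cref{prop:jn_znx_action}.

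Next I would insert the equivariant splitting of the $p$-completed Moore spectrum from \Cref{thm:moore_spec_action}(2), namely $M(\Z[\chi])^\wedge_p\simeq\bigvee_{[\sigma]\in\cok\iota^*}M(\Zp[\iota\circ\sigma\circ\chi])$, using that the fixed embedding $\iota:\Q(\chi)\hookrightarrow\Cp$ restricts to $\Z[\chi]\hookrightarrow\Zp[\chi]$ and induces $\iota^*$. Because $\cok\iota^*$ is finite, mapping out of this wedge is a finite product, which for spectra is again a finite wedge, and — the splitting being $\znx$-equivariant summand by summand — the homotopy fixed points distribute over it. Each Galois conjugate $\iota\circ\sigma\circ\chi$ is again a primitive $p$-adic Dirichlet character of conductor $N$ (conjugate characters have the same conductor), so by \Cref{con:twisted_j} the $[\sigma]$-summand $\map_{\Zp}(M(\Zp[\iota\circ\sigma\circ\chi]),S^0_{K(1)}(p^v))^{h\znx}$ is by definition $S^0_{K(1)}(p^v)^{h(\iota\circ\sigma\circ\chi)}$, which gives the asserted decomposition.

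The only input beyond formal manipulations is \Cref{thm:moore_spec_action}(2), whose substance (Cooke's obstruction theory together with the comparison of the integral and $p$-adic Moore-spectrum constructions) I would simply cite. I expect the genuine difficulty in writing this out cleanly to be bookkeeping the $\znx$-equivariance through the interchange of $p$-completion with $\map(M(\Z[\chi]),-)$ and with $(-)^{h\znx}$, making sure the equivariant structure carried along is exactly the one appearing in the definitions of $J(N)^{h\chi}$ and $S^0_{K(1)}(p^v)^{h\chi}$, so that the identification of the summands is the intended one and not merely an abstract equivalence.
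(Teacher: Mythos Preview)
Your proposal is correct and follows essentially the same route as the paper: commute $p$-completion past the finite homotopy fixed points and past $\map(M(\Z[\chi]),-)$, replace $J(N)^\wedge_p$ by $S^0_{K(1)}(p^v)$ via \Cref{cor:jn_structure}, and then plug in the equivariant splitting \eqref{eqn:padic_decomp} of $M(\Z[\chi])^\wedge_p$ to read off the summands. The paper's proof is a two-line sketch of exactly this; your version simply spells out the justifications (finiteness of $\znx$, dualizability of $M(\Z[\chi])$, $p$-completeness of the $K(1)$-local target) that the paper leaves implicit.
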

	\begin{proof}
	Since homotopy fixed points and $p$-completions commute and that the $p$-completion of $J(N)$ is $S^0_{K(1)}(p^v)$
	\begin{equation*}
	\left(J(N)^{h\chi}\right)^{\wedge}_p\simeq \map_{\Zp}\left(M(\Z[\chi])^\wedge_{p},S^0_{K(1)}(p^v)\right)^{h\znx}
	\end{equation*}
	The rest follows from \eqref{eqn:padic_decomp}.
	\end{proof}
	Now we give explicit descriptions of how $\left(J(N)^{h\chi}\right)^\wedge_{p}$ decomposes when $N=p^v$.
	\begin{exmps}\label{exmp:Dirichlet_j_decomp}
		Let $\chi\colon \znx\to \Cx$ be a Dirichlet character of conductor $N=p^v$. Fix an embedding $\iota\colon \Z[\chi]\hookrightarrow \Cp$. There are two cases.
		\begin{itemize}
			\item $p=2$. The $v=1$ case is trivial. For $v>1$, $\zx{2^v}\simeq \{\pm 1\}\x \Z/{2^{v-2}}$. When $v=2$, $\chi$ is primitive when it is non-trivial, i.e. $\chi(-1)=-1$. When $v>2$, $\chi$ is primitive of conductor $2^v$ iff $\Z[\chi]\simeq \Z[\zeta_{2^{v-2}}]$. In both cases, we have by \Cref{prop:gal_Qp_zeta},  $(\Z[\zeta_{2^{v-2}}])^\wedge_{2}\simeq \Z_2[\zeta_{2^{v-2}}]$. As a result,
			\begin{equation*}
				\left(J(2^v)^{h\chi}\right)^\wedge_{2}\simeq S^0_{K(1)}(2^v)^{h(\iota\circ\chi)}.
			\end{equation*}
			Notice for any two $2$-adic Dirichlet characters $\chi_1$ and $\chi_2$ of conductor $2^v$ with the same parity, there is a $\sigma\in \gal(\Q_2(\zeta_{2^{v-2}})/\Q_2)$ such that $\chi_1=\sigma\circ \chi_2$. By \Cref{prop:Dirichlet_J_isom_rep}, the above isomorphism does not depend on $\iota$, since $\iota\circ \chi(-1)$ is independent of the choice of $\iota$. 
			\item $p>2$. In this case, $\zx{p^v}\simeq \zpx\x \Z/{p^{v-1}}$. When $v=1$, $\chi$ is primitive iff it is non-trivial. When $v>1$, $\chi$ is primitive iff $\zeta_{p^{v-1}}\in \Z[\chi]$, i.e. $\chi|_{\Z/p^{v-1}}$ is injective. By \Cref{cor:cyclo_rep_p_decomp_pv}, there is an isomorphism of $p$-adic $\zx{p^v}$-representations:
			\begin{equation*}
				\left(\Z[\chi]\right)^\wedge_{p}\simeq \bigoplus_{\substack{0\le a\le p-2\\\ker \omega^a=\ker\chi|_{\zpx}}} \Zp[\chi_a],
			\end{equation*}
			where $\chi_a=\omega^a\cdot (\iota\circ\chi|_{\Z/p^{v-1}})$ and $\omega\colon\zpx\to\Zpx$ is the Teichm\"{u}ller character. This implies a decomposition of the $p$-completion of the Dirichlet $J$-spectrum as in \Cref{prop:jnchi_p_decomposition}:
			\begin{equation}\label{eqn:jp_chi_decomp}
			\left(J(p^v)^{h\chi}\right)^\wedge_{p}\simeq \bigvee_{\substack{0\le a\le p-2\\\ker \omega^a=\ker\chi|_{\zpx}}} S^0_{K(1)}(p^v)^{h\chi_a}.
			\end{equation}
		\end{itemize}
	\end{exmps}
	Now we need to compute the homotopy groups of the Dirichlet $K(1)$-local spheres. Like the integral case, while the $E_2$-page of \eqref{eqn:hess_e2_p} are not hard to compute in general, there are infinitely many differentials unless $p\nmid\phi(N)=|\znx|$. We now set up another spectral sequence that we will use in \Cref{Sec:pi_Dirichlet_j}.
	\begin{prop}\label{prop:Dirichlet_k1_Kp}
		Let $\chi\colon \znx\to \Cpx$ be a $p$-adic Dirichlet character of conductor $N$. Write $N=p^v\cdot N'$ where $p\nmid N'$. There is an equivalence of $K(1)$-local spectra:
		\begin{equation*}
		S^0_{K(1)}(p^v)^{h\chi}\simeq \map_{\Zp}\left(M(\Zp[\chi]),\Kp \right)^{h\left(\Zpx\x \zx{N'}\right)},
		\end{equation*}
		where $\Zpx\x\zx{N'}$ acts on the Moore spectra through the action of $\znx$ described in \Cref{Subsec:Moore} via the quotient map $\Zpx\x \zx{N'}\twoheadrightarrow \zx{p^v}\x \zx{N'}\cong \znx$; and on $\Kp$ via the Adams operations by the factor $\Zpx$.
	\end{prop}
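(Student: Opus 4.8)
The plan is to start from the right-hand side and peel off the homotopy fixed points in two stages, exploiting that $M(\Zp[\chi])$ is a finite spectrum. Write $G:=\Zpx\x\zx{N'}$ and $H:=(1+p^v\Zp)\x\{1\}\subseteq G$. Since $p\nmid N'$, the Chinese remainder theorem together with $\Zpx\twoheadrightarrow\zx{p^v}$ (kernel $1+p^v\Zp$) identifies $G/H\simeq\zx{p^v}\x\zx{N'}\cong\znx$; note $H$ is an open normal subgroup of $G$ of finite index $\phi(N)$. By hypothesis, $G$ acts on $M(\Zp[\chi])$ through the quotient $G\twoheadrightarrow G/H\cong\znx$ (so $H$ acts trivially) and on $\Kp$ through the projection $G\twoheadrightarrow\Zpx$ by Adams operations (so $H$ acts through $1+p^v\Zp$ by Adams operations, and the $\zx{N'}$-factor acts trivially on $\Kp$).

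First I would compute the $H$-homotopy fixed points. Because $M(\Zp[\chi])$ is a finite free $\Zp$-module Moore spectrum it is dualizable, so $\map_{\Zp}(M(\Zp[\chi]),-)\simeq M(\Zp[\chi])^{\vee}\wedge_{\Zp}(-)$ preserves all limits and colimits, and in particular commutes with the formation of (continuous) homotopy fixed points in the sense of Devinatz--Hopkins. As $H$ acts trivially on $M(\Zp[\chi])$, this gives
\begin{equation*}
\map_{\Zp}\!\left(M(\Zp[\chi]),\Kp\right)^{hH}\simeq\map_{\Zp}\!\left(M(\Zp[\chi]),(\Kp)^{hH}\right)\simeq\map_{\Zp}\!\left(M(\Zp[\chi]),S^0_{K(1)}(p^v)\right),
\end{equation*}
where the last equivalence is \Cref{defn:s_k1_pv} together with the identification of $H$ with $1+p^v\Zp$.

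Next I would take the residual $G/H\cong\znx$-homotopy fixed points, using the iterated homotopy fixed point equivalence $X^{hG}\simeq(X^{hH})^{h(G/H)}$ (available here since $G/H$ is finite). The remaining point is to match the two residual $\znx$-actions on $\map_{\Zp}(M(\Zp[\chi]),S^0_{K(1)}(p^v))$: on $M(\Zp[\chi])$ it is the $\znx$-action of \Cref{thm:moore_spec_action}, and on $S^0_{K(1)}(p^v)=(\Kp)^{h(1+p^v\Zp)}$ the residual $G/H$-action is the Galois action of $\Zpx/(1+p^v\Zp)\cong\zx{p^v}$ on the $\zx{p^v}$-factor and is trivial on the $\zx{N'}$-factor, which under $G/H\cong\znx$ is exactly the action used to define $S^0_{K(1)}(p^v)^{h\chi}$ in \Cref{con:twisted_j} (compare \Cref{prop:jn_znx_action}). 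Hence
\begin{equation*}
\map_{\Zp}\!\left(M(\Zp[\chi]),\Kp\right)^{hG}\simeq\map_{\Zp}\!\left(M(\Zp[\chi]),S^0_{K(1)}(p^v)\right)^{h\znx}=S^0_{K(1)}(p^v)^{h\chi}.
\end{equation*}
Both sides are $K(1)$-local: $S^0_{K(1)}(p^v)$ is $K(1)$-local, mapping spectra into $K(1)$-local spectra are $K(1)$-local, and homotopy fixed points, being a limit, preserve $K(1)$-locality.

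I expect the main obstacle to be the bookkeeping in the third step: verifying that the $\zx{N'}$-factor of $\znx$ acts only on the Moore spectrum while the residual Adams action on $(\Kp)^{h(1+p^v\Zp)}$ is precisely the Galois action entering the definition of $S^0_{K(1)}(p^v)^{h\chi}$, together with the (standard, but framework-dependent) fact that mapping out of a finite spectrum commutes with continuous homotopy fixed points.
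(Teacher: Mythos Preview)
Your proof is correct and follows essentially the same approach as the paper: both arguments use the iterated homotopy fixed point decomposition along the normal subgroup $1+p^v\Zp\subseteq\Zpx\times\zx{N'}$, identifying $(\Kp)^{h(1+p^v\Zp)}\simeq S^0_{K(1)}(p^v)$ and the quotient with $\znx$. The only cosmetic difference is direction---the paper starts from the definition of $S^0_{K(1)}(p^v)^{h\chi}$ and unwinds to the right-hand side, while you start from the right-hand side and peel off $H$-fixed points first---and you are slightly more explicit about why mapping out of the finite Moore spectrum commutes with continuous homotopy fixed points.
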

	\begin{proof}
		Recall from \Cref{defn:s_k1_pv} that $S^0_{K(1)}(p^v)=\left(\Kp\right)^{h(1+p^v\Zp)}$. From this, we have
		\begin{align*}
		S^0_{K(1)}(p^v)^{h\chi}= &\map_{\Zp}\left(M(\Zp[\chi]),S^0_{K(1)}(p^v) \right)^{h\znx}\\
		\simeq& \map_{\Zp}\left(M(\Zp[\chi]),\left(\Kp\right)^{h(1+p^v\Zp)} \right)^{h\znx}\\
		\simeq & \map_{\Zp}\left(M(\Zp[\chi]),\Kp \right)^{hG}
		\end{align*}
		where $G$ is an extension of $1+p^v$ by $\znx$. The subgroup $1+p^v\Zp$ acts trivially on the Moore spectrum and by the Adams operations on $\Kp$. Notice that the $\zx{p^v}$-action on $S^0_{K(1)}(p^v)=\left(\Kp\right)^{h(1+p^v\Zp)}$ is via the residual Adams operations by viewing $\zx{p^v}$ as a group group of $\Zpx$. The group $G$ is then isomorphic  to $\Zpx\x \zx{N'}$, with actions on $M(\Zp[\chi])$ and $\Kp$ as claimed.
	\end{proof}
	\begin{cor}\label{cor:hfpss_d_k1_gp_coh}
		Write $\chi=\chi_p\cdot \chi'$ where $\chi_p$ and $\chi'$ have conductors $p^v$ and $N'$, respectively. There is another spectral sequence to compute homotopy groups of Dirichlet $K(1)$-local spheres:
		\begin{equation}\label{eqn:D_k1_gp_coh}
		E_2^{s,2t}=\ext_{\Zp\llb \Zpx\x \zx{N'}\rrb}^s(\Zp[\chi],\Zp^{\otimes t})
		\simeq H_c^s(\Zpx\x \zx{N'};\Zp^{\otimes t}[\chi^{-1}])\Longrightarrow \pi_{2t-s}\left(S^0_{K(1)}(p^v)^{h\chi}\right),
		\end{equation}
		where $\Zp^{\otimes t}[\chi]$ is the $\Zpx\x \zx{N'}$-representation associated to the character $\Zpx\x\zx{N'}\xrightarrow{(a,b)\mapsto \chi_p(a)\chi'(b)a^t}(\Zp[\chi])^\x$.
	\end{cor}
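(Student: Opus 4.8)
The plan is to run the homotopy fixed point spectral sequence (HFPSS) for the continuous action of $G:=\Zpx\x\zx{N'}$ on the spectrum $\map_{\Zp}(M(\Zp[\chi]),\Kp)$ produced by \Cref{prop:Dirichlet_k1_Kp}, and then to rewrite its $E_2$-page exactly as in the proof of \Cref{prop:hess}. By that proposition $S^0_{K(1)}(p^v)^{h\chi}\simeq\map_{\Zp}(M(\Zp[\chi]),\Kp)^{hG}$, and the $G$-action on $\Kp$ is assembled from $p$-adic Adams operations as in \Cref{Subsec:HFPSS}, so the profinite HFPSS of \cite{fixedpt} applies:
\[
E_2^{s,u}=H_c^s\!\left(G;\pi_u\map_{\Zp}(M(\Zp[\chi]),\Kp)\right)\Longrightarrow\pi_{u-s}\!\left(S^0_{K(1)}(p^v)^{h\chi}\right).
\]

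First I would identify the coefficients. As in \Cref{prop:hess}, $M(\Zp[\chi])$ is non-equivariantly a finite wedge of spheres, so the Atiyah--Hirzebruch spectral sequence for $\pi_*\map_{\Zp}(M(\Zp[\chi]),\Kp)$ collapses; combined with the universal coefficient theorem this gives a $G$-equivariant isomorphism $\pi_u\map_{\Zp}(M(\Zp[\chi]),\Kp)\simeq\hom_{\Zp}(\Zp[\chi],\pi_u\Kp)$, where $G$ acts on $\Zp[\chi]=H^0(M(\Zp[\chi]);\Zp)$ through $\znx$ by $\chi$ (\Cref{thm:moore_spec_action}) and on $\pi_u\Kp$ through its $\Zpx$-factor by the Adams operations. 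Since $\pi_{2t}\Kp\simeq\Zp$ with $\psi^a$ acting by $a^t$ and $\pi_{\mathrm{odd}}\Kp=0$, the $E_2$-page is concentrated in even internal degree $u=2t$, where it equals $H_c^s\!\left(G;\hom_{\Zp}(\Zp[\chi],\Zp^{\otimes t})\right)$ and the edge map lands in $\pi_{2t-s}$, matching the claimed indexing. To recognize $\hom_{\Zp}(\Zp[\chi],\Zp^{\otimes t})$ as $\Zp^{\otimes t}[\chi^{-1}]$: the $G$-action on $\Zp[\chi]$ factors through the ring map $\Zp[G]\to\Zp[\chi]$, $g\mapsto\chi(g)$, exhibiting $\Zp[\chi]$ as a rank-one free module over the PID $\Zp[\chi]=\mathcal{O}_{\Qp(\chi)}$ with $G$ acting by this character; its $\Zp$-linear dual is then again free of rank one over $\Zp[\chi]$, now with $g$ acting by $\chi(g)^{-1}$, and tensoring with $\pi_{2t}\Kp$ inserts the factor $a^t$ on the $\Zpx$-part, giving precisely the module attached to $(a,b)\mapsto\chi_p(a)^{-1}\chi'(b)^{-1}a^t$.

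Finally, the identification $H_c^s\!\left(G;\hom_{\Zp}(\Zp[\chi],\Zp^{\otimes t})\right)\simeq\ext^s_{\Zp\llb G\rrb}(\Zp[\chi],\Zp^{\otimes t})$ is the Ext-form of continuous cohomology: $H_c^s(G;-)=\ext^s_{\Zp\llb G\rrb}(\Zp,-)$ by definition, and since $\Zp[\chi]$ is finite free over $\Zp$ the adjunction $\hom_{\Zp\llb G\rrb}(\Zp[\chi],-)\simeq\hom_{\Zp\llb G\rrb}\!\left(\Zp,\hom_{\Zp}(\Zp[\chi],-)\right)$ holds, with the inner functor $\hom_{\Zp}(\Zp[\chi],-)\simeq(-)\otimes_{\Zp}\Zp[\chi]^\vee$ exact and preserving injectives, so the derived functors agree. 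I expect the only delicate point to be convergence and the multiplicative structure of the HFPSS over the profinite group $G$. For $p$ odd this is the same collapse as around \eqref{HFPSS}: the prime-to-$p$ subgroup $\mu_{p-1}\x\zx{N'}$ has no higher cohomology with $\Zp$-coefficients, so the resolution has length one. For $p=2$ the $\{\pm1\}$-factor forces infinitely many nonzero columns, and one must argue convergence and settle the extension problems exactly as in the $KO_2^\wedge$ computation of \Cref{Subsec:HFPSS}; I would treat that step most carefully, while deferring the actual computation of the $E_2$-page and differentials to \Cref{Sec:pi_Dirichlet_j}.
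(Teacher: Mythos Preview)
Your proposal is correct and follows exactly the approach the paper intends: the corollary is stated without proof because it follows immediately from \Cref{prop:Dirichlet_k1_Kp} by rerunning the identification of the $E_2$-page carried out in \Cref{prop:hess}, now with $G=\Zpx\times\zx{N'}$ acting on $\Kp$ in place of $\znx$ acting on $J(N)$. Your extra care about the $\Zp[\chi]$-module structure of $\hom_{\Zp}(\Zp[\chi],\Zp)$ and about convergence at $p=2$ is more than the paper spells out, but neither point is needed for the bare statement of the spectral sequence; the convergence and extension issues are indeed handled later in \Cref{Sec:pi_Dirichlet_j} exactly as you anticipate.
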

	\begin{rem}
		When $N=p>2$ or $N=4$ and $p=2$, $M(\Zp[\chi])$ is non-equivariantly equivalent to $S^0_p$ and $\chi=\omega^a$ for some $a$. \Cref{prop:Dirichlet_k1_Kp} identifies the Dirichlet $K(1)$-local spheres $S^0_{K(1)}(p)^{h\omega^a}$ and $S^0_{K(1)}(4)^{h\omega}$ with elements of finite order in the $K(1)$-local Picard group $\Pic_{K(1)}$, first defined in \cite{HMS_picard}.
	\end{rem}
	
	\section{Computations of the Dirichlet $K(1)$-local spheres}\label{Sec:pi_Dirichlet_j}
	In this section, we compute homotopy groups of the Dirichlet $K(1)$-local spheres and $J$-spectra. By \Cref{prop:jnchi_p_decomposition}, we can recover the $p$-primary parts of the homotopy groups of Dirichlet $J$-spectra from the corresponding summands of Dirichlet $K(1)$-local spheres. Let $\chi\colon \znx\to\Cpx$ be a $p$-adic Dirichlet character of conductor $N$. The computations of $\pi_{*}\left(S^0_{K(1)}(p^v)^{h\chi}\right)$ break up into four cases:
	\begin{enumerate}
		\item $N=1$. 
		\item $N=p^v$ and $p>2$.
		\item $N=2^v$.
		\item $N$ is not a $p$-power.
	\end{enumerate}
	In the $N=1$ case, we recover the classical $K(1)$-local sphere, whose homotopy groups are computed in \eqref{k1spi} when $p>2$, and in \eqref{pis0_2} when $p=2$. When $N$ is power of $p$, we use HFPSS/HESS \eqref{eqn:hess_e2_p} and \eqref{eqn:D_k1_gp_coh} to compute homotopy groups of the Dirichlet $K(1)$-local spheres. When $N$ has prime factors other than $p$, the character $\chi$ factors as a product $\chi=\chi_p\cdot \chi'$, where $\chi_p$ has conductor $p^{v_p(N)}$. The Dirichlet $K(1)$-local spheres are contractible when $|\imag \chi'|$ is not a power of $p$. When $|\imag \chi'|$ is a power of $p$, we compute the homotopy groups of the Dirichlet $K(1)$-local spheres from its construction. 
	\subsection{The $N=p^v$ and $p>2$ case}\label{Subsec:pi_twisted_k1}
	Let's start with the $N=p>2$ case. We will compute $\pi_*\left(S_{K(1)}^0(p)^{h\chi}\right)$ when $p>2$ first using the homotopy eigen spectral sequence (HESS) \eqref{eqn:hess_e2_p}. The $E_2$-page of this spectral sequence is:
	\begin{equation}\label{HESS}
	E^{s,t}_2=\ext_{\Zp[\zpx]}^{s}\left((\Zp)_{\chi},\pi_t\left(S_{K(1)}^0(p)\right)\right)\Longrightarrow \pi_{t-s}\left(S_{K(1)}^0(p)^{h\chi}\right),
	\end{equation}
	where $a\in\zpx$ acts on $(\Zp)_\chi$ by multiplication by $\chi(a)$.
	\begin{rem}
		When $\chi$ is the trivial character $\chi^0$, we recover the HFPSS in \eqref{HFPSS}.
	\end{rem}
	Let $g\in\zpx$ be a generator. A projective resolution of $(\Zp)_\chi$ as a $\Zp[\zpx]$-module is 
	\begin{equation*}		
	\cdots\longrightarrow\Zp[\zpx]\xrightarrow{\x\left(\sum\chi(g)^{-i}g^i\right)}\Zp[\zpx]\xrightarrow{\x\left(g-\chi(g)\right)}\Zp[\zpx]\xrightarrow{g\mapsto\chi(g)}(\Zp)_\chi.
	\end{equation*}
	By \eqref{k1spi}, the homotopy groups of $S^0(p)$ are
	\begin{equation*}
	\pi_t\left(S_{K(1)}^0(p)\right)=\left\{\begin{array}{cl}
	\Z_p, & t=0 \text{ or}-1;\\
	\Z/p^{v_{p}(k)+1},& t=2k-1\neq -1;\\
	0,& \textup{otherwise.}
	\end{array}\right.
	\end{equation*}
	Descending from the Adams operations on $\left(\Kp\right)_t$,  $\zpx$ acts trivially on $\pi_0$ and $\pi_{-1}$ and by $\chi=\omega^k$ on $\pi_{2k-1}$ of $S_{K(1)}^0(p)$. A direct computation shows
	\begin{prop}
		When $\chi=\omega^a$, $a\neq0$, the $E_2$-page of \eqref{HESS} is
		\begin{equation*}
		E_2^{s,t}=\left\{\begin{array}{cl}
		\Z/p^{v_{p}(k)+1}, & s=0, t=2k-1, \textup{ and }(p-1)\mid(k-a);\\
		0,&\textup{otherwise.}
		\end{array}\right.
		\end{equation*}
		As the spectral sequence collapses on the $E_2$-page, we conclude
		\begin{equation}\label{eqn:pi_DK1_p}
		\pi_t\left(S_{K(1)}^0(p)^{h\omega^a}\right)=\left\{\begin{array}{cl}
		\Z/p^{v_{p}(k)+1}, & t=2k-1, \textup{ and }(p-1)\mid(k-a);\\
		0,& \textup{otherwise.}
		\end{array}\right.
		\end{equation}
	\end{prop}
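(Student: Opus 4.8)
The plan is to run the homotopy eigen spectral sequence (HESS) \eqref{HESS} and show it is concentrated on the line $s=0$, so that it collapses with no extension problems. The two inputs are the periodic projective resolution of $(\Zp)_\chi$ displayed above and the homotopy groups $\pi_*\left(S^0_{K(1)}(p)\right)$ from \eqref{eqn:pi_Sk1_pv} together with the $\zpx$-action on them. The first thing I would pin down is this action: it is the residual action descended from the $p$-adic Adams operations on $\Kp$, and since $\psi^g$ acts on $\left(\Kp\right)_{2k}$ by $g^k$, a generator $g\in\zpx$ acts on $\pi_{2k-1}\left(S^0_{K(1)}(p)\right)\simeq\Z/p^{v_p(k)+1}$ by multiplication by $\omega(g)^k$, i.e. $\zpx$ acts through $\omega^k$, while it acts trivially on $\pi_0$ and $\pi_{-1}$.

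Next I would compute the $E_2$-page entry by entry. Applying $\hom_{\Zp[\zpx]}(-,M)$ to the periodic resolution turns it into the $2$-periodic cochain complex $M\xrightarrow{g-\chi(g)}M\xrightarrow{N_\chi}M\xrightarrow{g-\chi(g)}\cdots$, where $N_\chi=\sum_{i=0}^{p-2}\chi(g)^{-i}g^i$ is the twisted norm. For $M=\Zp=\pi_0$ or $\pi_{-1}$ with trivial action the first differential is multiplication by the scalar $1-\chi(g)$; since $\chi=\omega^a$ with $a\not\equiv 0\bmod (p-1)$ and $g$ generates $(\Z/p)^\times$ we have $\chi(g)\equiv g^a\not\equiv 1\bmod p$, so $1-\chi(g)$ is a $p$-adic unit and the complex is exact, contributing nothing. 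For $M=\pi_{2k-1}=\Z/p^{v_p(k)+1}$ with $\zpx$ acting by $\omega^k$, the map $g-\chi(g)$ is multiplication by $\omega(g)^k-\omega(g)^a\equiv g^k-g^a\bmod p$: a unit unless $k\equiv a\bmod (p-1)$, in which case $g-\chi(g)=0$ on $M$ and $N_\chi$ is multiplication by $\sum_{i=0}^{p-2}1=p-1$, again a unit on $M$. In the first subcase the complex is exact; in the second it is $M\xrightarrow{0}M\xrightarrow{\times(p-1)}M\xrightarrow{0}\cdots$, with cohomology $M$ in degree $0$ and $0$ in all positive degrees. Conceptually this is nothing but the fact that $\Zp\supseteq\mu_{p-1}$, so $\Zp[\zpx]$ splits as a product of copies of $\Zp$ indexed by its characters and $(\Zp)_\chi$ is projective; hence $\ext^{s}\left((\Zp)_\chi,M\right)=0$ for $s>0$ and $\ext^0\left((\Zp)_\chi,M\right)$ is the $\chi$-eigenspace $\{m\in M:g\cdot m=\chi(g)m\}$.

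Assembling these computations, $E_2^{s,t}=0$ for $s>0$, and $E_2^{0,t}$ equals $\Z/p^{v_p(k)+1}$ precisely when $t=2k-1$ with $(p-1)\mid(k-a)$ and vanishes otherwise. I would also note that the case $t=-1$ is automatically excluded: $t=2k-1=-1$ forces $k=0$, and then $(p-1)\mid(0-a)$ would force $a\equiv 0$, contrary to hypothesis — which is why the conclusion carries no side condition $t\neq-1$. Since the spectral sequence is concentrated on the single line $s=0$, there are no differentials and no extension problems, and \eqref{eqn:pi_DK1_p} follows by reading off $\pi_t\left(S^0_{K(1)}(p)^{h\omega^a}\right)=E_2^{0,t}$.

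There is no genuine obstacle: the computation is forced once the HESS is in place. The only points needing care are the correct identification of the $\zpx$-action on $\pi_*\left(S^0_{K(1)}(p)\right)$ coming from the Adams operations (via \Cref{thm:S_k1} and \Cref{defn:s_k1_pv}), and the elementary observation that $1-\chi(g)$ and $\omega(g)^k-\omega(g)^a$ are $p$-adic units whenever the relevant exponents are incongruent modulo $p-1$ — both hinging on $p-1$ being prime to $p$ and on $g$ generating $(\Z/p)^\times$.
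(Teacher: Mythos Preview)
Your proof is correct and follows exactly the route the paper sketches: the paper writes down the periodic resolution of $(\Zp)_\chi$, records the $\zpx$-action on $\pi_*\left(S^0_{K(1)}(p)\right)$, and then simply says ``A direct computation shows'' the stated $E_2$-page. You have carried out that direct computation in full, and your added remark that $(\Zp)_\chi$ is projective over $\Zp[\zpx]$ (since $p\nmid|\zpx|$) is a clean conceptual shortcut for the vanishing of $\ext^{>0}$ that the paper leaves implicit.
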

	We can also use the HFPSS in \eqref{eqn:D_k1_gp_coh} to compute $\pi_*\left(S_{K(1)}^0(p)^{h\omega^a}\right)$. The $E_2$-page of this spectral sequence is:
	\begin{equation}\label{eqn:hess_Zpx_p}
	E_2^{s,2t}=\ext_{\Zp\llb\Zpx\rrb}^{s}\left((\Zp)_{\wtchi},\Zp^{\otimes t}\right)\simeq H_c^s(\Zpx; \Zp^{\otimes t}[\chi^{-1}])\Longrightarrow\pi_{2t-s}\left(S_{K(1)}^0(p)^{h\chi}\right),
	\end{equation}
	where $(\Zp)_{\wtchi}$ is the $\Zpx$-representation associated to the character $\wtchi$:
	\begin{equation*}
	\begin{tikzcd}
	\wtchi\colon \Zpx\rar[two heads]&\zpx\rar["\chi"]&\Zpx.
	\end{tikzcd}
	\end{equation*}
	The two approaches to compute $\pi_*\left(S_{K(1)}^0(p)^{h\chi}\right)$ are related by the diagram:
	\begin{equation}\label{diagram_ss}
	\begin{tikzcd}
	\ext_{\Zp[\zpx]}^{r}\left((\Zp)_{\chi},H_c^s\left(1+p\Zp;\left(\Kp\right)_t\right)\right)\dar[Rightarrow,"\text{HFPSS}"']\rar[Rightarrow,"\text{HSSS}"]&\ext_{\Zp\llb\Zpx\rrb}^{r+s}\left((\Zp)_{\wtchi},\left(\Kp\right)_t\right)\dar[Rightarrow,"\text{\eqref{eqn:hess_Zpx_p}}"]\\
	\ext_{\Zp[\zpx]}^{r}\left((\Zp)_{\chi},\pi_{t-s}\left(S_{K(1)}^0(p)\right)\right)\rar[Rightarrow,"\text{\eqref{HESS}}"']
	&\pi_{t-r-s}\left(S_{K(1)}^0(p)^{h\chi}\right)
	\end{tikzcd}
	\end{equation}
	Here, the top line is a Hochschild-Serre spectral sequence. Retrospectively from this diagram, we get when $\chi=\omega^a$, $a\neq 0$:
	\begin{equation}\label{hess_e2}
	\ext_{\Zp\llb\Zpx\rrb}^{s}\left((\Zp)_{\wtchi},\left(\Kp\right)_{2t}\right)\simeq H_c^s(\Zpx;\Zp^{\otimes t}[\chi^{-1}])=\left\{\begin{array}{cl}
	\Z/p^{v_{p}(t)+1}, & s=1, (p-1)\mid(t-a);\\
	0,& \textup{otherwise.}
	\end{array}\right.
	\end{equation}
	\textbf{When $N=p^v>p>2$}, we compute the homotopy groups of the Dirichlet $K(1)$-local spheres using \eqref{eqn:D_k1_gp_coh}. The other spectral sequence \eqref{eqn:hess_e2_p} does not quite work in this case. This is because $\cdp\left(\zx{p^v}\right)=\infty$ when $v>1$, whereas $\cdp\left(\Zpx\right)=1$. As in \Cref{prop:Dirichlet_k1_Kp}, there is an identification:
	\begin{equation*}
	S^0_{K(1)}(p^v)^{h\chi}\simeq \map_{\Zp}\left(M(\Zp[\chi]),\Kp\right)^{h\Zpx},
	\end{equation*}
	Using the resolution in $\eqref{eqn:c_res}$, we get the $E_2$-page of the HESS:
	\begin{equation}\label{eqn:hess_Zpx_pv}
	E^{s,2t}_2=H_c^s(\Zpx;\Zp^{\otimes t}[\chi^{-1}])=\left\{\begin{array}{cl}
	\Zp[\chi]\left/\left(\chi(g)-g^t\right)\right.,&s=1;\\
	0,&\textup{otherwise,} 
	\end{array}\right.
	\end{equation}
	where $g$ is a topological generator of $\Zpx$.
	\begin{lem}\label{lem:gt_chi_g_odd} Let $\chi|_{\zpx}=\omega^a$. Then
		\begin{equation*}
		\Zp[\chi]\left/\left(\chi(g)-g^{t}\right)\right.=\left\{\begin{array}{cl}
		\Z/p,& t\equiv a \mod (p-1);\\
		0,&\textup{otherwise.}
		\end{array}\right.
		\end{equation*}
	\end{lem}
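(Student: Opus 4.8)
The plan is simply to evaluate the quotient ring $\Zp[\chi]/(\chi(g)-g^{t})$ appearing in \eqref{eqn:hess_Zpx_pv}. In the case at hand $N=p^{v}$ with $p>2$ and $v\ge 2$, so $\chi$ is primitive of conductor $p^{v}$; by the primitivity criterion recalled in \Cref{exmp:Dirichlet_j_decomp}, the restriction $\chi|_{\Z/p^{v-1}}$ is injective and $\mathcal{O}:=\Zp[\chi]=\Zp[\zeta_{p^{v-1}}]$ is the ring of integers of the totally ramified extension $\Qp(\zeta_{p^{v-1}})/\Qp$ of degree $\phi(p^{v-1})$, with uniformizer $\pi:=1-\zeta_{p^{v-1}}$ and residue field $\mathcal{O}/\mathfrak{m}\simeq\Fp$.

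First I would separate the tame and wild parts of $g$. Writing $\Zpx\simeq\mu_{p-1}\x U$ with $U:=1+p\Zp$, decompose $g=\zeta_{g}\cdot u$ accordingly. Since $g$ topologically generates $\Zpx$, the factor $\zeta_{g}$ is a primitive $(p-1)$-th root of unity (the Teichm\"uller lift of $g\bmod p$) and $u$ topologically generates $U$. Because $\chi|_{\zpx}=\omega^{a}$ and $\omega$ is the Teichm\"uller character, $\chi(\zeta_{g})=\zeta_{g}^{a}$; because $\chi|_{\Z/p^{v-1}}$ is injective and $u$ generates $U$, the value $\zeta:=\chi(u)$ is a primitive $p^{v-1}$-th root of unity. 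Hence
\begin{equation*}
\chi(g)-g^{t}=\zeta_{g}^{a}\zeta-\zeta_{g}^{t}u^{t}=\zeta_{g}^{a}\bigl(\zeta-\zeta_{g}^{t-a}u^{t}\bigr),
\end{equation*}
and since $\zeta_{g}^{a}$ is a unit the ideal in question is $\bigl(\zeta_{g}^{t-a}u^{t}-\zeta\bigr)\subseteq\mathcal{O}$.

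Next I would reduce modulo the maximal ideal $\mathfrak{m}$. As $u\in 1+p\Zp$ and $\zeta\equiv 1\pmod{\mathfrak{m}}$ (since $\pi=1-\zeta$ is a uniformizer), we get $\zeta_{g}^{t-a}u^{t}-\zeta\equiv\zeta_{g}^{t-a}-1\pmod{\mathfrak{m}}$. If $t\not\equiv a\pmod{p-1}$, then $\zeta_{g}^{t-a}$ is a nontrivial $(p-1)$-th root of unity, so $\zeta_{g}^{t-a}-1$ is a unit in $\Zp$ (the $(p-1)$-th roots of unity remain pairwise distinct mod $p$); hence $\zeta_{g}^{t-a}u^{t}-\zeta$ is a unit in the local ring $\mathcal{O}$ and the quotient is $0$. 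If $t\equiv a\pmod{p-1}$, then $\zeta_{g}^{t-a}=1$ and the ideal becomes $(u^{t}-\zeta)$. Here I would compare $\pi$-adic valuations: since $u$ topologically generates $1+p\Zp$ one has $v_{p}(u^{t}-1)=v_{p}(t)+1$, hence $v_{\pi}(u^{t}-1)=\bigl(v_{p}(t)+1\bigr)\phi(p^{v-1})\ge 2$ (using $p>2$), whereas $v_{\pi}(1-\zeta)=1$; therefore $u^{t}-\zeta=(u^{t}-1)+(1-\zeta)$ has $\pi$-adic valuation exactly $1$, i.e. is again a uniformizer, and $\mathcal{O}/(u^{t}-\zeta)\simeq\mathcal{O}/\mathfrak{m}\simeq\Z/p$.

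The only mildly delicate inputs are that $u$ topologically generates $1+p\Zp$ — so that $v_{p}(u^{t}-1)=v_{p}(t)+1$ holds on the nose, which is where $p>2$ (equivalently $1+p\Zp\simeq\Zp$) enters — and the inequality $\bigl(v_{p}(t)+1\bigr)\phi(p^{v-1})\ge 2$, again forced by $p>2$; both are standard $p$-adic estimates. Everything else is bookkeeping with the splitting $\Zpx\simeq\mu_{p-1}\x(1+p\Zp)$ and the behavior of $\chi$ on the two factors.
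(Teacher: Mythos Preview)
Your proof is correct and follows essentially the same approach as the paper's: both arguments identify $\Zp[\chi]\simeq\Zp[\zeta_{p^{v-1}}]$ with uniformizer $1-\zeta_{p^{v-1}}$, then split $\chi(g)-g^{t}$ into a ``tame'' part (controlled by $\omega^{a}(g)$ versus $g^{t}$ modulo $p$) and a ``wild'' part (a unit times the uniformizer), and compare valuations. The only difference is cosmetic: you make the Teichm\"uller decomposition $g=\zeta_{g}u$ explicit and track $v_{\pi}$ precisely, whereas the paper writes the decomposition $g^{t}-\chi(g)=\omega^{a}(g)(1-\zeta_{p^{v-1}})+(g^{t}-\omega^{a}(g))$ directly and argues more tersely.
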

	\begin{proof}
		Since $\chi$ is primitive, we have $\chi(g)=\chi|_{\zpx}(g)\cdot \zeta_{p^{v-1}}=\omega^a(g)\zeta_{p^{v-1}}$. Rewrite $\chi(g)-g^{t}$ as 
		\begin{equation*}
		g^{t}-\chi(g)=g^t-\omega^a(g)\zeta_{p^{v-1}}=\omega^a(g)(1-\zeta_{p^{v-1}})+g^t-\omega^a(g).
		\end{equation*}
		As $1-\zeta_{p^{v-1}}$ is a uniformizer of $\Zp[\chi]\simeq\Zp[\zeta_{p^{v-1}}]$, $g^{t}-\chi(g)$ is invertible whenever $g^t-\omega^a(g)$ is. This happens when $t\not\equiv a\mod (p-1)$. When $t\equiv a\mod (p-1)$, $v_p(g^t-\omega^a(g))\ge 1> v_p(1-\zeta_{p^{v-1}})$, yielding
		\begin{equation*}
		(g^{t}-\chi(g))=(1-\zeta_p^{v-1})\implies \Zp[\chi]\left/\left(\chi(g)-g^{t}\right)\right.\simeq \Z/p.
		\end{equation*}
	\end{proof}
	Again let $\chi|_{\zpx}=\omega^a$. The spectral sequence collapses at the $E_2$-page and we conclude:
	\begin{equation}\label{eqn:pi_DK1_pv}
	\pi_i\left(S^0_{K(1)}(p^v)^{h\chi}\right)=\left\{\begin{array}{cl}
	\Z/p,&i=2(a+k(p-1))-1;\\
	0,&\textup{otherwise.}
	\end{array}\right.
	\end{equation}
	We can also compute $\pi_*\left(S^0_{K(1)}(p^v)^{h\chi}\right)$ from the following identification: 
	\begin{prop}\label{prop:S_k1_pv_cofib}
		$S^0_{K(1)}(p^v)^{h\chi}\simeq\mathrm{Cofib}\left(S^0_{K(1)}(p^{v-1})\to S^0_{K(1)}(p^v)\right)^{h\chi|_{\zpx}}$.
	\end{prop}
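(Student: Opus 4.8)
The plan is to obtain this equivalence by feeding the cyclotomic cofiber sequence of \Cref{con:integral_moore} through the definition \eqref{eqn:twisted_k1} of $S^0_{K(1)}(p^v)^{h\chi}$. Write $\zx{p^v}\simeq \zpx\x \Z/p^{v-1}$ and $\chi|_{\zpx}=\omega^a$. Since $\chi$ is primitive of conductor $p^v$ it factors as $\omega^a\cdot\chi'$ with $\chi'$ trivial on $\zpx$ and injective on the Sylow $p$-subgroup $\Z/p^{v-1}$, so $\Zp[\chi]\simeq \Zp[\zeta_{p^{v-1}}]$ as rings, with $\zpx$ acting by the scalar $\omega^a$ and $\Z/p^{v-1}$ by the cyclotomic representation. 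Combining \Cref{con:integral_moore} (the $p$-completed, inflated cyclotomic Moore spectrum) with \Cref{thm:moore_spec_action} (the twisted sphere $S^0_{\omega^a}$), one may take $M(\Zp[\chi])\simeq M(\Zp[\zeta_{p^{v-1}}])\wedge S^0_{\omega^a}$ as $\zx{p^v}$-spectra, where $M(\Zp[\zeta_{p^{v-1}}])$ carries the cyclotomic $\Z/p^{v-1}$-action and a trivial $\zpx$-action, and $S^0_{\omega^a}$ carries the $\omega^a$-action of $\zpx$ and a trivial $\Z/p^{v-1}$-action.

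First I would rotate and $p$-complete the defining cofiber sequence of \Cref{con:integral_moore} and smash it with $S^0_{\omega^a}$, obtaining a $\zx{p^v}$-equivariant cofiber sequence
\begin{equation*}
M(\Zp[\chi])\longrightarrow S^0_{\omega^a}\wedge (C_{p^{v-1}})_+\longrightarrow S^0_{\omega^a}\wedge (C_{p^{v-2}})_+,
\end{equation*}
with $\Z/p^{v-1}$ acting by translation on the copies of $C_{p^{k}}$ (through the surjection $\Z/p^{v-1}\to\Z/p^{k}$) and trivially on $S^0_{\omega^a}$, and $\zpx$ acting only on $S^0_{\omega^a}$. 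Applying the exact functor $\map_{\Zp}(-,S^0_{K(1)}(p^v))^{h\zx{p^v}}$ turns the last term into $S^0_{K(1)}(p^v)^{h\chi}$. For the other two terms, $S^0_{\omega^a}\wedge (C_{p^{k}})_+$ is coinduced along the subgroup $\zpx\x\big((1+p^{k+1}\Zp)/(1+p^v\Zp)\big)$, so Shapiro's lemma, the invertibility of $S^0_{\omega^a}$, and the identity $S^0_{K(1)}(p^v)^{h(1+p^{k+1}\Zp)/(1+p^v\Zp)}\simeq S^0_{K(1)}(p^{k+1})$ (from \Cref{defn:s_k1_pv}) give
\begin{equation*}
\map_{\Zp}\!\big(S^0_{\omega^a}\wedge (C_{p^{k}})_+,\,S^0_{K(1)}(p^v)\big)^{h\zx{p^v}}\simeq S^0_{K(1)}(p^{k+1})^{h\omega^a}.
\end{equation*}
Taking $k=v-1,\,v-2$ yields the cofiber sequence
\begin{equation*}
S^0_{K(1)}(p^{v-1})^{h\omega^a}\longrightarrow S^0_{K(1)}(p^{v})^{h\omega^a}\longrightarrow S^0_{K(1)}(p^v)^{h\chi}.
\end{equation*}

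To finish, I would identify the first map: it is induced by the quotient $C_{p^{v-1}}\to C_{p^{v-2}}$, which Shapiro's lemma sends to the inclusion of homotopy fixed points $(\Kp)^{h(1+p^{v-1}\Zp)}\to(\Kp)^{h(1+p^{v}\Zp)}$, i.e. to $(-)^{h\omega^a}$ applied to the standard map $S^0_{K(1)}(p^{v-1})\to S^0_{K(1)}(p^v)$. Since $(-)^{h\omega^a}=(-)^{h\chi|_{\zpx}}$ is exact, the cofiber of the first map above is $\mathrm{Cofib}(S^0_{K(1)}(p^{v-1})\to S^0_{K(1)}(p^v))^{h\chi|_{\zpx}}$, which is the assertion. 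The main obstacle I anticipate is the equivariant bookkeeping in the first two paragraphs: fixing the $\zx{p^v}$-splitting of $M(\Zp[\chi])$ and tracking the residual $\zpx$-actions through the Shapiro reductions, which needs care since the group actions on the Moore spectra are not canonical; once the pieces are set up correctly the remaining steps are formal.
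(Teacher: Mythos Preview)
Your proposal is correct and follows essentially the same approach as the paper: both feed the cyclotomic cofiber sequence of \Cref{con:integral_moore} for $M(\Zp[\zeta_{p^{v-1}}])$ through the definition of $S^0_{K(1)}(p^v)^{h\chi}$, split off the $\zpx$-twist $S^0_{\omega^a}$, and identify $\map((C_{p^k})_+,S^0_{K(1)}(p^v))^{h\Z/p^{v-1}}\simeq S^0_{K(1)}(p^{k+1})$ via the Galois tower. The only organizational difference is that the paper first takes $\Z/p^{v-1}$-fixed points (producing $\mathrm{Cofib}(S^0_{K(1)}(p^{v-1})\to S^0_{K(1)}(p^v))$ directly) and then applies $(-)^{h\chi|_{\zpx}}$, whereas you take the full $\zx{p^v}$-fixed points at once via Shapiro and then invoke exactness of $(-)^{h\omega^a}$; these are equivalent rearrangements of the same argument.
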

	\begin{proof}
	As $\chi$ is primitive of conductor $p^v$, $\Zp[\chi]=\Zp[\zeta_{p^{v-1}}]$ as an algebra. Recall from \eqref{eqn:Moore_p^v}, $M((\Zp[\zeta_{p^{v-1}}])$ is the $p$-completion of the integral Moore spectrum $M(\Z[\zeta_{p^{v-1}}])$ with a prescribed $C_{p^{v-1}}$-action. 
	As the Moore spectrum is defined by 
	\begin{equation*}
		M(\Z[\zeta_{p^{v-1}}])=\Sigma^{-1}\mathrm{Cofib}\left(S^0\bigwedge (C_{p^{v-1}})_+\longrightarrow S^0\bigwedge (C_{p^{v-2}})_+ \right),
	\end{equation*}
	we have
	\begin{align*}
		S^0_{K(1)}(p^v)^{h\chi}&=\map_{\Zp}\left(M(\Zp[\chi]),S^0_{K(1)}(p^v)\right)^{h\zx{p^v}}\\
		\simeq & \map_{\Zp}\left(S^0_{\chi|_{\zpx}},\map_{\Zp}\left(M(\Zp[\zeta_{p^{v-1}}]),S^0_{K(1)}(p^v)\right)^{h\Z/p^{v-1}}\right)^{h\zpx}\\
		\simeq &\left(\mathrm{Cofib}\left(\map((C_{p^{v-2}})_+,S^0_{K(1)}(p^v))\to\map((C_{p^{v-1}})_+,S^0_{K(1)}(p^v))\right)^{h\Z/p^{v-1}}\right)^{h\chi|_{\zpx}}\\
		\simeq &\mathrm{Cofib}\left(S^0_{K(1)}(p^{v-1})\to S^0_{K(1)}(p^v)\right)^{h\chi|_{\zpx}}.
	\end{align*}
	In the last step, we used the facts that
	\begin{align*}
		\map((C_{p^{v-2}})_+,S^0_{K(1)}(p^v))^{h\Z/p^{v-1}}\simeq S^0_{K(1)}(p^v)^{h(\Z/p^{v-1})/(\Z/p^{v-2})}&\simeq S^0_{K(1)}(p^v)^{h\Z/p}\simeq S^0_{K(1)}(p^{v-1})\\
		\map((C_{p^{v-1}})_+,S^0_{K(1)}(p^v))^{h\Z/p^{v-1}}\simeq S^0_{K(1)}(p^v)^{h(\Z/p^{v-1})/(\Z/p^{v-1})}&\simeq S^0_{K(1)}(p^v)
	\end{align*}
	\end{proof}
	Using the long exact sequence associated to this cofiber sequence, we can recover \eqref{eqn:pi_DK1_pv} from the homotopy groups $\pi_*\left(S^0_{K(1)}(p^v)\right)$ in \eqref{eqn:pi_Sk1_pv} and the spectral sequence \eqref{HESS}. Another consequence of this identification is the following:
	\begin{cor}\label{cor:Kp_Dk1}
		$\left(\Kp\right)_*\left(S^0_{K(1)}(p^{v})^{h\chi}\right)\simeq \hom_{\Zp}\left(\Zp[\chi],\left(\Kp\right)_*\right)$ as $\Zpx$-$\left(\Kp\right)_*$-modules.
	\end{cor}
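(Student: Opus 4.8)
The plan is to compute the completed $K(1)$-local $K$-homology $(\Kp)_*(X):=\pi_*L_{K(1)}(\Kp\wedge X)$ of $X:=S^0_{K(1)}(p^{v})^{h\chi}$ directly from its definition \eqref{eqn:twisted_k1} (or, equivalently, from the identification $X\simeq\map_{\Zp}(M(\Zp[\chi]),\Kp)^{h\Zpx}$ of \Cref{prop:Dirichlet_k1_Kp} with $N'=1$), using that $\Kp$ is a faithful $K(1)$-local Galois extension of $S^0_{K(1)}$ in the sense of Devinatz--Hopkins. Two standard inputs do the work: first, $\Kp$ is dualizable over $S^0_{K(1)}$ in $\Sp_{K(1)}$ (equivalently, the Tate construction of a finite group vanishes $K(1)$-locally), so $\Kp\wedge_{S^0_{K(1)}}(-)$ commutes with the $\zx{p^v}$-homotopy fixed points defining $X$; second, since $S^0_{K(1)}(p^v)$ is a $\zx{p^v}$-Galois extension of $S^0_{K(1)}$, Galois descent gives $\Kp\wedge_{S^0_{K(1)}}S^0_{K(1)}(p^v)\simeq\map(\zx{p^v},\Kp)$, under which the Galois copy of $\zx{p^v}$ acts by right translation of the index while the residual $\aut(\Kp)\cong\Zpx$ acts by left translation of the index (through $\Zpx\twoheadrightarrow\zx{p^v}$) together with Adams operations $\psi^h$ on the values.

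Using that $M(\Zp[\chi])$ is dualizable (non-equivariantly a finite wedge of $p$-complete spheres), I would move the smashing $\Kp$ through the mapping spectrum to get $\Kp\wedge_{S^0_{K(1)}}X\simeq\map(\zx{p^v},Z)^{h\zx{p^v}}$, where $Z:=\map_{\Zp}(M(\Zp[\chi]),\Kp)$ and $\zx{p^v}$ acts by right translation of the index twisted by its action $\rho$ on $M(\Zp[\chi])$ through $\chi$. The key step is the shearing self-equivalence $\Phi(f)(x):=\rho_x(f(x))$, which a direct check shows is $\zx{p^v}$-equivariant from this twisted action to the plain right-translation action; since $\map(\zx{p^v},Z)$ with plain right translation is coinduced from the trivial subgroup, its homotopy fixed points are $Z$ (the higher group cohomology vanishing makes the homotopy fixed point spectral sequence collapse). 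Hence $\Kp\wedge_{S^0_{K(1)}}X\simeq Z=\map_{\Zp}(M(\Zp[\chi]),\Kp)$, whose homotopy groups are $\hom_{\Zp}(\Zp[\chi],(\Kp)_*)$ by the collapsing Atiyah--Hirzebruch spectral sequence and universal coefficients, exactly as in the proof of \Cref{prop:hess}.

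It then remains to transport the residual $\Zpx$-action. It commutes with the $\zx{p^v}$-action being fixed, so it descends; conjugating it through $\Phi$ and the evaluation-at-the-identity identification $\map(\zx{p^v},Z)^{h\zx{p^v}}\simeq Z$, and using the fixed-point relation $f(xy)=\rho_y^{-1}f(x)$, it becomes the action $h\cdot f=\psi^h\circ f\circ\chi(h)^{-1}$ on $\hom_{\Zp}(\Zp[\chi],(\Kp)_*)$ --- that is, the diagonal action combining the Adams action on $(\Kp)_*$ with the $\chi$-action on $\Zp[\chi]$, which is the $\Zpx$-$(\Kp)_*$-module structure asserted in the corollary. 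As a consistency check, the degree-$2t$ part of this module is the representation $\Zp^{\otimes t}[\chi^{-1}]$ appearing in \Cref{cor:hfpss_d_k1_gp_coh}.

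The only real difficulty is bookkeeping: keeping the ``Galois'' copy of $\zx{p^v}$ (being fixed) cleanly separated from the residual $\Zpx$ on $K$-homology through both the Galois identification and the shearing, and checking that $\Phi$ is an equivalence of spectra intertwining the two actions, not merely an isomorphism on homotopy. A more hands-on alternative sidesteps the profinite Galois machinery via \Cref{prop:S_k1_pv_cofib}: applying $(\Kp)_*(-)$ to the cofibration $S^0_{K(1)}(p^{v-1})\to S^0_{K(1)}(p^v)$, whose $K(1)$-local $K$-homologies are the transparent $\Zpx$-modules $\map(\zx{p^{v-1}},(\Kp)_*)\hookrightarrow\map(\zx{p^v},(\Kp)_*)$, and then passing to $\chi|_{\zpx}$-eigenspaces --- an exact operation since $p\nmid\phi(p)$ --- after which a rank count identifies the eigenspace with $\hom_{\Zp}(\Zp[\zeta_{p^{v-1}}],(\Kp)_*)=\hom_{\Zp}(\Zp[\chi],(\Kp)_*)$.
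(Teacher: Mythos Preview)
Your argument is correct. The paper, however, takes the shorter route you sketch at the very end: the corollary is stated immediately after \Cref{prop:S_k1_pv_cofib} as a direct consequence of that cofiber identification, with no further proof given. Concretely, one applies $(\Kp)_*(-)$ to the cofiber sequence $S^0_{K(1)}(p^{v-1})\to S^0_{K(1)}(p^v)\to\mathrm{Cofib}$, uses the Galois identification $(\Kp)_*\bigl(S^0_{K(1)}(p^u)\bigr)\simeq\map(\zx{p^u},(\Kp)_*)$, and reads off the quotient as $\hom_{\Zp}\bigl(\Zp[\zpx]\otimes\Zp[\zeta_{p^{v-1}}],(\Kp)_*\bigr)$ via the short exact sequence \eqref{eqn:cyclo_rep_resolution}; passing to the $\chi|_{\zpx}$-eigenspace (exact since $p\nmid(p-1)$) then gives $\hom_{\Zp}(\Zp[\chi],(\Kp)_*)$ with the stated $\Zpx$-action.

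Your primary route via Galois descent and the shearing equivalence $\Phi$ is more conceptual and works uniformly without appealing to the cofiber decomposition; it is essentially the observation that smashing with $\Kp$ trivializes the Galois twist in the definition of the homotopy eigen-spectrum. The paper's route is quicker here only because \Cref{prop:S_k1_pv_cofib} has already been established for other purposes. Your bookkeeping of the residual $\Zpx$-action through the shearing is the one genuinely delicate point, and your description $h\cdot f=\psi^h\circ f\circ\chi(h)^{-1}$ is the correct one, matching the coefficient module $\Zp^{\otimes t}[\chi^{-1}]$ in \Cref{cor:hfpss_d_k1_gp_coh}.
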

	Let $\chi_1$ and $\chi_2$ be two $p$-adic Dirichlet characters of conductors $p^{v_1}$ and $p^{v_2}$, respectively. From \eqref{eqn:pi_DK1_pv}, $\pi_i\left(S^0_{K(1)}(p^{v_1})^{h\chi_1}\right)\simeq \pi_i\left(S^0_{K(1)}(p^{v_2})^{h\chi_2}\right)$ whenever $\chi_1|_{\zpx}=\chi_2|_{\zpx}$ and $v_1,v_2>1$. But this does NOT imply $S^0_{K(1)}(p^{v_1})^{h\chi_1}\simeq S^0_{K(1)}(p^{v_2})^{h\chi_2}$ as spectra.
	\begin{prop}\label{prop:comparsion_d_k1_pv}
		$S^0_{K(1)}(p^{v_1})^{h\chi_1}\simeq S^0_{K(1)}(p^{v_2})^{h\chi_2}$ iff $\chi_1|_{\zpx}=\chi_2|_{\zpx}$ and $v_1=v_2$.
	\end{prop}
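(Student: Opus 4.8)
The plan is to prove the two directions separately: the forward implication and the recovery of $\chi_i|_{\zpx}$ will come from homotopy groups, while the more delicate fact that $v_1=v_2$ will come from $\left(\Kp\right)$-homology.

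For the "if" direction, observe first that when $v_1=v_2=1$ the hypothesis $\chi_1|_{\zpx}=\chi_2|_{\zpx}$ already gives $\chi_1=\chi_2$, since a primitive character of conductor $p$ is nothing but a character of $\zpx$; so assume $v_1=v_2=v>1$. Then I would write $\chi_i=\omega^{a}\cdot\psi_i$ with $\psi_i\colon\Z/p^{v-1}\hookrightarrow\mu_{p^{v-1}}\subseteq\Zpx$ injective. Any two such $\psi_i$ differ by a power map on $p^{v-1}$-st roots of unity, so there is $\sigma\in\gal(\Qp(\zeta_{p^{v-1}})/\Qp)$ with $\chi_2=\sigma\circ\chi_1$ (the automorphism $\sigma$ fixes $\Qp$, hence the $\Zp$-valued factor $\omega^{a}$). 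Consequently $\Zp[\chi_1]\simeq\Zp[\chi_2]$ as $\Zp[\zx{p^v}]$-modules via $\sigma$, hence $M(\Zp[\chi_1])$ and $M(\Zp[\chi_2])$ are equivalent as spectra with $\zx{p^v}$-action, and applying $\map_{\Zp}\bigl(-,S^0_{K(1)}(p^v)\bigr)^{h\zx{p^v}}$ yields $S^0_{K(1)}(p^v)^{h\chi_1}\simeq S^0_{K(1)}(p^v)^{h\chi_2}$ by the same argument as in \Cref{prop:Dirichlet_J_isom_rep}.

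For the "only if" direction, suppose $f\colon S^0_{K(1)}(p^{v_1})^{h\chi_1}\xrightarrow{\ \sim\ }S^0_{K(1)}(p^{v_2})^{h\chi_2}$. I would first extract $v_1=v_2$ from $\left(\Kp\right)$-homology: by \Cref{cor:Kp_Dk1} we have $\left(\Kp\right)_*\bigl(S^0_{K(1)}(p^{v_i})^{h\chi_i}\bigr)\simeq\hom_{\Zp}\bigl(\Zp[\chi_i],\left(\Kp\right)_*\bigr)$, which is free over $\left(\Kp\right)_*$ of rank $\dim_{\Qp}\Qp(\chi_i)=\phi(p^{v_i-1})$ because $\chi_i$ is primitive of conductor $p^{v_i}$. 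Since $f$ induces an isomorphism on $\left(\Kp\right)_*$, we get $\phi(p^{v_1-1})=\phi(p^{v_2-1})$, and as $v\mapsto\phi(p^{v-1})$ is strictly increasing this forces $v_1=v_2=:v$. Next, writing $\chi_i|_{\zpx}=\omega^{a_i}$, I would read $a_i$ off the homotopy groups: by \eqref{eqn:pi_DK1_p} (when $v=1$) and \eqref{eqn:pi_DK1_pv} (when $v>1$), the set $\{\,t:\pi_t(S^0_{K(1)}(p^v)^{h\chi_i})\neq 0\,\}$ is exactly the residue class $\{\,t:t\equiv 2a_i-1\pmod{2(p-1)}\,\}$; since $f$ is an equivalence these classes coincide, so $a_1\equiv a_2\pmod{p-1}$, i.e. $\omega^{a_1}=\omega^{a_2}$ and $\chi_1|_{\zpx}=\chi_2|_{\zpx}$.

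The main obstacle is precisely the one flagged just before the statement: the homotopy groups in \eqref{eqn:pi_DK1_pv} do not depend on $v$, so for $v_1,v_2>1$ they cannot distinguish the two spectra. The fix is to pass to $\left(\Kp\right)$-homology, where \Cref{cor:Kp_Dk1} turns the $\left(\Kp\right)_*$-rank into the numerical invariant $\phi(p^{v-1})$ that recovers $v$; once that is done, everything else is bookkeeping with the computations \eqref{eqn:pi_DK1_p}, \eqref{eqn:pi_DK1_pv} and the representation-theoretic input behind \Cref{prop:Dirichlet_J_isom_rep}.
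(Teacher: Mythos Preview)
Your proof is correct, but it takes a somewhat different route from the paper's. The paper handles both directions uniformly by combining \Cref{cor:Kp_Dk1} with \Cref{lem:Sp_k1_alg_p}, which says that two $K(1)$-local spectra are equivalent if and only if their $\Kp$-homology agree as $\Zpx$-$(\Kp)_*$-modules. This reduces the whole proposition to the purely algebraic statement that $\hom_{\Zp}(\Zp[\chi_1],(\Kp)_*)\simeq\hom_{\Zp}(\Zp[\chi_2],(\Kp)_*)$ as $\Zpx$-$(\Kp)_*$-modules precisely when $v_1=v_2$ and $\chi_1|_{\zpx}=\chi_2|_{\zpx}$.

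By contrast, you prove the ``if'' direction constructively (via Galois conjugacy of characters, in the spirit of \Cref{prop:Dirichlet_J_isom_rep}) and split the ``only if'' direction into two separate invariants: the $(\Kp)_*$-\emph{rank} of the $\Kp$-homology recovers $v$, while the \emph{support of $\pi_*$} recovers $\chi|_{\zpx}$. Your argument is more elementary in that it never invokes the full detection statement \Cref{lem:Sp_k1_alg_p} (you do not need to realize a module isomorphism by a map of spectra), and it makes explicit which invariant sees which datum. The paper's argument is cleaner and more conceptual: it identifies the $\Zpx$-equivariant $\Kp$-homology as a complete invariant of $\Sp_{K(1)}$ at odd primes, a viewpoint it then reuses verbatim (with $KO^\wedge_2$ replacing $\Kp$) to prove the analogous \Cref{prop:comparsion_d_k1_2v} at $p=2$.
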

	\begin{proof}
		This follows from \Cref{cor:Kp_Dk1} and the lemma below.
	\end{proof}
		\begin{lem}[{Bousfield, \cite{Bousfield_K_local}}]\label{lem:Sp_k1_alg_p}
			Two $K(1)$-local spectra $X$ and $Y$ are equivalent iff $\left(\Kp\right)_*X\simeq \left(\Kp\right)_*Y$ as $\Zpx$-$\left(\Kp\right)_*$-modules.
		\end{lem}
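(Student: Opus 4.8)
The \textup(easy\textup) "only if" direction is immediate: an equivalence $X\simeq Y$ of spectra induces an isomorphism $\left(\Kp\right)_*X\simeq\left(\Kp\right)_*Y$ compatible with the $\Zpx$-action coming from the Adams operations on $\Kp$. For the converse, the plan is to use the $\Kp$-based Adams \textup(equivalently: descent\textup) spectral sequence internal to $\Sp_{K(1)}$. By \Cref{thm:S_k1} the extension $S^0_{K(1)}\to\Kp$ is pro-$\Zpx$-Galois, so for $K(1)$-local $X,Y$ there is a conditionally convergent spectral sequence
\begin{equation*}
E_2^{s,t}=\ext^s_{\left(\Kp\right)_*\Kp}\left(\left(\Kp\right)_*X,\Sigma^t\left(\Kp\right)_*Y\right)\Longrightarrow [X,Y]_{t-s}.
\end{equation*}
By \Cref{cor:BAutGm} and the identification $\aut(\Gmh)^\wedge_p\simeq\underline{\Zpx}$, the category of $\left(\Kp\right)_*\Kp$-comodules is the category of \textup(complete\textup) $\left(\Kp\right)_*$-modules equipped with a continuous semilinear $\Zpx$-action, and under this dictionary the $\ext$-groups are computed by continuous group cohomology $H^s_c(\Zpx;-)$. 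Now suppose $f\colon\left(\Kp\right)_*X\xrightarrow{\ \sim\ }\left(\Kp\right)_*Y$ is an isomorphism of $\Zpx$-equivariant $\left(\Kp\right)_*$-modules; it defines a class in $E_2^{0,0}$. If $f$ is a permanent cycle it is realized by a map $\tilde f\colon X\to Y$ with $\left(\Kp\right)_*\tilde f=f$ \textup(a map of positive Adams filtration vanishes on $\left(\Kp\right)_*$-homology\textup); then $\tilde f$ is a $\left(\Kp\right)_*$-isomorphism, and since any $K(1)$-local spectrum $Z$ with $\left(\Kp\right)_*Z=0$ also has $K(1)_*Z=0$ and is hence contractible, $\tilde f$ is an equivalence. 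So everything reduces to showing that the comodule isomorphism $f\in E_2^{0,0}$ survives the spectral sequence.

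\textbf{The case $p$ odd.} Here $\Zpx\simeq C_{p-1}\times\Zp$, and on $p$-complete coefficient modules the factor $C_{p-1}$ contributes no higher cohomology \textup(its order is a unit in $\Zp$\textup) while $\Zp$ has continuous cohomological dimension $1$; hence $E_2^{s,t}=H^s_c(\Zpx;-)=0$ for $s\ge 2$. Every Adams differential out of $E_r^{0,0}$ lands in $E_r^{r,r-1}$, a subquotient of $E_2^{r,r-1}=0$ for $r\ge 2$, and no differential can hit $E_r^{0,0}$; thus $E_\infty^{0,0}=E_2^{0,0}$ and $f$ is automatically a permanent cycle. This settles the odd primary case.

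\textbf{The case $p=2$.} Now $\Z_2^\x\simeq\{\pm1\}\times(1+4\Z_2)$ has infinite cohomological dimension at $2$ because of the order-two factor of complex conjugation, so the previous argument does not apply directly. The plan is to factor the descent through $KO_2^\wedge=\left(\Kp\right)^{h\{\pm1\}}$: writing $G'=1+4\Z_2\simeq\Z_2$, one has $X\simeq\left(KO_2^\wedge\wedge_{K(1)}X\right)^{hG'}$, and $KO_2^\wedge\wedge_{K(1)}X$ is a $KO_2^\wedge$-module whose homotopy $\left(KO_2^\wedge\right)_*X$ carries a residual semilinear $G'$-action. Running the mapping-spectrum spectral sequence over $G'\simeq\Z_2$ instead of over $\Z_2^\x$ again gives a two-column spectral sequence, so the argument of the odd-primary case applies verbatim once one knows that the pair $\left(\left(KO_2^\wedge\right)_*X,\,G'\text{-action}\right)$ is determined by $\left(\left(\Kp\right)_*X,\,\Z_2^\x\text{-action}\right)$. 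This is the main obstacle: it amounts to controlling the potential Tate contributions of $\{\pm1\}$ in passing from $\Kp$ to $KO_2^\wedge$. I would handle it by noting that the $\{\pm1\}$-homotopy-fixed-point spectral sequence for $\left(KO_2^\wedge\right)_*X$ is a spectral sequence of modules over the one computing $\pi_*KO_2^\wedge$ in \eqref{eqn:pi_ko}, so that its $E_2$-term, its differentials, and its extensions are all dictated by the equivariant $\left(\Kp\right)_*$-module structure of $\left(\Kp\right)_*X$ \textup(alternatively, one invokes Bousfield's analysis of the $K$-local category at $p=2$ together with the explicit $KO_2^\wedge$-module bookkeeping\textup). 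With this in hand the realization argument goes through, completing the proof.
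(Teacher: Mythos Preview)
For odd primes your argument is essentially the paper's: you run the $\Kp$-descent/HFPSS for $\map(X,Y)$, identify the $E_2$-page with continuous $\Zpx$-cohomology, use $\mathrm{cd}_p(\Zpx)=1$ to force the isomorphism $f\in E_2^{0,0}$ to survive, and conclude that the realizing map is an equivalence. The paper does exactly this (it phrases the last step as ``$\alpha$ induces an isomorphism on the $E_2$-page of the HFPSS for $\pi_*$'' rather than ``the cofiber has trivial $\Kp$-homology,'' but these are the same observation).

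Your $p=2$ section, however, is aiming at a false target. The lemma as stated is simply \emph{not true} at $p=2$: the exotic element $\Ecal_{K(1)}\in\pic_{K(1)}$ (see \Cref{rem:exotic_pic}) satisfies $(KU^\wedge_2)_*\Ecal_{K(1)}\simeq (KU^\wedge_2)_*$ as $\Z_2^\x$-$(KU^\wedge_2)_*$-modules, yet $\Ecal_{K(1)}\not\simeq S^0_{K(1)}$. This is why the paper states and uses a separate \Cref{lem:Sp_k1_alg_2} with $KO^\wedge_2$ in place of $KU^\wedge_2$; the proof of \Cref{lem:Sp_k1_alg_p} in the paper tacitly assumes $p>2$ (it is placed in the odd-prime subsection and invokes $\mathrm{cd}(\Zpx)=1$). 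Your attempted reduction fails exactly at the step you flag as ``the main obstacle'': the $\{\pm1\}$-HFPSS computing $(KO^\wedge_2)_*X$ has $E_2$-page determined by the $\Z_2^\x$-equivariant $(KU^\wedge_2)_*$-module $(KU^\wedge_2)_*X$, but its $d_3$-differentials are \emph{not} so determined---that is precisely the content of \Cref{rem:KR_FP} (``the difference is the $d_3$-differentials, which are invisible in algebra''). The pair $X=S^0_{K(1)}$, $Y=\Ecal_{K(1)}$ is the explicit counterexample to your claim that the differentials are ``dictated by the equivariant $(\Kp)_*$-module structure.''
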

		\begin{proof}
			The only if direction is clear. Let $f\colon \left(\Kp\right)_*X\simto \left(\Kp\right)_*Y$ be an isomorphism of $\Zpx$-$\left(\Kp\right)_*$-modules. There is a HFPSS to compute $\pi_*\left(\map(X,Y)\right)$:
			\begin{equation*}
			E_2^{s,t}=H_c^s\left(\Zpx;\left(\Kp\right)_t\left(\map(X,Y)\right)\right)\Longrightarrow \pi_{t-s}\left(\map(X,Y)\right).
			\end{equation*}
			The isomorphism $f$ is an element of $E_2^{0,0}$, since it is isomorphic to $\hom_{\left(\Kp\right)_*}\left(\left(\Kp\right)_*X, \left(\Kp\right)_*Y\right)^{\Zpx}$. The HFPSS collapses on the $E_2$-page, as $\mathrm{cd}(\Zpx)=1$. This implies $f\in E_2^{0,0}$ is a permanent cycle, and is represented by a map of spectra $\alpha\colon  X\to Y$ such that $\left(\Kp\right)_*\alpha=f$. 
			
			We claim $\alpha$ is a weak equivalence. As $f=\left(\Kp\right)_*\alpha$ is an isomorphism of $\Zpx$-$\left(\Kp\right)_*$-modules, $\alpha$ induces an isomorphism on the $E_2$-page of the HFPSS to compute $\pi_*(X)$ and $\pi_*(Y)$. It now follows from \cite[Theorem 5.3]{Boardman_ccss} that $\alpha$ is a weak equivalence.
		\end{proof}

	\subsection{The $N=2^v$ case}
	We start with the $N=4$ case, when the only non-trivial $2$-adic Dirichlet character of conductor $4$ is the \Teichmuller character $\omega\colon \zx{4}\to\Z_2^\x$. By \Cref{prop:Dirichlet_k1_Kp}, the Dirichlet $K(1)$-local sphere is identified with
	\begin{align}\label{eqn:hes_lift_p2}
		S_{K(1)}^0(4)^{h\omega}\simeq& \map_{\Z_2}(M(\Z_2[\omega]),KU^\wedge_{2})^{h\Z_2^\x}\\
		\simeq&\left(\map_{\Z_2}(M(\Z_2[\omega]),KU^\wedge_{2})^{h\{\pm 1\}}\right)^{h(1+4\Z_2)}\nonumber\\&=\left(\left(KU^\wedge_2\right)^{h\omega}\right)^{h(1+4\Z_2)}.\nonumber
	\end{align}
	Parallel to the computation of the classical $K(1)$-local sphere at $p=2$ in \Cref{Subsec:HFPSS}, we will first identify $\left(KU^\wedge_2\right)^{h\omega}$ geometrically.
	\begin{prop}\label{prop:imaginary_ko}
		Let $\sigma$ be the sign representation of $C_2$ on $\Z$. Define $\KR^{h\sigma}$ to be the homotopy $\sigma$-eigen-spectrum of Atiyah's $C_2$-equivariant $\KR$-spectrum in \textup{\cite{Atiyah_KR}}. Then we have an identification:
		\begin{equation*}
			\KR^{h\sigma}= \map(M(\Z[\sigma]),\KR)^{hC_2}\simeq \Sigma^2 KO.
		\end{equation*}
	\end{prop}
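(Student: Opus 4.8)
The plan is to identify $\map(M(\Z[\sigma]),\KR)$ with the cofiber of a canonical map of $C_2$-spectra whose homotopy fixed points are $KO$ and $KU$, and then invoke the classical relation $KU\simeq KO\wedge C\eta$. First I would record the two-cell model of the Moore spectrum: by \Cref{con:integral_moore} (and the $C_2$-cell structure in \Cref{fig:moore_cell}), $M(\Z[\sigma])=M(\Z[\zeta_2])$ is by definition $\Sigma^{-1}$ of the cofiber of the collapse map $(C_2)_+\to S^0$, so that $\Sigma M(\Z[\sigma])\simeq S^\sigma$ and $M(\Z[\sigma])\simeq S^{\sigma-1}$. Applying $\map(-,\KR)$ to the cofiber sequence $(C_2)_+\to S^0\to \Sigma M(\Z[\sigma])$ and rotating yields a cofiber sequence of $C_2$-spectra
\begin{equation*}
\KR \xrightarrow{\ \varepsilon\ } \map\bigl((C_2)_+,\KR\bigr) \longrightarrow \map(M(\Z[\sigma]),\KR)\simeq \Sigma^{1-\sigma}\KR,
\end{equation*}
where $\varepsilon$ is restriction along the collapse $(C_2)_+\to S^0$ and $\map((C_2)_+,\KR)$ is the coinduced $C_2$-spectrum attached to the underlying spectrum $\KR|_e\simeq KU$.

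Next I would apply $(-)^{hC_2}$, which is exact and insensitive to the genuine structure, to obtain $\KR^{h\sigma}\simeq \mathrm{cofib}(\varepsilon^{hC_2})$. The coinduction–restriction adjunction identifies $\map((C_2)_+,\KR)^{hC_2}\simeq \KR|_e\simeq KU$, while $\KR^{hC_2}\simeq KO$ — either from Atiyah's $\KR^{C_2}\simeq KO$ together with the fact that $KO\to KU$ is a faithful $C_2$-Galois extension (so genuine and homotopy fixed points agree), or directly from the convergent homotopy fixed point spectral sequence for the complex-conjugation action on $KU$. Under these identifications $\varepsilon^{hC_2}\colon \KR^{hC_2}\to \map((C_2)_+,\KR)^{hC_2}$ is the canonical map $\KR^{hC_2}\to \KR|_e$, i.e.\ the inclusion of homotopy fixed points $KU^{hC_2}\to KU$, which is exactly complexification $c\colon KO\to KU$. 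Hence $\KR^{h\sigma}\simeq \mathrm{cofib}(c\colon KO\to KU)$.

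To finish, write $C\eta=S^0\cup_\eta e^2$ for the cofiber of $\eta\colon S^1\to S^0$. The standard $KO$-module equivalence $KU\simeq KO\wedge C\eta$ carries $c$ to the map $KO\wedge S^0\to KO\wedge C\eta$ induced by $S^0\hookrightarrow C\eta$; taking cofibers and using $C\eta/S^0\simeq S^2$ gives
\begin{equation*}
\KR^{h\sigma}\simeq \mathrm{cofib}(c)\simeq KO\wedge (C\eta/S^0)\simeq \Sigma^2 KO,
\end{equation*}
as claimed.

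The main obstacle is the second step: one must carefully check that $\varepsilon^{hC_2}$ really is complexification (and not, say, the zero map) and fix orientations so that the answer is $\Sigma^2 KO$ rather than $\Sigma^{-2}KO\simeq\Sigma^6 KO$; both are standard but deserve an explicit sentence. Everything else is formal — the dualized cofiber sequence of the first step, the coinduction adjunction, and the well-known $C\eta$-resolution of $KU$. As a consistency check one can run the homotopy fixed point spectral sequence for $\Sigma^{1-\sigma}\KR$ directly: its $E_2$-page is $H^s(C_2;\pi_t KU)$ for the sign-twisted conjugation action (trivial on $\pi_{4k+2}KU$, by $-1$ on $\pi_{4k}KU$), and matching the output with $\pi_*(\Sigma^2 KO)$ forces the differentials and resolves the extensions exactly as in the $p=2$ computation of $\pi_*(S^0_{K(1)})$ above (\Cref{prop:extn_prob}); but the cofiber-sequence argument is cleaner.
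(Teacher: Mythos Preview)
Your argument is correct, but the paper's proof is much shorter and proceeds along a different line. After the common first step $M(\Z[\sigma])\simeq S^{\sigma-1}$, the paper simply invokes Atiyah's $(1+\sigma)$-periodicity of $\KR$ \cite[Theorem~2.1]{Atiyah_KR}, which gives a $C_2$-equivalence $\Sigma^{1-\sigma}\KR\simeq \Sigma^{2}\Sigma^{-1-\sigma}\KR\simeq \Sigma^2\KR$; taking $(-)^{hC_2}$ then yields $\Sigma^2 KO$ immediately. Your route avoids equivariant periodicity entirely and instead passes through the nonequivariant cofiber sequence $KO\xrightarrow{c}KU\to \Sigma^2 KO$ (equivalently, the Wood equivalence $KU\simeq KO\wedge C\eta$). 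This is a genuine alternative: it trades the input of Atiyah's theorem for the input of the Wood sequence, and it has the minor advantage of pinning down the suspension as $\Sigma^2 KO$ rather than $\Sigma^{-2}KO$ without a separate orientation check --- exactly the ambiguity the paper flags in the remark following the proposition. On the other hand, the paper's one-line use of $(1+\sigma)$-periodicity is what makes the dependence on the choice of model $S^{\sigma-1}$ versus $S^{1-\sigma}$ transparent, and that observation is used later (\Cref{rem:exotic_pic}); your cofiber-sequence proof obscures this symmetry somewhat, since replacing $S^{\sigma-1}$ by $S^{1-\sigma}$ would require redoing the identification of the relevant map with $c$ rather than its transfer-dual.
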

	\begin{proof}
		By \Cref{fig:moore_cell}, $M(\Z[\sigma])$ is $C_2$-equivariantly equivalent to $S^{\sigma-1}$. Now using the $(1+\sigma)$-periodicity of $\KR$  \cite[Theorem 2.1]{Atiyah_KR}, we have a $C_2$-equivalence
		\begin{equation*}
			\map(S^{\sigma-1},\KR)\simeq \Sigma^{1-\sigma}\KR \simeq \Sigma^2\KR.
		\end{equation*}
		The claim now follows from the equivalence $\KR^{hC_2}\simeq KO$. 
	\end{proof}
	\begin{rem}\label{rem:KR_FP}
		This statement depends on the actual model of $M(\Z[\sigma])$. If we started with $S^{1-\sigma}$, where $C_2$ also acts by the sign representation on $\pi_*(S^0)$, we would have
		\begin{equation*}
			\map(S^{1-\sigma},\KR)^{hC_2}\simeq \Sigma^{-2}KO.
		\end{equation*}
		In terms of the HFPSS computations, the $E_2$-pages of $\map(S^{\sigma-1},\KR)^{hC_2}$ and $\map(S^{1-\sigma},\KR)^{hC_2}$ are the same. The difference is the $d_3$-differentials, which are invisible in algebra. Likewise, one can check the HFPSS for
		\begin{equation*}
			\map(S^{2\sigma-2},\KR)^{hC_2}\simeq \Sigma^4KO\simeq KSp
		\end{equation*}
		has the same $E_2$-page as that for $\KR^{hC_2}\simeq KO$. Again the difference is the $d_3$-differentials that are invisible in algebra.
		\begin{figure}[ht]
			\centering
			\def\drawnodiff#1{
				\ifnum#1=0
				\else
				\ssdrop{\square} 
				\ssmove{1}{1} \ssdropbull
				\ssmove{1}{1} \ssdropbull
				\ssmove{6}{-2}
				\cnti=#1
				\advance \cnti by -1
				\drawnodiff
				{\the\cnti}
				\fi
			}
			\def\drawdiff#1{
			\ifnum#1=0
			\else
			\ssdrop{\color{red}\bullet} \ssname{a}
			\ssmove{-1}{3} 
			\ssdrop{\color{red}\bullet} \ssname{b}
			\ssgoto{a} \ssgoto{b}
			\ssstroke[color=red]
			\ssarrowhead
			\ssmove{2}{-2}
			\cntii=#1
			\advance \cntii by -1
			\drawdiff
			{\the\cntii}
			\fi
			}
			\def\drawboxdiff#1{
				\ifnum#1=0
				\else
				\ssdrop{\square} \ssname{a}
				\ssmove{-1}{3} 
				\ssdrop{\color{red}\bullet} \ssname{b}
				\ssgoto{a} \ssgoto{b}
				\ssstroke[color=red]
				\ssarrowhead
				\ssmove{2}{-2}
				\drawdiff{8}
				\ssmove{-1}{-9}
				\cnti=#1
				\advance \cnti by -1
				\drawboxdiff
				{\the\cnti}
				\fi
			}
			\begin{subfigure}[b]{.48\linewidth}
				\centering \begin{sseq}{-8...8}{0...8}				
					\ssmoveto{-8}{0}
					\drawnodiff{3}
					\ssresetprefix
					\ssmoveto{-12}{0}
					\drawboxdiff{3}		
				\end{sseq}
				\caption{$\left(\Sigma^{4k(1-\sigma)}\KR\right)^{hC_2}\simeq \Sigma^{8k}KO\simeq KO$}
			\end{subfigure}
			\begin{subfigure}[b]{.48\linewidth}
				\centering\begin{sseq}{-8...8}{0...8}				
					\ssmoveto{-4}{0}
					\drawnodiff{3}
					\ssresetprefix
					\ssmoveto{-16}{0}
					\drawboxdiff{4}		
				\end{sseq}
				\caption{$\left(\Sigma^{(4k+2)(1-\sigma)}\KR\right)^{hC_2}\simeq \Sigma^{8k+4}KO\simeq\Sigma^4KO$}
			\end{subfigure}
			\begin{subfigure}[b]{.48\linewidth}
				\centering\begin{sseq}{-8...8}{0...8}				
					\ssmoveto{-6}{0}
					\drawnodiff{3}
					\ssresetprefix
					\ssmoveto{-10}{0}
					\drawboxdiff{3}		
				\end{sseq}
				\caption{$\left(\Sigma^{(4k+1)(1-\sigma)}\KR\right)^{hC_2}\simeq\Sigma^{8k+2}KO\simeq \Sigma^2KO$}
			\end{subfigure}
			\begin{subfigure}[b]{.48\linewidth}
				\centering\begin{sseq}{-8...8}{0...8}				
					\ssmoveto{-10}{0}
					\drawnodiff{3}
					\ssresetprefix
					\ssmoveto{-14}{0}
					\drawboxdiff{4}		
				\end{sseq}
				\caption{$\left(\Sigma^{(4k+3)(1-\sigma)}\KR\right)^{hC_2}\simeq \Sigma^{8k+6}KO\simeq\Sigma^{6}KO$}
			\end{subfigure}
			\caption{$d_3$-differentials in the HFPSS for different $C_2$-actions on the complex $K$-theory spectrum }
			{(Adams grading. $\square=\Z$ and $\bullet=\Z/2$. (A) and (B) are the same as Figures 3 and 6 in \cite{KO_duality}.)}
		\end{figure}
	\end{rem}
	\begin{rem}
		A more geometric construction is the following. For any compact space $X$, $\KR^{h\sigma}(X)$ consists of virtual complex vector bundles $[E]$ over $X$ such that $\psi^{-1}([E])=[\overline{E}]=-[E]$. For any such virtual vector bundle, its tensor product with the complexification of a real vector also satisfies this condition. As a result, $\KR^{h\sigma}$ is a $KO$-module spectrum.
		
		Let $\xi$ be the tautological complex line bundle over $\CP^{1}\simeq S^2$. Then $[\xi]-[\overline{\xi}]\in \KR^{h\sigma}(S^2)$. \Cref{prop:imaginary_ko} implies the external tensor product with $\xi-\overline{\xi}$ induces an isomorphism:
		\begin{equation*}
			(\xi-\overline{\xi})\boxtimes (-)_\Cbb\colon  KO(X)\simto \KR^{h\sigma}(S^2\x X).
		\end{equation*}
		As elements in $\KR^{h\sigma}(X)$ satisfy $[\overline{E}]=-[E]$, $\KR^{h\sigma}$ can be thought of as the \emph{purely imaginary} $K$-theory, compared to the \emph{real} $K$-theory $KO\simeq \KR^{hC_2}$.
	\end{rem}
	\begin{cor}
		$\left(KU^\wedge_2\right)^{h\omega}\simeq \Sigma^2 KO^\wedge_2$ and its homotopy groups are given by:
		\begin{equation*}
		\begin{array}{c|c|c|c|c|c|c|c|c}
		i\mod 8& 0&1&2&3&4&5&6&7\\\hline
		\pi_i\left(\left(KU^\wedge_2\right)^{h\omega}\right)&0&0&\Z_2&\Z/2&\Z/2&0&\Z_2&0
		\end{array}
		\end{equation*}
	\end{cor}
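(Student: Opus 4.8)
The plan is to deduce this from \Cref{prop:imaginary_ko} by a $2$-completion argument, and then read off the homotopy groups from \eqref{eqn:pi_ko} after a degree shift. First I would unwind the definition of $\left(KU^\wedge_2\right)^{h\omega}$ coming from \eqref{eqn:hes_lift_p2}, namely $\map_{\Z_2}\left(M(\Z_2[\omega]),KU^\wedge_2\right)^{hC_2}$ where $C_2\simeq\{\pm 1\}\subseteq\Z_2^\x$ acts on $KU^\wedge_2$ by the Adams operation $\psi^{-1}$. Two identifications are needed. On the one hand, the Teichm\"uller character $\omega:\zx{4}\simeq C_2\to\Z_2^\x$ carries the generator to $-1$, so $\Z_2[\omega]$ is $\Z_2$ with the sign action, i.e. $\Z_2[\omega]\simeq\Z_2[\sigma]$; since $M(\Z[\sigma])$ is a finite spectrum this gives $M(\Z_2[\omega])\simeq M(\Z[\sigma])^\wedge_2\simeq\left(S^{\sigma-1}\right)^\wedge_2$ by \Cref{fig:moore_cell}. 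On the other hand, $\psi^{-1}$ on $KU^\wedge_2$ is complex conjugation, so $KU^\wedge_2$ with this $C_2$-action is the Borel $C_2$-spectrum underlying $\KR^\wedge_2$.

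With these in hand the computation I would carry out is
\begin{equation*}
\left(KU^\wedge_2\right)^{h\omega}:=\map_{\Z_2}\left(M(\Z_2[\omega]),KU^\wedge_2\right)^{hC_2}\simeq\left(\map\left(M(\Z[\sigma]),\KR\right)^\wedge_2\right)^{hC_2}\simeq\left(\map\left(M(\Z[\sigma]),\KR\right)^{hC_2}\right)^\wedge_2=\left(\KR^{h\sigma}\right)^\wedge_2,
\end{equation*}
using that mapping out of the finite complex $M(\Z[\sigma])$ commutes with $2$-completion and that homotopy fixed points for the finite group $C_2$ commute with $2$-completion (recorded in \Cref{rem:jn_not_gal}). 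By \Cref{prop:imaginary_ko} the right-hand side is $\left(\Sigma^2 KO\right)^\wedge_2\simeq\Sigma^2 KO^\wedge_2$, which is the asserted equivalence. The table of homotopy groups then follows immediately from $\pi_i\left(\Sigma^2 KO^\wedge_2\right)\simeq\pi_{i-2}\left(KO^\wedge_2\right)$ together with the values of $\pi_*\left(KO^\wedge_2\right)$ listed in \eqref{eqn:pi_ko}; the degrees $i\equiv 0,1,2,3,4,5,6,7\bmod 8$ correspond to $\pi_{6},\pi_{7},\pi_{0},\pi_{1},\pi_{2},\pi_{3},\pi_{4},\pi_{5}$ of $KO^\wedge_2$, i.e. $0,0,\Z_2,\Z/2,\Z/2,0,\Z_2,0$, as claimed.

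The main obstacle is making the chain of completion manipulations airtight: one must be certain that $KU^\wedge_2$ with complex conjugation really is the $2$-completion of $\KR$ as a Borel $C_2$-spectrum, so that \Cref{prop:imaginary_ko} survives completion. Since only homotopy (Borel) fixed points enter, there is no genuine-equivariant subtlety to worry about; and if one prefers to sidestep invoking \Cref{prop:imaginary_ko} through a completion argument, the proof of that proposition --- which uses only the $(1+\sigma)$-periodicity of $\KR$ and the equivalence $\KR^{hC_2}\simeq KO$, both of which descend to $\left(\KR^\wedge_2\right)^{hC_2}\simeq KO^\wedge_2$ --- can simply be repeated verbatim with $\KR$ replaced by $\KR^\wedge_2$. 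As a consistency check, the answer $\Sigma^2 KO^\wedge_2$ matches what the HFPSS for this $C_2$-action predicts once one inserts the $d_3$-pattern illustrated in \Cref{rem:KR_FP}.
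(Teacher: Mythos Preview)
Your proposal is correct and takes essentially the same approach as the paper: the corollary is stated without proof immediately after \Cref{prop:imaginary_ko}, and is meant to follow from that proposition by $2$-completion together with the shift $\pi_i\left(\Sigma^2 KO^\wedge_2\right)\simeq\pi_{i-2}\left(KO^\wedge_2\right)$ applied to \eqref{eqn:pi_ko}. You have simply made explicit the routine completion steps (finiteness of $M(\Z[\sigma])$, commutation of $(-)^{hC_2}$ with $2$-completion) that the paper leaves implicit.
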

	\begin{rem}
		The equivalence $\left(KU^\wedge_2\right)^{h\omega}\simeq \Sigma^2 KO^\wedge_2$ is NOT $(1+4\Z_2)$-equivariant.
	\end{rem}
	The next step is to compute the HFPSS:
	\begin{equation*}
		E_2^{s,t}=H_c^s\left(1+4\Z_2;\pi_t\left(\left(KU^\wedge_2\right)^{h\omega}\right)\right)\Longrightarrow \pi_{t-s}\left(S_{K(1)}^0(4)^{h\omega}\right).
	\end{equation*}
	Let $g\in 1+4\Z_2$ be a topological generator. Descending the Adams operations on $KU^\wedge_2$ to $\left(KU^\wedge_2\right)^{h\omega}$, we get $g$ acts on $\pi_{4t+2}\left(\left(KU^\wedge_2\right)^{h\omega}\right)$ by $g^{2t+1}$. The actions on the $\Z/2$-terms are trivial since $\Z/2$ has only trivial automorphism. Using the continuous resolution \eqref{eqn:c_res}, we compute the $E_2$-page of the HFPSS:
	\begin{equation}\label{eqn:hfpss_d_k1_4}
		E_2^{s,t}=H_c^s\left(1+4\Z_2;\pi_t\left(\left(KU^\wedge_2\right)^{h\omega}\right)\right)=\left\{\begin{array}{cl}
		\Z/4,&s=1, t\equiv 2\mod 4;\\
		\Z/2,& s=0,1, t\equiv 3,4\mod 8;\\
		0,&\textup{otherwise.}
		\end{array}\right.
	\end{equation}
	\begin{prop}\label{prop:extn_prob_tw}
		The extension problems of this spectral sequence are trivial.
	\end{prop}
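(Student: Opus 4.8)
The plan is to first observe that the spectral sequence \eqref{eqn:hfpss_d_k1_4} degenerates, so that only extension problems remain, and then to settle those by an $\eta$-multiplication argument modeled on \Cref{prop:extn_prob}.

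\emph{Degeneration and location of the extensions.} Since $1+4\Z_2\cong\Z_2$ has continuous cohomological dimension one, every term $E_2^{s,t}$ of \eqref{eqn:hfpss_d_k1_4} with $s\ge 2$ vanishes; there is therefore no room for any differential $d_r$ with $r\ge 2$, and $E_2=E_\infty$. On the diagonal $t-s=n$ the only potentially nonzero terms are $E_\infty^{0,n}$ and $E_\infty^{1,n+1}$, and inspecting \eqref{eqn:hfpss_d_k1_4} one sees that these are both nonzero exactly when $n\equiv 3\pmod 8$, in which case each is a copy of $\Z/2$ and one gets a short exact sequence
\[
0\to \Z/2 \longrightarrow \pi_n\!\left(S^0_{K(1)}(4)^{h\omega}\right)\longrightarrow \Z/2\to 0 .
\]
In every other degree at most one $E_\infty$-group is nonzero, so nothing needs to be checked there. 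Thus the assertion reduces to showing that this sequence splits for $n\equiv 3\pmod 8$, i.e. that $\pi_n(S^0_{K(1)}(4)^{h\omega})\cong(\Z/2)^{\oplus 2}$.

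\emph{Settling the extension.} By \Cref{prop:Dirichlet_k1_Kp}, $S^0_{K(1)}(4)^{h\omega}=\map_{\Z_2}(M(\Z_2[\omega]),S^0_{K(1)}(4))^{h\zx{4}}$ is a module over $(S^0_{K(1)}(4))^{h\zx{4}}\simeq S^0_{K(1)}$, hence over $S^0$, so $\pi_*(S^0)$ — in particular the Hopf class $\eta$ — acts on its homotopy. Under the identification $(KU^\wedge_2)^{h\omega}\simeq\Sigma^2KO^\wedge_2$ of \Cref{prop:imaginary_ko}, the term $E_\infty^{0,n}=H^0(1+4\Z_2;\pi_n((KU^\wedge_2)^{h\omega}))$ equals $\pi_{n-2}(KO^\wedge_2)=\Z/2\langle\eta\beta^{(n-3)/8}\rangle$, and the $\eta$-multiplications visible on $E_\infty$ are those of $\pi_*(KO^\wedge_2)$. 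As in \Cref{prop:extn_prob}, the point is to show that a filtration-zero class $\xi$ lifting the generator of $E_\infty^{0,n}$ has order two: were it of order four, $2\xi$ would be the nonzero element of $E_\infty^{1,n+1}$, and one plays off the $\eta$-multiplications on $\xi$ against those on $2\xi$, using the known $\eta$-action on $\pi_*(S^0_{K(1)})$, to reach a contradiction.

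To make this effective I would use the cofiber sequence obtained by applying $\map_{\Z_2}(-,KU^\wedge_2)^{h\Z_2^\x}$ to the $\zx{4}$-equivariant cofiber sequence $(\zx{4})_+\to S^0\to\Sigma M(\Z_2[\omega])$ that defines $M(\Z_2[\omega])$ in \Cref{con:integral_moore}; this yields a cofiber sequence of $S^0_{K(1)}$-modules
\[
S^0_{K(1)}\xrightarrow{\ \varrho\ }S^0_{K(1)}(4)\longrightarrow S^0_{K(1)}(4)^{h\omega},
\]
with $\varrho$ the unit of the $\zx{4}$-Galois extension. Combined with \eqref{pis0_2} and \eqref{eqn:pi_Sk1_pv}, and with the fact that $\varrho_*$ is induced by the complexification $c\colon\pi_*(KO^\wedge_2)\to\pi_*(KU^\wedge_2)$ (so is multiplication by $2$ in degrees $\equiv 4\pmod 8$), the long exact sequence recovers the extension above together with its $\pi_*(S^0)$-module structure, and the splitting should fall out of chasing $\eta$-multiplications along the sequence. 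The main obstacle is exactly this last step: unlike in \Cref{prop:extn_prob}, $S^0_{K(1)}(4)^{h\omega}$ carries no unit, so one cannot simply cite the order of $\eta\in\pi_1(S^0)$ inside the spectrum; the argument has to be routed through the module structure over $S^0_{K(1)}$ (or through the long exact sequence above and the low-dimensional stable stems), and producing an order-two class detected in filtration zero is the delicate point.
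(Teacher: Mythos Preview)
Your setup is correct — the collapse at $E_2$ by cohomological dimension and the location of the extension at $t-s\equiv 3\pmod 8$ — but you explicitly leave the key step unfinished, and your proposed cofiber-sequence route does not close the gap. If you run the long exact sequence of $S^0_{K(1)}\xrightarrow{\varrho} S^0_{K(1)}(4)\to S^0_{K(1)}(4)^{h\omega}$ in degree~$3$, the map $\varrho_3\colon\Z/8\to\Z/8$ is multiplication by~$2$ (coming from complexification $\pi_4(KO^\wedge_2)\to\pi_4(KU^\wedge_2)$), so the exact sequence reads
\[
0\longrightarrow\Z/2\longrightarrow\pi_3\bigl(S^0_{K(1)}(4)^{h\omega}\bigr)\longrightarrow\pi_2(S^0_{K(1)})=\Z/2\longrightarrow 0,
\]
which is the same unresolved extension you started with. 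Chasing $\eta$-multiples along this sequence does not obviously produce an order-two class in filtration zero either, because the only classes available in $\pi_2$ of the target lie in filtration one.

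The paper's argument is shorter and bypasses the cofiber sequence entirely. The key ingredient you are missing is the $KO^\wedge_2$-module structure on $(KU^\wedge_2)^{h\omega}\simeq\Sigma^2 KO^\wedge_2$: the nonzero element of $\pi_3\bigl((KU^\wedge_2)^{h\omega}\bigr)\cong\pi_1(KO^\wedge_2)$ is literally $\Sigma^2\eta$, an $\eta$-multiple in the $KO$-module sense, and therefore has order two. The paper then argues, by direct analogy with \Cref{prop:extn_prob}, that this permanent cycle represents an element of order two in $\pi_3\bigl(S^0_{K(1)}(4)^{h\omega}\bigr)$, so the extension splits. For $t-s=8k+3$ one replaces $\Sigma^2\eta$ by $\beta^k\cdot\Sigma^2\eta$ using the Bott element $\beta\in\pi_8(KO^\wedge_2)$. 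Working with the $KO^\wedge_2$-module structure, rather than only the $S^0$-module structure, is what lets one name the filtration-zero class as an honest $\eta$-multiple and finish the argument in one line.
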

	\begin{proof}
		We need solve the extension problems at $t-s\equiv 3 \mod 8$. The argument here is analogous to \Cref{prop:extn_prob}. As $\left(KU^\wedge_2\right)^{h\omega}\simeq\Sigma^2 KO^\wedge_2 $ is a $KO^\wedge_2$-module spectrum, we denote the non-zero element in $\pi_3\left(\left(KU^\wedge_2\right)^{h\omega}\right)$ by $\Sigma^2\eta$. This is an element of order $2$ and represents a permanent cycle in $E^{0,1}_2$ of \eqref{eqn:hfpss_d_k1_4}. As $\Sigma^2\eta$ represents an element of order $2$ in $\pi_3\left(S_{K(1)}^0(4)^{h\omega}\right)$, the extension problem is trivial. For general $t-s=8k+3$, replace $\Sigma^2\eta$ by $\beta^t\cdot\Sigma^2\eta$ in the argument above, where $\beta\in \pi_8(KO^\wedge_2)$ is the Bott element.
	\end{proof}
	From this, we conclude:
	\begin{equation}\label{eqn:Dirichlet_k1_4}
		\pi_i\left(S_{K(1)}^0(4)^{h\omega}\right)=\left\{\begin{array}{cl}
		\Z/4,&i\equiv 1\mod 4;\\
		\Z/2,&i\equiv 2,4\mod 8;\\
		\Z/2\oplus \Z/2,&i\equiv 3\mod 8;\\
		0,&\textup{otherwise}.
		\end{array}\right.
	\end{equation}
	We also record the $E_2$-page of the HESS associated to \eqref{eqn:hes_lift_p2}:
	\begin{equation}\label{eqn:hess_Zpx_4}
	\ext_{\Z_2\llb\Z_2^\x\rrb}^{s}\left((\Z_2)_{\wtomega},\left(KU^\wedge_2\right)_t\right)=\left\{\begin{array}{cl}
	\Z/4, & s=1, t\equiv 2\mod 4;\\
	\Z/2, & s>1, t\equiv 2\mod 4;\\
	\Z/2, & s>0, 4\mid t;\\
	0,& \textup{otherwise.}
	\end{array}\right.
	\end{equation}
	\begin{rem}\label{rem:exotic_pic}
		As explained in \Cref{rem:C2_action_S0}, we could have chosen $M(\Z[\zeta_2])=S^{1-\sigma}$ when defining the Dirichlet $J$-spectra and $K(1)$-local spheres. Denote the resulting homotopy eigen-spectra by 
		\begin{equation*}
			X^{h'\omega}=\map_{\Z_2}(S^{1-\sigma},X)^{hC_2},
		\end{equation*}
		where $\omega\colon C_2\simeq \zx{4}\to\Z_2^\x$ is the $2$-adic Teichm\"uller character. Then by \Cref{rem:KR_FP}, $\left(KU^\wedge_{2}\right)^{h'\omega}\simeq \Sigma^{-2}KO^\wedge_{2}$. A similar computation as above yields:
		\begin{equation*}
			\pi_i\left(S_{K(1)}^0(4)^{h'\omega}\right)=\left\{\begin{array}{cl}
			\Z/4,&i\equiv 1\mod 4;\\
			\Z/2,&i\equiv -2,0\mod 8;\\
			\Z/2\oplus \Z/2,&i\equiv -1\mod 8;\\
			0,&\textup{otherwise}.
			\end{array}\right.
		\end{equation*}
		Note that $\pi_{2k-1}\left(S^0_{K(1)}(4)^{h\chi}\right)=\pi_{2k-1}\left(S^0_{K(1)}(4)^{h'\chi}\right)$ when $(-1)^k=\chi(-1)$.
		
		Both $S_{K(1)}^0(4)^{h\omega}$ and $S_{K(1)}^0(4)^{h'\omega}$ are elements of order $4$ in the $K(1)$-local Picard group $\Pic_{K(1)}$ at prime $2$. Their difference in $\Pic_{K(1)}$ is the \textbf{exotic $K(1)$-local sphere} $\Ecal_{K(1)}$, an element whose HFPSS has the same $E_2$-page as that for the $K(1)$-local sphere.\footnote{The spectrum $\Ecal_{K(1)}$ has been referred to by different letters (like $P$) in the literature. Here we use the letter $\Ecal$ since it stands for exotic. In \Cref{thm:K1_BC_dual}, we will see it is the "error term" in the $K(1)$-local Brown-Comenetz duality at prime $2$.} One construction of this element is given in \cite[Section 9]{KO_duality}. It is also the $K(1)$-localization of a finite $CW$-spectra $\Ecal$, described in \Cref{thm:E1_BC_dual}. By identifying $\Ecal_{K(1)}$ with $\left(KSp^\wedge_{2}\right)^{h(1+4\Z_2)}$, we can compute its homotopy groups as in \eqref{pis0_2}:
		\begin{equation}\label{eqn:pi_Ek1}
			\pi_i\left(\Ecal_{K(1)}\right)=\left\{\begin{array}{cl}
			\Z_2, & i=0,-1;\\
			\Z/2, & i\equiv 4, 6 \mod 8;\\
			\Z/2\oplus \Z/2, & i\equiv 5 \mod 8;\\			
			\Z/2^{v_2(k)+3}, & i=4k-1\neq -1;\\
			0,& \textup{otherwise.}
			\end{array}\right.
		\end{equation}
	\end{rem}	
	When $p=2$ and $N=2^v>4$, we apply \Cref{prop:Dirichlet_k1_Kp} as before:
	\begin{equation*}
	S^0_{K(1)}(2^v)^{h\chi}\simeq \map_{\Z_2}\left(M(\Z_2[\chi]),KU^\wedge_2\right)^{h\Z_2^\x}.
	\end{equation*}
	\begin{lem}
		$S^0_{K(1)}(2^v)^{h\chi}\simeq\map_{\Z_2}\left(M(\Z_2[\chi]),\left(KU^\wedge_2\right)^{h\chi|_{\zx{4}}}\right)^{h(1+4\Z_2)}$.
	\end{lem}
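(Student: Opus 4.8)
The plan is to split $\Z_2^\x$ as $\{\pm1\}\times(1+4\Z_2)$, compute the homotopy fixed points of the torsion factor first, and recognize its contribution as a twisted form of $KU^\wedge_2$. Starting from the identification $S^0_{K(1)}(2^v)^{h\chi}\simeq\map_{\Z_2}\bigl(M(\Z_2[\chi]),KU^\wedge_2\bigr)^{h\Z_2^\x}$ and using that homotopy fixed points for a product of groups are computed iteratively, as already used for the classical $2$-complete $K(1)$-local sphere in \eqref{eqn:hes_lift_p2}, we get
\begin{equation*}
S^0_{K(1)}(2^v)^{h\chi}\simeq\Bigl(\map_{\Z_2}\bigl(M(\Z_2[\chi]),KU^\wedge_2\bigr)^{h\{\pm1\}}\Bigr)^{h(1+4\Z_2)}.
\end{equation*}
Here one must keep track of the $\znx\simeq\{\pm1\}\times(1+4\Z_2)/(1+2^v\Z_2)$-action on $M(\Z_2[\chi])$ described in \Cref{Subsec:Moore}: the $\{\pm1\}$-factor acts through the character $\chi|_{\zx{4}}\colon\zx{4}\simeq\{\pm1\}\to\Z_2^\x$, which is either trivial or the Teichm\"uller character $\omega$, while the $(1+4\Z_2)$-factor acts through $\chi|_{1+4\Z_2}$.

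The heart of the argument is the identification of $(1+4\Z_2)$-spectra
\begin{equation*}
\map_{\Z_2}\bigl(M(\Z_2[\chi]),KU^\wedge_2\bigr)^{h\{\pm1\}}\simeq\map_{\Z_2}\bigl(M(\Z_2[\chi]),(KU^\wedge_2)^{h\chi|_{\zx{4}}}\bigr),
\end{equation*}
after which the lemma follows by applying $(-)^{h(1+4\Z_2)}$. If $\chi|_{\zx{4}}$ is trivial then $\chi$ factors through the $(1+4\Z_2)$-factor, so the $\{\pm1\}$-action on $M(\Z_2[\chi])$ may be taken strictly trivial; then $\{\pm1\}$-fixed points pass through the mapping spectrum by the ($\{\pm1\}$-equivariant) internal hom adjunction, yielding $\map_{\Z_2}\bigl(M(\Z_2[\chi]),(KU^\wedge_2)^{h\{\pm1\}}\bigr)=\map_{\Z_2}\bigl(M(\Z_2[\chi]),KO^\wedge_2\bigr)$, and $(KU^\wedge_2)^{h\chi|_{\zx{4}}}=KO^\wedge_2$ in this case. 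If $\chi|_{\zx{4}}=\omega$ then $\chi(-1)=-1$, so $\{\pm1\}$ acts on $\pi_0 M(\Z_2[\chi])=\Z_2[\chi]$ by a direct sum of sign representations. Using the model $M(\Z_2[\omega])$, the $2$-completion of $S^{\sigma-1}$ from \Cref{fig:moore_cell}, one writes $M(\Z_2[\chi])$, $\{\pm1\}$-equivariantly, as $M(\Z_2[\omega])\wedge M_0$, where $M_0$ is $M(\Z_2[\chi])$ with the $\{\pm1\}$-action forgotten (so $M_0\simeq M(\Z_2[\chi])$ as $(1+4\Z_2)$-spectra). Commuting the $\{\pm1\}$-fixed points past $\map_{\Z_2}(M_0,-)$ — legitimate since $M_0$ has trivial $\{\pm1\}$-action — and invoking \Cref{prop:imaginary_ko} to recognize $\map_{\Z_2}\bigl(M(\Z_2[\omega]),KU^\wedge_2\bigr)^{h\{\pm1\}}=(KU^\wedge_2)^{h\omega}=(KU^\wedge_2)^{h\chi|_{\zx{4}}}$ then gives $\map_{\Z_2}\bigl(M(\Z_2[\chi]),(KU^\wedge_2)^{h\chi|_{\zx{4}}}\bigr)$, as desired.

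The main obstacle is producing the $\{\pm1\}$-equivariant splitting $M(\Z_2[\chi])\simeq M(\Z_2[\omega])\wedge M_0$ in the case $\chi|_{\zx{4}}=\omega$, made compatible with the residual $(1+4\Z_2)$-action. Since taking Moore spectra is not functorial and $2$ is not invertible, the module-level decomposition $\Z_2[\chi]\simeq\Z_2[\omega]\otimes_{\Z_2}\Z_2[\chi|_{1+4\Z_2}]$ need not lift uniquely to spectra — this is precisely the ambiguity responsible for the exotic element in \Cref{rem:exotic_pic} and \Cref{rem:C2_action_S0}. I would resolve it by returning to the explicit construction of $M(\Z_2[\chi])=M(\Z[\zeta_{2^{v-2}}])^\wedge_2$ with its $\znx$-action from \Cref{Subsec:Moore}, whose $\{\pm1\}$-action is pulled back along $\chi\colon\zx{2^v}\twoheadrightarrow C_{2^{v-2}}$, and checking directly from Rezk's equivariant cell structure (or from \Cref{prop:MZpv}) that this $\znx$-CW complex refines, after $2$-completion, into the claimed smash decomposition with the $\{\pm1\}$- and $(1+4\Z_2)$-actions distributed as stated. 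The remaining ingredients — the iterated homotopy fixed point formula of the first paragraph, the adjunction manipulations, and the identification in the even case — are formal.
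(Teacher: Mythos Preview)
Your proposal is correct and follows the same two-step strategy as the paper: decompose $\Z_2^\times\simeq\{\pm1\}\times(1+4\Z_2)$ and take $\{\pm1\}$-homotopy fixed points first. The paper compresses the key identification into a single shearing move, rewriting $\map_{\Z_2}(M(\Z_2[\chi]),KU^\wedge_2)^{h\Z_2^\times}$ as $\map_{\Z_2}\bigl(M(\Z_2[\chi]),\map_{\Z_2}(S^0_{\chi|_{\zx{4}}},KU^\wedge_2)^{h\zx{4}}\bigr)^{h(1+4\Z_2)}$ and justifying it by the character identity ``$\chi\cdot\chi|_{\zx{4}}$ is trivial on $\zx{4}$''; unwinding that step, however, is exactly your smash splitting $M(\Z_2[\chi])\simeq S^0_{\chi|_{\zx{4}}}\wedge M_0$ as $\{\pm1\}$-spectra, so the subtlety you flag is present in both arguments---you have simply made it explicit and pointed to the Rezk cell structure and \Cref{prop:MZpv} as the place to check it.
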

	\begin{proof}
		We prove the claim by breaking the $\Z_2^\x$-homotopy fixed points into two steps.
		\begin{align*}
			S^0_{K(1)}(2^v)^{h\chi}\simeq& \map_{\Z_2}\left(M(\Z_2[\chi]),KU^\wedge_2\right)^{h\Z_2^\x}\\
			\simeq & \map_{\Z_2}\left(M(\Z_2[\chi]),\map_{\Z_2}\left(S^0_{\chi|_{\zx{4}}},KU^\wedge_2\right)^{h\zx{4}}\right)^{h(1+4\Z_2)}\\
			\simeq& \map_{\Z_2}\left(M(\Z_2[\chi]),\left(KU^\wedge_2\right)^{h\chi|_{\zx{4}}}\right)^{h(1+4\Z_2)}.
		\end{align*}
		Here, $S^0_{\omega}=S^{\sigma-1}$ and $S^0_{\omega^0}=S^{0}$. In the third line, we used the fact $\chi\cdot\chi|_{\zx{4}} $ is trivial when restricted to $\zx{4}$ and is equal to $\wtchi$ when restricted to $1+4\Z_2$ .
	\end{proof}
	Let $g$ be a topological generator of $1+4\Z_2$. Denote by $\mathrm{Ann}\left(\wtchi(g)-1\right)$ the ideal of annihilators of $\wtchi(g)-1$ in $ \Z_2[\chi]/(2)$. The computation now splits into two subcases depending on the parity of $\chi$:
	\begin{itemize}
		\item When $\chi(-1)=1$, $\left(KU^\wedge_2\right)^{h\chi|_{\zx{4}}}\simeq KO^\wedge_2$. By \eqref{eqn:c_res} and \eqref{eqn:pi_ko}, $E_2$-page of the HESS is:
		\begin{align}
		E_2^{s,t}&=\ext^s_{\Z_2\llb 1+4\Z_2\rrb}\left(\Z_2[\chi],\pi_t\left(KO^\wedge_2\right)\right)\nonumber\\\label{eqn:hess_Zpx_2v_even}&=\left\{\begin{array}{cl}
		\Z_2[\chi]\left/\left(\wtchi(g)-g^{2k}\right)\right.,& s=1, t=4k;\\
		\mathrm{Ann}\left(\wtchi(g)-1\right),& s=0, t\equiv 1,2\mod 8;\\
		\Z_2[\chi]\left/\left(2,\wtchi(g)-1\right)\right.,&s=1, t\equiv 1,2\mod 8;\\
		0,&\textup{otherwise.}
		\end{array}\right.
		\end{align}
		\item When $\chi(-1)=-1$, $\left(KU^\wedge_2\right)^{h\chi|_{\zx{4}}}\simeq \Sigma^2 KO^\wedge_2$ by \Cref{prop:imaginary_ko}. The $E_2$-page of the HESS is:
		\begin{align}
		E_2^{s,t}&=\ext^s_{\Z_2\llb 1+4\Z_2\rrb}\left(\Z_2[\chi],\pi_t\left(\Sigma^2 KO^\wedge_2\right)\right)\nonumber\\\label{eqn:hess_Zpx_2v_odd}&=\left\{\begin{array}{cl}
		\Z_2[\chi]\left/\left(\wtchi(g)-g^{2k+1}\right)\right.,& s=1, t=4k+2;\\
		\mathrm{Ann}\left(\wtchi(g)-1\right),& s=0, t\equiv 3,4\mod 8;\\
		\Z_2[\chi]\left/\left(2,\wtchi(g)-1\right)\right.,&s=1, t\equiv 3,4\mod 8;\\
		0,&\textup{otherwise.}
		\end{array}\right.
		\end{align}
	\end{itemize}
	In both cases, the spectral sequences collapse  at the $E_2$-pages. Analogous to \Cref{prop:extn_prob} (\Cref{prop:extn_prob_tw}), the extension problems at $t-s\equiv 1 \mod 8$ ($t-s\equiv 3 \mod 8$, resp.) are trivial. We further simplify the formulas using the following facts about $\Z_2[\chi]$ from \Cref{prop:gal_Qp_zeta}.
	\begin{lem}\label{lem:gt_chi_g_2}
		Let $\chi$ be a primitive $2$-adic Dirichlet character of conductor $2^v\ge 8$. Let $g$ be a topological generator of $1+4\Z_2$.
		\begin{enumerate}
			\item $\Z_2[\chi]$ is a totally ramified extension of $\Z_2$ of ramification index $2^{v-3}$.
			\item $1-\wtchi(g)$ is a uniformizer of $\Z_2[\chi]$ and $\Z_2[\chi]/(1-\wtchi(g))\simeq \Z/2$.
			\item The ideal of annihilators of $\wtchi(g)-1\in \Z_2[\chi]/(2)$ is isomorphic to $\Z/2$.
			\item $\Z_2[\chi]/(\wtchi(g)-g^k)=\Z/2$ for any $k$.
		\end{enumerate}
	\end{lem}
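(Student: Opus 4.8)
The plan is to reduce all four parts to standard facts about the $2$-adic cyclotomic ring $\Z_2[\zeta_{2^{v-2}}]$, together with one short linear-algebra computation over $\mathbb{F}_2$. The first step is to pin down $\Z_2[\chi]$ and $\wtchi(g)$. Writing $\zx{2^v}\cong\{\pm1\}\times\langle5\rangle$ with $\langle5\rangle$ cyclic of order $2^{v-2}$, the character $\wtchi$ factors through the quotient $1+4\Z_2\twoheadrightarrow\langle5\rangle$. Since $\chi$ has conductor exactly $2^v$, its restriction to $\langle5\rangle$ must be nontrivial on the unique subgroup of order $2$, hence faithful on the cyclic $2$-group $\langle5\rangle$; therefore $\wtchi(g)$ has exact order $2^{v-2}$, i.e.\ it is a primitive $2^{v-2}$-th root of unity, and $\Z_2[\chi]=\Z_2[\wtchi(g)]\cong\Z_2[\zeta_{2^{v-2}}]=:\mathcal{O}$, with $1-\wtchi(g)$ corresponding to the standard element $\pi:=1-\zeta_{2^{v-2}}$. (The case $v=3$ is the degenerate one, $\mathcal{O}=\Z_2$ and $\pi=2$; it can be checked directly or read off from the same formulas.)

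Granting this, parts (1) and (2) follow from \Cref{prop:gal_Qp_zeta} and elementary local field theory: $\Q_2(\zeta_{2^{v-2}})/\Q_2$ is totally ramified of degree $\phi(2^{v-2})=2^{v-3}$, the element $\pi$ is a uniformizer of $\mathcal{O}$, and total ramification forces the residue field $\mathcal{O}/(\pi)$ to be $\mathbb{F}_2\cong\Z/2$.

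For part (3) I would compute inside $\mathcal{O}/(2)$. Setting $e=2^{v-3}$, total ramification gives that $\{1,\bar\pi,\dots,\bar\pi^{e-1}\}$ is an $\mathbb{F}_2$-basis of $\mathcal{O}/(2)$, while $v_\pi(2)=e$ forces $\bar\pi^{e}=0$ there. As $\wtchi(g)-1=-\pi$, the ideal in question is $\mathrm{Ann}_{\mathcal{O}/(2)}(\bar\pi)$; expanding a general element in the $\bar\pi$-basis, multiplication by $\bar\pi$ shifts the basis up by one and kills the top term, so the annihilator is exactly $\mathbb{F}_2\cdot\bar\pi^{e-1}\cong\Z/2$. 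For part (4) I would estimate the normalized valuation of $\wtchi(g)-g^k=(\zeta_{2^{v-2}}-1)+(1-g^k)=-\pi+(1-g^k)$: the first summand has valuation $1$, while the second lies in $\Z_2$ with $2$-adic valuation $\ge2$ (since $g^k\in1+4\Z_2$), hence $\pi$-adic valuation $\ge 2e\ge 2$ (and is $0$ when $k=0$). Thus $v_\pi(\wtchi(g)-g^k)=1$, so $\wtchi(g)-g^k$ is again a uniformizer and $\mathcal{O}/(\wtchi(g)-g^k)=\mathcal{O}/(\pi)\cong\Z/2$.

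The only genuinely delicate point is the identification in the first step — that $\Z_2[\chi]$ is the cyclotomic ring with $\wtchi(g)$ playing the role of $\zeta_{2^{v-2}}$ — and the attendant bookkeeping for $v=3$. Everything after that is standard ramification theory plus the one-line $\mathbb{F}_2$-computation in part (3), which I expect to be the step most in need of a careful write-up.
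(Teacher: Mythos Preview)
Your proposal is correct and matches the paper's approach; the paper in fact only writes out part (4), declaring (1)--(3) standard, and its argument for (4) is exactly your valuation estimate on $\wtchi(g)-g^k=(\wtchi(g)-1)+(1-g^k)$. Your treatment is more complete, supplying the identification $\Z_2[\chi]\cong\Z_2[\zeta_{2^{v-2}}]$ carefully and the $\mathbb{F}_2$-linear annihilator computation for (3), neither of which the paper spells out.
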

	\begin{proof}
		Only (4) needs a proof. Notice $\wtchi(g)=\zeta_{2^{v-2}}$ since $\chi$ is primitive. Write $\wtchi(g)-g^k=\wtchi(g)-1+1-g^k$. By (2), $\wtchi(g)-1$ is a uniformizer. On the other hand $v_2(1-g^k)\ge 2>v_2(\wtchi(g)-1)$, since $g\equiv 1\mod 4$. This implies:
		\begin{equation*}
		(\wtchi(g)-g^k)=(\wtchi(g)-1)\implies \Z_2[\chi]/(\wtchi(g)-g^k)=\Z/2.
		\end{equation*}
	\end{proof}
	\begin{prop}\label{prop:pi_dk1_2v}
		When $\chi(-1)=1$, we have		
		\begin{equation}\label{eqn:pi_dk1_2v_even}
		\pi_i\left(S^0_{K(1)}(2^v)^{h\chi}\right)=\left\{\begin{array}{cl}
		\Z/2,&i\equiv 0,2,3,7\mod 8;\\
		\Z/2\oplus \Z/2, &i\equiv 1\mod 8;\\
		0,&\textup{otherwise.}
		\end{array}\right.
		\end{equation}
		When $\chi(-1)=-1$, we have
		\begin{equation}\label{eqn:pi_dk1_2v_odd}
		\pi_i\left(S^0_{K(1)}(2^v)^{h\chi}\right)=\left\{\begin{array}{cl}
		\Z/2,&i\equiv 1,2,4,5\mod 8;\\
		\Z/2\oplus \Z/2, &i\equiv 3\mod 8;\\
		0,&\textup{otherwise.}
		\end{array}\right.
		\end{equation}
	\end{prop}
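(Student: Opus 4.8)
The plan is to feed the two $E_2$-page formulas \eqref{eqn:hess_Zpx_2v_even} and \eqref{eqn:hess_Zpx_2v_odd} through \Cref{lem:gt_chi_g_2} and collect the surviving groups degree by degree. Since $\chi$ is primitive of conductor $2^v\ge 8$, parts (2)--(4) of \Cref{lem:gt_chi_g_2} say that \emph{every} nonzero entry on the $E_2$-page is a copy of $\Z/2$: the quotients $\Z_2[\chi]/(\wtchi(g)-g^{2k})$ and $\Z_2[\chi]/(\wtchi(g)-g^{2k+1})$ are $\Z/2$ by part (4), the annihilator ideal $\mathrm{Ann}(\wtchi(g)-1)$ in $\Z_2[\chi]/(2)$ is $\Z/2$ by part (3), and $\Z_2[\chi]/(2,\wtchi(g)-1)$ is $\Z/2$ by part (2) together with the fact that $\Z_2[\chi]$ is totally ramified (part (1)), so that $(2)\subseteq(1-\wtchi(g))$ and the double quotient is just $\Z/2$.

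First I would treat the case $\chi(-1)=1$. Here \eqref{eqn:hess_Zpx_2v_even} has: in filtration $s=1$ a $\Z/2$ in every degree $t=4k$ (i.e. $t\equiv 0,4\bmod 8$) and a $\Z/2$ in degrees $t\equiv 1,2\bmod 8$; in filtration $s=0$ a $\Z/2$ in degrees $t\equiv 1,2\bmod 8$. Reading off $\pi_{t-s}$: the $s=1$, $t\equiv 0\bmod 8$ class lands in degree $\equiv 7\bmod 8$; the $s=1$, $t\equiv 4\bmod 8$ class lands in degree $\equiv 3\bmod 8$; the $s=0$, $t\equiv 1\bmod 8$ and $s=1$, $t\equiv 2\bmod 8$ classes both land in degree $\equiv 1\bmod 8$, contributing $\Z/2\oplus\Z/2$; the $s=0$, $t\equiv 2\bmod 8$ class lands in degree $\equiv 2\bmod 8$; and the $s=1$, $t\equiv 1\bmod 8$ class lands in degree $\equiv 0\bmod 8$. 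This reproduces \eqref{eqn:pi_dk1_2v_even}. Then I would run the identical bookkeeping on \eqref{eqn:hess_Zpx_2v_odd} for $\chi(-1)=-1$, shifting everything by the degree-$2$ suspension coming from $\left(KU^\wedge_2\right)^{h\chi|_{\zx{4}}}\simeq\Sigma^2 KO^\wedge_2$: the $s=1$ classes now sit in $t=4k+2$ and the $s=0$ classes in $t\equiv 3,4\bmod 8$, which after taking $t-s$ gives exactly \eqref{eqn:pi_dk1_2v_odd}.

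Two points need a word of justification rather than pure bookkeeping. The spectral sequences collapse at $E_2$ because, as in \Cref{Subsec:HFPSS}, the continuous resolution \eqref{eqn:c_res} of $\Z_2$ over $\Z_2\llb 1+4\Z_2\rrb$ has length $1$, so there is no room for differentials once we have passed to the $1+4\Z_2$-homotopy fixed points; hence $E_2=E_\infty$. The remaining issue is the extension problems, and there is at most one nontrivial one to resolve, at $t-s\equiv 1\bmod 8$ (resp. $t-s\equiv 3\bmod 8$), where two $\Z/2$'s stack up. This is settled exactly as in \Cref{prop:extn_prob} and \Cref{prop:extn_prob_tw}: the Hopf element $\eta$ (resp. $\Sigma^2\eta$) times a power of the Bott element $\beta$ represents a permanent cycle of order $2$ in filtration $0$, so the extension is split and the group is $\Z/2\oplus\Z/2$, not $\Z/4$. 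The main obstacle, such as it is, is organizing the degree shifts carefully so the $\bmod 8$ residues come out right; everything else is an immediate consequence of the lemmas already established.
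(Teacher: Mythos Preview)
Your proposal is correct and follows essentially the same approach as the paper: plug \Cref{lem:gt_chi_g_2} into the $E_2$-pages \eqref{eqn:hess_Zpx_2v_even} and \eqref{eqn:hess_Zpx_2v_odd}, observe collapse at $E_2$ because $1+4\Z_2$ has continuous cohomological dimension $1$, and resolve the single extension problem in each case by the $\eta$ (resp.\ $\Sigma^2\eta$) times Bott-power argument of \Cref{prop:extn_prob} and \Cref{prop:extn_prob_tw}. Your degree-by-degree bookkeeping is accurate and matches the statement.
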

	\begin{rem}\label{rem:j_2v_Moore_model}The computations above depend on the actual model of the $C_2$-actions on the Moore spectra:
		\begin{itemize}
			\item When $\chi(-1)=1$, if we choose $S^{2-2\sigma}$ as a model for the $C_2$-action on $S^0$ with trivial induced action on $\pi_*$, \eqref{eqn:pi_dk1_2v_even} becomes:
			\begin{equation*}
			\pi_i\left(S^0_{K(1)}(2^v)^{h'\chi}\right)=\left\{\begin{array}{cl}
			\Z/2,&i\equiv 3,4,6,7\mod 8;\\
			\Z/2\oplus \Z/2, &i\equiv 5\mod 8;\\
			0,&\textup{otherwise.}
			\end{array}\right.
			\end{equation*}
			\item When $\chi(-1)=-1$, if we choose $S^{1-\sigma}$ as a model for the $C_2$-action on $S^0$ that induces sign representations on $\pi_*$, \eqref{eqn:pi_dk1_2v_odd} becomes:
			\begin{equation*}
			\pi_i\left(S^0_{K(1)}(2^v)^{h'\chi}\right)=\left\{\begin{array}{cl}
			\Z/2,&i\equiv 0,1,5,6\mod 8;\\
			\Z/2\oplus \Z/2, &i\equiv 7\mod 8;\\
			0,&\textup{otherwise.}
			\end{array}\right.
			\end{equation*}
		\end{itemize}
		Note that $\pi_{2k-1}\left(S^0_{K(1)}(2^v)^{h\chi}\right)=\pi_{2k-1}\left(S^0_{K(1)}(2^v)^{h'\chi}\right)$ when $(-1)^k=\chi(-1)$.
	\end{rem}
	Like the odd prime case, we can recover the results in \Cref{prop:pi_dk1_2v} using the following identification:
	\begin{prop}\label{prop:S_k1_2v_cofib}
		Let $\chi$ be a primitive $2$-adic Dirichlet character of conductor $2^v>4$. When $\chi(-1)=1$, we have
		\begin{equation*}
		S^0_{K(1)}(2^v)^{h\chi}\simeq \mathrm{Cofib}\left((KO^\wedge_{2})^{h (1+2^{v-1}\Z_2)}\to (KO^\wedge_{2})^{h (1+2^{v}\Z_2)} \right).
		\end{equation*}
		If $\chi(-1)=-1$, then
		\begin{equation*}
		S^0_{K(1)}(2^v)^{h\chi}\simeq \mathrm{Cofib}\left(\left(\left(KU^\wedge_2\right)^{h\omega}\right)^{h (1+2^{v-1}\Z_2)}\to \left(\left(KU^\wedge_2\right)^{h\omega}\right)^{h (1+2^{v}\Z_2)} \right).
		\end{equation*}
	\end{prop}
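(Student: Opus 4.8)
The plan is to run the argument of \Cref{prop:S_k1_pv_cofib} at the prime $2$, using the $p=2$ cyclotomic cofiber sequence \eqref{eqn:cofib} in place of its odd-prime analogue. Abbreviate $E:=\left(KU^\wedge_2\right)^{h\chi|_{\zx{4}}}$, so that by definition $E\simeq KO^\wedge_2$ when $\chi(-1)=1$ and $E\simeq\left(KU^\wedge_2\right)^{h\omega}$ when $\chi(-1)=-1$ (the only nontrivial character of $\zx{4}\cong C_2$ being $\omega$). By the displayed lemma preceding \eqref{eqn:hess_Zpx_2v_even}, $S^0_{K(1)}(2^v)^{h\chi}\simeq\map_{\Z_2}\big(M(\Z_2[\chi]),E\big)^{h(1+4\Z_2)}$, so it suffices to produce a natural equivalence
\begin{equation*}
\map_{\Z_2}\big(M(\Z_2[\chi]),E\big)^{h(1+4\Z_2)}\simeq\mathrm{Cofib}\left(E^{h(1+2^{v-1}\Z_2)}\longrightarrow E^{h(1+2^v\Z_2)}\right)
\end{equation*}
with the right-hand map the canonical comparison of homotopy fixed points; substituting the two values of $E$ then gives the two asserted formulas.

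First I would pin down the equivariant model of $M(\Z_2[\chi])$. Since $\chi$ is primitive of conductor $2^v>4$, \Cref{prop:gal_Qp_zeta} (cf.\ \Cref{lem:gt_chi_g_2}) gives $\Z_2[\chi]\simeq\Z_2[\zeta_{2^{v-2}}]$ as a $\Z_2$-algebra and $\wtchi(g)=\zeta_{2^{v-2}}$ for a topological generator $g$ of $1+4\Z_2$; hence the $1+4\Z_2$-action on $M(\Z_2[\chi])$ factors through $(1+4\Z_2)/(1+2^v\Z_2)\cong C_{2^{v-2}}$ and there realizes the $2$-completed cyclotomic action of \Cref{con:integral_moore}. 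Applying \eqref{eqn:cofib} with $p^v$ replaced by $2^{v-2}$ and $2$-completing, there is a $1+4\Z_2$-equivariant cofiber sequence $S^0\wedge(C_{2^{v-2}})_+\to S^0\wedge(C_{2^{v-3}})_+\to\Sigma M(\Z_2[\chi])$ whose first map is induced by the quotient $C_{2^{v-2}}\twoheadrightarrow C_{2^{v-3}}$, the groups acting by left translation through $(1+4\Z_2)/(1+2^v\Z_2)$ and $(1+4\Z_2)/(1+2^{v-1}\Z_2)$ respectively.

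Next I would apply $\map_{\Z_2}(-,E)$ and then $(-)^{h(1+4\Z_2)}$: mapping out of a cofiber sequence yields a fiber sequence of mapping spectra, and homotopy fixed points preserve fiber sequences, so
\begin{equation*}
S^0_{K(1)}(2^v)^{h\chi}\simeq\mathrm{Cofib}\left(\map\big((C_{2^{v-3}})_+,E\big)^{h(1+4\Z_2)}\longrightarrow\map\big((C_{2^{v-2}})_+,E\big)^{h(1+4\Z_2)}\right).
\end{equation*}
The one computational input is the Shapiro-type identity $\map\big((G/H)_+,E\big)^{hG}\simeq E^{hH}$ for the translation action of $G=1+4\Z_2$ on $G/H$; taking $H=1+2^{v-1}\Z_2$ and $H=1+2^v\Z_2$ collapses the two terms to $E^{h(1+2^{v-1}\Z_2)}$ and $E^{h(1+2^v\Z_2)}$, with the induced arrow becoming the canonical restriction map. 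This is exactly the desired equivalence.

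The only non-formal point, and hence the main obstacle, is the equivariant bookkeeping of the first step: checking that the $1+4\Z_2$-action on $M(\Z_2[\chi])$ is precisely the one arising from \Cref{con:integral_moore} through the stated finite quotient (this is where primitivity of $\chi$ and \Cref{prop:gal_Qp_zeta} enter), and keeping the suspensions straight so that a cofiber sequence in the source becomes a cofiber of homotopy fixed point spectra with the correct direction. Everything else is the formal manipulation already used for \Cref{prop:S_k1_pv_cofib}; as there, one may cross-check the outcome against \eqref{eqn:pi_dk1_2v_even}--\eqref{eqn:pi_dk1_2v_odd} by running the long exact sequence of the cofiber sequence with $\pi_*\left(KO^\wedge_2\right)$ from \eqref{eqn:pi_ko} and the procyclic HFPSS for $E^{h(1+2^w\Z_2)}$.
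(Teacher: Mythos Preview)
Your proposal is correct and follows essentially the same approach as the paper, which simply says ``This is the same as proof of \Cref{prop:S_k1_pv_cofib}.'' You have correctly adapted that argument to $p=2$ by first invoking the lemma $S^0_{K(1)}(2^v)^{h\chi}\simeq\map_{\Z_2}\big(M(\Z_2[\chi]),E\big)^{h(1+4\Z_2)}$ with $E=\left(KU^\wedge_2\right)^{h\chi|_{\zx{4}}}$, then using the cofiber sequence \eqref{eqn:cofib} for $M(\Z_2[\zeta_{2^{v-2}}])$ together with the Shapiro identity $\map\big((G/H)_+,E\big)^{hG}\simeq E^{hH}$; this is precisely the $2$-primary translation of the manipulations in \Cref{prop:S_k1_pv_cofib}.
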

	\begin{proof}
		This is the same as proof of \Cref{prop:S_k1_pv_cofib}.
	\end{proof}
	\begin{cor}\label{cor:KO_Dk1}
		There is an equivalence of $(1+4\Z_2)$-$\left(KO^\wedge_{2}\right)_*$-modules:
		\begin{equation*}
			\left(KO^\wedge_{2}\right)_*\left(S^0_{K(1)}(2^v)^{h\chi}\right)\simeq\left\{\begin{array}{cl}
			\hom\left(\Z_2[\chi],\left(KO^\wedge_{2}\right)_*\right) & \chi(-1)=1;\\
			\hom\left(\Z_2[\chi],\left(KU^\wedge_{2}\right)^{h\omega}_*\right) & \chi(-1)=-1,\\
			\end{array}\right.
		\end{equation*}
	\end{cor}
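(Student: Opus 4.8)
The plan is to run the argument of \Cref{cor:Kp_Dk1} with \Cref{prop:S_k1_pv_cofib} replaced by \Cref{prop:S_k1_2v_cofib}. First I would apply the homology theory $(KO^\wedge_2)_*(-)$ to the cofiber sequence of \Cref{prop:S_k1_2v_cofib}. When $\chi(-1)=1$ this is
\[
(KO^\wedge_2)^{h(1+2^{v-1}\Z_2)}\longrightarrow (KO^\wedge_2)^{h(1+2^v\Z_2)}\longrightarrow S^0_{K(1)}(2^v)^{h\chi},
\]
and when $\chi(-1)=-1$ it is the same sequence with $(KU^\wedge_2)^{h\omega}$ in place of $KO^\wedge_2$. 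In either case the first two terms are finite, hence $K(1)$-locally dualizable, Galois extensions of $S^0_{K(1)}$, so the task reduces to computing their $(KO^\wedge_2)$-homology together with the connecting map.

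Take $\chi(-1)=1$ first. Here $(KO^\wedge_2)^{h(1+2^w\Z_2)}=(KU^\wedge_2)^{h(\{\pm1\}\x(1+2^w\Z_2))}$ is a $\Z_2^\x/(\{\pm1\}\x(1+2^w\Z_2))\cong\Z/2^{w-2}$-Galois extension of $S^0_{K(1)}=(KU^\wedge_2)^{h\Z_2^\x}$. Smashing $KO^\wedge_2$ with it and using the double-coset formula for the $K(1)$-local $\Z_2^\x$-action on $KU^\wedge_2$ (which simplifies since $\Z_2^\x$ is abelian) gives $KO^\wedge_2\wedge(KO^\wedge_2)^{h(1+2^w\Z_2)}\simeq\prod_{(1+4\Z_2)/(1+2^w\Z_2)}KO^\wedge_2$, so that
\[
(KO^\wedge_2)_*\left((KO^\wedge_2)^{h(1+2^w\Z_2)}\right)\cong\hom_{\Z_2}\left(\Z_2[(1+4\Z_2)/(1+2^w\Z_2)],(KO^\wedge_2)_*\right)
\]
as $(1+4\Z_2)$-modules over $(KO^\wedge_2)_*$, with $1+4\Z_2$ acting by the regular representation of the quotient twisted by the residual Adams action. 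Under these identifications the map induced by the inclusion of fixed points $(KO^\wedge_2)^{h(1+2^{v-1}\Z_2)}\hookrightarrow(KO^\wedge_2)^{h(1+2^v\Z_2)}$ is the $\Z_2$-dual of the surjection $\Z_2[(1+4\Z_2)/(1+2^v\Z_2)]\twoheadrightarrow\Z_2[(1+4\Z_2)/(1+2^{v-1}\Z_2)]$ and hence injective, so $(KO^\wedge_2)_*S^0_{K(1)}(2^v)^{h\chi}$ is its cokernel. Dualizing the ($p=2$) cyclotomic resolution \eqref{eqn:cyclo_rep_resolution} identifies that cokernel with $\hom_{\Z_2}(\Z_2[\zeta_{2^{v-2}}],(KO^\wedge_2)_*)$, which is $\hom_{\Z_2}(\Z_2[\chi],(KO^\wedge_2)_*)$ because a primitive $\chi$ of conductor $2^v$ satisfies $\Z_2[\chi]\simeq\Z_2[\zeta_{2^{v-2}}]$ (\Cref{exmp:Dirichlet_j_decomp}); the identification is $(1+4\Z_2)$-equivariant since \eqref{eqn:cyclo_rep_resolution} is a resolution of $(1+4\Z_2)$-representations by $\Z_2$-free modules, so no $\ext^1_{\Z_2}$-terms intervene. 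The case $\chi(-1)=-1$ is formally the same after replacing $KO^\wedge_2$ everywhere by $(KU^\wedge_2)^{h\omega}\simeq\Sigma^2KO^\wedge_2$: there the residual $\{\pm1\}$ acts by $-1$ on $\Z_2[\chi]$ (as $\chi(-1)=-1$), which is exactly what promotes the coefficient ring from $KO^\wedge_2$ to $(KU^\wedge_2)^{h\omega}$.

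A second, slightly more conceptual route avoids \Cref{prop:S_k1_2v_cofib}: from $S^0_{K(1)}(2^v)^{h\chi}\simeq\map_{\Z_2}(M(\Z_2[\chi]),KU^\wedge_2)^{h\Z_2^\x}$, the finiteness of $M(\Z_2[\chi])$ and the shear isomorphism for the pro-Galois extension $S^0_{K(1)}\to KU^\wedge_2$ give $(KU^\wedge_2)_*S^0_{K(1)}(2^v)^{h\chi}\cong\hom_{\Z_2}(\Z_2[\chi],(KU^\wedge_2)_*)$ with the diagonal $\Z_2^\x$-action — the $p=2$ instance of \Cref{cor:Kp_Dk1} — and one then descends along the $C_2$-Galois extension $KO^\wedge_2\to KU^\wedge_2$ via the equivalence $KO^\wedge_2\wedge X\simeq(KU^\wedge_2\wedge X)^{hC_2}$, valid for $K(1)$-local $X$ because homotopy fixed points for finite groups agree with homotopy orbits $K(1)$-locally (cf. \Cref{rem:jn_not_gal}). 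I expect the main obstacle to be bookkeeping rather than any single hard computation: one must keep track of the residual $C_2=\{\pm1\}$- and $(1+4\Z_2)$-actions carefully enough to see that the correct coefficient ring is $KO^\wedge_2$ precisely when $\chi$ is even and $(KU^\wedge_2)^{h\omega}$ when $\chi$ is odd, and — along the descent route — to carry out the $d_3$-differential in the $KU^\wedge_2\Rightarrow KO^\wedge_2$ homotopy fixed point spectral sequence (equivalently, to verify the exactness that makes the $(KO^\wedge_2)$-homology of the cofiber a genuine cokernel), which is exactly the point at which the statement splits into its two parity cases.
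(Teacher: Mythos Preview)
Your first approach is exactly what the paper intends: \Cref{cor:KO_Dk1} is stated without proof immediately after \Cref{prop:S_k1_2v_cofib}, and the implied argument is precisely to apply $(KO^\wedge_2)_*$ to that cofiber sequence, identify the $(KO^\wedge_2)$-homology of the intermediate Galois extensions via the double-coset formula, and read off the cokernel using the cyclotomic resolution \eqref{eqn:cyclo_rep_resolution} --- just as you outline, and in direct parallel with how \Cref{cor:Kp_Dk1} follows from \Cref{prop:S_k1_pv_cofib}. Your second route, computing $(KU^\wedge_2)_*$ first and then descending along the $C_2$-Galois extension $KO^\wedge_2\to KU^\wedge_2$, is a legitimate alternative that the paper does not pursue; it trades the cofiber-sequence bookkeeping for the $\KR$-type HFPSS, and as you correctly note the parity split emerges there from whether $\{\pm1\}$ acts trivially or by sign on $\Z_2[\chi]$, which twists the $C_2$-action on $(KU^\wedge_2)_*$ into the one computing $(KO^\wedge_2)_*$ or $((KU^\wedge_2)^{h\omega})_*$ respectively.
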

	This computation leads to a $p=2$ version of \Cref{prop:comparsion_d_k1_pv}. Let $\chi_1$ and $\chi_2$ be two $2$-adic Dirichlet characters of conductors $2^{v_1}$ and $2^{v_2}$, respectively. From \Cref{prop:pi_dk1_2v}, $\pi_i\left(S^0_{K(1)}(2^{v_1})^{h\chi_1}\right)\simeq \pi_i\left(S^0_{K(1)}(2^{v_2})^{h\chi_2}\right)$ whenever $\chi_1(-1)=\chi_2(-1)$ and $v_1,v_2>2$. But this DOES NOT imply $S^0_{K(1)}(2^{v_1})^{h\chi_1}\simeq S^0_{K(1)}(2^{v_2})^{h\chi_2}$ as spectra. We have the $p=2$ version of \Cref{prop:comparsion_d_k1_pv}:
	\begin{prop}\label{prop:comparsion_d_k1_2v}
		$S^0_{K(1)}(2^{v_1})^{h\chi_1}\simeq S^0_{K(1)}(2^{v_2})^{h\chi_2}$ iff $\chi_1(-1)=\chi_2(-1)$ and $v_1=v_2$.
	\end{prop}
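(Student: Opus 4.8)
The plan is to follow the template of the odd-primary \Cref{prop:comparsion_d_k1_pv}, with $(KO^\wedge_2)_*$ playing the role that $(\Kp)_*$ plays there: once the appropriate $2$-local detection statement is in place, sufficiency is immediate from \Cref{cor:KO_Dk1}, and necessity amounts to recovering the pair $(\chi(-1),v)$ from the $(1+4\Z_2)$-$(KO^\wedge_2)_*$-module $(KO^\wedge_2)_*\bigl(S^0_{K(1)}(2^v)^{h\chi}\bigr)$. Note that all the spectra involved are $K(1)$-local.

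The first step is to record the $p=2$ analogue of \Cref{lem:Sp_k1_alg_p}: two $K(1)$-local spectra $X,Y$ at the prime $2$ are equivalent if and only if $(KO^\wedge_2)_*X\simeq (KO^\wedge_2)_*Y$ as $(1+4\Z_2)$-$(KO^\wedge_2)_*$-modules. The proof mimics that of \Cref{lem:Sp_k1_alg_p}, but uses $KO^\wedge_2\simeq (KU^\wedge_2)^{h\{\pm1\}}$ and the splitting $\Z_2^\x\simeq\{\pm1\}\times(1+4\Z_2)$ to replace the profinite group $\Z_2^\x$, whose continuous mod-$2$ cohomological dimension is infinite, by the procyclic group $1+4\Z_2$. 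Concretely: $KO^\wedge_2$ is Bousfield equivalent to $K(1)$ at $p=2$ (from the Wood cofiber sequence $\Sigma KO\xrightarrow{\eta}KO\to KU$ and nilpotence of $\eta$), and $(KO^\wedge_2)^{h(1+4\Z_2)}\simeq S^0_{K(1)}$, so for $K(1)$-local $X,Y$ the ($K(1)$-local) function spectrum $\map(X,Y)$ carries a conditionally convergent HFPSS
\begin{equation*}
E_2^{s,t}=H^s_c\bigl(1+4\Z_2;(KO^\wedge_2)_t\map(X,Y)\bigr)\Longrightarrow\pi_{t-s}\map(X,Y)
\end{equation*}
which collapses onto the lines $s=0,1$. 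An isomorphism $f\in\hom_{(KO^\wedge_2)_*}\bigl((KO^\wedge_2)_*X,(KO^\wedge_2)_*Y\bigr)^{1+4\Z_2}=E_2^{0,0}$ is then a permanent cycle, realized by a map $\alpha\colon X\to Y$ with $(KO^\wedge_2)_*\alpha=f$, and $\alpha$ is an equivalence by comparison of the HFPSS computing $\pi_*X$ and $\pi_*Y$ together with \cite[Theorem 5.3]{Boardman_ccss}, exactly as before.

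Granting this, sufficiency follows quickly. If $\chi_1(-1)=\chi_2(-1)$ and $v_1=v_2=v$, then for $v\le 2$ one has $\chi_1=\chi_2$ and there is nothing to prove, while for $v>2$ the restrictions $\widetilde\chi_1,\widetilde\chi_2$ to $1+4\Z_2$ are primitive characters valued in $\mu_{2^{v-2}}$, hence conjugate under $\gal(\Q_2(\zeta_{2^{v-2}})/\Q_2)$; such a Galois automorphism provides an isomorphism $\Z_2[\chi_1]\simeq\Z_2[\chi_2]$ of modules over $\Z_2\llb 1+4\Z_2\rrb$, so \Cref{cor:KO_Dk1} yields an isomorphism of $(1+4\Z_2)$-$(KO^\wedge_2)_*$-modules and the detection lemma applies. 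For necessity, an equivalence $S^0_{K(1)}(2^{v_1})^{h\chi_1}\simeq S^0_{K(1)}(2^{v_2})^{h\chi_2}$ induces an isomorphism of the corresponding $(1+4\Z_2)$-$(KO^\wedge_2)_*$-modules; from \Cref{cor:KO_Dk1} the $\Z_2$-rank in a free degree is $[\Q_2(\chi_i):\Q_2]$, which by \Cref{lem:gt_chi_g_2}(1) is the ramification index $2^{v_i-3}$ for $v_i\ge 3$, while the set of degrees mod $8$ in which the module is supported (the $KO^\wedge_2$-pattern $\{0,1,2,4\}$ versus its $\Sigma^2$-shift $\{2,3,4,6\}$) records whether the coefficient spectrum is $KO^\wedge_2$ or $(KU^\wedge_2)^{h\omega}$, i.e.\ the sign $\chi_i(-1)$. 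The finitely many remaining cases with some $v_i\le 2$ are separated by inspecting \eqref{eqn:Dirichlet_k1_4} and $\pi_*S^0_{K(1)}$ directly (for instance, a nonzero $\pi_1$ of order $4$ forces conductor $4$, hence pins down $v_i=2$ versus $v_i\ge 3$ when the parities agree).

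The main obstacle is the $p=2$ detection lemma: one must check carefully that $(KO^\wedge_2)_*$—equivalently $(KU^\wedge_2)_*$ with its $\Z_2^\x$-action, repackaged through the $\{\pm1\}$-homotopy fixed points so that the residual group $1+4\Z_2$ has finite cohomological dimension—is conservative on $2$-local $K(1)$-local spectra and that the descent spectral sequence above converges. This is the point at which the argument of \Cref{lem:Sp_k1_alg_p} genuinely needs to be re-run rather than merely quoted. The subsequent bookkeeping that extracts $v$ and $\chi(-1)$ from the module structure, and the treatment of the low-conductor cases, is then routine, paralleling the uniformizer computation behind \Cref{lem:gt_chi_g_2}.
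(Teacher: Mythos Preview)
Your approach is essentially the same as the paper's: the paper's proof is the one-line ``This follows from \Cref{cor:KO_Dk1} and the $p=2$ version of \Cref{lem:Sp_k1_alg_p}'', and it records that $p=2$ detection statement separately as \Cref{lem:Sp_k1_alg_2}, which is exactly the lemma you sketch. Your extraction of $v$ and $\chi(-1)$ from the $(1+4\Z_2)$-$(KO^\wedge_2)_*$-module structure is more explicit than the paper, which leaves this implicit; note only that \Cref{cor:KO_Dk1} is stated for $2^v>4$, so the $v=2$ case needs the direct computation of $(KO^\wedge_2)_*\bigl(S^0_{K(1)}(4)^{h\omega}\bigr)$ rather than an appeal to that corollary, and that the cleanest way to read off $v$ is from the $(1+4\Z_2)$-annihilator of $\Z_2[\chi]$ (equivalently the kernel $1+2^v\Z_2$ of the action) rather than from rank plus a separate low-conductor check.
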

	\begin{proof}
		This follows from \Cref{cor:KO_Dk1} and the $p=2$ version of \Cref{lem:Sp_k1_alg_p}.
	\end{proof}
	\begin{lem}\label{lem:Sp_k1_alg_2}
		Let $X$ and $Y$ be two $K(1)$-local spectra and $p=2$. Then $X\simeq Y$ iff $\left(KO^\wedge_{2}\right)_*X\simeq \left(KO^\wedge_{2}\right)_*Y$ as $(1+4\Z_2)$-$\left(KO^\wedge_{2}\right)_*$-modules.
	\end{lem}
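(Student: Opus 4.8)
The plan is to repeat the proof of \Cref{lem:Sp_k1_alg_p} almost verbatim, replacing the Morava $E$-theory $\Kp$ by $KO^\wedge_2$ and the Morava stabilizer group $\Z_2^\x$ by the torsion-free quotient $1+4\Z_2\simeq \Z_2^\x/\{\pm 1\}$. The structural input that makes this substitution legitimate at $p=2$ is the pair of identifications $\left(KU^\wedge_2\right)^{h\{\pm 1\}}\simeq KO^\wedge_2$ and $\Z_2^\x\simeq \{\pm 1\}\x(1+4\Z_2)$, whose iterated homotopy fixed points give $S^0_{K(1)}\simeq \left(KO^\wedge_2\right)^{h(1+4\Z_2)}$. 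Consequently the Devinatz--Hopkins descent machinery of \cite{fixedpt} furnishes, for every $K(1)$-local spectrum $Z$, a conditionally convergent homotopy fixed point spectral sequence
\begin{equation*}
E_2^{s,t}=H_c^s\left(1+4\Z_2;\left(KO^\wedge_2\right)_t(Z)\right)\Longrightarrow \pi_{t-s}(Z).
\end{equation*}
The decisive improvement over the $KU^\wedge_2$-based spectral sequence of \Cref{lem:Sp_k1_alg_p} is that $\mathrm{cd}_2(1+4\Z_2)=\mathrm{cd}_2(\Z_2)=1$, so this spectral sequence is concentrated in filtrations $s=0,1$; in particular it collapses and has a horizontal vanishing line, giving strong convergence.

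The ``only if'' direction is immediate from functoriality of $\left(KO^\wedge_2\right)_*(-)$. For the converse, let $f\colon\left(KO^\wedge_2\right)_*X\simto\left(KO^\wedge_2\right)_*Y$ be an isomorphism of $(1+4\Z_2)$-$\left(KO^\wedge_2\right)_*$-modules, and apply the spectral sequence above to $Z=\map(X,Y)$, which is $K(1)$-local since $Y$ is. Using that $\left(KO^\wedge_2\right)_*X$ is dualizable over $\left(KO^\wedge_2\right)_*$, we identify $\left(KO^\wedge_2\right)_*\map(X,Y)\simeq \hom_{\left(KO^\wedge_2\right)_*}\!\left(\left(KO^\wedge_2\right)_*X,\left(KO^\wedge_2\right)_*Y\right)$ as $(1+4\Z_2)$-modules, so that $f$ is a class in $E_2^{0,0}=\hom_{\left(KO^\wedge_2\right)_*}\!\left(\left(KO^\wedge_2\right)_*X,\left(KO^\wedge_2\right)_*Y\right)^{(1+4\Z_2)}$. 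Since the spectral sequence collapses, $f$ is a permanent cycle, hence is realized by a map $\alpha\colon X\to Y$ with $\left(KO^\wedge_2\right)_*\alpha=f$.

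It remains to show $\alpha$ is a weak equivalence. As $\left(KO^\wedge_2\right)_*\alpha=f$ is an isomorphism of $(1+4\Z_2)$-$\left(KO^\wedge_2\right)_*$-modules, $\alpha$ induces an isomorphism on the $E_2$-pages of the descent spectral sequences computing $\pi_*(X)$ and $\pi_*(Y)$; the vanishing line at $s=2$ makes both spectral sequences conditionally (indeed strongly) convergent, so \cite[Theorem 5.3]{Boardman_ccss} applies and $\alpha$ is a weak equivalence. Alternatively, $\alpha$ is a $KO^\wedge_2$-homology equivalence, and since $KO$ and $KU$ are Bousfield equivalent at $p=2$, $KO^\wedge_2$-homology is conservative on $K(1)$-local spectra.

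The step I expect to demand the most care is not conceptual but bookkeeping: establishing the K\"unneth/universal-coefficient identification $\left(KO^\wedge_2\right)_*\map(X,Y)\simeq \hom_{\left(KO^\wedge_2\right)_*}(-,-)$ in the $K(1)$-local (hence $2$-complete) setting, and verifying that the $(1+4\Z_2)$-action appearing on $KO^\wedge_2$-homology is the continuous one for which the descent spectral sequence and its convergence are valid. Once these are in place the argument is formally identical to the odd-primary case of \Cref{lem:Sp_k1_alg_p}.
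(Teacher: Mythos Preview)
The paper does not supply a proof of \Cref{lem:Sp_k1_alg_2}: it is stated immediately after \Cref{lem:Sp_k1_alg_p} as the $p=2$ analogue, with the evident implication that one should rerun that argument with $KO^\wedge_2$ and $1+4\Z_2$ in place of $\Kp$ and $\Zpx$. Your proposal does exactly this, so it matches the paper's intended approach; you also correctly isolate the one genuinely new wrinkle at $p=2$, namely that the K\"unneth/universal-coefficient identification $\left(KO^\wedge_2\right)_*\map(X,Y)\simeq \hom_{\left(KO^\wedge_2\right)_*}\!\bigl(\left(KO^\wedge_2\right)_*X,\left(KO^\wedge_2\right)_*Y\bigr)$ is not automatic because $\left(KO^\wedge_2\right)_*$ has $2$-torsion. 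In the paper's applications (\Cref{prop:comparsion_d_k1_2v} and \Cref{lem:BA_k1}) the relevant $\left(KO^\wedge_2\right)_*$-modules are free by \Cref{cor:KO_Dk1}, so this step goes through; for the lemma in the generality stated one would need an extra hypothesis of this sort or an appeal to Bousfield's classification of $K(1)$-local spectra at $p=2$.
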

	\subsection{The $N$ is not a $p$-power case} Write $N=p^v\cdot N'$, where $p\nmid N'>1$. In this case, a primitive Dirichlet character $\chi\colon \znx\to \Cpx$ factors into a product $\chi=\chi_p\cdot \chi'$, where $\chi_p$ has conductor $p^v$ and $\chi'$ has conductor $N'$. The subgroup $\zx{N'}$ of $\znx$ acts trivially on $S^0_{K(1)}(p^v)$.
	\begin{prop}\label{prop:dirichlet_k1_contractible}
		$S^0_{K(1)}(p^v)^{h\chi}$ is contractible when $|\imag \chi'|$ is not a power of $p$.
	\end{prop}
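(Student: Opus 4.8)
The plan is to show that every homotopy group of $S^0_{K(1)}(p^v)^{h\chi}$ vanishes; since this spectrum is $K(1)$-local, that forces it to be contractible. First I would invoke the spectral sequence of \Cref{cor:hfpss_d_k1_gp_coh}, whose $E_2$-page is the continuous cohomology
\[ E_2^{s,2t}=H_c^s\bigl(\Zpx\x \zx{N'};\Zp^{\otimes t}[\chi^{-1}]\bigr), \]
and argue that it vanishes identically. The finite factor $\zx{N'}$ acts on the coefficient module $\Zp^{\otimes t}[\chi^{-1}]$ — which is $\Zp[\chi]$ as a $\Zp$-module — through the character $b\mapsto \chi'^{-1}(b)$, the twisting factor $a\mapsto a^t$ being supported on $\Zpx$. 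Feeding the extension $1\to \zx{N'}\to \Zpx\x\zx{N'}\to\Zpx\to 1$ into the Lyndon--Hochschild--Serre spectral sequence, it then suffices to prove that $H^q\bigl(\zx{N'};\Zp[\chi]\bigr)=0$ for all $q\ge 0$, where $\zx{N'}$ acts on $\Zp[\chi]$ by multiplication by $\chi'^{-1}$.

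The crux is an elementary observation about this module. Since $|\imag\chi'|$ is not a power of $p$, I would pick a prime $\ell\ne p$ with $\ell\mid|\imag\chi'|$ and an element $b_0\in\zx{N'}$ with $\chi'(b_0)$ of exact order $\ell$. Because $\chi=\chi_p\cdot\chi'$ forces $\imag\chi'\subseteq\imag\chi$, the ring $\Zp[\chi]$ contains $\Zp[\zeta_\ell]$; and in $\Zp[\zeta_\ell]$ the element $\zeta_\ell-1$ is a unit, its norm being $\pm\Phi_\ell(1)=\pm\ell\in\Zp^\x$. Hence $\chi'^{-1}(b_0)-1$ is a unit in $\Zp[\chi]$, so multiplication by $b_0-1$ acts invertibly on the coefficient module. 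On the other hand $\zx{N'}$ is abelian, so $b_0$ is central, and a group acts trivially on its own cohomology: multiplication by $b_0$ induces the identity on $H^*(\zx{N'};-)$, so multiplication by $b_0-1$ induces the zero map on $H^*\bigl(\zx{N'};\Zp[\chi]\bigr)$. A self-map that is simultaneously zero and induced by an automorphism of the coefficients forces $H^*\bigl(\zx{N'};\Zp[\chi]\bigr)=0$. (In degree $q=0$ this just says the $\chi'$-eigenspace $\Zp[\chi]^{\zx{N'}}$ is $0$, which one can also see directly.)

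Combining these, the $E_2$-page of the spectral sequence in \Cref{cor:hfpss_d_k1_gp_coh} is zero, so $\pi_*\bigl(S^0_{K(1)}(p^v)^{h\chi}\bigr)=0$ and the spectrum is contractible. An equivalent route avoids the profinite-cohomology bookkeeping: since $\zx{N'}$ acts trivially on $\Kp$, \Cref{prop:Dirichlet_k1_Kp} gives $S^0_{K(1)}(p^v)^{h\chi}\simeq\map_{\Zp}\bigl(M(\Zp[\chi])_{h\zx{N'}},\Kp\bigr)^{h\Zpx}$, and the same homology computation — now with $H_*(\zx{N'};-)$ in place of $H^*$ — shows the finite spectrum $M(\Zp[\chi])_{h\zx{N'}}$ is $\Kp$-acyclic, so the mapping spectrum is already contractible. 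I expect no real obstacle beyond a single bookkeeping point: confirming that the residual $\zx{N'}$-action on $\Zp[\chi]=\pi_0 M(\Zp[\chi])$ is exactly the cyclotomic character $\chi'^{-1}$, which is what \Cref{thm:moore_spec_action} and the proof of \Cref{prop:Dirichlet_k1_Kp} are designed to supply; granting that, everything else is formal.
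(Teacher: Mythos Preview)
Your proof is correct and follows essentially the same strategy as the paper: both show the $E_2$-page of the spectral sequence in \Cref{cor:hfpss_d_k1_gp_coh} vanishes by running a Hochschild--Serre spectral sequence and killing the inner cohomology. The only difference is in execution: the paper takes as normal subgroup a cyclic $C_n\subseteq\zx{N'}$ with $p\nmid n$, so that $H^{>0}(C_n;-)$ vanishes for order reasons and $H^0$ vanishes because $\zeta_n^{-1}-1$ is a nonzerodivisor; you instead take the full $\zx{N'}$ and use the ``center kills'' argument with the unit $\chi'^{-1}(b_0)-1$. Your version is arguably cleaner since it handles all degrees uniformly and does not require isolating a prime-to-$p$ cyclic subgroup, but the two arguments are really the same idea.
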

	\begin{proof}
		We claim that when $|\imag \chi'|$ is not a power of $p$, the $E_2$-page of spectral sequence \eqref{eqn:D_k1_gp_coh} to compute homotopy groups of $S^0_{K(1)}(p^v)^{h\chi}$ is zero. That is, the group cohomology
		\begin{equation*}
			H_c^s(\Zpx\x\zx{N'};\Zp^{\otimes t}[\chi^{-1}])
		\end{equation*}
		is zero for all $s\ge 0$ and $t\in \Z$. Suppose $\zeta_n\in \imag \chi'$ and $p\nmid n$. The $\zx{N'}$ contains a subgroup $C_n$ such that $\chi'|_{C_n}$ is injective. We have a Hochschild-Serre spectral sequence to compute this group cohomology:
		\begin{equation}\label{eqn:E2_rst}
			E_2^{r,s,t}=H_c^r(\Zpx\x\zx{N'}/C_n;H^s(C_n;\Zp^{\otimes t}[\chi^{-1}]))\Longrightarrow H_c^s(\Zpx\x\zx{N'};\Zp^{\otimes t}[\chi^{-1}]).
		\end{equation}
		The group cohomology of $C_n$ vanishes in positive degrees since its order is invertible in $\Zp$. As a generator $g\in C_n$ acts on $\Zp^{\otimes t}[\chi^{-1}]=\Zp[\chi]$ by multiplication by $\zeta_n^{-1}$, the fixed points of this group action is zero. This shows $E_2^{r,s,t}=0$  for all $s\ge 0$ in \eqref{eqn:E2_rst}. Consequently, the $E_2$-page of \eqref{eqn:D_k1_gp_coh} to compute $\pi_*\left(S^0_{K(1)}(p^v)^{h\chi}\right)$ is zero and the Dirichlet $K(1)$-local sphere is contractible.
	\end{proof}

	\begin{cor}\label{cor:dirichlet_k1_contractible}
		 Suppose $\chi$ is primitive character of conductor $N=p^v\cdot N'$ as above. The spectrum  $S^0_{K(1)}(p^v)^{h\chi}$ is contractible when $p\nmid \phi(N')$, in particular when:
		 \begin{enumerate}
		 	\item $N=q\neq p$ is a prime such that $p\nmid (q-1)$.
		 	\item $N=q^v>2q$ for any prime $q$ not equal to $p$.
		 \end{enumerate}
	\end{cor}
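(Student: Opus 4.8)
The plan is to deduce everything from \Cref{prop:dirichlet_k1_contractible}: since that proposition already shows $S^0_{K(1)}(p^v)^{h\chi}$ is contractible whenever $|\imag\chi'|$ is not a power of $p$, it suffices to verify this numerical condition under each hypothesis. The observation common to all cases is that, because $\chi$ is primitive of conductor $N=p^vN'$ with $N'>1$, the prime-to-$p$ factor $\chi'$ is itself a primitive Dirichlet character of conductor $N'>1$, hence nontrivial, so $|\imag\chi'|>1$.

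For the main assertion I would argue directly with character orders. The group $\imag\chi'$ is cyclic, and its order divides the exponent of $\zx{N'}$, hence divides $\phi(N')=|\zx{N'}|$. So if $p\nmid\phi(N')$ then $p\nmid|\imag\chi'|$; combined with $|\imag\chi'|>1$ this forces $|\imag\chi'|$ to have a prime factor different from $p$, i.e.\ to not be a power of $p$, and \Cref{prop:dirichlet_k1_contractible} applies. Case (1) is then immediate: there $N=q$ is a prime different from $p$, so $v_p(N)=0$, $N'=q$ and $\phi(N')=q-1$, and the hypothesis $p\nmid(q-1)$ is precisely $p\nmid\phi(N')$.

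Case (2) does not fall under the $\phi(N')$-condition in general (e.g.\ $p=2$, $N=9$), and this is the step needing the only genuine care: it should instead be read as a direct invocation of \Cref{prop:dirichlet_k1_contractible}. Here $q\neq p$, so $v_p(N)=0$, $\chi'=\chi$, and $N'=q^v$ with $v$ denoting the exponent of $q$. I would show $q\mid|\imag\chi|$, which (since $q\neq p$) makes $|\imag\chi|$ a non-$p$-power. When $q$ is odd, $q^v>2q$ forces $v\ge2$; $\zx{q^v}$ is cyclic of order $q^{v-1}(q-1)$, and primitivity of $\chi$ at conductor $q^v$ means $\chi$ is nontrivial on the order-$q$ kernel of $\zx{q^v}\twoheadrightarrow\zx{q^{v-1}}$, so $q\mid|\imag\chi|$. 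When $q=2$, $q^v>2q$ forces $v\ge3$ and $\zx{2^v}\simeq\{\pm1\}\times\Z/2^{v-2}$; primitivity again makes $\chi$ nontrivial on the order-$2$ kernel of $\zx{2^v}\twoheadrightarrow\zx{2^{v-1}}$, so $2\mid|\imag\chi|$ while $p$ is odd, whence $|\imag\chi|$ is not a power of $p$. In either subcase \Cref{prop:dirichlet_k1_contractible} finishes the proof; the threshold $2q$ is exactly what makes the relevant $q$-power part of $\zx{q^v}$ nontrivial, and beyond tracking the $q=2$ exception the argument is routine.
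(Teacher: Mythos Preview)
Your proof is correct and follows the same route as the paper: reduce to \Cref{prop:dirichlet_k1_contractible} by verifying that $|\imag\chi'|$ is not a power of $p$. The paper's own proof is brief and only addresses the main assertion (primitivity makes $\chi'$ nontrivial; $p\nmid\phi(N')$ then forces $\imag\chi'$ to contain no nontrivial $p$-power roots of unity), leaving (1) and (2) as implicit special cases.

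You are in fact more careful than the paper on one point. You correctly observe that case~(2) is \emph{not} always an instance of the condition $p\nmid\phi(N')$ (your example $p=2$, $N=9$ is apt, since $\phi(9)=6$), and you supply the missing step by invoking \Cref{prop:dirichlet_k1_contractible} directly: primitivity of $\chi$ at conductor $q^v>2q$ forces $q\mid|\imag\chi|$, and $q\neq p$ then rules out $|\imag\chi|$ being a $p$-power. This is exactly the right fix, and the paper's proof arguably leaves a small gap here that your argument closes.
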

	\begin{proof}
	 $\chi'$ is a primitive character since $\chi$ is. This guarantees $\imag \chi'$ is not trivial. The assumption $p\nmid \phi(N')$ implies that $\imag \chi'$ contains no $p$-power roots of unity. The claim now follows from \Cref{prop:dirichlet_k1_contractible}.
	\end{proof}
	When $\imag \chi'$ contains only $p$-power roots of unity, the spectral sequence \eqref{eqn:D_k1_gp_coh} does not collapse on the $E_2$-page. Instead, we compute $\pi_*\left(S^0_{K(1)}(p^v)^{h\chi}\right)$ by identifying the spectrum from its construction. 
	\begin{thm}\label{thm:Dk1_ho}
		Suppose $\imag \chi'\simeq C_{p^n}$ for some $n\ge 1$. Then we have:
		\begin{equation*}
		S^0_{K(1)}(p^v)^{h\chi}\simeq\left\{\begin{array}{cll}
		\Sigma\left(S^0_{K(1)}(2p)^{h\chi|_{\zx{2p}}}\right)^{\vee p},&\textup{if }p^v\le 2p;&\textup{(Case I)}\\
		\Sigma\left(S^0_{K(1)}(p)^{h\chi|_{\zx{p}}}\right)^{\vee p},&\textup{if }n\ge v-1>0\textup{ and } p>2;&\textup{(Case II)}\\
		\Sigma\left(S^0_{K(1)}(4)^{h\chi|_{\zx{4}}}\right)^{\vee p},&\textup{if }n\ge v-2>0\textup{ and } p=2;&\textup{(Case II')}\\
		\Sigma\left(S^0_{K(1)}(p^{v-n})^{h\chi|_{\zx{p}}}\right)^{\vee p},&\textup{if }n < v-1\textup{ and } p>2; &\textup{(Case III)}\\
		\Sigma\left(S^0_{K(1)}(2^{v-n})^{h\chi|_{\zx{4}}}\right)^{\vee p},&\textup{if }n<v-2\textup{ and } p=2. &\textup{(Case III')}
		\end{array}\right.
		\end{equation*}
		Here $\zx{2p}$ is thought of as a subgroup of $\znx$ via the inclusions $\zx{2p}\subseteq \zx{p^v}\subseteq \znx$.
	\end{thm}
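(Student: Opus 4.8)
The plan is to compute $S^0_{K(1)}(p^v)^{h\chi}$ straight from its construction, reducing everything to homotopy fixed points of $\Kp$ under Adams operations. By \Cref{prop:Dirichlet_k1_Kp} we start from $S^0_{K(1)}(p^v)^{h\chi}\simeq \map_{\Zp}(M(\Zp[\chi]),\Kp)^{h(\Zpx\x\zx{N'})}$, where $\Zpx\x\zx{N'}$ acts on the Moore spectrum through $\chi$ (via the quotient onto $\znx$) and on $\Kp$ through the Adams operations of the factor $\Zpx$. First I would pin down the structure forced by the hypothesis $\imag(\chi')\simeq C_{p^n}$ together with primitivity: $N'$ is squarefree with every prime divisor $\equiv 1\bmod p$, and the order of $\chi$ is $d\cdot p^{N_0}$ with $d\mid p-1$ prime to $p$ and $N_0=\max(v-1,n)$ (for $p=2$ one has $d=1$ and $v-1$ is replaced by $v-2$, the conductor-$4$ Moore spectrum $M(\Z_2[\zeta_4])$ playing the role of $M(\Zp[\zeta_p])$). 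By \Cref{lem:cyclo_decomp} this splits $M(\Zp[\chi])$ equivariantly as the smash of a prime-to-$p$ cyclotomic factor with $M(\Zp[\zeta_{p^{N_0}}])$; the first factor is a sphere carrying an order-$d$ action by Cooke's obstruction theory (\Cref{thm:cooke}) and its uniqueness statement, and when combined with the $\zpx$-homotopy fixed points it merely twists $\Kp$ onto an Adams summand. This reduces the computation to understanding $\map_{\Zp}(M(\Zp[\zeta_{p^{N_0}}]),Z)^{h(\text{residual group})}$ for an appropriate $\Kp$-module $Z$.

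The heart of the argument is then the $p$-cyclotomic Moore spectrum. Using Foling Zou's identification $M(\Zp[\zeta_{p^{N_0}}])\simeq (C_{p^{N_0}})_+\wedge_{C_p}M(\Zp[\zeta_p])$ (\Cref{prop:MZpv}) together with Shapiro's lemma, and then the defining cofiber sequence of $M(\Zp[\zeta_p])$ from \Cref{con:integral_moore}, one obtains a fiber sequence presenting $S^0_{K(1)}(p^v)^{h\chi}$ as a single suspension of $\mathrm{Fib}(Z^{hH'}\to Z^{hH})$ for nested open subgroups $H\subseteq H'\subseteq\Zpx$ with $H'/H\cong C_p$ (up to a twist by the classifying space of the finite group $\ker(\chi'|_{\zx{N'}})$), the subgroups $H,H'$ being read off from whether $\zx{N'}$ already surjects onto $\mu_{p^{N_0}}$. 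This produces both the outer $\Sigma$ and the $p$-fold wedge in the statement, and the case division is exactly the comparison of $n$ with $v-1$ (resp. $v-2$): when $n\ge v-1$ the $C_{p^{N_0}}$-action is saturated by $\zx{N'}$, so $H$ is the full $1+p\Zp$ and the residual $\Kp$-fixed points collapse to conductor $p$ (resp. $4$), giving Cases I, II, II$'$; when $n<v-1$ one is left with conductor $p^{v-n}$ (resp. $2^{v-n}$), giving Cases III, III$'$, with Case I being the small-$v$ base case where $M(\Zp[\zeta_p])$ itself sits at the bottom. Identifying the pieces with $S^0_{K(1)}(\,\cdot\,)^{h\chi|_{\zpx}}$ (resp. $h\chi|_{\zx 4}$) finishes the proof.

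I expect the main obstacle to be this last identification, specifically controlling the contribution of $\ker(\chi'|_{\zx{N'}})$: its order need not be prime to $p$ (one can have $N'=q_1q_2$ with both primes $\equiv 1\bmod p$ and $\chi'$ of order $p$, so $p\mid\phi(N')/p^n$), so $\bigl(\Sigma^\infty_+B\ker(\chi'|_{\zx{N'}})\bigr)_{K(1)}$ is not merely a sum of copies of $S^0_{K(1)}$. One must show that the transfer maps relating the classifying-space twists at the two adjacent levels $H$ and $H'$ cancel, leaving behind precisely the factor $H'/H\cong C_p$ and hence exactly a $p$-fold wedge; this will require careful bookkeeping with the Mackey double-coset decomposition of $\mathrm{res}\,\mathrm{Ind}$, $K(1)$-local additivity for finite abelian groups, and the Cooke-uniqueness identifications of the various twisted spheres. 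A secondary subtlety, which I would handle via \Cref{prop:Dirichlet_J_isom_rep} and the $\cok\iota^*$-bookkeeping of \Cref{thm:moore_spec_action}, is keeping track of which character of $\zpx$ (resp. $\zx 4$) survives on each wedge summand.
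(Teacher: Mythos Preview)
Your approach and the paper's share the same skeleton—both unwind the defining cofiber sequence of the Moore spectrum and eventually invoke \Cref{prop:MZpv}—but the paper organizes the computation differently and thereby sidesteps the obstacle you flag. The key simplification you are missing is this: since $\zx{N'}$ acts \emph{trivially} on the target $S^0_{K(1)}(p^v)$, one may pass to homotopy orbits on the source immediately,
\[
\map(M,S^0_{K(1)}(p^v))^{h\zx{N'}}\simeq \map\bigl(M_{h\zx{N'}},\,S^0_{K(1)}(p^v)\bigr),
\]
so the entire $\zx{N'}$-contribution is packaged into the single object $\bigl(M(\Zp[\zeta_{p^n}])\bigr)_{h\zx{N'}}$. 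Applying $(-)_{h\zx{N'}}$ to the Moore cofiber sequence and using $(G/H)_{hG}\simeq BH$ (\Cref{lem:quotient_HO}) identifies this with $\Sigma^{-1}\mathrm{Cofib}\bigl((B\ker\chi')_+\to(B\chi'^{-1}(C_p))_+\bigr)$; the paper then reads this off from the fiber sequence of classifying spaces $B\ker\chi'\to B\chi'^{-1}(C_p)\to BC_p$ as $\Sigma^{-1}(BC_p)_+$, and $K(1)$-localizes via \Cref{lem:BA_k1} to obtain the $p$-fold wedge of $S^{-1}_{K(1)}$. The residual $\zx{p^v}$-homotopy fixed points are taken in a second step, with \Cref{prop:MZpv} entering only in Case~III to strip off the extra $C_{p^{v-1}}/C_{p^n}$.

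In particular, your worry about $p$-torsion in $\ker\chi'$ is not resolved in the paper by Mackey double cosets or transfer cancellation, but by recognizing that $(B\ker\chi')_+$ and $(B\chi'^{-1}(C_p))_+$ already sit in a sequence over $(BC_p)_+$, so that only the quotient $C_p$ is visible in the answer. Your proposed route—splitting off the prime-to-$p$ cyclotomic factor via Cooke's obstruction theory, carrying a twisted Adams summand $Z$, and then tracking nested open subgroups $H\subset H'\subset\Zpx$—could in principle be pushed through, but it is heavier than necessary: the paper never leaves the level of $S^0_{K(1)}(p^v)$ and only invokes the $\chi|_{\zpx}$-eigenspectrum at the very last step.
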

	\begin{proof}
	We prove the cases when $p>2$. The $p=2$ cases are similar. Recall from \Cref{con:twisted_j} that 
	\begin{equation*}
	S^0_{K(1)}(p^v)^{h\chi}=\map_{\Zp}\left(M(\Z_p[\chi]),S^0_{K(1)}(p^v)\right)^{h\znx}.
	\end{equation*}
	We need to compare $\Zp[\chi_p]$, $\Zp[\chi']$, and $\Zp[\chi]$. As $\chi_p$ is primitive, $\Zp[\chi_p]=\Zp[\zeta_{p^{v-1}}]$ as an algebra. Our assumption says $\imag \chi'\simeq C_{p^{n}}$ for some $n\ge 1$. As a result, $\Zp[\chi]=\Zp[\zeta_{p^{\max\{v-1,n\}}}]$. In particular, when $p^v\le p$, $\Zp[\chi_p]=\Zp$ and $\Zp[\chi]=\Zp[\chi']$. So there are three cases depending on $n$ and $v-1$ ($v-2$ when $p=2$).
	
	In \textbf{Case I} when $p^v\le p$ and $p>2$, we have the following identification:
	\begin{align*}
		S^0_{K(1)}(p^v)^{h\chi}&=\map_{\Zp}\left(M(\Zp[\chi]),S^0_{K(1)}(p) \right)^{h(\zpx\x \zx{N'})}\\
		\simeq &\map_{\Zp}\left(M(\Zp[\zeta_{p^n}]),\map\left(S^0_{\chi|_{\zpx}},S^0_{K(1)}(p)\right) \right)^{h(\zpx\x \zx{N'})}\\
	(*)\quad	\simeq & \map_{\Zp}\left(M(\Zp[\zeta_{p^n}])_{h\zx{N'}},\map\left(S^0_{\chi|_{\zpx}},S^0_{K(1)}(p)\right)^{h\zpx} \right)\\
		\simeq& \map_{\Zp}\left(M(\Zp[\zeta_{p^n}])_{h\zx{N'}}, S^0_{K(1)}(p)^{h\chi|_{\zpx}}\right)\\
		\simeq&	\map_{K(1)}\left((M(\Zp[\zeta_{p^n}])_{h\zx{N'}})_{K(1)}, S^0_{K(1)}(p)^{h\chi_p}\right).	
	\end{align*}
	In (*), we used the facts that $\zpx$ acts trivially on the source, and that $\zx{N'}$ acts trivially on the target. Also, notice $S^0_{K(1)}(p)^{h\chi_p}\simeq S^0_{K(1)}$ when $\chi_p$ is trivial. We now show:
	\begin{equation*}
		(M(\Zp[\zeta_{p^n}])_{h\zx{N'}})_{K(1)}\simeq \left(S^{-1}_{K(1)}\right)^{\vee p}.
	\end{equation*}	
	In \eqref{eqn:Moore_p^v}, $M((\Zp[\zeta_{p^n}])$ is defined to be the $p$-completion of the integral Moore spectrum $M(\Z[\zeta_{p^n}])$. The $p$-completion commutes with the taking homotopy orbits, since it is equivalent to smashing with $M(\Zp)$ in this case. As a result, we should first find $M(\Z[\zeta_{p^n}])_{h\zx{N'}}$. By \eqref{eqn:cofib}, we have:
	\begin{align}
	&M(\Z[\zeta_{p^{n}}])=\Sigma^{-1}\mathrm{Cofib}\left((C_{p^n})_+\longrightarrow  (C_{p^{n-1}})_+ \right)\nonumber\\
	\label{eqn:Moore_cofib_ho}\implies& M(\Z[\zeta_{p^{n}}])_{h\zx{N'}}\simeq \Sigma^{-1}\mathrm{Cofib}\left(\left((C_{p^n})_+\right)_{h\zx{N'}}\longrightarrow \left((C_{p^{n-1}})_+\right)_{h\zx{N'}} \right).
	\end{align}
	\begin{lem}\label{lem:quotient_HO}
		Let $H$ be a closed subgroup of $G$. Then $(G/H)_{hG}\simeq BH$.
	\end{lem}
	\begin{proof}
		By \mbox{\cite[(1.6)]{May_alaska_notes}}, there is a $G$-homeomorphism for any $G$-space $X$:
		\begin{equation*}
			G\times_H X\cong (G/H)\times X.
		\end{equation*}
		Set $X=EG$ to be the classifying space of $G$ and take $G$-orbits on both sides of the homeomorphism, we get:
		\begin{equation*}
			(G\times_H EG)_G\cong ((G/H)\times EG)_G.
		\end{equation*}
		The right hand side of the homeomorphism is the homotopy orbit $(G/H)_{hG}$ by definition. The left hand side is equivalent to the $H$-orbit of $EG$. Notice $EG$ is a free $H$-space since it is a free $G$-space. As $EG$ is contractible and the $H$-action is free, the orbit $(EG)_H$ is equivalent to $BH$. This proves $BH\simeq (G/H)_{hG}$.
	\end{proof}
	The lemma implies that $\left((C_{p^{n}})_+\right)_{h\zx{N'}}\simeq \left(B\ker \chi'\right)_+$ and $\left((C_{p^{n-1}})_+\right)_{h\zx{N'}}\simeq \left(B\chi'^{-1}(C_p)\right)_+$. From the short exact sequence of abelian groups:
	\begin{equation*}
		\begin{tikzcd}
		0\rar& \ker \chi'\rar &\chi'^{-1}(C_p)\rar & C_p\rar&0, 
		\end{tikzcd}
	\end{equation*}
	we get a fiber sequence of classifying spaces:
	\begin{equation*}
		\begin{tikzcd}
		B\ker \chi'\rar &B\chi'^{-1}(C_p)\rar & BC_p.
		\end{tikzcd}
	\end{equation*}
	Together with \eqref{eqn:Moore_cofib_ho}, we have shown $M(\Z[\zeta_{p^{n}}])_{h\zx{N'}}\simeq \Sigma^{-1}\left(BC_p\right)_+$ as a spectrum. It now remains to identify $\left(BC_p\right)_+$ in $\Sp_{K(1)}$.
	\begin{lem}\label{lem:BA_k1}
		Let $A$ be a finite abelian group and $A_{(p)}$ be its Sylow $p$-subgroup. Then 
		\begin{equation*}
			\left(BA_+\right)_{K(1)}\simeq \left(S^0_{K(1)}\right)^{\vee |A_{(p)}|}.
		\end{equation*}
	\end{lem}
	\begin{proof}
		By \cite[Corollary 5.10]{HKR_gen_gp_char}, $\left(\Kp\right)_*(BA)\simeq \mathrm{Fun}(A_{(p)},\left(\Kp\right)_*)$, where $\Zpx$ acts on $A_{(p)}$ trivially. The claim now follows from \Cref{lem:Sp_k1_alg_p}.
	\end{proof}
	\begin{cor}
		$(M(\Zp[\zeta_{p^n}])_{h\zx{N'}})_{K(1)}\simeq \left(S^{-1}_{K(1)}\right)^{\vee p}$.
	\end{cor}	
	In \textbf{Case II} when $p>2$ and $n\ge v-1>0$, $\Zp[\chi]=\Zp[\zeta_{p^n}]$.
	From this, we have:
	\begin{align*}
		S^0_{K(1)}(p^v)^{h\chi}&=\map_{\Zp}\left(M(\Zp[\chi]),S^0_{K(1)}(p^v) \right)^{h(\zx{p^v}\x \zx{N'})}\\
		\simeq &\map_{\Zp}\left( M(\Zp[\zeta_{p^{n}}])_{h\zx{N'}},S^0_{K(1)}(p^v) \right)^{h\zx{p^v}}\\
		\simeq & \map_{\Zp}\left(\Sigma^{-1}(BC_p)_+,S^0_{K(1)}(p^v)\right)^{h\zx{p^v}}\\
		\simeq & \map_{K(1)}\left(\left(S_{K(1)}^{-1}\right)^{\vee p},S^0_{K(1)}(p^v)\right)^{h\zx{p^v}}.
	\end{align*}
	The subgroup $\zpx\subseteq \zx{p^v}$ acts on $\left(S_{K(1)}^{-1}\right)^{\vee p}\simeq \left( M(\Zp[\zeta_{p^{n}}])_{h\zx{N'}}\right)_{K(1)}$ by $\chi_p|_{\zpx}$. We claim the other summand $\Z/p^{v-1}\subseteq \zx{p^v}$ acts on the homotopy orbit trivially. This is because both actions of $\zx{N'}$ and $\Z/p^{v-1}$ on the Moore spectrum $M(\Zp[\zeta_{p^n}])$ factors through the action by $C_{p^n}$. As $\zx{N'}$ surjects onto $C_{p^n}$ via $\chi'$, the action of $\Z/p^{v-1}$ on $M(\Zp[\zeta_{p^{n}}])_{h\zx{N'}}$ is trivial,  yielding:
	\begin{align*}
			S^0_{K(1)}(p^v)^{h\chi}\simeq&\map_{K(1)}\left(\left(S_{K(1)}^{-1}\right)^{\vee p},S^0_{K(1)}(p^v)\right)^{h\zx{p^v}}\\
			\simeq& (S^1)^{\vee p}\wedge\map\left(S^0_{\chi|_{\zpx}},\left(S_{K(1)}^0(p^v)\right)^{h\Z/p^{v-1}}\right)^{h\zpx}\\
			\simeq& \Sigma\left(S^0_{K(1)}(p)^{h\chi|_{\zpx}}\right)^{\vee p}.
	\end{align*}	
	In \textbf{Case III} when $p>2$ and $v-1>n$, $\Zp[\chi]=\Zp[\zeta_{p^{v-1}}]$.  By \Cref{prop:MZpv}, there is a $C_{p^{v-1}}$-equivalence:
	\begin{equation*}
	M(\Zp[\zeta_{p^n}])\simeq (C_{p^{v-1}})_+\bigwedge_{C_{p^{n}}}(C_{p^{n}})_+\bigwedge_{C_{p}}M(\Zp[\zeta_p])\simeq (C_{p^{v-1}})_+\bigwedge_{C_{p^{n}}}M(\Zp[\zeta_{p^{n}}]).
	\end{equation*}	
	We have the identification:
	\begin{align*}
		S^0_{K(1)}(p^v)^{h\chi}&=\map_{\Zp}\left(M(\Zp[\chi]),S^0_{K(1)}(p^v) \right)^{h(\zx{p^v}\x \zx{N'})}\\
		\simeq&\left(\map_{\Zp}\left(M(\Zp[\zeta_{p^{v-1}}]),S^0_{K(1)}(p^v) \right)^{h \zx{N'}}\right)^{h\zx{p^v}}\\
		\simeq&\left(\map_{\Zp}\left((C_{p^{v-1}})_+\bigwedge_{C_{p^{n}}}M(\Zp[\zeta_{p^{n}}]),S^0_{K(1)}(p^v) \right)^{h \zx{N'}}\right)^{h\zx{p^v}}\\
		\simeq&\map_{\Zp}\left((C_{p^{v-1}}/C_{p^{n}})_+\bigwedge M(\Zp[\zeta_{p^{n}}])_{h \zx{N'}},S^0_{K(1)}(p^v) \right)^{h\zx{p^v}}\\
		\simeq&\map_{K(1)}\left((C_{p^{v-1}}/C_{p^{n}})_+\bigwedge (S^{-1}_{K(1)})^{\vee p},S^0_{K(1)}(p^v) \right)^{h\zx{p^v}}.
	\end{align*}
	Like in the previous cases, the subgroup $\zpx\subseteq \zx{p^v}$ acts on $\left( M(\Zp[\zeta_{p^{v-1}}])_{h\zx{N'}}\right)_{K(1)}$ by $\chi_p|_{\zpx}$. The other summand $\Z/p^{v-1}\subseteq \zx{p^v}$ acts on the source $(C_{p^{v-1}}/C_{p^{n}})_+\wedge (S^{-1}_{K(1)})^{\vee p}$ via the projection $\Z/p^{v-1}\simeq C_{p^{v-1}}\twoheadrightarrow C_{p^{v-1}}/C_{p^{n}}$, and on the target $S^0_{K(1)}(p^v)$ by the Galois action. As the latter action is free, we have
	\begin{align*}
		S^0_{K(1)}(p^v)^{h\chi}\simeq &\map_{K(1)}\left((C_{p^{v-1}}/C_{p^{n}})_+\bigwedge (S^{-1}_{K(1)})^{\vee p},S^0_{K(1)}(p^v) \right)^{h\zx{p^v}}\\
		\simeq &\Sigma\left(\map_{K(1)}\left(S^0_{\chi|_{\zpx}},S^0_{K(1)}(p^v)^{h\Z/p^n}\right)^{h\zpx}\right)^{\vee p}\\
		\simeq &\Sigma\left(S^0_{K(1)}(p^{v-n})^{h\chi|_{\zpx}}\right)^{\vee p}.
	\end{align*}
	This completes the proof.
	\end{proof}
	In Cases I and II (II') in \Cref{thm:Dk1_ho}, we can now compute  $\pi_*\left(S^0_{K(1)}(p^v)^{h\chi}\right)$ using \eqref{k1spi} and \eqref{eqn:pi_DK1_p} when $p>2$, and \eqref{pis0_2} and \eqref{eqn:Dirichlet_k1_4} when $p=2$, respectively. Computations in Case III (III') are similar. Here we list the results below. 
	\begin{cor}
		When $p>2$, suppose $\chi|_{\zpx}=\omega^{a}$ for some $0\le a\le p-2$. We have:
		\begin{equation*}
		\pi_i\left(S^0_{K(1)}(p^v)^{h\chi}\right)=\left\{\begin{array}{cl}
		\Zp^{\oplus p}, &a=0\textup{ and }i=0 \textup{ or }1;\\
		\left(\Z/p^{v_p(k)+1}\right)^{\oplus p}, &n\ge v-1, i=2k\neq 0\textup{, and }(p-1)\mid (k-a);\\
		\left(\Z/p^{v_p(k)+v-n}\right)^{\oplus p}, &n< v-1, i=2k\neq 0\textup{, and }(p-1)\mid (k-a);\\
		0,&\textup{otherwise.}
		\end{array}\right.
		\end{equation*}
		When $p=2$, if $\chi|_{\zx{4}}$ is trivial, then
		\begin{equation*}
		\pi_i\left(S^0_{K(1)}(2^v)^{h\chi}\right)=\left\{\begin{array}{cl}
		\Z_2^{\oplus 2},& i=0;\\
		(\Z_2\oplus\Z/2)^{\oplus 2},& i=1;\\
		(\Z/2\oplus \Z/2)^{\oplus 2},& i\equiv 2\mod 8;\\
		(\Z/2)^{\oplus 2},&i\equiv 1,3\mod 8\textup{ and }i\neq 1;\\
		\left(\Z/2^{v_2(k)+3}\right)^{\oplus 2}&n\ge v-2\textup{ and }i=4k\neq 0;\\
		\left(\Z/2^{v_2(k)+v-n+1}\right)^{\oplus 2}&n< v-2\textup{ and }i=4k\neq 0;\\
		0,&\textup{otherwise.}
		\end{array}\right.
		\end{equation*}
		If $\chi|_{\zx{4}}=\omega$, then
		\begin{equation*}
		\pi_i\left(S^0_{K(1)}(2^v)^{h\chi}\right)=\left\{\begin{array}{cl}
		(\Z/2)^{\oplus 2},&i\equiv 3,5\mod 8\textup{ and }i\neq 1;\\
		(\Z/2\oplus \Z/2)^{\oplus 2},& i\equiv 4\mod 8;\\		
		\left(\Z/4\right)^{\oplus 2}&n\ge v-2\textup{ and }i\equiv 2\mod 4;\\
		\left(\Z/2^{v-n}\right)^{\oplus 2}&n< v-2\textup{ and }i\equiv 2\mod 4;\\
		0,&\textup{otherwise.}
		\end{array}\right.
		\end{equation*}
	\end{cor}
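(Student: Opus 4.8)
The plan is to derive everything from the equivalences of \Cref{thm:Dk1_ho}, each of which presents $S^0_{K(1)}(p^v)^{h\chi}$ as a suspension of a $p$-fold wedge $\Sigma Y^{\vee p}$, with $Y$ a smaller Dirichlet $K(1)$-local sphere. Since a finite wedge of spectra agrees with the corresponding product, $\pi_*$ is additive on it, so $\pi_i\left(\Sigma Y^{\vee p}\right)\cong\pi_{i-1}(Y)^{\oplus p}$; the whole computation thus reduces to identifying $\pi_*(Y)$ in each case, shifting it up by one degree, and taking $p$ copies. So after invoking \Cref{thm:Dk1_ho} I would organise the bookkeeping by the shape of the block $Y$.

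For $p>2$: in Cases I and II the block is $S^0_{K(1)}(2p)^{h\chi|_{\zx{2p}}}$ or $S^0_{K(1)}(p)^{h\chi|_{\zpx}}$, which is $S^0_{K(1)}$ (homotopy \eqref{k1spi}) when $\chi|_{\zpx}=\omega^0$ is trivial and is $S^0_{K(1)}(p)^{h\omega^a}$ (homotopy \eqref{eqn:pi_DK1_p}) when $\chi|_{\zpx}=\omega^a$ with $a\ne0$. In Case III the block is $S^0_{K(1)}(p^{v-n})^{h\chi|_{\zpx}}$, with $\chi|_{\zpx}=\omega^a$ of conductor at most $p$ but now viewed at level $p^{v-n}$; this is not among the primitive cases treated earlier, so I would compute its homotopy from the homotopy eigen spectral sequence \eqref{eqn:hess_e2_p}, whose $E_2$-term is $\ext^s_{\Zp[\zpx]}\left(\Zp[\omega^a],\pi_t\left(S^0_{K(1)}(p^{v-n})\right)\right)$ with the coefficients read off from \eqref{eqn:pi_Sk1_pv}. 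Because $|\zpx|=p-1$ is prime to $p$, this spectral sequence collapses onto the line $s=0$ and computes the $\omega^a$-eigenspace of $\pi_*\left(S^0_{K(1)}(p^{v-n})\right)$; as the residual Adams action of $\zpx$ on $\pi_{2k-1}\left(S^0_{K(1)}(p^{v-n})\right)\cong\Z/p^{v_p(k)+v-n}$ is through $\omega^k$, this eigenspace is $\Z/p^{v_p(k)+v-n}$ exactly when $(p-1)\mid(k-a)$ and $0$ otherwise, while on $\pi_0\cong\pi_{-1}\cong\Zp$ (trivial action) it is all of $\Zp$ precisely when $a=0$. Shifting up by one and taking $p$ copies then produces the $p>2$ table.

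For $p=2$ the outline is identical. In Cases I and II' the block is $S^0_{K(1)}$ (homotopy \eqref{pis0_2}) when $\chi|_{\zx4}$ is trivial and $S^0_{K(1)}(4)^{h\omega}$ (homotopy \eqref{eqn:Dirichlet_k1_4}) when $\chi|_{\zx4}=\omega$; in Case III' the block $S^0_{K(1)}(2^{v-n})^{h\chi|_{\zx4}}$ should be identified, exactly as in the proofs of \Cref{prop:S_k1_2v_cofib} and \Cref{prop:pi_dk1_2v} and via \Cref{prop:imaginary_ko}, with the $(1+4\Z_2)$-homotopy fixed points of $S^0_{K(1)}(2^{v-n})$ (or of $\left(KU^\wedge_{2}\right)^{h\omega}$ when $\chi|_{\zx4}=\omega$), whose homotopy I would read off from \eqref{eqn:pi_Sk1_pv} after running the $d_3$-differentials and checking triviality of the extension problems as in \Cref{prop:extn_prob} and \Cref{prop:extn_prob_tw}. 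The differing exponents are governed by the $2$-adic valuations of $g^m-1$ for the relevant Galois weight $m$: the reindexing $4k-1=2(2k)-1$ makes the pertinent valuation $v_2(k)+1$ rather than $v_2(k)$ in the even case, forcing the exponent $v_2(k)+v-n+1$, whereas in the $\chi|_{\zx4}=\omega$ case the weight on $\pi_{\equiv 2 \bmod 4}$ is odd, which is why only $v-n$ survives. Assembling these identifications of $\pi_*(Y)$ and applying the degree shift and $p$-fold sum in every case would complete the proof. The hard part will be the $p=2$, Case III' input: carefully transporting the $\{\pm1\}$- and $(1+4\Z_2)$-actions through the $KO$-type HFPSS, re-verifying triviality of its extension problems, and matching the character eigenspaces so that both the exponents and the congruences modulo $8$ land exactly as stated; the odd-primary cases are essentially formal once \Cref{thm:Dk1_ho} is available.
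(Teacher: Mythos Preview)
Your plan is correct and matches the paper's own argument, which simply says that Cases I and II (II') follow from \eqref{k1spi}, \eqref{eqn:pi_DK1_p}, \eqref{pis0_2}, \eqref{eqn:Dirichlet_k1_4} via \Cref{thm:Dk1_ho}, and that Case III (III') is ``similar''; you have in fact supplied more detail than the paper does, and your odd-primary Case III computation via the collapsed HESS over $\zpx$ is exactly right.

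One wording slip to fix in Case III': the block $S^0_{K(1)}(2^{v-n})^{h\chi|_{\zx4}}$ is \emph{not} ``the $(1+4\Z_2)$-homotopy fixed points of $S^0_{K(1)}(2^{v-n})$'' (that would collapse everything to $S^0_{K(1)}(4)$). The clean identification is to swap the order of the two homotopy fixed points: since $S^0_{K(1)}(2^{v-n})=\left(KU^\wedge_2\right)^{h(1+2^{v-n}\Z_2)}$ and $\zx4\simeq\{\pm1\}$, one has
\[
S^0_{K(1)}(2^{v-n})^{h\chi^0}\simeq\left(KO^\wedge_2\right)^{h(1+2^{v-n}\Z_2)},\qquad
S^0_{K(1)}(2^{v-n})^{h\omega}\simeq\left(\left(KU^\wedge_2\right)^{h\omega}\right)^{h(1+2^{v-n}\Z_2)},
\]
i.e.\ the $(1+2^{v-n}\Z_2)$-fixed points of $KO^\wedge_2$ or of $\left(KU^\wedge_2\right)^{h\omega}$, computed exactly as in \eqref{eqn:hfpss_k1_sp2} and \eqref{eqn:hfpss_d_k1_4} but with the generator $g$ replaced by $g^{2^{v-n-2}}$. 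Your subsequent valuation analysis (yielding $v_2(k)+v-n+1$ in the even case and $v-n$ in the odd case) is already consistent with this corrected identification, so the numerical conclusions stand once the wording is adjusted.
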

	\subsection{Homotopy groups of Dirichlet $J$-spectra}
	In this subsection, we first compute homotopy groups of the Dirichlet $J$-spectra by assembling the computations in the previous subsection. From there, we compare these homotopy groups with the special values of the corresponding Dirichlet $L$-functions in \mbox{\Cref{thm:dirichlet_j_gbn}}.
	\begin{thm}\label{thm:pi_dirichlet_j}
		Let $\chi$ be a primitive Dirichlet character $\znx\to \Cx$ of conductor $N$. 
		\begin{enumerate}
			\item When $N=p>2$, if $|\imag \chi|>1$ is not a prime power, then 
			\begin{equation*}
				J(p)^{h\chi}\simeq \bigvee_{\substack{0\le a\le p-2\\\ker \omega^a=\ker\chi}} S_{K(1)}^0(p)^{h\omega^a}\implies \pi_i\left(J(p)^{h\chi}\right)=\left\{\begin{array}{cl}
				\Z/p^{v_p(k)+1}, &i=2k-1\textup{ and }\ker \omega^k=\ker\chi;\\
				0,&\textup{otherwise.}
				\end{array}\right.
			\end{equation*}
			If $|\imag \chi|>1$ is a power of a prime $\ell$, then
			\begin{equation*}
				J(p)^{h\chi}\simeq \left(\Sigma\left(S^0_{KU/\ell}\right)^{\vee \ell}\right)\bigvee\bigvee_{\substack{0\le a\le p-2\\\ker \omega^a=\ker\chi}} S_{KU/p}^0(p)^{h\omega^a} .
			\end{equation*}
			When $\ell>2$, we have 
			\begin{equation*}
			\pi_i\left(J(p)^{h\chi}\right)=\left\{\begin{array}{cl}
			\Z_\ell^{\oplus \ell}, & i=0; \\
			\Z_\ell^{\oplus \ell}, & i=1 \textup{ and }\ker\chi\neq 0;\\
			\Z/p\oplus\Z_\ell^{\oplus \ell}, & i=1 \textup{ and }\ker\chi=0;\\
			\Z/p^{v_p(k)+1}, &i=2k-1\neq 1\textup{ and }\ker \omega^k=\ker\chi;\\
			\left(\Z/\ell^{v_\ell(k)+1}\right)^{\oplus \ell}, &i=2k\neq 0\textup{, and }(\ell-1)\mid k;\\
			0,&\textup{otherwise.}
			\end{array}\right.
			\end{equation*}
			When $\ell=2$ (in particular whenever $p=2^{2^n}+1$ is a Fermat prime), we have
			\begin{equation*}
			\pi_i\left(J(p)^{h\chi}\right)=\left\{\begin{array}{cl}
			\Z_2^{\oplus 2}, &i=0;\\
			\left(\Z_2\oplus \Z/2\right)^{\oplus 2}, & i=1 \textup{ and }\ker\chi\neq 0;\\
			\Z/p\oplus\left(\Z_2\oplus \Z/2\right)^{\oplus 2}, & i=1 \textup{ and }\ker\chi=0;\\
			(\Z/2\oplus \Z/2)^{\oplus 2},& i\equiv 2\mod 8;\\
			\Z/p^{v_p(k)+1}\oplus(\Z/2)^{\oplus 2}, &i=2k-1\neq 1, i\equiv 1,3\mod 8\textup{, and }\ker \omega^k=\ker\chi;\\
			\Z/p^{v_p(k)+1}, &i=2k-1, i\equiv 5,7\mod 8\textup{, and }\ker \omega^k=\ker\chi;\\
			\left(\Z/2^{v_2(k)+3}\right)^{\oplus 2}&i=4k\neq 0;\\
			0,&\textup{otherwise.}
			\end{array}\right.
			\end{equation*}
			\item When $N=p^v$, $v>1$ and $p>2$, we have
			\begin{equation*}
				J(p^v)^{h\chi}\simeq \bigvee_{\substack{0\le a\le p-2\\\ker \omega^a=\ker\chi|_{\zpx}}} S_{K(1)}^0(p^v)^{h\chi_a}\implies\pi_i\left(J(p^v)^{h\chi}\right)=\left\{\begin{array}{cl}
				\Z/p, &i=2k-1\textup{ and }\ker \omega^k=\ker\chi|_{\zpx};\\
				0,&\textup{otherwise},
				\end{array}\right.
			\end{equation*}
			where $\chi_a=\omega^a\cdot (\iota\circ\chi|_{\Z/p^{v-1}})$ and $\iota\colon \Q(\chi)\hookrightarrow \Cp$ is an embedding as in \Cref{exmp:Dirichlet_j_decomp}.
			\item When $N=4$, the only non-trivial character satisfies $\chi(-1)=-1$. We have:
			\begin{equation*}
				J(4)^{h\chi}\simeq S^0_{K(1)}(4)^{h\omega}\implies\pi_i\left(J(4)^{h\chi}\right)=\left\{\begin{array}{cl}
				\Z/4,&i=4k+1;\\
				\Z/2,&i\equiv 2,4\mod 8;\\
				\Z/2\oplus \Z/2,&i\equiv 3\mod 8;\\
				0,&\textup{otherwise.}
				\end{array}\right.
			\end{equation*}
			\item When $N=2^v>4$, $J(4)^{h\chi}\simeq S^0_{K(1)}(2^v)^{h(\iota\circ\chi)}$, where $\iota\colon \Q(\chi)\hookrightarrow \Cbb_2$ is an embedding.
			If $\chi(-1)=1$, then
			\begin{equation*}
			\pi_i\left(J(2^v)^{h\chi}\right)=\left\{\begin{array}{cl}
			\Z/2,&i\equiv 0,2,3,7\mod 8;\\
			\Z/2\oplus \Z/2, &i\equiv 1\mod 8;\\
			0,&\textup{otherwise.}
			\end{array}\right.
			\end{equation*}
			If $\chi(-1)=-1$, then
			\begin{equation*}
			\pi_i\left(J(2^v)^{h\chi}\right)=\left\{\begin{array}{cl}
			\Z/2,&i\equiv 1,2,4,5\mod 8;\\
			\Z/2\oplus \Z/2, &i\equiv 3\mod 8;\\
			0,&\textup{otherwise.}
			\end{array}\right.
			\end{equation*}
			\item  Suppose $N$ has more than one prime factors.
			\begin{enumerate}
				\item $J(N)^{h\chi}$ is contractible unless there is a prime $p$ such that  $|\imag \chi|_{\zx{N'}}|=p^n$ where $N'=N/p^{v_p(N)}$. In particular, $J(N)^{h\chi}$ is contractible whenever $v_\ell(N)\ge 2$ for at least two distinct primes $\ell$.
				\item When there is such a prime $p$, then $J(N)^{h\chi}\simeq \left(J(N)^{h\chi}\right)^\wedge_{p}$. When $p$ is odd, we have
				\begin{equation*}
				\pi_i\left(J(N)^{h\chi}\right)=\left\{\begin{array}{cl}
				\Zp^{\oplus p}, &\chi|_{\zpx}\textup{ is trivial and }i=0 \textup{ or }1;\\
				\left(\Z/p^{v_p(k)+1}\right)^{\oplus p}, &n\ge v-1, i=2k\neq 0\textup{, and }\ker\omega^k=\ker\chi|_{\zpx};\\
				\left(\Z/p^{v_p(k)+v-n}\right)^{\oplus p}, &n< v-1, i=2k\neq 0\textup{, and }\ker\omega^k=\ker\chi|_{\zpx};\\
				0,&\textup{otherwise.}
				\end{array}\right.
				\end{equation*}	
				When $p=2$, if $\chi|_{\zx{4}}$ is trivial, then
				\begin{equation*}
				\pi_i\left(J(N)^{h\chi}\right)=\left\{\begin{array}{cl}
				\Z_2^{\oplus 2},& i=0;\\
				(\Z_2\oplus\Z/2)^{\oplus 2},& i=1;\\
				(\Z/2\oplus \Z/2)^{\oplus 2},& i\equiv 2\mod 8;\\
				(\Z/2)^{\oplus 2},&i\equiv 1,3\mod 8\textup{ and }i\neq 1;\\
				\left(\Z/2^{v_2(k)+3}\right)^{\oplus 2}&n\ge v-2\textup{ and }i=4k\neq 0;\\
				\left(\Z/2^{v_2(k)+v-n+1}\right)^{\oplus 2}&n< v-2\textup{ and }i=4k\neq 0;\\
				0,&\textup{otherwise.}
				\end{array}\right.
				\end{equation*}
				If $\chi|_{\zx{4}}=\omega$, we have
				\begin{equation*}
				\pi_i\left(J(N)^{h\chi}\right)=\left\{\begin{array}{cl}
				(\Z/2)^{\oplus 2},&i\equiv 3,5\mod 8\textup{ and }i\neq 1;\\
				(\Z/2\oplus \Z/2)^{\oplus 2},& i\equiv 4\mod 8;\\		
				\left(\Z/4\right)^{\oplus 2}&n\ge v-2\textup{ and }i\equiv 2\mod 4;\\
				\left(\Z/2^{v-n}\right)^{\oplus 2}&n< v-2\textup{ and }i\equiv 2\mod 4;\\
				0,&\textup{otherwise.}
				\end{array}\right.
				\end{equation*}
			\end{enumerate}
		\end{enumerate}	
	\end{thm}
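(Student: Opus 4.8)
The plan is to assemble \Cref{thm:pi_dirichlet_j} by combining the splitting of the $p$-completion of $J(N)^{h\chi}$ from \Cref{prop:jnchi_p_decomposition}/\Cref{exmp:Dirichlet_j_decomp} with the fracture square definition of $J(N)^{h\chi}$ and the homotopy-group computations of the Dirichlet $K(1)$-local spheres carried out throughout \Cref{Sec:pi_Dirichlet_j}. First I would recall that \Cref{cor:dirichlet_J_Q} makes $\left(J(N)^{h\chi}\right)_\Q$ contractible whenever $\chi\neq\chi^0$, so since every $\chi$ here is primitive of conductor $N>1$ the rational part never contributes; hence $J(N)^{h\chi}$ is determined by its $p$-completions for all primes $p$, glued along the (contractible) rational part, i.e. $J(N)^{h\chi}\simeq \prod_p\left(J(N)^{h\chi}\right)^\wedge_p$ in the relevant range, and in fact only finitely many of these are noncontractible. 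This reduces every case to understanding $\left(J(N)^{h\chi}\right)^\wedge_p$ for each $p$.

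Next, for each prime $p$ I would invoke \Cref{prop:jnchi_p_decomposition}: fixing an embedding $\iota:\Q(\chi)\hookrightarrow\Cp$, one has $\left(J(N)^{h\chi}\right)^\wedge_p\simeq\bigvee_{[\sigma]\in\cok\iota^*}S^0_{K(1)}(p^{v_p(N)})^{h(\iota\circ\sigma\circ\chi)}$. Then I split into the five enumerated cases by the shape of $N$. For $N=p^v$, $p>2$ ($v=1$ and $v>1$ handled separately): the only prime where $\left(J(p^v)^{h\chi}\right)^\wedge_\ell$ can be nonzero besides $p$ is a prime $\ell$ with $|\imag\chi|$ a power of $\ell$, and at $\ell$ one must identify the Moore spectrum $M(\Z[\chi])^\wedge_\ell$ non-equivariantly with a wedge of spheres to produce the $\Sigma\left(S^0_{KU/\ell}\right)^{\vee\ell}$ summand — this is the computation $M(\Z[\zeta_n])_{h\zx{N'}}\simeq\Sigma^{-1}(BC_p)_+$ and \Cref{lem:BA_k1} applied with roles of $p$ and $\ell$ interchanged (here the character $\chi$ restricted to $\znx$ with $\ell$-power image plays the role of $\chi'$ in \Cref{thm:Dk1_ho}). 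Having the wedge decomposition, the homotopy groups follow from \eqref{eqn:pi_DK1_p} (summands $S^0_{K(1)}(p)^{h\omega^a}$, $a\neq0$), \eqref{k1spi}/\eqref{pis0_2} (the $S^0_{KU/\ell}$ factors), and \eqref{eqn:pi_DK1_pv} for $v>1$; the $\ker\omega^k=\ker\chi$ index conditions come directly from matching $\omega^a$ with $\chi|_{\zpx}$ as in \Cref{exmp:Dirichlet_j_decomp}. For $N=4$ use \eqref{eqn:Dirichlet_k1_4}; for $N=2^v>4$ use \Cref{prop:pi_dk1_2v}, noting $\cok\iota^*$ is trivial here since $(\Z[\zeta_{2^{v-2}}])^\wedge_2\simeq\Z_2[\zeta_{2^{v-2}}]$. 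For the composite-conductor case (5), combine \Cref{cor:dirichlet_k1_contractible} (contractibility unless $|\imag\chi|_{\zx{N'}}|$ is a $p$-power for some $p\mid N$) with \Cref{thm:Dk1_ho} and its corollary, which already tabulate $\pi_*\left(S^0_{K(1)}(p^v)^{h\chi}\right)$ in Cases I--III${}'$; one then checks the $p$-completion is the whole spectrum because the only other prime contributing would need $|\imag\chi|$ divisible by a second prime, which is excluded.

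The main obstacle I anticipate is bookkeeping rather than conceptual: correctly tracking, in the $N=p$ case with $|\imag\chi|$ an $\ell$-power, \emph{which} prime's Moore-spectrum combinatorics produces the extra wedge summand $\Sigma\left(S^0_{KU/\ell}\right)^{\vee\ell}$ versus the $S^0_{KU/p}(p)^{h\omega^a}$ summands — i.e. applying \Cref{thm:Dk1_ho}/\Cref{lem:BA_k1} with $p$ and $\ell$ in swapped roles — and then assembling the $\ell$-primary and $p$-primary homotopy in a single table while keeping straight the three sub-cases $\ell>2$, $\ell=2$, and the $\Z/2$-summands inherited from $\pi_*(S^0_{K(1)})$ at the prime $2$. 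The extension problems needed along the way are all already resolved (\Cref{prop:extn_prob}, \Cref{prop:extn_prob_tw}), and the identification of summand-by-summand homotopy groups is the content of \Cref{Sec:pi_Dirichlet_j}, so no new differentials or extensions arise; the proof is essentially an organized citation of those results case by case.
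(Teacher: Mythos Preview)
Your proposal is correct and follows essentially the same approach as the paper: the paper's proof is a two-sentence citation observing that $J(N)^{h\chi}_\Q$ is contractible for nontrivial $\chi$ (\Cref{prop:JN_hchi_Q_contractible}) and then invoking the $p$-adic decomposition \Cref{prop:jnchi_p_decomposition} together with the computations of \Cref{Sec:pi_Dirichlet_j}. Your write-up simply unpacks this citation case by case, correctly identifying which earlier results feed each entry in the tables; no new ideas are needed beyond what you have outlined.
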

	\begin{proof}
		By \Cref{prop:JN_hchi_Q_contractible}, $J(N)^{h\chi}_\Q$ is contractible unless $\chi$ is trivial.  By \Cref{prop:jnchi_p_decomposition}, $J(N)^{h\chi}$ decomposes as a wedge sum of Dirichlet $K(1)$-local spheres upon $p$-completions.  Homotopy groups of these summands were computed in the previous subsections. This means we can compute $\pi_*\left(J(N)^{h\chi}\right)$ by assembling homotopy groups of the summands in its $p$-completions at all primes $p$. To illustrate how this works, we now explain the computations in Case (1) above, where the conductor $N$ of the character $\chi$ is an odd prime $p$. Computations in the other cases are accomplished similarly.
		
		By \eqref{eqn:jp_chi_decomp}, the $p$-completion of $J(p)^{h\chi}$ is given by:
		\begin{equation*}
			\left(J(p)^{h\chi}\right)^\wedge_{p}\simeq \bigvee_{\substack{0\le a\le p-2\\\ker \omega^a=\ker\chi}} S^0_{K(1)}(p)^{h\chi_a}.
		\end{equation*}
		As $\chi$ is not the trivial character, $a\ne0$ in this wedge sum. 
		Completed at a different prime $\ell\ne p$, we have by \Cref{prop:jnchi_p_decomposition}:
		\begin{equation}\label{eqn:Jphchi_ell}
			\left(J(p)^{h\chi}\right)^\wedge_{\ell}\simeq \bigvee_{[\sigma]\in\cok \iota^* }\left(S^0_{KU/\ell}\right)^{h(\iota\circ\sigma \circ \chi)}.
		\end{equation}
		When $|\imag \chi|$ is not a power of $\ell$, summands in this decomposition are contractible by \Cref{prop:dirichlet_k1_contractible}. In particular when $|\imag\chi|$ is not a power of any prime , $\left(J(p)^{h\chi}\right)^\wedge_{\ell}$ is contractible for any $\ell\ne p$. So we have  
		\begin{equation*}
			J(p)^{h\chi}\simeq \left(J(p)^{h\chi}\right)^\wedge_{p}\simeq \bigvee_{\substack{0\le a\le p-2\\\ker \omega^a=\ker\chi}} S^0_{K(1)}(p)^{h\chi_a}.
		\end{equation*}
		Homotopy groups of the summands in this decomposition can be then read off from \eqref{eqn:pi_DK1_p}.
		
		When $|\imag \chi|$ is a power of $\ell$, there is only one summand in \eqref{eqn:Jphchi_ell}. This is because $\iota_*$ is an isomorphism under the assumption by \Cref{prop:gal_p_adic_cyclo}.  As a result, the $\ell$-completion of $J(p)^{h\chi}$ is $ \left(S^{0}_{KU/\ell}\right)^{h(\iota\circ\chi)}$. By \Cref{thm:Dk1_ho}, this spectrum is equivalent to $ \Sigma\left(S^0_{KU/\ell}\right)^{\vee \ell}$. Combining the above, we have in this case
		\begin{align*}
			J(p)^{h\chi}\simeq& \left(J(p)^{h\chi}\right)^\wedge_{\ell}\bigvee \left(J(p)^{h\chi}\right)^\wedge_{p}\\
			\simeq &  \left(\Sigma\left(S^0_{KU/\ell}\right)^{\vee \ell}\right)\bigvee\bigvee_{\substack{0\le a\le p-2\\\ker \omega^a=\ker\chi}} S^0_{KU/p}(p)^{h\chi_a}.
		\end{align*}
		Homotopy groups of the summands in this decomposition can then be read off from \eqref{k1spi} (when $\ell>2$) and \eqref{pis0_2} (when $\ell=2$) for the $\ell$-summands, and from \eqref{eqn:pi_DK1_p} for the $p$-summands.
	\end{proof}\color{black}
	\begin{thm}\label{thm:dirichlet_j_gbn}
		Let $\mathcal{D}_{k,\chi}$ be the ideal of $\Z[\chi]$ generated by the denominator of $\frac{B_{k,\chi}}{2k}\in \Q(\chi)$. Set $\mathcal{D}_{k,\chi}=(1)$ when $(-1)^k\neq \chi(-1)$ (i.e. when $B_{k,\chi}=0$). 
		\begin{enumerate}
			\item Assume $N=p>2$ or $N=4$ when $p=2$. For all integers $k$ satisfying $(-1)^k=\chi(-1)$, we have
			\begin{equation*}
			\pi_{2k-1}\left(J(N)^{h\chi}\left[\frac{1}{\ell(\chi)}\right]\right)\simeq \left.\Z\left[\chi\right]\right/\mathcal{D}_{|k|,\chi^{-1}},\quad \textup{where }\ell(\chi)=\left\{\begin{array}{cl}
			\ell,&\textup{if }|\imag(\chi)|\textup{ is a power of a prime }\ell;\\
			1,&\textup{otherwise.}
			\end{array}\right.
			\end{equation*}
			\item When $N=p^v>2p$, $\pi_{2k-1}\left(J(p^v)^{h\chi}\right)\simeq \left.\Z\left[\chi\right]\right/\mathcal{I}_{k,\chi^{-1}}$, where $I_{k,\chi}$ is an ideal of $\Z[\chi]$ such that its multiplicative difference with $\mathcal{D}_{k,\chi}$ contains the principal ideal $(2)$ in $\Z[\chi]$.
		\end{enumerate}		
	\end{thm}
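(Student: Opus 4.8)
The plan is to read off the homotopy groups from \Cref{thm:pi_dirichlet_j} and match them term by term against the arithmetic of generalized Bernoulli numbers supplied by \Cref{thm:GBN} (Carlitz) and \Cref{thm:von_staudt} (Clausen--von Staudt). The trivial cases come first: if $(-1)^k\neq\chi(-1)$ then $B_{|k|,\chi^{-1}}=0$, so $\mathcal D_{|k|,\chi^{-1}}=(1)$ by convention while $\pi_{2k-1}(J(N)^{h\chi})=0$, and there is nothing to prove; similarly, when $\ker\omega^k\neq\ker(\chi|_{\zpx})$ the homotopy group vanishes and I will have to check separately that $B_{|k|,\chi^{-1}}/(2|k|)$ is then (almost) integral. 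So assume $(-1)^k=\chi(-1)$ and $\ker\omega^k=\ker(\chi|_{\zpx})$. The first structural step is to isolate the relevant summand: by \Cref{cor:dirichlet_J_Q} the spectrum $J(N)^{h\chi}$ is rationally contractible, by \Cref{prop:jnchi_p_decomposition} and \Cref{exmp:Dirichlet_j_decomp} its $p$-completion is $\bigvee_a S^0_{K(1)}(p^{v_p(N)})^{h\chi_a}$, and by \Cref{cor:dirichlet_k1_contractible} every other completion is contractible unless $|\imag\chi|$ is a power of a prime $\ell\neq p$, in which case (\Cref{thm:Dk1_ho}, Case I) the only extra piece is a single copy of $\Sigma(S^0_{KU/\ell})^{\vee\ell}$. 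Inverting $\ell(\chi)$ is designed precisely to discard this cross-prime piece: after inverting $\ell$ its contribution to $\pi_i$ sits in even degrees together with $i\in\{0,1\}$, hence is invisible in odd degrees $2k-1$ with $|k|\ge 2$; the borderline $|k|=1$ (which occurs only when $\chi$ is injective and $\ell\mid p-1$) I would treat by hand.

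The second step is to evaluate $\pi_{2k-1}$ of the $p$-complete wedge using \eqref{eqn:pi_DK1_p} for $N=p$, \eqref{eqn:pi_DK1_pv} for $N=p^v>2p$, and \eqref{eqn:Dirichlet_k1_4} for $N=4$. In each case exactly one summand---the one indexed by the unique $a\equiv k\pmod{p-1}$---has nonzero $\pi_{2k-1}$, and it is the cyclic group $\Z/p^{v_p(|k|)+1}$ (respectively $\Z/p$, respectively $\Z/4$). So, after inverting $\ell(\chi)$, $\pi_{2k-1}(J(N)^{h\chi})$ is a single cyclic $p$-group whose order I have now determined.

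The third, and central, step is to identify this cyclic group with $\Z[\chi]/\mathcal D_{|k|,\chi^{-1}}$. I would use the decomposition $\Z[\chi]\otimes\Zp=\prod_{\mathfrak p\mid p}\Z[\chi]_{\mathfrak p}$, in which (by \Cref{prop:gal_Qp_zeta}) each factor is $\Zp$ for conductor $p$ and a totally ramified extension of $\Zp$ for conductor $p^v$, so that $\Z[\chi]/\mathfrak p\simeq\Fp$ in all cases at hand (the point being that $\mathrm{ord}(\chi|_{\zpx})$ divides $p-1$, so primes above $p$ have residue degree one). Since reduction is injective on roots of unity of order prime to $p$, the character $\omega^k$ coincides with $\chi^{-1}\bmod\mathfrak p$ for some $\mathfrak p\mid p$ exactly when $\ker\omega^k=\ker\chi$; this both matches the non-vanishing condition of the second step and pins down the prime $\mathfrak p=(p,\,1-\chi^{-1}(g)g^{|k|})$ appearing in \Cref{thm:GBN}(2), which corresponds to the wedge-summand index $a$. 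Carlitz's congruences \eqref{eqn:bkchi_p} and \eqref{eqn:bkchi_pv} then give $v_{\mathfrak p}\!\big(B_{|k|,\chi^{-1}}/|k|\big)=-1$, and for conductor $p$ the exact power $v_p(|k|)+1$; a short bookkeeping of $v_{\mathfrak p}(2|k|)$---using in the ramified case that the $v_p(k)$-contributions to numerator and denominator cancel---yields the $\mathfrak p$-valuation of $B_{|k|,\chi^{-1}}/(2|k|)$ matching the order found above ($-(v_p(|k|)+1)$ for $N=p$, $-2$ for $N=4$, $-1$ for $N=p^v>2p$). Combined with the fact (\Cref{thm:GBN}(1)--(2)) that the denominator of $B_{|k|,\chi^{-1}}/|k|$ is supported on primes above $p$, this determines $\mathcal D_{|k|,\chi^{-1}}$ as the corresponding power of $\mathfrak p$, so $\Z[\chi]/\mathcal D_{|k|,\chi^{-1}}$ is cyclic of exactly the order of $\pi_{2k-1}$, which proves part (1).

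I expect the $2$-primary bookkeeping to be the main obstacle. The $\frac1{2k}$ in the definition of $\mathcal D_{k,\chi}$, the $\Z/2$-summands in $\pi_*(S^0_{K(1)})$ and $\pi_*(J)$ at $p=2$, and the $N=4$, $N=2^v$ normalizations all interfere, and Carlitz only controls $B_{|k|,\chi^{-1}}/|k|$ rather than its halving---so when $\ker\omega^k\neq\ker\chi$ the halved number $B_{|k|,\chi^{-1}}/(2|k|)$ has denominator dividing $(2)$ but not necessarily trivial. This is exactly why part (2) only asserts $\pi_{2k-1}(J(p^v)^{h\chi})\simeq\Z[\chi]/\mathcal I_{k,\chi^{-1}}$ for an ideal $\mathcal I_{k,\chi^{-1}}$ whose multiplicative difference from $\mathcal D_{k,\chi^{-1}}$ contains $(2)$: one defines $\mathcal I_{k,\chi^{-1}}$ by the topological answer from step two and then compares $2$-adic valuations. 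Carrying out that comparison, together with cross-checking the $N=4$ and $N=2^v$ normalizations against \eqref{eqn:Dirichlet_k1_4} and \Cref{prop:pi_dk1_2v}, is the step that requires real care.
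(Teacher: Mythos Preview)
Your approach is essentially the paper's: reduce to the $p$-completion via \Cref{cor:dirichlet_J_Q}, \Cref{prop:jnchi_p_decomposition}, and \Cref{cor:dirichlet_k1_contractible}, read off $\pi_{2k-1}$ from the computations of \Cref{Sec:pi_Dirichlet_j}, and match case by case against Carlitz's \Cref{thm:GBN}---you phrase the matching in valuation-theoretic language where the paper does direct case analysis, but the content is the same. You are in fact more careful than the paper about the $|k|=1$ boundary case (where the $\Z_\ell$-summands of $\Sigma(S^0_{KU/\ell})^{\vee \ell}$ persist as $\Q_\ell$ after inverting $\ell(\chi)$); the paper's proof simply asserts that after inverting $\ell(\chi)$ the Dirichlet $J$-spectrum agrees with its $p$-completion and does not address this.
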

	\begin{rem}
		By \Cref{rem:exotic_pic} and \Cref{rem:j_2v_Moore_model}, the statements above are independent of the models of $M(\Z[\chi])$ when $(-1)^k=\chi(-1)$.
	\end{rem}
	\begin{proof}
		In the first four cases in \Cref{thm:pi_dirichlet_j}, the Dirichlet $J$-spectra are equivalent to their $p$-completions after inverting $\ell(\chi)$ by \Cref{cor:dirichlet_J_Q}, \Cref{prop:jnchi_p_decomposition} and \Cref{cor:dirichlet_k1_contractible}. The only thing remains to check is $\pi_{2k-1}$ where $(-1)^k=\chi(-1)$ and $N=p^v>1$. For that, it suffices to compare the arithmetic properties of $B_{k,\chi}$ in \Cref{thm:GBN} with computations in \Cref{Subsec:pi_twisted_k1}.
		\begin{enumerate}
			\item $N=p>2$.  Comparing the decomposition in \Cref{exmp:Dirichlet_j_decomp} and computation in \eqref{eqn:pi_DK1_p} with \Cref{thm:GBN}, we need to check the following:
			\begin{itemize}
				\item Let $g$ be a primitive $(p-1)$-st root of unity mod $p$. The ideal $\mathfrak{p}=(p,1-\chi(g)g^k)$ of $\Z[\chi]$ is not equal to $(1)$ iff $\ker \chi=\ker\omega^{-k}$. To see this, notice by \Cref{cor:cyclo_rep_p_decomp}, there is an isomorphism of $\zpx$-representations:
				\begin{equation*}
				\Z[\chi]/p\simeq \bigoplus_{\substack{0\le a\le p-2\\\ker \omega^a=\ker\chi}} (\Z/p)_{\omega^a}\simeq \bigoplus_{\substack{0\le a\le p-2\\\ker \omega^a=\ker\chi}} (\Z/p)^{\otimes a}.
				\end{equation*} 
				Then $1-\chi(g)g^k$ is invertible in $\Z[\chi]/p$ iff $1\equiv g^a\cdot g^k\mod p$ for some $a$ satisfying $0\le a\le p-2$ and $\ker \chi=\ker \omega^a$. Since $g$ is a primitive $(p-1)$-st root of unity mod $p$, this condition is further equivalent to saying $(p-1)\mid (a+k)$ for such an $a$. From this we conclude $\ker\chi=\ker \omega^{-k}$.
				\item When $\mathfrak{p}\neq (1)$, the congruence \eqref{eqn:bkchi_p} $pB_{k,\chi}\equiv p-1 \mod \mathfrak{p}^{v_p(k)+1}$ implies $\Z[\chi]/\mathcal{D}_{k,\chi}\simeq \Z/p^{v_p(k)+1}$. 
				
				It suffices to check this formula holds $p$-adically and $2$-adically since the denominator ideal of $\frac{B_{k,\chi}}{k}$ is $p$-primary by \Cref{thm:GBN}. As $2\mid (p-1)$, $\mathcal{D}_{k,\chi}$ has no $2$-primary factors by \eqref{eqn:bkchi_p}. Completed at $p$, the ideal $\mathfrak{p}$ is the same as $(p)$ when it is not $(1)$. Now \eqref{eqn:bkchi_p} becomes 
				\begin{equation*}
					pB_{k,\omega^{a}}\equiv p-1\mod p^{v_p(k)+1}\implies \frac{B_{k,\omega^a}}{2k}-\frac{p-1}{2pk}\in \Z_{(p)}[\omega^a],
				\end{equation*}
				where $a$ satisfies $\ker \omega^a=\ker \chi $ and $(p-1)\mid (k+a)$. This implies \begin{equation*}\Z[\chi]/\mathcal{D}_{k,\chi^{-1}}\simeq \Z/p^{v_p(k)+1}\simeq \pi_{2k-1}\left(J(p)^{h\chi}\left[\frac{1}{\ell(\chi)}\right]\right).\end{equation*}
			\end{itemize} 
			\item $N=p^v$, $v>1$ and $p>2$. By \Cref{lem:gt_chi_g_odd}, $\mathfrak{p}=(p,1-\chi(g)g^k)\neq (1)$ when $\ker\chi|_{\zpx}=\ker \omega^{-k}$. In that case, $\mathfrak{p}=(1-\zeta_{p^{v-1}},p)=(1-\zeta_{p^{v-1}})$. On the other hand, since $1+p$ is a generator of the subgroup $\Z/p^{v-1}\subseteq \zx{p^v}$ and $\chi$ is primitive, $\chi(1+p)$ is also a primitive $p^{v-1}$-th root of unity. As a result, \eqref{eqn:bkchi_p} translates into
			\begin{equation*}
				(1-\chi(p+1))\frac{B_{k,\chi}}{k}\equiv 1\mod \mathfrak{p}\implies \frac{B_{k,\chi}}{k}\equiv \frac{1}{1-\zeta_{p^{v-1}}}\mod \Zp[\zeta_{p^{v-1}}].
			\end{equation*}
			Thus $\mathcal{D}_{k,\chi}$ is either $(1-\zeta_{p^{v-1}})$ or $(2(1-\zeta_{p^{v-1}}))$. Whereas by \Cref{thm:pi_dirichlet_j}, $\pi_{2k-1}\left(J(p^v)^{h\chi}\right)\simeq \Z/p\simeq \Z[\chi]/(1-\zeta^{p_{v-1}})$.
			\item $N=4$. In this case $\chi=\chi^{-1}$ since $\zx{4}\simeq C_2$. By \eqref{eqn:bkchi_4}, we have when $k$ is odd:
			\begin{equation*}
			\frac{B_{k,\chi}}{k}-\frac{1}{2}\in \Z[\chi]=\Z \implies \frac{B_{k,\chi}}{2k}-\pm\frac{1}{4}\in \Z[\chi]=\Z.
			\end{equation*}
			Thus $\mathcal{D}_{k,\chi}=\mathcal{D}_{k,\chi^{-1}}$ is equal to the ideal $(4)$ of $\Z[\chi]\simeq \Z$. This matches the computation in \eqref{eqn:Dirichlet_k1_4} that $\pi_{2k-1}\left(S_{K(1)}^0(4)^{h\omega}\right)\simeq \Z/4$ when $k$ is odd.
			\item $N=2^v>4$. \Cref{thm:GBN} says $\frac{B_{k,\chi}}{k}$ is an algebraic integer. As a result, $\mathcal{D}_{k,\chi}$ the denominator ideal of $\frac{B_{k,\chi}}{2k}$ contains $(2)$ as a sub-ideal. By \Cref{thm:pi_dirichlet_j}, $\pi_{2k-1}\left(J(2^v)^{h\chi}\right)\simeq \Z/2\simeq \Z[\chi]/(1-\zeta_{2^{v-2}})$. As both $\mathcal{D}_{k,\chi}$ and $\mathcal{I}_{k,\chi}$ contain the ideal $(2)$ in $\Z[\chi]$, their  multiplicative difference also contains the ideal $(2)$.
		\end{enumerate} 
	\end{proof}
	\section{Comparisons of $J$-spectra and $L$-functions}
	In this section, we compare various $J$-spectra we constructed with $L$-functions. We first relate the spectrum $J(N)$ and Dedekind $\zeta$-functions in \Cref{subsec:jn_Dedekind}. In addition, we find the Brown-Comenetz duals of the Dirichlet $J$-spectra and $K(1)$-spheres, as well as $J(N)$ in \Cref{subsec:BC_dual}. This duality phenomenon is similar to the functional equations of the corresponding $L$-functions. 
	\subsection{$J$-spectra, Dedekind $\zeta$-functions, and algebraic $K$-theory}\label{subsec:jn_Dedekind}
	In this subsection, we compare the spectrum $J(N)$ with Dedekind $\zeta$-function of the field $\Q(\zeta_N)$. We will focus on the case when $N=p^v$ for some prime $p$. By \Cref{thm:Dedekind_zeta_Dirichlet_L}, the Dedekind $\zeta$-function of $\Q(\zeta_N)$ and Dirichlet $L$-functions are related by:
	\begin{equation}\label{eqn:Dedekind_zeta_prod}
	\zeta_{\Q(\zeta_N)}(s)=\prod_{\chi\colon \znx\to\Cx} L(s,\chi).
	\end{equation}
	When $N=p^v$, this yields:
	\begin{equation*}
	\frac{\zeta_{\Q(\zeta_{p^v})}(s)}{\zeta_{\Q(\zeta_{p^{v-1}})}(s)}= \prod_{\substack{\chi\colon \zx{p^v}\to\Cx\\\text{primitive}}} L(s,\chi).
	\end{equation*}
	The formula above reminds us of \Cref{prop:S_k1_pv_cofib} and \Cref{prop:S_k1_2v_cofib}.
	\begin{prop}\label{prop:J_pv_cofib}
		Let $p>2$ be a prime and $\chi\colon \zx{p^v}\to \Cpx$ be any primitive $p$-adic Dirichlet character of conductor $p^v$. 
		\begin{equation*}
		\mathrm{Cofib}(J(p^{v-1})\to J(p^v))\simeq\left\{\begin{array}{cl}
		\bigvee_{a=1}^{p-2} S_{K(1)}^{0}(p)^{h\omega^a},& v=1;\\
		\bigvee_{a=0}^{p-2} S_{K(1)}^{0}(p^v)^{h\chi_a},&v>1,
		\end{array}\right.
		\end{equation*}
		where $\chi_a=\omega^a\cdot (\chi|_{\Z/p^{v-1}})$.
	\end{prop}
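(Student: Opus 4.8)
The plan is to reduce to the $p$-complete ($K(1)$-local) setting and then identify \emph{both} sides with the single spectrum $\map_{\Zp}\!\big(M(\Zp[\zeta_{p^{v-1}}]),\Kp\big)^{h(1+p\Zp)}$. \emph{Step 1 (reduction).} The map $J(p^{v-1})\to J(p^v)$ is a map of $KU$-local spectra (\Cref{prop:JN_k_loc}), so its cofiber is $KU$-local; it is rationally the identity of $S^0_\Q$ and $q$-adically the identity of $S^0_{KU/q}$ for every $q\ne p$ (\Cref{cor:jn_structure}), hence the cofiber is rationally and $q$-adically trivial for $q\ne p$, so it coincides with its $p$-completion, namely the $K(1)$-local spectrum $\mathrm{Cofib}\!\big(S^0_{K(1)}(p^{v-1})\to S^0_{K(1)}(p^v)\big)$, the map being the canonical one induced by $1+p^v\Zp\subseteq 1+p^{v-1}\Zp$. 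It therefore suffices to identify this cofiber with the asserted wedge.

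\emph{Step 2 (the wedge).} Assume $v>1$. By \Cref{prop:Dirichlet_k1_Kp} with $N'=1$, $S^0_{K(1)}(p^v)^{h\chi_a}\simeq\map_{\Zp}\!\big(M(\Zp[\chi_a]),\Kp\big)^{h\Zpx}$, where $\Zpx$ acts on $\Kp$ through the Adams operations. Since $\mu_{p-1}\subseteq\Zp$ one has $\Zp[\chi_a]=\Zp[\zeta_{p^{v-1}}]$ for all $a$, and — decomposing the $\zx{p^v}=\zpx\times\zx{p^{v-1}}$-action on the Moore spectra into their two factors as in \Cref{thm:moore_spec_action} — there is a $\zx{p^v}$-equivariant equivalence $M(\Zp[\chi_a])\simeq S^0_{\omega^a}\wedge M(\Zp[\zeta_{p^{v-1}}])$, in which $M(\Zp[\zeta_{p^{v-1}}])$ carries the cyclotomic $\zx{p^{v-1}}$-action (\Cref{con:integral_moore}) and trivial $\zpx$-action. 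Using $\bigoplus_{a=0}^{p-2}(\Zp)_{\omega^a}\cong\Zp[\zpx]$ one gets $\bigvee_{a=0}^{p-2}M(\Zp[\chi_a])\simeq (\zpx)_+\wedge M(\Zp[\zeta_{p^{v-1}}])$ $\zx{p^v}$-equivariantly. As homotopy fixed points commute with finite wedges and $\map_{\Zp}(-,\Kp)$ sends finite wedges to finite products, Shapiro's lemma for $\zpx=\Zpx/(1+p\Zp)$ then yields
\begin{equation*}
\bigvee_{a=0}^{p-2}S^0_{K(1)}(p^v)^{h\chi_a}\;\simeq\;\map_{\Zp}\!\big((\zpx)_+\wedge M(\Zp[\zeta_{p^{v-1}}]),\Kp\big)^{h\Zpx}\;\simeq\;\map_{\Zp}\!\big(M(\Zp[\zeta_{p^{v-1}}]),\Kp\big)^{h(1+p\Zp)},
\end{equation*}
with $1+p\Zp$ acting on $M(\Zp[\zeta_{p^{v-1}}])$ through its quotient $C_{p^{v-1}}=(1+p\Zp)/(1+p^v\Zp)$ cyclotomically and on $\Kp$ by Adams operations.

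\emph{Step 3 (the cofiber) and the case $v=1$.} Apply $\map_{\Zp}(-,\Kp)$ to the defining cofiber sequence $S^0\wedge(C_{p^{v-1}})_+\to S^0\wedge(C_{p^{v-2}})_+\to\Sigma M(\Z[\zeta_{p^{v-1}}])$ of \Cref{con:integral_moore}, after identifying $C_{p^{v-1}}=(1+p\Zp)/(1+p^v\Zp)$ and $C_{p^{v-2}}=(1+p\Zp)/(1+p^{v-1}\Zp)$ (legitimate since $1+p\Zp\cong\Zp$ for $p>2$). Taking $(1+p\Zp)$-homotopy fixed points and using Shapiro's lemma $\map_{\Zp}((C_{p^{k-1}})_+,\Kp)^{h(1+p\Zp)}\simeq(\Kp)^{h(1+p^k\Zp)}=S^0_{K(1)}(p^k)$ — the transition map being the canonical one — gives $\map_{\Zp}(M(\Zp[\zeta_{p^{v-1}}]),\Kp)^{h(1+p\Zp)}\simeq\mathrm{Cofib}\!\big(S^0_{K(1)}(p^{v-1})\to S^0_{K(1)}(p^v)\big)$, which with Step 2 settles $v>1$. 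For $v=1$, since $|\zpx|=p-1$ is invertible, $S^0_{K(1)}(p)=(\Kp)^{h(1+p\Zp)}$ splits into its $\zpx$-eigensummands as $\bigvee_{a=0}^{p-2}S^0_{K(1)}(p)^{h\omega^a}$; the $a=0$ summand is $S^0_{K(1)}(p)^{h\zpx}=(\Kp)^{h\Zpx}=S^0_{K(1)}$, included via the canonical map $S^0_{K(1)}\to S^0_{K(1)}(p)$, so its cofiber is the complementary wedge $\bigvee_{a=1}^{p-2}S^0_{K(1)}(p)^{h\omega^a}$.

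\emph{Main obstacle.} The only genuinely non-formal point is the equivariant bookkeeping in Steps 2–3: establishing the $\zx{p^v}$-equivariant splitting $M(\Zp[\chi_a])\simeq S^0_{\omega^a}\wedge M(\Zp[\zeta_{p^{v-1}}])$ from the way the group actions on the Moore spectra are constructed (which is also what forces the right-hand side to be independent of the chosen $\chi$, in accordance with \Cref{prop:comparsion_d_k1_pv}), and verifying that each Shapiro-lemma identification recovers the canonical map $S^0_{K(1)}(p^{k-1})\to S^0_{K(1)}(p^k)$ rather than a twist of it. Everything else is diagram chasing.
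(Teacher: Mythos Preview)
Your proof is correct and follows essentially the same route as the paper. The paper's argument is terser: after the identical reduction to $\mathrm{Cofib}\big(S^0_{K(1)}(p^{v-1})\to S^0_{K(1)}(p^v)\big)$, it simply invokes the Adams splitting of this cofiber into its $\zpx$-eigensummands and cites \Cref{prop:S_k1_pv_cofib} (which identifies each eigensummand $\mathrm{Cofib}(\cdots)^{h\omega^a}$ with $S^0_{K(1)}(p^v)^{h\chi_a}$) together with \Cref{prop:comparsion_d_k1_pv} (to see the answer is independent of the chosen primitive $\chi$). Your Steps~2--3 are an inline unpacking of exactly these two ingredients, reorganized via Shapiro's lemma: rather than splitting the cofiber and matching pieces, you assemble the wedge into the single spectrum $\map_{\Zp}\big(M(\Zp[\zeta_{p^{v-1}}]),\Kp\big)^{h(1+p\Zp)}$ and then recognize the cofiber as the same thing. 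The two arguments are dual to one another and equally valid; the paper's version has the advantage of leaning on results already established, while yours makes the equivariant bookkeeping you flag (the splitting $M(\Zp[\chi_a])\simeq S^0_{\omega^a}\wedge M(\Zp[\zeta_{p^{v-1}}])$ and the identification of the transition maps) more explicit.
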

	\begin{proof}
		By \Cref{cor:jn_structure}, $\mathrm{Cofib}(J(p^{v-1})\to J(p^v))$ is equivalent to its $p$-completion, since the map is an equivalence rationally, and when completed at a prime other than $p$. At prime $p$, we have
		\begin{equation*}
		\mathrm{Cofib}(J(p^{v-1})\to J(p^v))\simeq \mathrm{Cofib}(J(p^{v-1})\to J(p^v))^\wedge_p\simeq \mathrm{Cofib}\left(S_{K(1)}^0(p^{v-1})\to S_{K(1)}^0(p^v)\right).
		\end{equation*}
		When $v=1$, taking this cofiber removes the $a=0$ summand in the Adams splitting of $S_{K(1)}^0(p)$. When $v>1$, the claim follows from \Cref{prop:comparsion_d_k1_pv} and the Adams splittings.
	\end{proof}
	\begin{cor}\label{cor:J_pv_cofib}
		Notations as above. When $p>2$ and $v>1$, we have
		\begin{equation*}
		\mathrm{Cofib}(J(p^{v-1})\to J(p^v))\simeq \bigvee_{a\in [0,p-2]/\sim} J(p^v)^{h\chi_a},
		\end{equation*}
		where $a\sim b$ if $\ker \omega^a=\ker \omega^b$.
	\end{cor}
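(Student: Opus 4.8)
The plan is to repackage the wedge decomposition of $\mathrm{Cofib}(J(p^{v-1})\to J(p^v))$ supplied by \Cref{prop:J_pv_cofib} according to the relation $\sim$, recognizing each block of Adams summands as a single Dirichlet $J$-spectrum via the $p$-adic decomposition \eqref{eqn:jp_chi_decomp}. The substantive content is already in \Cref{prop:J_pv_cofib} and \Cref{exmp:Dirichlet_j_decomp}; what remains is bookkeeping together with one completeness check.

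\emph{Step 1: identify the blocks.} Since $\chi$ is primitive of conductor $p^v$ with $v>1$, the restriction $\chi|_{\Z/p^{v-1}}$ is injective, so each $\chi_a=\omega^a\cdot(\chi|_{\Z/p^{v-1}})$ with $0\le a\le p-2$ satisfies $\zeta_{p^{v-1}}\in\Z[\chi_a]$ and is therefore primitive of conductor $p^v$. Because $\chi_a|_{\zpx}=\omega^a$ and $\chi_a|_{\Z/p^{v-1}}=\chi|_{\Z/p^{v-1}}$, feeding $\chi_a$ into \eqref{eqn:jp_chi_decomp} yields
\begin{equation*}
\left(J(p^v)^{h\chi_a}\right)^\wedge_p\simeq\bigvee_{\substack{0\le b\le p-2\\ \ker\omega^b=\ker\omega^a}}S^0_{K(1)}(p^v)^{h\chi_b},
\end{equation*}
the wedge of precisely the Adams summands whose indices form the $\sim$-class of $a$. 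In particular the equivalence class of $\left(J(p^v)^{h\chi_a}\right)^\wedge_p$ depends only on $[a]$.

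\emph{Step 2: $J(p^v)^{h\chi_a}$ is $p$-complete.} It is rationally contractible by \Cref{cor:dirichlet_J_Q}, as $\chi_a$ is nontrivial. For a prime $\ell\neq p$, \Cref{prop:jnchi_p_decomposition} applied $\ell$-adically (note $v_\ell(p^v)=0$) writes $\left(J(p^v)^{h\chi_a}\right)^\wedge_\ell$ as a wedge of Dirichlet $K(1)$-local spheres of level $1$ attached to Galois conjugates $\psi$ of $\chi_a$; since $|\imag\psi|=|\imag\chi_a|$ is divisible by $p$, it is not a power of $\ell$, so each summand vanishes by \Cref{prop:dirichlet_k1_contractible}. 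As $J(p^v)^{h\chi_a}$ is $KU$-local, the arithmetic fracture square now forces $J(p^v)^{h\chi_a}\simeq\left(J(p^v)^{h\chi_a}\right)^\wedge_p$.

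\emph{Step 3: assemble.} The subgroups $\ker\omega^a$ of the cyclic group $\zpx$ are determined by $\mathrm{ord}(\omega^a)$, so the $\sim$-classes are indexed by the divisors $d\mid p-1$, the class for $d$ having $\phi(d)$ elements; since $\sum_{d\mid p-1}\phi(d)=p-1$ these classes partition $\{0,\dots,p-2\}$. Combining Steps 1 and 2 and then \Cref{prop:J_pv_cofib},
\begin{equation*}
\bigvee_{a\in[0,p-2]/\sim}J(p^v)^{h\chi_a}\ \simeq\ \bigvee_{[a]}\,\bigvee_{b\in[a]}S^0_{K(1)}(p^v)^{h\chi_b}\ \simeq\ \bigvee_{a=0}^{p-2}S^0_{K(1)}(p^v)^{h\chi_a}\ \simeq\ \mathrm{Cofib}\!\left(J(p^{v-1})\to J(p^v)\right).
\end{equation*}
The main obstacle is Step 2: ensuring no prime-to-$p$ contribution survives in the individual eigen-spectra $J(p^v)^{h\chi_a}$, which is exactly what \Cref{prop:dirichlet_k1_contractible} handles once it is invoked at every prime $\ell\neq p$ against the Galois conjugates of $\chi_a$.
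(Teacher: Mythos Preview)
Your proof is correct and takes essentially the same approach as the paper. The paper's proof is the one-liner ``This follows from \Cref{prop:J_pv_cofib} and Case (2) in \Cref{thm:pi_dirichlet_j},'' and what you have done is unpack that second citation: Case (2) of \Cref{thm:pi_dirichlet_j} already records the spectrum-level equivalence $J(p^v)^{h\chi_a}\simeq\bigvee_{\ker\omega^b=\ker\omega^a}S^0_{K(1)}(p^v)^{h\chi_b}$, which is exactly the combination of your Step~1 (the $p$-adic decomposition \eqref{eqn:jp_chi_decomp}) with your Step~2 (the vanishing of the rational and prime-to-$p$ parts via \Cref{cor:dirichlet_J_Q} and \Cref{prop:dirichlet_k1_contractible}). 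Your Step~3 is then the same regrouping argument the paper intends.
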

	\begin{proof}
		This follows from \Cref{prop:J_pv_cofib} and Case (2) in \Cref{thm:pi_dirichlet_j}.
	\end{proof}
	One might now wonder if there is a connection between special values of $\zeta_{\Q(\zeta_N)}$ and homotopy groups of $J(N)$ as in \Cref{thm:dirichlet_j_gbn}. Notice in \eqref{eqn:Dedekind_zeta_prod}, both even and odd characters show up on the right hand side. Also recall $L(1-k,\chi)=0$ unless $(-1)^k=\chi(-1)$. This means $\zeta_{\Q(\zeta_N)}(1-k)=0$ for all positive integers $k$. As a result, a direct analogy of \Cref{thm:dirichlet_j_gbn} does not exist in this case.
	
	There are two ways one might try to fix this. The first one is to exclude odd characters in the product formula \eqref{eqn:Dedekind_zeta_prod}. Let $\Kbb$ be a \textit{totally real} finite abelian extension  of $\Q$, and $N$ be the smallest integer such that $\Kbb\subseteq \Q(\zeta_N)$. The number field $\Kbb$ being totally real is equivalent to $\gal(\Q(\zeta_N)/\Kbb)$ containing complex conjugation, which is identified with $-1\in\znx$ via \Cref{lem:gal_Q_zeta}. This means in the product formula \Cref{thm:Dedekind_zeta_Dirichlet_L}
	\begin{equation*}
	\zeta_{\Kbb}(s)=\prod_{\substack{\chi\colon \znx\to\Cx\\ \gal(\Q(\zeta_N)/\Kbb)\subseteq\ker \chi}} L(s,\chi),
	\end{equation*}
	only even characters show up on the right hand side and $\zeta_{\Kbb}$ has non-zero special values when $s=1-2k$.
	\begin{thm}\label{thm:J_K_Dedekind}
		Let $\Kbb/\Q$ be a totally real finite abelian extension. Suppose the smallest integer $N$ such that $\Kbb\subseteq \Q(\zeta_{N})$ is a prime power $p^v$. Denote the Galois group $\gal(\Q(\zeta_{p^v})/\Kbb)$ by $G$. Then 
		\begin{equation*}
		\pi_{4t-1}\left( J(p^v)^{hG}\left[\frac{1}{|G|}\right]\right)=\left.\Z\left[\frac{1}{|G|}\right]\right/D_{\Kbb,2t},
		\end{equation*}
		where $D_{\Kbb,2t}\in \Z_{>0}$ is the denominator of $\zeta_{\Kbb}(1-2t)$. 
	\end{thm}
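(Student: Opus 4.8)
The plan is to identify $\pi_*\!\left(J(\Kbb)[1/|G|]\right)$ with the $G$-invariants of $\pi_*(J(N))[1/|G|]$ (writing $N=p^v$ throughout), and then to match these groups prime by prime with the denominators of the values $\zeta_\Kbb(1-2t)$, using the factorization $\zeta_\Kbb=\prod_{G\subseteq\ker\chi}L(\,\cdot\,,\chi)$ of \Cref{thm:Dedekind_zeta_Dirichlet_L} together with the congruences for generalized Bernoulli numbers in \Cref{thm:GBN}. Since $J(\Kbb)=J(N)^{hG}$ by \Cref{defn:J_K}, there is a homotopy fixed point spectral sequence $H^s(G;\pi_t J(N))\Rightarrow\pi_{t-s}(J(\Kbb))$; after inverting $|G|$ the higher cohomology $H^{s}(G;-)$, $s>0$, vanishes, the spectral sequence collapses, and $\pi_n\!\left(J(\Kbb)[1/|G|]\right)\cong\bigl(\pi_n(J(N))[1/|G|]\bigr)^{G}$. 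By \Cref{cor:jn_structure} we have $J(N)_\Q\simeq S^0_\Q$, so $\pi_{4t-1}(J(N))$ is finite for $t\neq 0$ and it suffices to work one rational prime $\ell$ at a time.

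The contributions away from $p$ are formal. If $\ell\mid|G|$ the $\ell$-torsion is killed by $[1/|G|]$, matching the fact that $\Z[1/|G|]/D_{\Kbb,2t}$ has no $\ell$-part. If $\ell\neq p$ and $\ell\nmid|G|$, then $J(N)^\wedge_\ell\simeq S^0_{KU/\ell}$ with trivial $\znx$-action (the $\mu_N$-level structure is trivial over $\Zp$ for $p\nmid\ell$), so the $G$-invariants equal $\pi_{4t-1}(S^0_{KU/\ell})$, whose order is the $\ell$-part of $D_{2t}$, the denominator of $\zeta(1-2t)$, by the computation in \Cref{Subsec:HFPSS}. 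Since every nontrivial $\chi$ with $G\subseteq\ker\chi$ has conductor a power of $p$, \Cref{thm:GBN}(1) makes each such $L(1-2t,\chi)$ $\ell$-integral, so the $\ell$-part of $D_{\Kbb,2t}$ equals that of $D_{2t}$. This already settles the case $p=2$: since $\Kbb$ is totally real and $v\geq 2$, complex conjugation $-1\in G$, hence $2\mid|G|$, so inverting $|G|$ removes the $2$-torsion on both sides and only the primes $\ell\neq 2$ remain. (The degenerate case $\Kbb=\Q$ is \eqref{eqn:pi_j} plus Clausen--von Staudt.)

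It remains to treat $\ell=p$ with $p$ odd and $p\nmid|G|$. Then $G$ has order prime to $p$, so under $\zx N\cong\zpx\times\Z/p^{v-1}$ it lies in the factor $\zpx$. By \Cref{cor:jn_structure} and \eqref{eqn:pi_Sk1_pv}, $\pi_{4t-1}(J(N))^\wedge_p\cong\Z/p^{v_p(2t)+v}$, and the residual $\zpx$-action on it is multiplication by $\omega^{2t}$, descending from the Adams operations on $\left(\Kp\right)_{4t}$. Hence $\bigl(\Z/p^{v_p(2t)+v}\bigr)^{G}$ is all of $\Z/p^{v_p(2t)+v}$ when $G\subseteq\ker\omega^{2t}$ and is $0$ otherwise, because $\omega^{2t}(g)-1$ is a $p$-adic unit whenever $\omega^{2t}(g)\neq 1$. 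So the theorem reduces to the claim: $v_p(D_{\Kbb,2t})=v_p(2t)+v$ if $G\subseteq\ker\omega^{2t}$, and $v_p(D_{\Kbb,2t})=0$ otherwise. I would prove this from $\zeta_\Kbb(1-2t)=\prod_{G\subseteq\ker\chi}\bigl(-B_{2t,\chi}/2t\bigr)$ (all these $\chi$ are even since $-1\in G$): writing a character of conductor $p^a$ as $\chi=\psi\cdot\phi$ with $\psi$ on $\zpx$ and $\phi$ on $\Z/p^{v-1}$, \Cref{thm:GBN} shows the $p$-denominator of $B_{2t,\chi}/2t$ is nontrivial exactly when $\ker\psi=\ker\omega^{2t}$, in which case it is a power of the single prime $(p,1-\chi(g)g^{2t})$ of $\Z[\chi]$, of valuation $v_p(2t)+1$ for $a\le 1$ and of valuation $1$ for $a\ge 2$ (by \eqref{eqn:bkchi_p} and \eqref{eqn:bkchi_pv}), and that prime has residue degree one since $p$ splits completely in the maximal unramified subextension of $\Q(\chi)$. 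Taking norms and summing over Galois orbits, the term $\chi^0$ contributes $v_p(2t)+1$ exactly when $\omega^{2t}|_{\zpx}$ is trivial; when it is nontrivial there is exactly one orbit of conductor-$p$ characters with $\ker\chi=\ker\omega^{2t}$ contributing $v_p(2t)+1$; and for each $2\le a\le v$ there is exactly one orbit of conductor-$p^a$ characters with $\ker\psi=\ker\omega^{2t}$ contributing $1$. In every case with $G\subseteq\ker\omega^{2t}$ these telescope to $(v_p(2t)+1)+(v-1)=v_p(2t)+v$, and otherwise the sum is $0$.

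The main obstacle is precisely this last bookkeeping: one must pin down which primes of $\Z[\chi]$ appear in the denominator ideal of $B_{2t,\chi}$ — the asymmetry is essential, as Carlitz's congruence is recorded at the specific prime $(p,1-\chi(g)g^{2t})$ rather than symmetrically — verify these primes have residue degree one, and count Galois orbits of the relevant characters so that the $p$-valuations add up to $v_p(2t)+v$. Everything else (the collapse of the fixed point spectral sequence after inverting $|G|$, and the comparison away from $p$) is routine.
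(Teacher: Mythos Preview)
Your proposal is correct and follows the same overall strategy as the paper: work prime by prime, use that $J(N)^\wedge_\ell\simeq S^0_{KU/\ell}$ with trivial action for $\ell\neq p$, and at the prime $p$ compare with the product formula $\zeta_\Kbb=\prod_{G\subseteq\ker\chi}L(\,\cdot\,,\chi)$ via Carlitz's congruences. The paper's proof differs only organizationally: it first observes (by minimality of $v$) that $G\subseteq(\Z/p)^\times$ and hence $p\nmid|G|$ for \emph{every} $p$, so the case $p\mid|G|$ never arises; and at the prime $p$ it decomposes $J(\Kbb)^\wedge_p$ as a wedge of Dirichlet $K(1)$-local spheres $\bigvee_{G\subseteq\ker\omega^a}S^0_{K(1)}(p^v)^{h\omega^a}$ and invokes \Cref{thm:dirichlet_j_gbn}, rather than computing $\pi_{4t-1}(J(N))^\wedge_p\cong\Z/p^{v_p(2t)+v}$ with its $\omega^{2t}$-action and taking $G$-invariants directly as you do. Your route is slightly more elementary in that it bypasses the Dirichlet $J$-spectrum machinery, but it lands on exactly the same Bernoulli-number bookkeeping (the ``main obstacle'' you flag), which both arguments treat tersely.
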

	\begin{proof}
		It suffices to compare the two sides at primes not dividing $|G|$. We first show $G\subseteq \zpx\subseteq \zx{p^v}\simeq \gal(\Q(\zeta_{p^v})/\Q)$. This is true when $v=1$ since $\gal(\Q(\zeta_{p})/\Q)\simeq\zpx$. Now assume $v>1$ and suppose $G$ contains an element of order $p$. Then we have $\gal(\Q(\zeta_{p^v})/\Q(\zeta_{p^{v-1}}))\simeq C_p\subseteq G$. By Galois correspondence between subfields of $\Q(\zeta_{p^v})$ and subgroups of $\zx{p^v}$, this would imply $\Kbb\subseteq \Q(\zeta_{p^{v-1}})$, contradicting our assumption.
		
		It follows that $p\nmid |G|$. Let $\ell$ be a prime such that $\ell\nmid |G|$ and $\ell\neq p$. By \Cref{cor:jn_structure}, we have
		\begin{equation*}
		\left(J(p^v)^{hG}\right)^\wedge_\ell\simeq S^0_{KU/\ell}
		\end{equation*}
		This shows $\pi_{4t-1}\left((J(p^v)^{hG})^\wedge_\ell\right)=\pi_{4t-1}\left(S^0_{KU/\ell}\right)=\Z_\ell/D_{2t}$. Using the product formula \Cref{thm:Dedekind_zeta_Dirichlet_L} and Carlitz's \Cref{thm:GBN}, we get $\pi_{4t-1}\left((J(p^v)^{hG})^\wedge_\ell\right)\simeq \Z_{\ell}/D_{2t}=\Z_{\ell}/D_{\Kbb,2t}$.
		
		Completed at the prime $p$, we have by \Cref{cor:jn_structure} and \Cref{thm:pi_dirichlet_j}
		\begin{equation*}
		\left(J(p^v)^{hG}\right)^\wedge_p\simeq S^0_{K(1)}(p^v)^{hG}\simeq \bigvee_{\substack{0\le a\le p-2
				\\G\subseteq\ker\omega^a}} S^0_{K(1)}(p^v)^{h\omega^a}\simeq \bigvee_{\substack{\chi\in\hom(\zpx,\Cx )/\sim
				\\G\subseteq\ker\chi}}\left(J(p^v)^{h\chi}\right)^\wedge_p
		\end{equation*}
		where $\chi_1\sim \chi_2$ iff they have the same images (thus differ by an element in the Galois group). Now we can prove the claim by comparing \Cref{thm:Dedekind_zeta_Dirichlet_L} and \Cref{prop:S_k1_pv_cofib} via \Cref{thm:dirichlet_j_gbn}. 
	\end{proof}
	A second approach to relate $J(N)$ to $\zeta_{\Q(\zeta_N)}$ is to consider the Taylor expansion of the Dedekind $\zeta$-functions at zero special values. 
	\begin{defn}
		For any number field $\Kbb$, we denote by $\zeta_{\Kbb}^*(1-k)$ the coefficient of the first non-zero term in the Taylor expansion of $\zeta_{\Kbb}(s)$ at $s=1-k$.
	\end{defn}
	From the definition, $\zeta_{\Kbb}^*(1-k)=\zeta_{\Kbb}(1-k)$ when the latter is not zero. This special value $\zeta_{\Kbb}^*(1-k)$ is closely related to the \textbf{algebraic $K$-theory} of $\mathcal{O}_\Kbb$, the ring of integers of $\Kbb$.
	\begin{thm}[Quillen-Lichtenbaum Conjecture, Voevodsky-Rost, {\cite[199 -- 200]{Kolster_k_thy_arithmetic}}]
		For all $k\ge 2$:
		\begin{equation*}
			\zeta^*_{\Kbb}(1-k)=\pm\frac{|K_{2k-2}(\mathcal{O}_\Kbb)|}{|K_{2k-1}(\mathcal{O}_\Kbb)_{\mathrm{tors}}|}\cdot R^B_k(\Kbb),
		\end{equation*}
		up to powers of $2$, where $R^B_k(\Kbb)$ is the $k$-th Borel regulator of $\Kbb$. 
	\end{thm}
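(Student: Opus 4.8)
The plan is to prove this one rational prime at a time: for each $\ell$, match the $\ell$-primary parts of $\#K_{2k-2}(\mathcal{O}_\Kbb)$, of $\#K_{2k-1}(\mathcal{O}_\Kbb)_{\mathrm{tors}}$, and of the value $\zeta^*_\Kbb(1-k)$, the primes $\ell=2$ being excluded on purpose. The three inputs feeding into this are (i) Borel's computation of the ranks and the regulator of $K_*(\mathcal{O}_\Kbb)$, which handles the Archimedean/rational content; (ii) the Quillen--Lichtenbaum conjecture, which replaces $\ell$-adic $K$-theory by \'etale cohomology in the relevant range; and (iii) the Main Conjecture of Iwasawa theory, which converts orders of \'etale cohomology groups into $\ell$-adic $L$-values. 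First I would record Borel's theorems: for $k\ge 2$ one has $K_{2k-2}(\mathcal{O}_\Kbb)\otimes\Q=0$, $\textup{rank}_\Z K_{2k-1}(\mathcal{O}_\Kbb)=r_1+r_2$ (resp.\ $r_2$) for $k$ odd (resp.\ even), and the regulator statement that $\zeta^*_\Kbb(1-k)$ equals $R^B_k(\Kbb)$ up to a nonzero rational; so the theorem is exactly the problem of pinning down that rational number prime by prime.

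Next, fix an odd prime $\ell$. By the Quillen--Lichtenbaum conjecture — a consequence of the Bloch--Kato norm residue isomorphism theorem of Voevodsky--Rost, via the motivic-to-\'etale comparison and the motivic (Atiyah--Hirzebruch) spectral sequence of Bloch--Lichtenbaum--Friedlander--Suslin together with Geisser--Levine — the \'etale descent spectral sequence degenerates in the range of interest, yielding canonical isomorphisms
\begin{equation*}
K_{2k-1}(\mathcal{O}_\Kbb;\Z_\ell)\xrightarrow{\ \sim\ }H^1_{\textup{\'et}}\!\left(\mathcal{O}_\Kbb[1/\ell];\Z_\ell(k)\right),\qquad K_{2k-2}(\mathcal{O}_\Kbb;\Z_\ell)\xrightarrow{\ \sim\ }H^2_{\textup{\'et}}\!\left(\mathcal{O}_\Kbb[1/\ell];\Z_\ell(k)\right)
\end{equation*}
for $k\ge 2$ (the first after passing to torsion, the $H^0$-correction being trivial in this range). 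Hence the $\ell$-part of the ``motivic Euler characteristic'' $\#K_{2k-2}/\#K_{2k-1,\mathrm{tors}}$ becomes the $\ell$-part of $\#H^2_{\textup{\'et}}/\#H^1_{\textup{\'et},\mathrm{tors}}$, which by Tate's global Euler characteristic formula is, up to explicit local factors at the primes above $\ell$ (units once $\ell\nmid |G|$), the order of a Selmer-type module governed by Iwasawa theory.

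The heaviest input comes last. Using \Cref{thm:Dedekind_zeta_Dirichlet_L} to factor $\zeta_\Kbb=\prod_\chi L(s,\chi)$ and \Cref{thm:GBN} to control the relevant Euler factors, one reduces to a characterwise statement about the $\ell$-adic $L$-functions $L_\ell(s,\chi)$ interpolating the values $(1-\chi\omega^{-k}(\ell)\ell^{k-1})L(1-k,\chi\omega^{-k})$. The Main Conjecture — Mazur--Wiles for $\Kbb$ abelian over $\Q$, Wiles in the totally real case — identifies the characteristic ideal of the corresponding Iwasawa module with the Iwasawa power series of $L_\ell$; descent along the cyclotomic $\Z_\ell$-extension (controllable since $\ell$ is odd and the relevant modules are finite here) then gives the equality of the $\ell$-adic valuation of $\#H^2_{\textup{\'et}}/\#H^1_{\textup{\'et},\mathrm{tors}}$ with that of $\zeta_\Kbb(1-k)$ (or its leading coefficient), up to the local Euler factors. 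Matching this against Borel's regulator statement via Soul\'e's Chern class maps and the $\ell$-adic regulator (known for abelian fields) assembles the global identity, with the sign and the powers of $2$ left undetermined because Quillen--Lichtenbaum and the norm residue theorem behave differently at $\ell=2$.

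The main obstacle is simply that this is not an elementary result: essentially all of the depth is imported, through the Bloch--Kato conjecture for the $K$-theory/\'etale comparison and the Iwasawa Main Conjecture for the $L$-value, with Borel's theorems, Tate's Euler characteristic formula, and the interpolation property of $p$-adic $L$-functions serving as the connective tissue. Since the statement is used in this paper only as a black box motivating \Cref{quest:J_K_comparison}, in practice I would cite \cite{Kolster_k_thy_arithmetic} and the references therein rather than reproduce the argument; the outline above is the route a full proof takes.
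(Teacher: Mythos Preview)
The paper does not give a proof of this theorem at all: it is stated with a citation to \cite[199--200]{Kolster_k_thy_arithmetic} and used purely as a black box to motivate \Cref{quest:J_K_comparison}. You recognize this yourself in your final paragraph, and that recognition is the correct response here; the sketch you give of Borel's theorems, the Voevodsky--Rost input to Quillen--Lichtenbaum, and the Iwasawa Main Conjecture is a reasonable summary of the ingredients in the literature, but it is not something the paper attempts, so there is nothing to compare against.
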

	Let $\Kbb/\Q$ be a finite abelian extension. When $\Kbb$ is as in \Cref{thm:J_K_Dedekind}, we have seen both $\pi_{2k-1}(J(p^v)^{hG})$ and $K_{2k-1}(\mathcal{O}_{\Kbb})$ capture the denominator of $\zeta^*_{\Kbb}(1-k)$. We now want to compare the $J$-spectra and algebraic $K$-theory spectra of ring of integers of number fields. Recall in \Cref{prop:JN_k_loc}, we showed $J(N)$ is a $KU$-local $\einf$-ring spectrum, with a natural $\znx$-action by $\einf$-maps. While $K(\mathcal{O}_{\Kbb})$ is not a $KU$-local spectrum (since it is connective and is not a wedge sum of Eilenberg-MacLane spectra), it is very close to one in the following sense:
	\begin{thm}[Waldhausen, {\cite[Conjecture 11.5]{Mitchell_LQC}}]\label{thm:Waldhausen}
		Fix a prime $\ell$. Assuming Lichtenbaum-Quillen Conjecture holds at $\ell$, then the $E(1)$-localization map $K(\mathcal{O}_{\Kbb})\to L_1K(\mathcal{O}_{\Kbb})$ is an $\ell$-local isomorphism on $\pi_n$ for all $n\ge 1$.
	\end{thm}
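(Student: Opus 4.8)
The plan is to deduce this from the chromatic fracture square together with the mod-$\ell$ Lichtenbaum--Quillen isomorphism. Since $E(1)=BP\langle 1\rangle$ is $\ell$-local, both $X:=K(\mathcal{O}_\Kbb)_{(\ell)}$ and $L_1X=L_1K(\mathcal{O}_\Kbb)$ are $\ell$-local spectra, and each fits into an arithmetic fracture square of the shape \eqref{eqn:J_frac_sq}: for an $\ell$-local spectrum $Y$ one has $Y\simeq Y^\wedge_\ell\times_{(Y^\wedge_\ell)_\Q}Y_\Q$ (the other $p$-completions vanish $\ell$-locally). The localization map $X\to L_1X$ is thus a map between two such squares. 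The key observation is that, because the associated Mayer--Vietoris long exact sequence relates $\pi_n$ of a pullback to $\pi_n$ and $\pi_{n+1}$ of its three corners, and because $n\mapsto n+1$ preserves the range $n\ge 1$, it suffices to show that $X\to L_1X$ induces isomorphisms on $\pi_n$, $n\ge 1$, of the corner spectra $Y^\wedge_\ell$, $Y_\Q$, $(Y^\wedge_\ell)_\Q$; a five-lemma comparison of the two long exact sequences then gives the isomorphism on $\pi_n$, $n\ge 1$, of the pullbacks. In particular, no information about $\pi_0$ or $\pi_{-1}$ of the corners --- e.g. Picard-, Brauer-, or class-group contributions to \'etale $K$-theory --- is needed.

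The rational corner is immediate: rationalization factors through $E(1)$-localization (as $H\Q$ is a module over $BP\langle 1\rangle_*$, every $BP\langle 1\rangle$-acyclic is $H\Q$-acyclic), so $X_\Q\xrightarrow{\ \sim\ }(L_1X)_\Q$. For the $\ell$-complete corner, the monochromatic fracture $L_1X\simeq L_{K(1)}X\times_{(L_{K(1)}X)_\Q}X_\Q$ gives, upon $\ell$-completion (which annihilates rational spectra), $(L_1X)^\wedge_\ell\simeq L_{K(1)}X^\wedge_\ell\simeq L_{K(1)}K(\mathcal{O}_\Kbb)$, and under this identification the map $X^\wedge_\ell\to (L_1X)^\wedge_\ell$ becomes the completed localization map $K(\mathcal{O}_\Kbb)^\wedge_\ell\to L_{K(1)}K(\mathcal{O}_\Kbb)$. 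By Thomason's descent theorem the target is the $\ell$-completed \'etale $K$-theory of $\mathcal{O}_\Kbb[1/\ell]$, and the statement that this comparison map is a $\pi_n$-isomorphism for $n\ge1$ is one of the standard reformulations of the Lichtenbaum--Quillen conjecture at $\ell$, now a theorem by Voevodsky--Rost and the norm residue isomorphism. Strictly, LQC supplies the corresponding statement with $\Z/\ell$-coefficients; since $K(\mathcal{O}_\Kbb)^\wedge_\ell$ and $L_{K(1)}K(\mathcal{O}_\Kbb)$ are both $\ell$-complete, the cofiber $C$ of the comparison map satisfies $\pi_*(C/\ell)=0$ for $*\ge1$, hence $\pi_nC$ is $\ell$-divisible and derived $\ell$-complete for $n\ge1$, hence zero by derived Nakayama --- upgrading the mod-$\ell$ isomorphism in degrees $\ge1$ to an integral one. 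Finally, the third corner $(X^\wedge_\ell)_\Q\to((L_1X)^\wedge_\ell)_\Q$ is the rationalization of the map just treated, and rationalization is exact, so it too is a $\pi_n$-isomorphism for $n\ge1$.

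Assembling these three facts and running the five-lemma argument of the first paragraph finishes the proof. The only genuine input is the Lichtenbaum--Quillen conjecture itself, together with Thomason's identification of $L_{K(1)}K(\mathcal{O}_\Kbb)$ with \'etale $K$-theory (which, in Thomason's original formulation, amounts to inverting a Bott element and is itself intertwined with LQC); granting these, every remaining step is a formal manipulation of Bousfield localizations and fracture squares. The main point requiring care is therefore purely bookkeeping: making sure the arithmetic-fracture Mayer--Vietoris sequence, which mixes adjacent homotopy degrees, never forces us outside the range $n\ge1$ --- which, as noted, it does not --- and keeping straight that ``$\ell$-local isomorphism on $\pi_n$'' is precisely the property that survives the passage from the mod-$\ell$ statement through $\ell$-completion and back along the fracture square.
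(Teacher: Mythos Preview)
The paper does not give a proof of this theorem: it is quoted as a result of Waldhausen, with a citation to Mitchell's survey \cite[Conjecture 11.5]{Mitchell_LQC}, and the text immediately moves on to its consequences. There is therefore no argument in the paper to compare your proposal against.

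That said, your sketch is a sensible reconstruction of the standard argument. A couple of small comments. First, your five-lemma step is fine, but it is worth being explicit that to get the isomorphism on $\pi_1$ of the pullback you need the corner maps to be isomorphisms on $\pi_1$ and $\pi_2$ and at least injective on $\pi_1$ of the pushout corner; since you have isomorphisms on all $\pi_n$, $n\ge 1$, of the corners, this is satisfied. Second, in the derived-Nakayama step you should note that from $\pi_n(C/\ell)=0$ for $n\ge 1$ you get both $\pi_nC/\ell=0$ and $\pi_{n-1}C[\ell]=0$ for $n\ge 1$; combined with derived $\ell$-completeness of $\pi_nC$ (inherited from $\ell$-completeness of $C$) this indeed forces $\pi_nC=0$ for $n\ge 1$. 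Finally, be aware that ``the Lichtenbaum--Quillen conjecture'' has several inequivalent formulations in the literature; the version you invoke (mod-$\ell$ comparison with \'etale $K$-theory in positive degrees, via Thomason) is the one that is actually proved as a consequence of the norm residue isomorphism, so your reduction is legitimate, but in a formal write-up you would want to pin down exactly which statement you are assuming.
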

	This shows the $KU$-localization map $K(\mathcal{O}_{\Kbb})\to L_{KU}K(\mathcal{O}_{\Kbb})$ is an equivalence on $1$-connective covers of the two spectra.The spectrum $K(\mathcal{O}_{\Kbb})$ is an $\einf$-ring spectrum by a result of May \cite[\!2.8]{Mitchell_LQC}. Since Quillen's construction of algebraic $K$-theory spectra is functorial, there is a natural $\gal(\Kbb/\Q)$-action on $K(\mathcal{O}_{\Kbb})$ by $\einf$-maps. So just like $J(N)$, $L_{KU}K(\Z[\zeta_N])$ is a $KU$-local $\einf$-ring spectrum, with a natural $\znx$-action by $\einf$-maps. When $N=1$, the two spectra $J(1)=J$ and $L_{KU}(K(\Z))$ are connected by the $KU$-local Hurewicz map:
	\begin{equation*}
	h_{KU}\colon  J=S^0_{KU}\longrightarrow L_{KU}K(\Z).
	\end{equation*}
	A natural question to ask here is if we can extend $h_{KU}$ to a natural $\znx$-equivariant map of $KU$-local $\einf$-ring spectra $h(N)\colon  J(N)\to L_{KU}K(\Z[\zeta_{N}])$. This is essentially answered by the recent work of Bhatt-Clausen-Mathew in \cite{BCM_rmk_k1_alg_k}. Recall in \Cref{prop:otop_section}, there is a sheaf of $K(1)$-local $\einf$-ring spectra $\otop_{K(1)}$ over the classifying stack $B\Zpx$. We now identify this stack with the $p$-cyclometic tower of $\Q$ as follows. By \Cref{lem:gal_Q_zeta}, we have
	 \begin{itemize}
	 	\item the Galois group $\gal(\Q(\zeta_{p^v})/\Q)$ is isomorphic to $(\Z/p^v)^\x$,
	 	\item the ring of integers of $\Q(\zeta_{p^v})$ is $\Z[\zeta_{p^v}]$.
	 \end{itemize} 
	In addition, one can check that the inclusion map $\Z\to \Z[\zeta_{p^v}]$ becomes an \'etale extension after inverting $p$. Combining the three facts above, we have a Galois correspondence between open subgroups of $\Zpx$ and $p$-cyclotometic \'etale extensions of $\Z[1/p]$. By \cite[Theorem 1.8, Theorem 1.10]{CMNN_descent}, $K(1)$-local algebraic $K$-theory satisfies \'etale descent. This means $L_{K(1)}K(-)$ defines a sheaf of $K(1)$-local $\einf$-ring spectra over $B\Zpx$, where
	\begin{equation*}
	 	\Gamma(L_{K(1)}K(-), B\Zpx)=L_{K(1)}K(\Z[1/p]),\qquad\Gamma(L_{K(1)}K(-), B(1+p^v\Zp))=L_{K(1)}K(\Z[1/p,\zeta_{p^v}]).
	\end{equation*}
	\begin{thm}[Bhatt-Clausen-Mathew, \mbox{\cite[Theorem 3.9]{BCM_rmk_k1_alg_k}}]\label{thm:BCM}
		There is a map of sheaves of $K(1)$-local $\einf$-ring spectra $\underline{h}\colon  \otop_{K(1)}\to L_{K(1)}K(-)$ over $B\Zpx$, such that its evaluation on global sections is the $K(1)$-local Hurewicz map $h_{K(1)}\colon  S^0_{K(1)}\to L_{K(1)}K(\Z[1/p])$.
	\end{thm} 
	\begin{prop}\label{cor:J_K_comparison}
	There is a $\znx$-equivariant map of $KU$-local $\einf$-ring spectra $h(N)\colon  J(N)\to L_{KU}K(\Z[\zeta_{N}])$ extending the $KU$-local Hurewicz map. Moreover, the maps $h(N_1)$ and $h(N_2)$ are compatible when $N_1$ divides $N_2$:
	\begin{equation*}
	\begin{tikzcd}
	J\dar["h_{KU}"]\rar& J(N_1)\dar["h(N_1)"]\rar&J(N_2)\dar["h(N_2)"]\\
	L_{KU}K(\Z)\rar&L_{KU}K(\Z[\zeta_{N_1}])\rar& L_{KU}K(\Z[\zeta_{N_2}])
	\end{tikzcd}
	\end{equation*}
	\end{prop} 
	\begin{proof}
		We construct $h(N)$ as a map between arithmetic fracture squares:
		\begin{equation}\label{eqn:pb_cube}
			\begin{tikzcd}[row sep=scriptsize,column sep=0 ex]
				J(N)\ar[rr]\ar[dd,dashed,"h(N)"]\ar[dr]\arrow[drrr, phantom, "\revangle", very near start]&&\prod_{p}S^0_{KU/p}\left(p^{v_p(N)}\right)\ar[dr,"L_\Q"]\ar[dd,"\prod h(N)_p",near end]&\\
				&S^0_\Q\ar[rr,crossing over]&&\left(\prod_{p}S^0_{KU/p}\left(p^{v_p(N)}\right)\right)_\Q\ar[dd,"(\prod h(N)_p)_\Q"]\\
				L_{KU}K(\Z[\zeta_N])\ar[rr]\ar[dr]\arrow[drrr, phantom, "\revangle", very near start]&&\prod_{p}L_{KU/p}K(\Z[\zeta_N])\ar[dr,"L_\Q"]&\\
				&L_{\Q}K(\Z[\zeta_N])\ar[from=uu,crossing over,"h_\Q", near end]\ar[rr]&&\left(\prod_{p}L_{KU/p}K(\Z[\zeta_N])\right)_\Q
			\end{tikzcd}
		\end{equation}
		Here, the top fracture square is from \eqref{eqn:jn} and bottom one is the arithmetic fracture square for $KU$-localization in \cite[Proposition 2.9]{Bousfield_localization}. The vertical map $S^0_\Q\to L_{\Q}K(\Z[\zeta_N])$ is the rational Hurewicz map $h_\Q$. Fix a prime $p$ and write $N=p^vN'$ where $v=v_p(N)$. We will now construct a $\znx$-equivariant map $h(N)_p\colon  S^0_{KU/p}(p^v)\to L_{KU/p}K(\Z[\zeta_N])$ of $KU/p$-local $\einf$-ring spectra in three steps. 
		\begin{enumerate}
			\item Evaluating the map $\underline{h}\colon \otop_{KU/p}\to L_{KU/p}K(-)$ in \Cref{thm:BCM} at the finite \'etale cover $B(1+p^v\Zp)\to B\Zpx$, we get a $(\Z/p^v)^\x$-equivariant map of $KU/p$-local $\einf$-ring spectra \[\underline{h}(B(1+p^v\Zp))\colon  S^0_{KU/p}(p^v)\to L_{KU/p}K(\Z[1/p,\zeta_{p^v}]).\] Notice $\znx\simeq (\Z/p^v)^\x\x(\Z/N')^\x$, the map $\underline{h}(B(1+p^v\Zp))$ then can be viewed a $\znx$-equivariant map where the summand $(\Z/N')^\x$ acts trivially on both the source and the target.
			\item The extension of rings 
			\begin{equation*}
				\Z[1/p,\zeta_{p^v}]\longrightarrow \Z[1/p,\zeta_{p^v}]\otimes_{\Z[1/p]}\Z[1/p,\zeta_{N'}]\simeq \Z[1/p,\zeta_N]
			\end{equation*}
			 is $\znx$-equvariant where the summand $\zx{N'}$ acts trivially on the source. This induces a $\znx$-equvariant map of $\einf$-rings spectra on their algebraic $K$-theory spectra:
			\begin{equation*}
				K(\Z[1/p,\zeta_{p^v}])\to K(\Z[1/p,\zeta_{p^v}])\wedge_{K(\Z[1/p])} K(\Z[1/p,\zeta_{N'}])\to K(\Z[1/p,\zeta_{N}])
			\end{equation*}
			\item Finally by \cite[Theorem 1.1]{BCM_rmk_k1_alg_k}, the natural map $L_{KU/p}K(\Z[\zeta_{N}])\to L_{KU/p}K(\Z[1/p,\zeta_{N}])$ is an equivalence of $\einf$-ring spectra. Again, this map is $\znx$-equivariant since it is induced by a $\znx$-equivariant map of rings $\Z[\zeta_N]\to \Z[1/p,\zeta_N]$.
		\end{enumerate}  Combining the maps above, we define $h(N)_p$ to be the composite
		\begin{equation}\label{eqn:h(N)p}
			\begin{tikzcd}[row sep=small]
				S_{KU/p}^0(p^v)\ar[rr,"{\underline{h}(B(1+p^v\Zp))}"]\ar[drrr,dashed,"h(N)_p"',end anchor=west]&& L_{KU/p}K(\Z[1/p,\zeta_{p^v}])\rar&L_{KU/p}K(\Z[1/p,\zeta_{N}])\\&&& L_{KU/p}K(\Z[\zeta_N])\uar["\sim"{sloped,above}].
			\end{tikzcd}
		\end{equation}
		Having constructed $h(N)_p$, we claim the rationalization of their product $\left(\prod h(N)_p\right)_\Q$ makes the diagrams on the right and front faces on the cube \eqref{eqn:pb_cube} commute. The right face commutes since $L_\Q$ is a natural transformation. The front face commutes since all four edges are maps of rational $\einf$-ring spectra and $S^0_\Q$ is initial among such spectra. Consequently, we get $h(N)\colon  J(N)\to L_{KU}K(\Z[\zeta_N])$ as a map between pullbacks. As all three maps $h_\Q, \left(\prod h(N)_p\right)_\Q,$ and $\prod h(N)_p$ are $\znx$-equivariant maps of $\einf$-ring spectra, the map $h(N)$ is also a $\znx$-equivariant map of $\einf$-ring spectra.
		
		When $N=1$, $h(1)_p$ is the $KU/p$-local Hurewicz map at all primes. This implies $h(1)\colon  J=S^0_{KU}\to L_{KU}K(\Z)$ is the $KU$-local Hurewicz map. The compatibility between $h(N_1)$ and $h(N_2)$ when $N_1\mid N_2$ follows from the compatibility of their $p$-completions for all $p$. To show this, we use the fact that the map $\underline{h}$ in \Cref{thm:BCM} is a map of sheaves of spectra and that algebraic $K$-theory is functorial. More precisely, let $v_1$ and $v_2$ be the $p$-adic valuations of $N_1$ and $N_2$, respectively. The assumption $N_1\mid N_2$ implies $v_1\le v_2$. We have a commutative diagram:
		\begin{equation*}
			\begin{tikzcd}
				S^0_{KU/p}(p^{v_1})\rar\dar &L_{KU/p}(K(\Z[1/p,\zeta_{p^{v_1}}]))\rar\dar&L_{KU/p}(K(\Z[1/p,\zeta_{N_1}]))\dar&L_{KU/p}(K(\Z[\zeta_{N_1}]))\lar["\sim"']\dar\\
				S^0_{KU/p}(p^{v_2})\rar &L_{KU/p}(K(\Z[1/p,\zeta_{p^{v_2}}]))\rar&L_{KU/p}(K(\Z[1/p,\zeta_{N_2}]))&L_{KU/p}(K(\Z[\zeta_{N_2}]))\lar["\sim"']
			\end{tikzcd}
		\end{equation*}
 	\end{proof}
 	Recall from \Cref{lem:Sp_k1_alg_p} that when $p>2$, a $K(1)$-local spectrum $X$ is determined (up to weak equivalence) by its $\Kp$-homology groups together the Adams operations ($\Zpx$-actions) on it. When $p=2$, the statement is true if we replace $KU^\wedge_{2}$ by $KO^\wedge_{2}$ (\Cref{lem:Sp_k1_alg_2}). We now conclude this subsection by studying the induced map of $h(N)$ on $\Kp$-homology groups. In even degrees, we need the following computations:
 	\begin{thm}[Dwyer-Mitchell, {\cite[Theorem 1.7]{Dwyer-Mitchell_alg_int}}]\label{thm:KpKalg}
 			Let $\Kbb$ be a number field. Denote by $\Kbb(\mu_{p^\infty})$ the cyclotomic extension of $\Kbb$ obtained by adjoining all $p$-power roots of unity and let $G_p(\Kbb)$ be the Galois group $\gal(\Kbb(\mu_{p^\infty})/\Kbb)$. The $\Kp$-homology groups of the algebraic $K$-theory spectrum of the ring of $p$-integers $\Ocal_\Kbb[1/p]$ in $\Kbb$ are given by:
 		\begin{equation*}
 			\left(\Kp\right)_{2t}(K(\Ocal_\Kbb[1/p]))\simeq \map(\Zpx/G_p(\Kbb), (\Kp)_{2t}).
 		\end{equation*}
 	\end{thm}
 	\begin{prop}\label{prop:hN_even}
 		The map $h(N)\colon  J(N)\to L_{KU}K(\Z[\zeta_N])$ induces isomorphisms of $\Zpx$-modules on even-degree $\Kp$-homology groups for all primes $p$.
 	\end{prop}
 	\begin{proof}
 		Notice for any spectrum $X$, $(\Kp)_{2t}(X)\simeq (\Kp)_0(X)\otimes(\Kp)_{2t}$ is an isomorphism of  $\Zpx$-modules. In addition, $\Kp$-homology groups of a spectrum only depends on its $KU/p$-localization. So from the construction of $h(N)$, it suffices to check $h(N)_p$ induces isomorphisms of $\Zpx$-modules in the zeroth $\Kp$-homology group for each prime $p$. In fact, we will show that this is true for each step in the construction of $h(N)_p$ in \eqref{eqn:h(N)p}. 
 		\begin{enumerate}
 			\item Recall from \Cref{defn:s_k1_pv}, $S^0_{KU/p}(p^v)=(\Kp)^{h(1+p^v\Zp)}$. So its zeroth $\Kp$-homology is
 			\begin{equation*}
 				\left(\Kp\right)_0(S^0_{KU/p}(p^v))\simeq \map(\Zpx/(1+p^v\Zp),\Zp),
 			\end{equation*}
 			where $\Zpx$ acts trivially on $\Zp$. When $\Kbb=\Q(\zeta_{p^v})$, $\Kbb(\mu_{p^\infty})=\Q(\zeta_{p^\infty})$ and $G_p(\Q(\zeta_{p^v}))=1+p^v\Zp$. By \Cref{thm:KpKalg}, we have 
 			\begin{equation*}
 				\left(\Kp\right)_0(K(\Z[1/p,\zeta_{p^v}]))\simeq \map(\Zpx/(1+p^v\Zp),\Zp).
 			\end{equation*}
 			The above computation shows two $\Kp$-homology groups are (abstractly) isomorphic as $\Zpx$-modules. It remains to check that the induced map of $\underline{h}(B(1+p^v\Zp))$ realizes this isomorphism. Notice $\underline{h}(B(1+p^v\Zp))$ is an $\zx{p^v}$-equivariant map of ring spectra, its induced map on the zeroth $\Kp$-homology groups is also a $\zx{p^v}$-map of rings. Being a map of algebras, it must preserve idempotents in $\map(\Zpx/(1+p^v\Zp),\Zp)$ and send $1$ to $1$. Notice idempotents in this ring are characteristic functions of subsets of $\Zpx/(1+p^v\Zp)$ and the identity element is the sum of all characteristic functions $\chi_g$ of the element $[g]\in \Zpx/(1+p^v\Zp)$. This means the induced map of $\underline{h}(B(1+p^v\Zp)$ sends a characteristic function of an element to another one. From the $\zx{p^v}$-equivariance of the induced map, we conclude it is given by sending a function $F\in \map(\Zpx/(1+p^v\Zp),\Zp)$ to $F(g\cdot -)$ for some $g\in \Zpx$, which is an isomorphism of $\Zpx$-modules. 
 			\item 	
 			 When $\Kbb=\Q(\zeta_{N})$, we have $\Kbb(\mu_{p^\infty})=\Q(\zeta_{p^\infty},\zeta_{N'})$, where $N=p^vN'$ and $(N',p)=1$. This implies $G_p(\Q(\zeta_{N}))=\gal(\Q(\zeta_{p^\infty},\zeta_{N'})/\Q(\zeta_{p^v},\zeta_{N'}))=1+p^v\Zp$. As a result, the field extension $\Q(\zeta_{p^v})\to \Q(\zeta_N)$ induces an isomorphism of Galois groups $1+p^v\Zp=G_p(\Q(\zeta_{p^v}))\simto G_p(\Q(\zeta_{N}))$. In light of \Cref{thm:KpKalg}, we have shown the extension of rings $\Z[1/p,\zeta_{p^v}]\to \Z[1/p,\zeta_N]$ induces an isomorphism of $\Zpx$-modules:
 			\begin{equation*}
 				\left(\Kp\right)_{0}(K(\Z[1/p,\zeta_{p^v}]))\simto \left(\Kp\right)_{0}(K(\Z[1/p,\zeta_N])).
 			\end{equation*}
 			\item Finally, since the map $L_{KU/p}K(\Z[\zeta_N])\to L_{KU/p}K(\Z[1/p,\zeta_N])$ is an equivalence, its induced map on $\Kp$-homology groups must also be an isomorphism of $\Zpx$-modules. 
 		\end{enumerate}	
 	\end{proof}
	The $\Kp$-homology groups of $J(N)$ are zero in odd degrees. So $h(N)\colon  J(N)\to L_{KU}K(\Z[\zeta_N])$ induces zero maps on odd-degree $\Kp$-homology groups. Recall that a $KU$-local spectrum can be assembled from its rationalization and $p$-completions at all primes. By \Cref{lem:Sp_k1_alg_p} and \Cref{lem:Sp_k1_alg_2}, the latter is in turn determined by the Adams operations on its $\Kp$-homology groups ($KO^\wedge_{2}$-homology groups when $p=2$). In this sense, the map $h(N)$ exhibits $J(N)$ as the "even half" of the $KU$-local algebraic $K$-theory spectrum $L_{KU}K(\Z[\zeta_N])$. In terms of homotopy groups, we have:
	\begin{prop}
		When $k\ge 1$, the induced of $h(N)$ on the $k$-th homotopy groups is an injection when $2$ is inverted.
	\end{prop}
	\begin{proof}
		When $p>2$, the HFPSS 
		\begin{equation*}
			E_2^{s,2t}=H^s_c(\Zpx;(\Kp)_{t}(K(\Z[\zeta_N])))\Longrightarrow \pi_{2t-s}L_{KU/p}(K(\Z[\zeta_N]))
		\end{equation*}
		collapses on $E_2$-page since it is concentrated in the $0$- and $1$-lines. One  extension problem on the $E_\infty$-page is 
		\begin{align*}
			0\to &H_c^1(\Zpx; (\Kp)_{2t}(K(\Z[\zeta_N])))\to \pi_{2t-1}(L_{KU/p}(K(\Z[\zeta_N])))\to H_c^0(\Zpx; (\Kp)_{2t-1}(K(\Z[\zeta_N])))\to 0
		\end{align*} 
		By \Cref{prop:hN_even}, $h(N)_*\colon  (\Kp)_{2t}(J(N))\simto (\Kp)_{2t}(K(\Z[\zeta_N]))$ is an isomorphism of $\Zpx$-modules. So we have an injection:
		\begin{align*}
			h(N)_*\colon \pi_{2t-1}(J(N)^{\wedge}_{p})=H_c^1(\Zpx; (\Kp)_{2t}(J(N)))\simto& H_c^1(\Zpx; (\Kp)_{2t}(K(\Z[\zeta_N])))\\\hookrightarrow& \pi_{2t-1}(L_{KU/p}(K(\Z[\zeta_N]))).
		\end{align*}
		By \Cref{prop:pi_jn}, $\pi_{2t}(J(N)^\wedge_{p})=0$ when $p>2$ and $t>0$. So the above argument shows $h(N)_*:\pi_{k}(J(N)^{\wedge}_{p})\to \pi_{k}(L_{KU/p}K(\Z[\zeta_N]))$ is an injection when $p>2$ and $k>0$. Since homotopy groups of $J(N)$ are all torsion groups in positive degrees, $h(N)_*$ is also an injection rationally. Consequently, we have shown $h(N)_*\colon  \pi_k(J(N))\to \pi_{k}(L_{KU}K(\Z[\zeta_N]))$ is an injection when $2$ is inverted.
	\end{proof}
	When $n\ge 1$, the image of $h(N)_*$ in $\pi_{n}(L_{KU}K(\Z[\zeta_N]))\simeq K_{n}(\Z[\zeta_N])$ (the two are isomorphic by \Cref{thm:Waldhausen}) is called the \textbf{Harris-Segal summand}, introduced in \cite{Harris-Segal_Ki}. Spectral liftings of the Harris-Segal summands at each prime have been studied by Kahn in \cite{Kahn_Bott_elements}. In this sense, our construction of the $J(N)$ spectrum is a global version of Kahn's $J(\ell,\Delta)$ spectrum. We end this subsection with a construction of a $J$-spectrum $J(\Kbb)$ attached to an abelian extension $\Kbb/\Q$. Similar to \Cref{con:jn}, we define $J(\Kbb)$ to be the homotopy pullback of the following arithmetic fracture square:
	\begin{equation*}
		\begin{tikzcd}
			J(\Kbb)\rar\dar\arrow[dr, phantom, "\lrcorner", very near start]&\prod_{p}\left(\Kp\right)^{hG_p(\Kbb)}\dar["L_\Q"]\\
			S^0_\Q\rar["h_\Q"']&\left(\prod_{p}\left(\Kp\right)^{hG_p(\Kbb)}\right)_\Q
		\end{tikzcd},
	\end{equation*}
	where $G_p(\Kbb)$ is as in \Cref{thm:KpKalg}. When $\Kbb=\Q(\zeta_N)$, we recover $J(N)$. 
	\begin{thm}\label{thm:h_K} The spectrum $J(\Kbb)$ is a $KU$-local $\einf$-ring spectrum with a $\gal(\Kbb/\Q)$-action by $\einf$-ring maps. Moreover, there is a $\gal(\Kbb/\Q)$-equivariant map $h(\Kbb)\colon  J(\Kbb)\to L_{KU}K(\Ocal_\Kbb)$ of $KU$-local $\einf$-ring spectrum such that
		\begin{enumerate}
			\item When $\Kbb=\Q(\zeta_N)$, $h(\Kbb)=h(N)$. In particular, $h(\Q)=h(1)$ is the $KU$-local Hurewicz map $h_{KU}:S^0_{KU}\to L_{KU}K(\Z)$.
			\item The map $h(\Kbb)$ is natural with respect to field extensions.
			\item The induced maps of $h(\Kbb)$ on even degree $\Kp$-homology groups are isomorphisms. (The induced maps on odd-degree $\Kp$-homology groups are zero since $(\Kp)_*(J(\Kbb))$ is zero in odd degrees.)
			\item When $t>1$, the image of $\pi_{2t-1}(h(\Kbb))$ is the Harris-Segal summand in $\pi_{2t-1}L_{KU}K(\Ocal_\Kbb)\simeq K_{2t-1}(\Ocal_\Kbb)$.
		\end{enumerate}
	\end{thm}
	\begin{rem}
		Let $N$ be the smallest integer such that $\Kbb\subseteq \Q(\zeta_N)$. Then the two spectra $J(\Kbb)$ and $J(N)^{h\gal(\Q(\zeta_N)/\Kbb)}$ are not equivalent in general. For example, take $\Kbb=\Q(\sqrt{5})$. Then $N=5$ and $\gal(\Q(\zeta_5)/\Kbb)=C_2$, generated by complex conjugation. One can check $J(\Kbb)^\wedge_2\simeq S^0_{KU/2}$ since $G_2(\Kbb)=\Z_2^\x$. However, $(J(5)^{hC_2})^\wedge_{2}\simeq \left(S^0_{KU/2}\right)^{hC_2}$, where $C_2$ acts trivially. Similar to the proof of \Cref{prop:jn_not_gal}, this homotopy fixed point spectrum is equivalent to $S^0_{KU/2}\vee S^0_{KU/2}$. 
	\end{rem}
	\subsection{Brown-Comenetz duality}\label{subsec:BC_dual}
	From \Cref{rem:jn_duality} and the computations in \Cref{Sec:pi_Dirichlet_j} and \Cref{thm:pi_dirichlet_j}, we observe the following duality phenomena hold in many (but not all) cases:
	\begin{equation*}
		\pi_t(J(N))\simeq \pi_{-2-t}(J(N)),\quad \pi_t\left(J(N)^{h\chi}\right)\simeq \pi_{-2-t}\left(J(N)^{h\chi^{-1}}\right).
	\end{equation*}
	This duality resembles the functional equations of the Dedekind $\zeta$-functions and Dirichlet $L$-functions. Let $\chi\colon \znx\to \Cx$ be a primitive Dirichlet character of conductor $N$ and $k$ is a positive integer such that $(-1)^k=\chi(-1)$. Then we have the following functional equation of $L(k;\chi)$:
	\begin{equation*}
	L(k;\chi)=\frac{\tau(\chi)}{2(k-1)!}\cdot\left(\frac{2\pi i}{N}\right)^k\cdot L(1-k;\chi^{-1}), \text{ where }\tau(\chi)=\sum_{a=1}^N\chi(a)e^{\frac{2\pi i a}{N}}.
	\end{equation*}
	The duality in homotopy groups we observed is a result of \textbf{Brown-Comenetz duality} of the spectra. In this subsection, we find the Brown-Comenetz duals for the Dirichlet $J$-spectra, $K(1)$-local spheres, and the spectra $J(N)$. Let's first review the setup following \cite{rigid}.	
	\begin{defns}
		Let $I$ be the spectrum that represents the cohomology theory $X\mapsto \hom_{\Z}(\pi_0(X),\Q/\Z)$. The Brown-Comenetz dual of a finite type spectrum $X$ is defined to be
		\begin{equation*}IX= \map(X,I)\simeq I\wedge DX,\end{equation*} 
		where $DX=\map(X,S^0)$ is the \textbf{Spanier-Whitehead dual} of $X$. In particular, $I=IS^0$ is the Brown-Comenetz dual of $S^0$.
	\end{defns}
		It follows from the definition that $\pi_t(IX)\simeq \hom_{\Z}(\pi_{-t}(X),\Q/\Z)$.
	\begin{defn}
		Let $I_1=\map(L_1S^0,I)$ be the Brown-Comenetz dual of the $E(1)$-local sphere $L_1S^0$. The $K(1)$-local Brown-Comenetz dual of $S^0$ is defined by $I_{K(1)}=L_{K(1)}I_1$. Define the $E(1)$-local and $K(1)$-local duals of a spectrum $X$ to be
		\begin{equation*}
			I_1X=\map(X,I_1)\simeq I_1\wedge D_{E(1)}X,\qquad I_{K(1)}X=\map(X,I_{K(1)})\simeq I_{K(1)}\wedge_{K(1)} D_{K(1)}(X).
		\end{equation*}
	\end{defn}
	\begin{prop}
		Homotopy groups of $I_1X$ and $I_{K(1)}X$ are computed by:
		\begin{align}
			\pi_t(I_1X)\simeq&\hom_{\Z_{(p)}}\left(\pi_{-t}(L_1X),\Q/\Z_{(p)}\right),\label{eqn:BC_dual_E1}\\
			\pi_t(I_{K(1)}X)\simeq&\hom_{\Zp}(\pi_{-t}(M_1X),\Qp/\Zp),\label{eqn:BC_dual_K1}
		\end{align}
		where $M_1X=\mathrm{hoFib}(L_1X\to L_0 X)$.
	\end{prop}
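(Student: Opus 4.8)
\textit{Strategy.} The plan is to rewrite each Brown--Comenetz dual as the ordinary Brown--Comenetz dual $\map(-,I)$ of a localization of $X$, exploiting that the finite localization $L_1=L_{E(1)}$ is smashing (hence so are $L_0=L_{H\Q}$ and the monochromatic functor $M_1$), and then to finish with elementary abelian-group algebra. For $I_1X$ the smash--hom adjunction gives $I_1X=\map\bigl(X,\map(L_1S^0,I)\bigr)\simeq\map(X\wedge L_1S^0,I)$, and the smashing theorem identifies $X\wedge L_1S^0\simeq L_1X$, so $I_1X\simeq\map(L_1X,I)$. Since $I$ represents $Y\mapsto\hom_\Z(\pi_0Y,\Q/\Z)$, we get $\pi_t(I_1X)\simeq\hom_\Z(\pi_{-t}L_1X,\Q/\Z)$. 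Now $L_1X$ is a module over the $p$-local ring spectrum $L_1S^0$, so $\pi_{-t}L_1X$ is a $\Z_{(p)}$-module; every prime $\ell\neq p$ acts invertibly on it, so any homomorphism into $\Q/\Z\cong\bigoplus_\ell\Z(\ell^\infty)$ lands in the $p$-primary summand, yielding $\hom_\Z(\pi_{-t}L_1X,\Q/\Z)\simeq\hom_{\Z_{(p)}}(\pi_{-t}L_1X,\Q/\Z_{(p)})$.

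\textit{The $K(1)$-local case.} The second formula rests on identifying $I_{K(1)}=L_{K(1)}I_1$ with $\map(M_1S^0,I)$. The fiber sequence $M_1S^0\to L_1S^0\to L_0S^0$ is also a cofiber sequence, so $\map(-,I)$ produces a cofiber sequence $\map(L_0S^0,I)\to I_1\to\map(M_1S^0,I)$. Its left term is the Brown--Comenetz dual of the rational spectrum $L_0S^0=S^0_\Q$, hence is itself rational and thus $K(1)$-acyclic; applying $L_{K(1)}$ therefore gives $I_{K(1)}=L_{K(1)}I_1\simeq L_{K(1)}\map(M_1S^0,I)$. It remains to check that $\map(M_1S^0,I)$ is already $K(1)$-local. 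I would argue that $M_1S^0$ is monochromatic ($L_0$-acyclic and $E(1)$-local), so for any $K(1)$-acyclic $A$ the smash product $A\wedge M_1S^0$ is again monochromatic, while $K(1)_*(A\wedge M_1S^0)\simeq K(1)_*A\otimes_{K(1)_*}K(1)_*M_1S^0=0$ since $K(1)$ is a field spectrum; by the Hovey--Strickland equivalence between the monochromatic and the $K(1)$-local category this forces $A\wedge M_1S^0\simeq\ast$, hence $\map(A,\map(M_1S^0,I))\simeq\map(A\wedge M_1S^0,I)\simeq\ast$. Thus $\map(M_1S^0,I)$ is $K(1)$-local and equals $I_{K(1)}$.

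\textit{Conclusion.} Since $M_1$ is smashing, $M_1X\simeq X\wedge M_1S^0$, and therefore
\[
I_{K(1)}X=\map\bigl(X,\map(M_1S^0,I)\bigr)\simeq\map(X\wedge M_1S^0,I)\simeq\map(M_1X,I),
\]
so $\pi_t(I_{K(1)}X)\simeq\hom_\Z(\pi_{-t}M_1X,\Q/\Z)$. Finally, in the long exact sequence of $M_1X=\mathrm{hoFib}(L_1X\to L_0X)$ the map $\pi_*L_1X\to\pi_*L_0X$ is the rationalization of the $\Z_{(p)}$-module $\pi_*L_1X$, whose kernel (torsion) and cokernel ($\pi_*L_1X\otimes\Q/\Z$, supported at $p$) are $p$-power torsion; hence $\pi_*M_1X$ is $p$-power torsion, and as above $\hom_\Z(\pi_{-t}M_1X,\Q/\Z)\simeq\hom_{\Zp}(\pi_{-t}M_1X,\Qp/\Zp)$. (No finiteness on $X$ is needed for these $\pi_*$-formulas; dualizability of $X$ in $\Sp_{K(1)}$ is only what makes the alternative description $I_{K(1)}X\simeq I_{K(1)}\wedge D_{K(1)}X$ meaningful.)

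\textit{Main obstacle.} The one nonformal ingredient is the identification $I_{K(1)}\simeq\map(M_1S^0,I)$, i.e.\ the claim that $\map(M_1S^0,I)$ is $K(1)$-local. A naive ``mapping into a local object'' argument is unavailable because $I$ is very far from $K(1)$-local, so one genuinely needs the structural facts that $M_1S^0$ is monochromatic and that smashing a monochromatic spectrum with a $K(1)$-acyclic spectrum is contractible (via the monochromatic/$K(1)$-local equivalence). The remaining steps are bookkeeping with smashing localizations, the representing property of $I$, and stripping the prime-to-$p$ summands off $\Q/\Z$.
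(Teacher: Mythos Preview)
Your proposal is correct. The paper does not actually prove this proposition: it says \eqref{eqn:BC_dual_E1} ``follows from the definition'' and cites \cite[84]{rigid} for \eqref{eqn:BC_dual_K1}. You have supplied the argument behind that citation --- the identification $I_{K(1)}\simeq\map(M_1S^0,I)$ via the smashing property of $L_1$, $L_0$, $M_1$ and the monochromatic/$K(1)$-local equivalence --- together with the elementary algebra that trims $\Q/\Z$ down to its $p$-primary part. One small point worth making explicit is why $\map(L_0S^0,I)$ is rational: its only nonzero homotopy group is $\hom_\Z(\Q,\Q/\Z)$, and $\hom_\Z(\Q,-)$ always lands in $\Q$-vector spaces because $\Q$ is uniquely divisible; this justifies the step you flagged as the main obstacle.
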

	\begin{proof}
		\eqref{eqn:BC_dual_E1} follows from the definition. \eqref{eqn:BC_dual_K1} is in \cite[84]{rigid}.
	\end{proof}
	\begin{thm}[Hopkins, {\cite[Remark 1.5]{Devinatz_KTGH}}, {\cite[Corollary 9.6]{MR_BC_dual_ASS}}]\textup{}\label{thm:E1_BC_dual}
		When $p>2$, $I_1\simeq \Sigma^2 L_1\left(S^0_p\right)$, where $S^0_p$ is the $p$-complete sphere. When $p=2$, $I_1\simeq \Sigma^2 L_1\left(\Ecal^\wedge_{2}\right)$, where $\Ecal$ is the finite CW-spectrum defined by
		\begin{equation*}
		\Ecal=\Sigma^{-2}(S^{-1}\cup_2 e^0\cup_{\eta}e^2).
		\end{equation*}
	\end{thm}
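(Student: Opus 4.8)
The plan is to pin down $I_1$ by computing its homotopy groups from the universal-coefficient description \eqref{eqn:BC_dual_E1} and the known $v_1$-periodic homotopy of the sphere, and then to promote the numerical identification to an equivalence of $L_1 S^0$-modules. By \eqref{eqn:BC_dual_E1} one has $\pi_t(I_1)\simeq\hom_{\Z_{(p)}}\!\big(\pi_{-t}(L_1 S^0),\Q/\Z_{(p)}\big)$, so the computation is governed entirely by $\pi_*(L_1 S^0)$: the copies of $\Z_{(p)}$ and $\Q/\Z_{(p)}$ in degrees $0$ and $-2$, the image-of-$J$ groups $\Z/p^{v_p(m)+1}$ of \eqref{k1spi} at odd $p$, and the additional $\eta$- and $\eta^2$-torsion $\Z/2$'s of \eqref{pis0_2} at $p=2$.

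For $p>2$ the crucial input is that this pattern is self-dual about degree $-1$, up to $p$-completion: $\hom(\Z_{(p)},\Q/\Z_{(p)})\simeq\Q/\Z_{(p)}$, $\hom(\Q/\Z_{(p)},\Q/\Z_{(p)})\simeq\Zp$, $\hom(\Z/p^n,\Q/\Z_{(p)})\simeq\Z/p^n$, and the image-of-$J$ degrees $2m(p-1)-1$ are permuted by $s\mapsto-2-s$ with matching $p$-adic valuations. Computing $\pi_*\big(\Sigma^2 L_1(S^0_p)\big)$ directly shows it is exactly this reindexed dual, so $\pi_*(I_1)\cong\pi_*\big(\Sigma^2 L_1(S^0_p)\big)$ degreewise. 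To upgrade, note that $I_1=\map(L_1 S^0,I)$ is a module over the ring $L_1 S^0$; one picks the class in $\pi_2(I_1)\simeq\Zp$ dual to the fundamental class of $\pi_{-2}(L_1 S^0)\simeq\Q/\Z_{(p)}$, realizes it as an $L_1 S^0$-module map $\Sigma^2 L_1(S^0_p)\to I_1$, and checks it is an isomorphism on homotopy — which, everything being $v_1$-periodic, is detected after smashing with $K(1)$ and follows from \eqref{k1spi} together with $\big(\Kp\big)^{h\Zpx}\simeq S^0_{K(1)}$. This is the Gross--Hopkins/Devinatz argument; see \cite{Devinatz_KTGH}.

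At $p=2$ the pattern of \eqref{pis0_2} is \emph{not} self-dual about degree $-1$: the $\Z/2$'s in degrees $\equiv 0,1,2\bmod 8$ together with the Hopf classes $\eta,\eta^2$ obstruct the duality, so $\Sigma^2 L_1(S^0_2)$ has the wrong homotopy and the dualizing object must be twisted. That twist is the three-cell complex $\Ecal=\Sigma^{-2}(S^{-1}\cup_2 e^0\cup_\eta e^2)$, whose cells (in dimensions $-3,-2,0$, with attaching maps $2$ and $\eta$) are chosen precisely so that the reindexed Pontryagin dual of $\pi_*(L_1 S^0)$ equals $\pi_*\big(\Sigma^2 L_1(\Ecal^\wedge_2)\big)$. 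I expect this to be the main obstacle: isolating the correct complex and verifying the duality is cleanest on the Adams spectral sequence — showing the $E_2$-page and the $d_3$-differentials for $L_1(\Ecal^\wedge_2)$ are dual, through the relevant range, to those for $L_1(S^0_2)$ — as carried out by Mahowald--Rezk \cite{MR_BC_dual_ASS}. One should also check consistency with the near-involutivity of Brown--Comenetz duality, which here reflects the invertibility of $\Ecal_{K(1)}$ in $\pic_{K(1)}$ recorded in \Cref{rem:exotic_pic}; after the same module-map upgrade as in the odd-primary case, this gives $I_1\simeq\Sigma^2 L_1(\Ecal^\wedge_2)$.
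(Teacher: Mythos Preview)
The paper does not give its own proof of this theorem: it is quoted as a result of Hopkins, with references to \cite[Remark 1.5]{Devinatz_KTGH} for the odd-primary case and \cite[Corollary 9.6]{MR_BC_dual_ASS} for $p=2$, and is used as a black box in \Cref{subsec:BC_dual}. So there is nothing in the paper to compare your argument against.

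That said, your outline is broadly in the spirit of those references. A few cautions. First, your degreewise isomorphism of homotopy groups is not by itself enough, and the step where you ``realize a class in $\pi_2(I_1)$ as an $L_1S^0$-module map $\Sigma^2 L_1(S^0_p)\to I_1$'' is doing real work: one needs to know that $I_1$ is invertible in the $E(1)$-local (or $K(1)$-local) category, which is the actual content of the Gross--Hopkins argument and is what lets you lift a generator to an equivalence. Second, at $p=2$ the identification of the twist as $\Ecal$ is not just a matter of matching homotopy groups; Mahowald--Rezk pin it down via the Adams spectral sequence precisely because there are exotic Picard elements (as you note from \Cref{rem:exotic_pic}) with the same $E_2$-page, so your sketch of ``checking $E_2$ and $d_3$ are dual'' is the right shape but the details there are the substance of \cite{MR_BC_dual_ASS}. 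If you intend this as a pointer to the literature rather than a self-contained proof, it is adequate; if you intend it as a proof, the invertibility of $I_1$ and the $2$-primary identification both need to be filled in.
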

	\begin{rem}
		Bousfield localization at $E(1)$ does NOT commute with $p$-completion. On one hand, we have $L_1\left(S^0_p\right)\simeq M(\Zp)\wedge L_1S^0$,  since $L_1$ is smashing by \cite[Theorem 7.5.6]{orange}. One the other hand, $\left(L_1S^0\right)^\wedge_{p}\simeq S^0_{K(1)}$. We can then show $M(\Zp)\wedge L_1S^0\not\simeq S^0_{K(1)}$ by comparing their homotopy groups.
	\end{rem}
	Localized at an odd prime $p$, the spectrum $\Ecal$ is equivalent to the sphere spectrum. As a result, the formula $I_1\simeq \Sigma^2L_1(\Ecal^\wedge_{p})$ holds for all primes $p$.  This suggests:
	\begin{cor}\label{cor:BC_dual_K}
		$I_{KU}=\map(L_{KU} S^0,I)\simeq \Sigma^2L_{KU}(\Ecal^\wedge)$, where $\Ecal^\wedge$ is the profinite completion of $\Ecal$.
	\end{cor}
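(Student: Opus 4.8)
The plan is to establish the equivalence by reassembling both sides from their rationalizations and $p$-completions through the arithmetic fracture square, with all the chromatic input supplied by \Cref{thm:E1_BC_dual}. Since $\map(J,-)$ commutes with homotopy limits and with smashing against a finite spectrum, $p$-completion passes through the Brown--Comenetz dual: $\bigl(\map(J,I)\bigr)^\wedge_p\simeq\map(J,I^\wedge_p)$, and the uniquely-$p$-divisible summand of $J$ maps trivially into the $p$-complete target, so $\bigl(\map(J,I)\bigr)^\wedge_p\simeq\map(J^\wedge_p,I^\wedge_p)\simeq\map(S^0_{KU/p},I^\wedge_p)$, using $J^\wedge_p\simeq S^0_{KU/p}$ from \eqref{eqn:J_frac_sq}. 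Rationally, $\pi_*\bigl((\map(J,I))_\Q\bigr)\cong\pi_*(\map(J,I))\otimes\Q$, which by the identity $\pi_t\map(J,I)\cong\hom_\Z(\pi_{-t}J,\Q/\Z)$ together with \eqref{eqn:pi_j} is concentrated in degree $2$, where it is $\hom_\Z(\Q/\Z,\Q/\Z)\otimes\Q\cong\wh\Z\otimes\Q$; thus $\bigl(\map(J,I)\bigr)_\Q\simeq\Sigma^2 H(\wh\Z\otimes\Q)$.

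I would then carry out three tasks. \textbf{(1)} Identify $\map(S^0_{KU/p},I^\wedge_p)$ with $L_{KU/p}(\Sigma^2\Ecal^\wedge_p)$. Here $S^0_{KU/p}$ is the $K(1)$-localization of the $E(1)$-local sphere $L_1 S^0$, and $L_1$ is smashing, so this is where \Cref{thm:E1_BC_dual} enters: for $p$ odd it gives $\Sigma^2 S^0_{KU/p}$ (equivalently the Gross--Hopkins statement $I_{K(1)}\simeq\Sigma^2 S^0_{K(1)}$, since $\Ecal^\wedge_p\simeq S^0_p$ there), while for $p=2$ it gives $\Sigma^2\Ecal_{K(1)}$, whose homotopy groups were recorded in \eqref{eqn:pi_Ek1}. \textbf{(2)} Match the rationalizations: the computation above gives $\bigl(\map(J,I)\bigr)_\Q\simeq\Sigma^2 H(\wh\Z\otimes\Q)$, and on the other side $\Ecal$ is a finite spectrum with $\pi_0\Ecal\cong\Z$ and all higher homotopy torsion, so $\pi_0(\Ecal^\wedge)\cong\wh\Z$ and $(\Sigma^2 L_{KU}\Ecal^\wedge)_\Q=(\Sigma^2\Ecal^\wedge)_\Q\simeq\Sigma^2 H(\wh\Z\otimes\Q)$. \textbf{(3)} Check that these identifications are compatible with the rationalization maps over the common term $\bigl(\prod_p L_{KU/p}\Sigma^2\Ecal^\wedge_p\bigr)_\Q$. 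Granting (1)--(3), the arithmetic fracture squares of $\map(J,I)$ and of $\Sigma^2 L_{KU}\Ecal^\wedge$ coincide, hence $I_{KU}\simeq\Sigma^2 L_{KU}(\Ecal^\wedge)$ --- in particular $\map(L_{KU}S^0,I)$ is $KU$-local.

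The main obstacle is task (1), and specifically distinguishing the plain dual $\map(S^0_{KU/p},I^\wedge_p)$ from the genuinely $K(1)$-local Brown--Comenetz dual $I_{K(1)}=L_{K(1)}I_1$ of \Cref{thm:E1_BC_dual}: that theorem is a statement about $L_1 S^0$ rather than about its $K(1)$-localization, so one has to track carefully how the degree-$2$ shift interacts with $p$-completion --- and this interaction is precisely the mechanism by which the exotic finite complex $\Ecal$ (rather than just $S^0_p$) enters at $p=2$. Tasks (2) and (3) are comparatively routine bookkeeping: one verifies degreewise, using \eqref{eqn:pi_j}, \eqref{k1spi} and \eqref{eqn:pi_Ek1}, that the homotopy groups of the two sides agree compatibly --- the $\wh\Z$-contribution in degree $2$, the cyclic summands $\Z/D_{|2k|}$ in the degrees $\equiv 1\bmod 4$, and the $2$-primary summands ---, after which the fracture-square reassembly is formal.
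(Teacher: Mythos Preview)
Your approach is correct and is essentially what the paper intends, though the paper gives no proof at all: it simply observes that $\Ecal_{(p)}\simeq S^0_{(p)}$ for odd $p$, so that \Cref{thm:E1_BC_dual} reads uniformly as $I_1\simeq\Sigma^2 L_1(\Ecal^\wedge_p)$ at every prime, and then states the corollary. Your fracture-square argument is the natural way to fill this in.

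One simplification for your task (1): you do not need to wrestle separately with $\map(S^0_{KU/p},I^\wedge_p)$ versus $I_{K(1)}$. Since $L_{KU}$ is smashing, $I_{KU}=F(J,I)$ is already $KU$-local (if $X$ is $KU$-acyclic then $[X,F(J,I)]=[X\wedge J,I]=[L_{KU}X,I]=0$), so its $p$-completion is its $K(1)$-localization. On the other side, $I_1=F(L_1 S^0,I)\simeq\Sigma^2 L_1(\Ecal^\wedge_p)$ is visibly $E(1)$-local, hence $I_{K(1)}:=L_{K(1)}I_1=(I_1)^\wedge_p$. Now use that $J_{(p)}\simeq L_1 S^0$ and that $F(-,I)$ takes the cofiber sequence for $p$-localization to one identifying $(I_{KU})^\wedge_p$ with $(I_1)^\wedge_p=I_{K(1)}$; this is exactly the $p$-completion of \Cref{thm:E1_BC_dual}, and no further distinction is needed. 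Your tasks (2) and (3) are then, as you say, bookkeeping: $\Ecal_\Q\simeq S^0_\Q$ since the attaching maps $2$ and $\eta$ are respectively invertible and null over $\Q$, so $(\Sigma^2 L_{KU}\Ecal^\wedge)_\Q\simeq\Sigma^2 H(\wh{\Z}\otimes\Q)$ matches your computation of $(I_{KU})_\Q$ from $\pi_{-2}(J)=\Q/\Z$.
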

	\begin{thm}[Gross-Hopkins]\label{thm:K1_BC_dual}
		When $p>2$, $I_{K(1)}\simeq S^2_{K(1)}$. When $p=2$, $I_{K(1)}\simeq \Sigma^2\Ecal_{K(1)}$ .
	 \end{thm}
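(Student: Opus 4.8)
The plan is to deduce this from \Cref{thm:E1_BC_dual} by applying $K(1)$-localization and tracking a handful of formal properties of the chromatic localization functors; the deep input — the Gross--Hopkins computation of $I_1$ — is already quoted there. By definition $I_{K(1)}=L_{K(1)}I_1$. \Cref{thm:E1_BC_dual} gives $I_1\simeq \Sigma^2L_1(S^0_p)$ for $p>2$ and $I_1\simeq \Sigma^2L_1(\Ecal^\wedge_2)$ for $p=2$; since the attaching maps $2$ and $\eta$ in $\Ecal=\Sigma^{-2}(S^{-1}\cup_2 e^0\cup_\eta e^2)$ are $p$-locally null for odd $p$, we have $\Ecal_{(p)}\simeq S^0_{(p)}$, so both cases read uniformly as $I_1\simeq \Sigma^2L_1(\Ecal^\wedge_p)$. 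Hence $I_{K(1)}\simeq \Sigma^2\,L_{K(1)}\bigl(L_1(\Ecal^\wedge_p)\bigr)$, and the rest of the argument is the simplification of the right-hand side.

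First I would use that $L_{K(1)}$ commutes with suspension and that $L_{K(1)}\circ L_1\simeq L_{K(1)}$ — a standard property of the chromatic tower, since the fiber of $Y\to L_1Y=L_{E(1)}Y$ is $E(1)$-acyclic and hence $K(1)$-acyclic (see e.g. \cite{orange}). Next, because $\Ecal$ is a finite spectrum, the map $\Ecal\to\Ecal^\wedge_p$ is obtained by smashing with $S^0\to S^0_p$ and is a $K(1)_*$-isomorphism (as $K(1)$ is $p$-torsion), so $L_{K(1)}\Ecal^\wedge_p\simeq L_{K(1)}\Ecal=:\Ecal_{K(1)}$. Combining these three facts yields $I_{K(1)}\simeq \Sigma^2\Ecal_{K(1)}$, which is the $p=2$ claim. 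For $p>2$ the identical computation with $\Ecal$ replaced by $S^0$ gives $I_{K(1)}\simeq \Sigma^2L_{K(1)}S^0=S^2_{K(1)}$.

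As an independent check I would verify the identification on homotopy groups through \eqref{eqn:BC_dual_K1}: $\pi_t(I_{K(1)})\simeq \hom_{\Zp}(\pi_{-t}(M_1S^0),\Qp/\Zp)$ with $M_1S^0=\mathrm{hoFib}(L_1S^0\to L_0S^0)$, comparing against $\pi_{t-2}(S^0_{K(1)})$ from \eqref{k1spi} when $p>2$ and against $\pi_{t-2}(\Ecal_{K(1)})$ from \eqref{eqn:pi_Ek1} when $p=2$. To upgrade ``same homotopy groups'' to ``equivalent spectra'' at $p=2$ without reusing \Cref{thm:E1_BC_dual}, one could instead apply $\bigl(KO^\wedge_2\bigr)_*(-)$ to both sides and invoke \Cref{lem:Sp_k1_alg_2}; the point is that $\bigl(KO^\wedge_2\bigr)_*\Ecal_{K(1)}$ carries precisely the $(1+4\Z_2)$-action that marks $\Ecal_{K(1)}$ as the exotic element of $\pic_{K(1)}$, as recorded in \Cref{rem:exotic_pic}.

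The hard part will not be any one computation but rather bookkeeping: applying $L_{K(1)}$, $L_1$, and $p$-completion in the right order and with the correct hypotheses — in particular that $K(1)$-localization absorbs $E(1)$-localization and ignores $p$-completion of a finite spectrum — so that the reduction to the already-quoted \Cref{thm:E1_BC_dual} is airtight.
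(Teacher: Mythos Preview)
The paper does not give a proof of this theorem: it is stated as a known result attributed to Gross--Hopkins, with no argument supplied. Your derivation from \Cref{thm:E1_BC_dual} is correct --- the chain $I_{K(1)}=L_{K(1)}I_1\simeq \Sigma^2 L_{K(1)}L_1(\Ecal^\wedge_p)\simeq \Sigma^2 L_{K(1)}(\Ecal^\wedge_p)\simeq \Sigma^2\Ecal_{K(1)}$ goes through exactly as you describe, using that $K(1)$-local spectra are $E(1)$-local and that $\Ecal\to\Ecal^\wedge_p$ is a $K(1)_*$-isomorphism for finite $\Ecal$. In fact this is precisely the style of argument the paper uses just above to deduce the integral analogue \Cref{cor:BC_dual_K} from \Cref{thm:E1_BC_dual}; the paper simply chose to cite Gross--Hopkins directly for the $K(1)$-local statement rather than spell out the parallel deduction.
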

 	The Dirichlet $J$-spectra and $K(1)$-local spheres constructed in this paper are $KU$-local and $K(1)$-local, respectively. The analysis above shows:
 	\begin{align*}
 		I_{KU}\left(J(N)^{h\chi} \right)\simeq& \Sigma^2 D_{KU}\left(J(N)^{h\chi}\right)\wedge L_{KU}(\Ecal^\wedge)\\
 		I_{K(1)}\left(S^0_{K(1)}(p^v)^{h\chi}\right)\simeq&\left\{\begin{array}{cl}
 		\Sigma^2D_{K(1)}\left(S^0_{K(1)}(p^v)^{h\chi}\right),&p>2;\\
 		\Sigma^2 D_{K(1)}\left(S^0_{K(1)}(p^v)^{h\chi}\right)\wedge_{K(1)} \Ecal_{K(1)},&p=2.
 		\end{array}\right.
 	\end{align*} 	
 	To find Brown-Comenetz duals of these spectra, it now remains to identify their Spanier-Whitehead duals in $\Sp_{KU}$ and $\Sp_{K(1)}$, respectively. We start with the $K(1)$-local cases, which the $KU$-local cases depend on.
 	\begin{prop}\label{prop:D_k1_SW_dual}
 		Let $\chi$ be a $p$-adic Dirichlet character of conductor $p^v$. The Spanier-Whitehead dual of the Dirichlet $K(1)$-local sphere attached to $\chi$ is
 		\begin{equation*}
 			D_{K(1)}\left(S^0_{K(1)}(p^v)^{h\chi}\right)\simeq\left\{\begin{array}{cl}
 			S^0_{K(1)}(2^v)^{h\chi^{-1}}\wedge_{K(1)}\Ecal_{K(1)},& p=2 \textup{ and }\chi(-1)=-1;\\
 			S^0_{K(1)}(p^v)^{h\chi^{-1}},&\textup{otherwise.}
 			\end{array}\right. 
 		\end{equation*}
 	\end{prop}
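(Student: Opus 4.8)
The plan is to compare the two sides at the level of Morava modules and then upgrade an algebraic isomorphism to an equivalence of spectra via \Cref{lem:Sp_k1_alg_p} (when $p$ is odd) and \Cref{lem:Sp_k1_alg_2} (when $p=2$): a $K(1)$-local spectrum is pinned down by $(\Kp)_*$, resp.\ $(KO^\wedge_2)_*$, as a module over the coefficient ring together with its continuous $\Zpx$-, resp.\ $(1+4\Z_2)$-, action. The Morava modules of the Dirichlet $K(1)$-local spheres are already recorded in \Cref{cor:Kp_Dk1} and \Cref{cor:KO_Dk1}, and since they are finitely generated over the coefficient ring, $S^0_{K(1)}(p^v)^{h\chi}$ is dualizable in $\Sp_{K(1)}$; hence $\Kp\wedge D_{K(1)}\bigl(S^0_{K(1)}(p^v)^{h\chi}\bigr)\simeq\map_{\Kp}\bigl(\Kp\wedge S^0_{K(1)}(p^v)^{h\chi},\Kp\bigr)$, so on Morava modules $D_{K(1)}$ acts by the internal $(\Kp)_*$-linear dual with the contragredient action (and likewise with $KO^\wedge_2$). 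The one purely algebraic input is that $\Zp$-linear duality carries the $\Zpx$-representation $\Zp[\chi]$ to $\Zp[\chi^{-1}]$: the module $\hom_{\Zp}(\Zp[\chi],\Zp)$ is invertible over $\Zp[\chi]$ with $\znx$ acting through $\chi^{-1}$, and $\Zp[\chi]$ --- being the ring of integers of the local field $\Qp(\chi)$ --- is a discrete valuation ring, so every invertible module is free; thus $\hom_{\Zp}(\Zp[\chi],\Zp)\simeq\Zp[\chi^{-1}]$. I would record this as a short lemma (the trace pairing realizes the isomorphism up to a power of the uniformizer).

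For $p$ odd, feeding \Cref{cor:Kp_Dk1} into this gives
\[
(\Kp)_*\Bigl(D_{K(1)}\bigl(S^0_{K(1)}(p^v)^{h\chi}\bigr)\Bigr)\ \simeq\ \hom_{(\Kp)_*}\!\Bigl(\hom_{\Zp}\!\bigl(\Zp[\chi],(\Kp)_*\bigr),(\Kp)_*\Bigr)\ \simeq\ \hom_{\Zp}\!\bigl(\Zp[\chi^{-1}],(\Kp)_*\bigr),
\]
where the last step uses that $\Zp[\chi]$ is finite free over $\Zp$ together with the duality lemma (and that no Gross--Hopkins twist enters $D_{K(1)}$: it enters $I_{K(1)}$, but the $\Sigma^2$ of \Cref{thm:K1_BC_dual} cancels in passing from $I_{K(1)}$ to $D_{K(1)}$). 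By \Cref{cor:Kp_Dk1} again the right-hand side is the Morava module of $S^0_{K(1)}(p^v)^{h\chi^{-1}}$, so \Cref{lem:Sp_k1_alg_p} yields $D_{K(1)}\bigl(S^0_{K(1)}(p^v)^{h\chi}\bigr)\simeq S^0_{K(1)}(p^v)^{h\chi^{-1}}$, with no correction term. When $p=2$ and $\chi(-1)=1$ the same argument runs verbatim with $\Kp$ replaced by $KO^\wedge_2$ and \Cref{lem:Sp_k1_alg_p} by \Cref{lem:Sp_k1_alg_2}, since \Cref{cor:KO_Dk1} presents the Morava module as $\hom_{\Z_2}(\Z_2[\chi],(KO^\wedge_2)_*)$, free over $(KO^\wedge_2)_*$; again no $\Ecal_{K(1)}$ appears.

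The remaining case is $p=2$, $\chi(-1)=-1$, which is where $\Ecal_{K(1)}$ enters. Here \Cref{cor:KO_Dk1} gives the Morava module as $\hom_{\Z_2}\bigl(\Z_2[\chi],\pi_*(\Sigma^2KO^\wedge_2)\bigr)$, with $(KU^\wedge_2)^{h\omega}\simeq\Sigma^2KO^\wedge_2$ by \Cref{prop:imaginary_ko} --- an invertible but nontrivial $KO^\wedge_2$-module. Dualizing over $(KO^\wedge_2)_*$ replaces $\Sigma^2KO^\wedge_2$ by its $KO^\wedge_2$-module inverse $\Sigma^{-2}KO^\wedge_2$, so
\[
(KO^\wedge_2)_*\Bigl(D_{K(1)}\bigl(S^0_{K(1)}(2^v)^{h\chi}\bigr)\Bigr)\ \simeq\ \hom_{\Z_2}\!\bigl(\Z_2[\chi^{-1}],\pi_*(\Sigma^{-2}KO^\wedge_2)\bigr).
\]
On the other hand $\Sigma^{-2}KO^\wedge_2\simeq(KU^\wedge_2)^{h'\omega}$ is exactly the ``primed'' model of \Cref{rem:exotic_pic}, where the $S^{1-\sigma}$-model of $M(\Z[\zeta_2])$ is used; combining \Cref{rem:exotic_pic} with \Cref{cor:KO_Dk1} applied to $S^0_{K(1)}(4)^{h\omega}$ and $S^0_{K(1)}(4)^{h'\omega}$ identifies $KO^\wedge_2\wedge\Ecal_{K(1)}$ with the invertible $KO^\wedge_2$-module $\Sigma^{-4}KO^\wedge_2$. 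Hence smashing $S^0_{K(1)}(2^v)^{h\chi^{-1}}$ with $\Ecal_{K(1)}$ turns its Morava module $\hom_{\Z_2}(\Z_2[\chi^{-1}],\pi_*\Sigma^2KO^\wedge_2)$ into $\hom_{\Z_2}(\Z_2[\chi^{-1}],\pi_*\Sigma^{-2}KO^\wedge_2)$, matching the display above; \Cref{lem:Sp_k1_alg_2} then gives $D_{K(1)}\bigl(S^0_{K(1)}(2^v)^{h\chi}\bigr)\simeq S^0_{K(1)}(2^v)^{h\chi^{-1}}\wedge\Ecal_{K(1)}$.

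The hard part is precisely this last case. One must correctly match the continuous $(1+4\Z_2)$-actions, not merely the underlying $(KO^\wedge_2)_*$-modules, on the two sides: in particular, that the $(KO^\wedge_2)_*$-linear dual of $(KU^\wedge_2)^{h\omega}_*$ is $(KU^\wedge_2)^{h'\omega}_*$ \emph{equivariantly}, and that $(KO^\wedge_2)_*\Ecal_{K(1)}$ carries the $(1+4\Z_2)$-action making $\Sigma^2KO^\wedge_2\wedge_{KO^\wedge_2}(KO^\wedge_2\wedge\Ecal_{K(1)})\simeq(KU^\wedge_2)^{h'\omega}$ as $(1+4\Z_2)$-modules --- since \Cref{lem:Sp_k1_alg_2} only detects spectra through the equivariant module. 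An alternative is the geometric route: dualize the cofiber-sequence description in \Cref{prop:S_k1_2v_cofib} and use $D_{K(1)}\bigl((KU^\wedge_2)^{h\omega}\bigr)\simeq(KU^\wedge_2)^{h'\omega}$ together with the fact that Spanier--Whitehead duality commutes, up to a controlled suspension, with the pro-cyclic homotopy fixed points $(-)^{h(1+2^j\Z_2)}$; there the obstacle is checking those suspensions cancel so that only $\Ecal_{K(1)}$, and no extra shift, survives. Everything else --- $K(1)$-local dualizability, the self-duality of the $\zx{p^v}$-Galois data underlying $S^0_{K(1)}(p^v)$, and the trace-pairing identity $\hom_{\Zp}(\Zp[\chi],\Zp)\simeq\Zp[\chi^{-1}]$ --- is routine.
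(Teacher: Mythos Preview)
Your proposal is correct and takes essentially the same approach as the paper: compute the Morava module of the dual via \Cref{cor:Kp_Dk1} (resp.\ \Cref{cor:KO_Dk1}), use that the $\Zp$-linear dual of $\Zp[\chi]$ is $\Zp[\chi^{-1}]$, and invoke the detection lemmas \Cref{lem:Sp_k1_alg_p}/\Cref{lem:Sp_k1_alg_2}. In the $p=2$, $\chi(-1)=-1$ case the paper simply asserts that $(KU^\wedge_2)^{h\omega}_*$ is dual to $(KU^\wedge_2)^{h'\omega}_*$ and identifies the result with $(KO^\wedge_2)_*\bigl(S^0_{K(1)}(2^v)^{h\chi^{-1}}\wedge\Ecal_{K(1)}\bigr)$ without further comment, so your more explicit unpacking of the role of $\Ecal_{K(1)}$ (and your flagging of the equivariance check) is in fact more detailed than what the paper provides.
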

 	\begin{proof}
 		When $p>2$,  by \Cref{lem:Sp_k1_alg_p}, it suffices to check the $\Zpx$-action on the $\Kp$-homology of the Dirichlet $K(1)$-local spheres. In \Cref{cor:Kp_Dk1}, we computed:  
 		\begin{equation*}
 			\left(\Kp\right)_*\left(S^0_{K(1)}(p^v)^{h\chi}\right)\simeq \hom_{\Zp}\left(\Zp[\chi],\left(\Kp\right)_*\right),
 		\end{equation*}
 		where $\Zpx$ acts on $\Zp[\chi]$ through the character $\chi$, and on $\left(\Kp\right)_*$ by the Adams operations. The dual $\Zpx$-representation of $\Zp[\chi]$ is $\Zp[\chi^{-1}]$. Also notice $\left(\Kp\right)_*$ is self dual as graded $\Zpx$-representations. We have:
 		\begin{equation*}
 			\hom_{\left(\Kp\right)_*}\left(\left(\Kp\right)_*\left(S^0_{K(1)}(p^v)^{h\chi}\right),\left(\Kp\right)_*\right)\simeq \hom_{\Zp}\left(\Zp[\chi^{-1}],\left(\Kp\right)_*\right)\simeq\left(\Kp\right)_*\left(S^0_{K(1)}(p^v)^{h\chi^{-1}}\right).
 		\end{equation*}
 		This implies $D_{K(1)}\left(S^0_{K(1)}(p^v)^{h\chi}\right)\simeq S^0_{K(1)}(p^v)^{h\chi^{-1}}$ when $p>2$.
 		
 		When $p=2$, by \Cref{lem:Sp_k1_alg_2}, we need to check the $(1+4\Z_2)$-action on the $KO^\wedge_2$-homology of the Dirichlet $K(1)$-local spheres. By \Cref{cor:KO_Dk1}, we have
 		\begin{equation*}
 		\left(KO^\wedge_{2}\right)_*\left(S^0_{K(1)}(2^v)^{h\chi}\right)\simeq\left\{\begin{array}{cl}
 		\hom\left(\Z_2[\chi],\left(KO^\wedge_{2}\right)_*\right), & \chi(-1)=1;\\
 		\hom\left(\Z_2[\chi],\left(KU^\wedge_{2}\right)^{h\omega}_*\right), & \chi(-1)=-1.\\
 		\end{array}\right.
 		\end{equation*} 
 		Here $1+4\Z_2$ acts by the character $\chi|_{\Z/2^{v-2}}$ on $\Z_2[\chi]$ and by Adams operations on $\left(KO^\wedge_{2}\right)_*$ and $\left(KU^\wedge_{2}\right)^{h\omega}_*$. Notice that $\Z_2[\chi]$ is dual to $\Z_2[\chi^{-1}]$, $\left(KO^\wedge_{2}\right)_*$ is self-dual, and $\left(KU^\wedge_{2}\right)^{h\omega}_*$ is dual to $\left(KU^\wedge_{2}\right)^{h'\omega}_*$, where $\left(KU^\wedge_{2}\right)^{h'\omega}=\map\left(S^{1-\sigma},\KR^\wedge_{2}\right)^{h\{\pm 1\}}$. From this, we get
 		\begin{align*}
 			\hom_{\left(KO^\wedge_{2}\right)_*}\left(\left(KO^\wedge_{2}\right)_*\left(S^0_{K(1)}(2^v)^{h\chi}\right),\left(KO^\wedge_{2}\right)_*\right)
 			\simeq& \left\{\begin{array}{cl}
 			\hom\left(\Z_2[\chi^{-1}],\left(KO^\wedge_{2}\right)_*\right), & \chi(-1)=1;\\
 			\hom\left(\Z_2[\chi^{-1}],\left(KU^\wedge_{2}\right)^{h'\omega}_*\right), & \chi(-1)=-1.\\
 			\end{array}\right.\\
 			\simeq&\left\{\begin{array}{cl}
 			\left(KO^\wedge_{2}\right)_*\left(S^0_{K(1)}(2^v)^{h\chi^{-1}}\right),& \chi(-1)=1;\\
 			\left(KO^\wedge_{2}\right)_*\left(S^0_{K(1)}(2^v)^{h\chi^{-1}}\wedge_{K(1)}\Ecal_{K(1)}\right),& \chi(-1)=-1.\\
 			\end{array}\right.
 		\end{align*}
 		This implies the claim at $p=2$.
 	\end{proof}
 	\begin{thm}\label{thm:D_k1_duality_pv}
 		When the conductor of $\chi$ is a power of $p$, the Brown-Comenetz dual of the Dirichlet $K(1)$-local sphere attached to $\chi$ is:
 		\begin{equation*}
 		I_{K(1)}\left(S^0_{K(1)}(p^v)^{h\chi}\right)\simeq\left\{\begin{array}{cl}
 		\Sigma^2 S^0_{K(1)}(2^v)^{h\chi^{-1}}\wedge_{K(1)}\Ecal_{K(1)},& p=2 \textup{ and }\chi(-1)=1;\\
 		\Sigma^2 S^0_{K(1)}(p^v)^{h\chi^{-1}},&\textup{otherwise.}
 		\end{array}\right. 
 		\end{equation*}
 	\end{thm}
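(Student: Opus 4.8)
The plan is to feed the Spanier--Whitehead dual computed in \Cref{prop:D_k1_SW_dual} into the Gross--Hopkins identification of $I_{K(1)}$ recorded in \Cref{thm:K1_BC_dual}, using only that $\Sp_{K(1)}$ is a closed symmetric monoidal category in which $S^0_{K(1)}(p^v)^{h\chi}$ is dualizable (being obtained from the dualizable $S^0_{K(1)}(p^v)$ by a finite mapping spectrum and finite-group homotopy fixed points, the latter agreeing with homotopy orbits $K(1)$-locally). By definition $I_{K(1)}X\simeq I_{K(1)}\wedge D_{K(1)}X$, so once both smash factors are named the computation is essentially immediate.

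First I would dispose of the case $p>2$: here \Cref{thm:K1_BC_dual} gives $I_{K(1)}\simeq S^2_{K(1)}$ and \Cref{prop:D_k1_SW_dual} gives $D_{K(1)}\left(S^0_{K(1)}(p^v)^{h\chi}\right)\simeq S^0_{K(1)}(p^v)^{h\chi^{-1}}$, so smashing produces $\Sigma^2 S^0_{K(1)}(p^v)^{h\chi^{-1}}$, the ``otherwise'' branch. For $p=2$ we instead have $I_{K(1)}\simeq \Sigma^2\Ecal_{K(1)}$. If $\chi(-1)=1$ then \Cref{prop:D_k1_SW_dual} gives $D_{K(1)}\left(S^0_{K(1)}(2^v)^{h\chi}\right)\simeq S^0_{K(1)}(2^v)^{h\chi^{-1}}$, and smashing with $\Sigma^2\Ecal_{K(1)}$ yields exactly $\Sigma^2 S^0_{K(1)}(2^v)^{h\chi^{-1}}\wedge\Ecal_{K(1)}$. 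If $\chi(-1)=-1$ (so also $\chi^{-1}(-1)=-1$) then \Cref{prop:D_k1_SW_dual} gives $D_{K(1)}\left(S^0_{K(1)}(2^v)^{h\chi}\right)\simeq S^0_{K(1)}(2^v)^{h\chi^{-1}}\wedge\Ecal_{K(1)}$, whence $I_{K(1)}\left(S^0_{K(1)}(2^v)^{h\chi}\right)\simeq \Sigma^2\,\Ecal_{K(1)}\wedge\Ecal_{K(1)}\wedge S^0_{K(1)}(2^v)^{h\chi^{-1}}$; the two copies of $\Ecal_{K(1)}$ cancel and we again land in the ``otherwise'' branch.

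The only input beyond \Cref{prop:D_k1_SW_dual} and \Cref{thm:K1_BC_dual} is the identity $\Ecal_{K(1)}\wedge\Ecal_{K(1)}\simeq S^0_{K(1)}$, i.e. that the exotic $K(1)$-local sphere is $2$-torsion in $\pic_{K(1)}$ at $p=2$. I would record this either by citing the known structure of $\pic_{K(1)}$ at $p=2$, or directly: using the identification $\Ecal_{K(1)}\simeq\left(KSp^\wedge_2\right)^{h(1+4\Z_2)}$ from \Cref{rem:exotic_pic} together with \Cref{lem:Sp_k1_alg_2}, it suffices to see that $\left(KO^\wedge_2\right)_*\left(\Ecal_{K(1)}\wedge\Ecal_{K(1)}\right)$ is the free rank-one $\left(KO^\wedge_2\right)_*$-module with trivial residual $(1+4\Z_2)$-action, which one reads off from the $8$-fold periodicity of $\left(KO^\wedge_2\right)_*$.

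Since the two genuinely nontrivial theorems it rests on are already in place, I do not expect a real obstacle here: the argument is bookkeeping. The only points requiring care are the $\Ecal_{K(1)}$-accounting at $p=2$ just discussed, and keeping the parity conventions straight --- note that the roles of $\chi(-1)=1$ and $\chi(-1)=-1$ are interchanged between the special branch of \Cref{prop:D_k1_SW_dual} and the special branch of the present statement, precisely because the $\Ecal_{K(1)}$ coming from Gross--Hopkins duality cancels the one in $D_{K(1)}$ exactly in the odd case.
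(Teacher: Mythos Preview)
Your proposal is correct and follows exactly the same route as the paper: combine the identification $I_{K(1)}X\simeq I_{K(1)}\wedge D_{K(1)}X$ with \Cref{thm:K1_BC_dual} and \Cref{prop:D_k1_SW_dual}, and then cancel the two copies of $\Ecal_{K(1)}$ in the $p=2$, $\chi(-1)=-1$ case using $\Ecal_{K(1)}\wedge\Ecal_{K(1)}\simeq S^0_{K(1)}$. The paper's proof is in fact a single sentence recording this cancellation, so your write-up is if anything more detailed than what appears there.
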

 	\begin{proof}
 		We used the fact $\Ecal_{K(1)}\wedge\Ecal_{K(1)}\simeq S^0_{K(1)}$ in the case when $p=2$ and $\chi(-1)=-1$.
 	\end{proof}
 	From computations in \Cref{Sec:pi_Dirichlet_j}, we know $L_0\left(S^0_{K(1)}(p^v)^{h\chi}\right)\simeq *$ whenever the conductor $N$ of $\chi$ is a power of $p$.  This implies $M_1S^0_{K(1)}(p^v)^{h\chi}=S^0_{K(1)}(p^v)^{h\chi}$. Also, as the homotopy groups of the Dirichlet $K(1)$-local spheres are finite $p$-groups, they are (non-canonically) isomorphic to their $p$-adic Pontryagin duals. Now plugging \Cref{thm:D_k1_duality_pv} into \eqref{eqn:BC_dual_K1}, we get
 	\begin{align}
 		\pi_{t}\left(S^0_{K(1)}(p^v)^{h\chi}\right)\simeq&
 		\hom_{\Zp}(\pi_{t}\left(S^0_{K(1)}(p^v)^{h\chi}\right),\Qp/\Zp)\qquad\textup{(non-canonically)}\nonumber\\
 		\simeq&\hom_{\Zp}(\pi_{t}\left(M_1S^0_{K(1)}(p^v)^{h\chi}\right),\Qp/\Zp)\nonumber\\
 		\simeq&\left\{\begin{array}{cl}
 		\pi_{-2-t}\left( S^0_{K(1)}(2^v)^{h\chi^{-1}}\wedge_{K(1)}\Ecal_{K(1)}\right),& p=2 \textup{ and }\chi(-1)=1;\\
 		\pi_{-2-t}\left( S^0_{K(1)}(p^v)^{h\chi^{-1}}\right),&\textup{otherwise.}
 		\end{array}\right. \nonumber
 	\end{align}
 	
 	When the conductor $N$ of the $p$-adic Dirichlet character $\chi$ is not a $p$-power, the Brown-Comenetz dual of the Dirichlet $K(1)$-local is slightly different.
 	\begin{cor}\label{cor:D_k1_duality_not_pv}
 	Let $\chi$ be a $p$-adic Dirichlet character of conductor $N=p^v\cdot N'$ . Write $\chi=\chi_p\cdot\chi'$ as before. If $|\imag \chi'|>1$ is not a $p$-power, then
 	\begin{equation*}
 		I_{K(1)}\left(S^0_{K(1)}(p^v)^{h\chi}\right)\simeq I_{K(1)}(*)\simeq *.
 	\end{equation*}
 	If $|\imag \chi'|>1$ is a $p$-power, then
 	\begin{equation*}
 	I_{K(1)}\left(S^0_{K(1)}(p^v)^{h\chi}\right)\simeq\left\{\begin{array}{cl}
 	S^0_{K(1)}(2^v)^{h\chi^{-1}}\wedge_{K(1)}\Ecal_{K(1)},& p=2 \textup{ and }\chi|_{\zx{4}}\textup{ is trivial};\\
 	S^0_{K(1)}(p^v)^{h\chi^{-1}},&\textup{otherwise.}
 	\end{array}\right. 
 	\end{equation*}
 	\end{cor}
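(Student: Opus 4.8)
The plan is to handle the two cases of the statement separately, each time reducing to the $p$-power-conductor computations already in hand.

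In the first case, where $|\imag\chi'|>1$ is not a power of $p$, there is nothing to compute: \Cref{prop:dirichlet_k1_contractible} already gives $S^0_{K(1)}(p^v)^{h\chi}\simeq *$, and applying $I_{K(1)}(-)=\map(-,I_{K(1)})$ to the zero spectrum yields the zero spectrum. So I would dispose of this case in a single sentence.

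In the second case, where $|\imag\chi'|>1$ is a power of $p$, I would begin by quoting \Cref{thm:Dk1_ho}: in each of its five numerical subcases one has an equivalence of the shape $S^0_{K(1)}(p^v)^{h\chi}\simeq\Sigma\,Y^{\vee p}$, where $Y=S^0_{K(1)}(p^m)^{h\psi}$ is a Dirichlet $K(1)$-local sphere whose conductor is a power of $p$ and $\psi$ is the restriction of $\chi$ to a subgroup of the form $\zpx$ (when $p>2$) or $\zx{4}$ (when $p=2$). Because $I_{K(1)}(-)$ is contravariant, carries $\Sigma$ to $\Sigma^{-1}$, and turns the finite wedge $Y^{\vee p}$ into the finite product $\prod^{p}I_{K(1)}(Y)$, which in $\Sp_{K(1)}$ is again a finite wedge, I get $I_{K(1)}\left(S^0_{K(1)}(p^v)^{h\chi}\right)\simeq\Sigma^{-1}\left(I_{K(1)}Y\right)^{\vee p}$. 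Next I would feed $Y$ into \Cref{thm:D_k1_duality_pv} — whose proof, resting on the $\Kp$-homology computation of \Cref{cor:Kp_Dk1} and the $KO$-homology computation of \Cref{cor:KO_Dk1} together with \Cref{lem:Sp_k1_alg_p} and \Cref{lem:Sp_k1_alg_2}, applies equally to the (not necessarily primitive) restricted characters $\psi$ appearing here — to get $I_{K(1)}Y\simeq\Sigma^{2}S^0_{K(1)}(p^m)^{h\psi^{-1}}$ when $p>2$ or when $p=2$ and $\psi$ is odd, and $I_{K(1)}Y\simeq\Sigma^{2}S^0_{K(1)}(2^m)^{h\psi^{-1}}\wedge\Ecal_{K(1)}$ when $p=2$ and $\psi$ is even. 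Substituting, combining the suspensions ($\Sigma^{-1}\Sigma^{2}=\Sigma$), and distributing the smash with $\Ecal_{K(1)}$ over the wedge, I land on $\Sigma\left(S^0_{K(1)}(p^m)^{h\psi^{-1}}\right)^{\vee p}$ (or that smashed with $\Ecal_{K(1)}$). Finally, running \Cref{thm:Dk1_ho} once more, now for $\chi^{-1}$ — which has the same conductor, satisfies $|\imag(\chi^{-1})'|=|\imag\chi'|$, lies in the same numerical subcase, and has relevant restriction $\psi^{-1}$ — identifies this with $S^0_{K(1)}(p^v)^{h\chi^{-1}}$, which is exactly the claimed formula.

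The one point that needs genuine care, and which I expect to be the main obstacle, is the bookkeeping of parities across the five subcases of \Cref{thm:Dk1_ho}: I must check that the character $\psi$ on the $p$-power-conductor wedge summand is precisely $\chi|_{\zx{4}}$ (resp. $\chi|_{\zpx}$) and nothing else, so that — using $\zx{4}=\{\pm1\}$ — the hypothesis ``$\chi|_{\zx{4}}$ is trivial'' coincides with ``$\psi(-1)=1$'', which is the exact dichotomy in \Cref{thm:D_k1_duality_pv} controlling the appearance of $\Ecal_{K(1)}$; and that this restriction behaves correctly under $\chi\mapsto\chi^{-1}$. Once those identifications are made, everything else is formal manipulation of finite wedges (equal to finite products in $\Sp_{K(1)}$), suspensions, the smash factor $\Ecal_{K(1)}$, and the structural inputs \Cref{thm:D_k1_duality_pv} and the Gross--Hopkins theorem \Cref{thm:K1_BC_dual}.
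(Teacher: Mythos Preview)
Your proposal is correct and takes essentially the same approach as the paper, whose proof is the one-liner ``When $|\imag \chi'|>1$ is not a $p$-power, the claim follows from \Cref{prop:dirichlet_k1_contractible}. The other cases follow from \Cref{thm:Dk1_ho} and \Cref{thm:D_k1_duality_pv}.'' You have simply unpacked how those two results combine: apply \Cref{thm:Dk1_ho} to $\chi$, dualize the resulting suspension-of-a-wedge summand-by-summand via \Cref{thm:D_k1_duality_pv}, and re-assemble via \Cref{thm:Dk1_ho} for $\chi^{-1}$, with the parity bookkeeping you flag being exactly the point that governs the $\Ecal_{K(1)}$ factor.
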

 	\begin{proof}
 	 	When $|\imag \chi'|>1$ is not a $p$-power, the claim follows from \Cref{prop:dirichlet_k1_contractible}. The other cases follow from \Cref{thm:Dk1_ho} and \Cref{thm:D_k1_duality_pv}.
 	\end{proof}
	Now we identify the $KU$-local Spanier-Whitehead dual of the Dirichlet $J$-spectra $J(N)^{h\chi}$. Similar to \eqref{eqn:BC_dual_E1}, we have in $\Sp_{KU}$:
	\begin{equation}\label{eqn:BC_dual_K}
	\pi_t(I_{KU} X)\simeq \hom_{\Z}\left(\pi_{-t}(L_{KU} X), \Q/\Z\right).
	\end{equation}
	In this case, our strategy is to assemble duality formulas via the arithmetic fracture squares for $KU$-local spectra. By \Cref{prop:JN_hchi_Q_contractible}, $J(N)^{h\chi}_\Q$ is contractible unless $\chi$ is trivial. In \Cref{prop:jnchi_p_decomposition}, we described how $J(N)^{h\chi}$ decomposes upon $p$-completion. Now from \Cref{thm:D_k1_duality_pv} and \Cref{cor:D_k1_duality_not_pv}, we have:
	\begin{thm}\label{thm:BC_dual_D_J}
		Let $\chi\colon \znx\to \Cx$ be a primitive Dirichlet character of conductor $N$.
		\begin{enumerate}
			\item Suppose $N=p^v$ and $p$ is odd. If $|\imag\chi|>1$ is not a power of another prime (in particular whenever $v>1$), then
			\begin{equation*}
			I_{KU}\left(J(p^v)^{h\chi}\left[\frac{1}{\ell(\chi)}\right]\right)\simeq \Sigma^2J(p^v)^{h\chi^{-1}}\left[\frac{1}{\ell(\chi)}\right]\quad\implies \quad \pi_t\left(J(p^v)^{h\chi}\left[\frac{1}{\ell(\chi)}\right]\right)\simeq \pi_{-2-t}\left(J(p^v)^{h\chi^{-1}}\left[\frac{1}{\ell(\chi)}\right]\right),
			\end{equation*}
			where $\ell(\chi)$ is as in \Cref{thm:dirichlet_j_gbn}:
			\begin{equation*}
				\ell(\chi)=\left\{\begin{array}{cl}
				\ell,&\textup{if }|\imag(\chi)|\text{ is a power of a prime }\ell\neq p;\\
				1,&\textup{otherwise.}
				\end{array}\right.
			\end{equation*}
			\item If $N=2^v\ge 4$, then
			\begin{align*}
				&I_{KU}\left(J(2^v)^{h\chi}\right)\simeq\left\{\begin{array}{cl}
				\Sigma^2J(2^v)^{h\chi^{-1}}\wedge \Ecal_{KU},& \chi(-1)=1;\\
				\Sigma^2J(2^v)^{h\chi^{-1}},& \chi(-1)=-1.
				\end{array}\right.\\
				\implies&\pi_t\left(J(2^v)^{h\chi}\right)\simeq\left\{\begin{array}{cl}
				\pi_{-2-t}\left(J(2^v)^{h\chi^{-1}}\wedge \Ecal_{KU}\right),& \chi(-1)=1;\\
				\pi_{-2-t}\left(J(2^v)^{h\chi^{-1}}\right),& \chi(-1)=-1.
				\end{array}\right.
			\end{align*}
		\end{enumerate} 
	\end{thm}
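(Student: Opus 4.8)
The plan is to reduce the $KU$-local statement to the $K(1)$-local computations already in hand via the arithmetic fracture square, exactly as the spectrum $J(N)^{h\chi}$ itself is assembled. Recall that for a $KU$-local spectrum $X$ one has $I_{KU}X\simeq \Sigma^2 D_{KU}(X)\wedge L_{KU}(\mathcal{E}^{\wedge})$ by \Cref{cor:BC_dual_K}, so it suffices to identify $D_{KU}(J(N)^{h\chi})$, the $KU$-local Spanier--Whitehead dual, together with the interaction with $L_{KU}(\mathcal{E}^{\wedge})$. The key input is that $\mathcal{E}^{\wedge}_p$ is $KU/p$-equivalent to $S^0_p$ for $p$ odd (since $\mathcal{E}$ is $\ell$-locally a sphere for odd $\ell$), and $\mathcal{E}^{\wedge}_2$ localizes to the exotic sphere $\mathcal{E}_{K(1)}$ at $p=2$; so in the fracture square for $I_{KU}(J(N)^{h\chi})$ the ``error term'' $\mathcal{E}$ only appears at the prime $2$, and there only when $\chi(-1)=1$ after matching with \Cref{thm:D_k1_duality_pv}.

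First I would establish the general principle: Brown--Comenetz duality and Spanier--Whitehead duality commute with the arithmetic fracture square for $KU$-local spectra, because $L_{KU}X$ is the pullback of its rationalization and its $p$-completions $L_{KU/p}X$, and $D_{KU}$ (resp. $I_{KU}$) sends this pullback to the corresponding pullback of duals (all the relevant homotopy groups being finite away from degrees $0,-1,-2$, so the $\lim^1$ terms are controlled). Concretely, by \Cref{prop:jnchi_p_decomposition} one has $(J(N)^{h\chi})^{\wedge}_p\simeq \bigvee_{[\sigma]\in\cok\iota^*}S^0_{K(1)}(p^v)^{h(\iota\circ\sigma\circ\chi)}$, and by \Cref{cor:dirichlet_J_Q} the rationalization is trivial (as $\chi$ is nontrivial here). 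So $I_{KU}(J(N)^{h\chi})$ is, up to rational corrections that vanish, assembled from the $I_{K(1)}$ of the summands, which are computed in \Cref{thm:D_k1_duality_pv} and \Cref{cor:D_k1_duality_not_pv}. For $N=p^v$ with $p$ odd, each summand dualizes to $\Sigma^2 S^0_{K(1)}(p^v)^{h\chi_a^{-1}}$ with no exotic factor, and reassembling via $\iota^*$ for $\chi^{-1}$ gives $\Sigma^2(J(p^v)^{h\chi^{-1}})^{\wedge}_p$; inverting $\ell(\chi)$ kills the other primes (where the spectrum is either trivial by \Cref{cor:dirichlet_k1_contractible} or a shifted $S^0_{KU/\ell}$ that is self-$I_{KU}$-dual up to $\Sigma^2\mathcal{E}$, which is why one inverts $\ell$). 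For $N=2^v$, the single $2$-complete summand is $S^0_{K(1)}(2^v)^{h(\iota\circ\chi)}$, and \Cref{thm:D_k1_duality_pv} gives $I_{K(1)}$ of it as $\Sigma^2 S^0_{K(1)}(2^v)^{h\chi^{-1}}\wedge\mathcal{E}_{K(1)}$ when $\chi(-1)=1$ and $\Sigma^2 S^0_{K(1)}(2^v)^{h\chi^{-1}}$ when $\chi(-1)=-1$; since $\mathcal{E}_{K(1)}$ is the $K(1)$-localization of $\mathcal{E}^{\wedge}$, this matches $\Sigma^2 J(2^v)^{h\chi^{-1}}\wedge\mathcal{E}_{KU}$ and $\Sigma^2 J(2^v)^{h\chi^{-1}}$ respectively after passing back through the fracture square. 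The homotopy-group consequences then follow from $\pi_t(I_{KU}X)\simeq\hom_{\Z}(\pi_{-t}(L_{KU}X),\Q/\Z)$ in \eqref{eqn:BC_dual_K}, using that all the relevant groups are finite (so Pontryagin duality is non-canonically the identity) or handled by the $M_1X=X$ observation as in the $K(1)$-local case.

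The main obstacle, I expect, is the bookkeeping at the prime $2$ and at the ``other'' primes $\ell\mid|\imag\chi|$ in case (1): one must check that $L_{KU}(\mathcal{E}^{\wedge})$ contributes an exotic factor \emph{exactly} in the cases asserted, i.e. that the $\Sigma^2\mathcal{E}_{K(1)}$ appearing in \Cref{cor:BC_dual_K}'s formula $I_{KU}X\simeq\Sigma^2 D_{KU}(X)\wedge L_{KU}(\mathcal{E}^{\wedge})$ is absorbed or not absorbed by the $D_{K(1)}$-computation of \Cref{prop:D_k1_SW_dual}. The subtlety is that $D_{K(1)}(S^0_{K(1)}(2^v)^{h\chi})$ already carries an $\mathcal{E}_{K(1)}$ when $\chi(-1)=-1$ (from the $\Sigma^2 KO$-module structure, i.e. the purely imaginary $K$-theory), so smashing with the $\mathcal{E}_{K(1)}$ from $I_{K(1)}$ cancels it via $\mathcal{E}_{K(1)}\wedge\mathcal{E}_{K(1)}\simeq S^0_{K(1)}$, whereas when $\chi(-1)=1$ there is no such cancellation and the exotic factor survives. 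For odd $p$ at primes $\ell\neq p$ dividing $|\imag\chi|$ the summand $\Sigma(S^0_{KU/\ell})^{\vee\ell}$ of \Cref{thm:pi_dirichlet_j}(1) has $I_{K(1)}$ given by Gross--Hopkins as $\Sigma^2$ of itself when $\ell>2$ but as $\Sigma^2\mathcal{E}_{K(1)}$-twisted when $\ell=2$; inverting $\ell(\chi)$ removes precisely this ambiguous summand, which is the reason the clean duality statement only holds after inverting $\ell(\chi)$. Once these two prime-by-prime matchings are verified, the reassembly is formal.
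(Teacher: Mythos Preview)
Your overall strategy matches the paper's exactly: assemble $I_{KU}\bigl(J(N)^{h\chi}\bigr)$ via the arithmetic fracture square, use \Cref{prop:JN_hchi_Q_contractible} for rational contractibility, \Cref{prop:jnchi_p_decomposition} for the $p$-completion decomposition, and then plug in the $K(1)$-local duality results \Cref{thm:D_k1_duality_pv} and \Cref{cor:D_k1_duality_not_pv}. The cancellation $\Ecal_{K(1)}\wedge\Ecal_{K(1)}\simeq S^0_{K(1)}$ that you invoke for the $p=2$, $\chi(-1)=-1$ case is also exactly how the paper handles part (2).

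There is, however, a genuine error in your explanation of why $\ell(\chi)$ must be inverted in part (1). You claim that for odd $\ell$ the summand $\Sigma\bigl(S^0_{KU/\ell}\bigr)^{\vee\ell}$ has $I_{K(1)}$ equal to ``$\Sigma^2$ of itself''. This is false: since $I_{K(1)}(\Sigma Y)\simeq\Sigma^{-1}I_{K(1)}(Y)$, one gets
\[
I_{K(1)}\bigl(\Sigma S^0_{K(1)}\bigr)\simeq\Sigma^{-1}\Sigma^2 S^0_{K(1)}\simeq\Sigma S^0_{K(1)},
\]
i.e.\ the $\ell$-complete piece is self-$I_{K(1)}$-dual with \emph{no} $\Sigma^2$ shift (this is exactly the content of \Cref{cor:D_k1_duality_not_pv}). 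Meanwhile the $p$-complete piece dualizes to $\Sigma^2\bigl(J(p^v)^{h\chi^{-1}}\bigr)^{\wedge}_p$ by \Cref{thm:D_k1_duality_pv}. The suspension degrees $0$ and $2$ do not agree, so the two pieces cannot be reassembled into a single global $\Sigma^2 J(p^v)^{h\chi^{-1}}$; inverting $\ell(\chi)$ removes the offending $\ell$-complete piece. This is the reason stated in the remark following the theorem in the paper (``the degrees of suspensions are different''), and it applies whenever such an $\ell$ exists, not only when $\ell=2$. If your claim were correct, inverting $\ell(\chi)$ would be unnecessary for odd $\ell$, contradicting the theorem.
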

	\begin{rem}
		In (1) above, it is necessary to invert $\ell(\chi)$. This is because the degrees of suspensions are different in \Cref{thm:D_k1_duality_pv} and \Cref{cor:D_k1_duality_not_pv}.
	\end{rem}
	We now identify the Brown-Comenetz dual of $J(N)$. It is $KU$-local by \Cref{prop:JN_k_loc}.  This means $I_{KU}(J(N))\simeq \Sigma^2L_{KU}\left(\Ecal^\wedge\right)\wedge D_{KU}(J(N))$ by \Cref{cor:BC_dual_K}.
	\begin{prop}\label{prop:jn_dual}
		$J(N)$ is Spanier-Whitehead self-dual in $\Sp_{KU}$. It follows that 
		\begin{equation*}
			I_{KU}(J(N))\simeq \Sigma^2L_{KU}\left(\Ecal^\wedge\right)\wedge J(N)\simeq\Sigma^2\Ecal_{KU}\wedge J(N)\wedge M(\wh{\Z}).
		\end{equation*}
	\end{prop}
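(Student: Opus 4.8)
The substantive content is the assertion that $J(N)$ is Spanier--Whitehead self-dual in $\Sp_{KU}$, i.e.\ $D_{KU}(J(N))\simeq J(N)$; the Brown--Comenetz formula is then formal. Indeed, granting the self-duality, \Cref{cor:BC_dual_K} gives $I_{KU}(J(N))=\map(J(N),I_{KU})\simeq\Sigma^2 L_{KU}(\Ecal^\wedge)\wedge J(N)$, and since $\Ecal$ is a finite CW-spectrum one has $\Ecal^\wedge\simeq\Ecal\wedge M(\wh{\Z})$, so one may slide $L_{KU}$ past the dualizable factor $\Ecal$ and, using that $J(N)\wedge M(\wh{\Z})$ is already $KU$-local (each of its arithmetic-fracture constituents is), rearrange the right-hand side into $\Sigma^2\Ecal_{KU}\wedge J(N)\wedge M(\wh{\Z})$. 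So the plan is to concentrate on $D_{KU}(J(N))\simeq J(N)$, assembling it from $K(1)$-local and rational dualities exactly as in the treatment of $J(N)^{h\chi}$ that leads to \Cref{thm:BC_dual_D_J}.

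For the local input, fix a prime $p$ and put $v=v_p(N)$. The extension $S^0_{K(1)}\to S^0_{K(1)}(p^v)$ is a $\zx{p^v}$-Galois extension of $K(1)$-local $\einf$-ring spectra (\Cref{defn:s_k1_pv} and the remarks following it), hence finite étale, so its relative dualizing module is trivial and $D_{K(1)}\bigl(S^0_{K(1)}(p^v)\bigr)=\map_{\Sp_{K(1)}}\bigl(S^0_{K(1)}(p^v),S^0_{K(1)}\bigr)\simeq S^0_{K(1)}(p^v)$. One can also verify this through homology: the Galois structure identifies $\left(\Kp\right)_*\bigl(S^0_{K(1)}(p^v)\bigr)\simeq\mathrm{Fun}\bigl(\zx{p^v},\left(\Kp\right)_*\bigr)$ as a $\Zpx$-$\left(\Kp\right)_*$-module, and $\mathrm{Fun}(G,R)$ is self-dual over $R$ for any finite group $G$; then \Cref{lem:Sp_k1_alg_p} for $p>2$, and its $KO^\wedge_2$-analogue \Cref{lem:Sp_k1_alg_2} for $p=2$ (where the module in play is $\left(KU^\wedge_2\right)_*$, which is genuinely self-dual over $\left(KO^\wedge_2\right)_*$ by the Wood cofiber sequence, so no exotic correction appears), upgrade this to an equivalence of spectra. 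Rationally, $J(N)_\Q\simeq S^0_\Q$.

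To assemble, apply $D_{KU}(-)=\map(-,J)$ to the fiber sequence $J(N)\to S^0_\Q\oplus\prod_p S^0_{K(1)}(p^{v_p(N)})\to\bigl(\prod_p S^0_{K(1)}(p^{v_p(N)})\bigr)_\Q$ extracted from the defining arithmetic fracture square \eqref{eqn:jn}; this exhibits $D_{KU}(J(N))$ as the cofiber of the dualized diagram. Since mapping $J(N)$ into a target that is $K(1)$-local at $p$ only sees the $p$-completion of the source, the $p$-complete constituent of this cofiber is $D_{K(1)}\bigl(J(N)^\wedge_p\bigr)=D_{K(1)}\bigl(S^0_{K(1)}(p^{v_p(N)})\bigr)\simeq S^0_{K(1)}(p^{v_p(N)})\simeq J(N)^\wedge_p$ by the previous paragraph and \Cref{cor:jn_structure}, while the (large, purely adelic) contributions of the two rational terms and of the infinite product cancel in the cofiber to leave the rationalization $S^0_\Q$. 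As $D_{KU}(J(N))$ is again $KU$-local it is recovered from these local and rational pieces, and comparing with \eqref{eqn:jn} and \Cref{cor:jn_structure} gives $D_{KU}(J(N))\simeq J(N)$, from which the displayed Brown--Comenetz formula follows as in the first paragraph.

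The delicate step is the assembling: $\map(-,J)$ does not visibly commute with the arithmetic fracture square, and one must control the mapping spectrum out of the infinite product $\prod_p S^0_{K(1)}(p^{v_p(N)})$ appearing in \eqref{eqn:jn} — neither localization at $K(1)$ (at a given prime) nor smashing with $K(1)$ commutes with infinite products in general. This is precisely the bookkeeping that underlies the validity of the fracture square \eqref{eqn:jn} itself: the rational summand and the away-from-$p$ factors of the product are $K(1)$-acyclic at $p$ and contribute nothing $p$-locally, and the huge adelic contributions of the rational mapping spectra conspire to cancel. The Galois/finite-étale self-duality of $S^0_{K(1)}\to S^0_{K(1)}(p^v)$ from the second paragraph is the geometric input that makes the answer come out to be $J(N)$ on the nose (and, in particular, produces no extra exotic factor, even at $p=2$).
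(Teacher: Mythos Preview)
Your proposal is correct and follows the same strategy as the paper: verify Spanier--Whitehead self-duality rationally and $K(1)$-locally, then assemble. The paper's own proof is a single sentence asserting exactly this---that $J(N)_\Q\simeq S^0_\Q$ and $J(N)^\wedge_p\simeq S^0_{K(1)}(p^{v_p(N)})$ are self-dual in $\Sp_\Q$ and $\Sp_{K(1)}$ respectively---and leaves both the local self-duality of $S^0_{K(1)}(p^v)$ and the fracture-square assembly implicit; you supply justifications for both (the Galois/finite-\'etale argument for the former, and an honest acknowledgment of the bookkeeping for the latter), so your write-up is strictly more detailed than what appears in the paper.
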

	\begin{proof}
		This is because $J(N)_\Q\simeq S^0_\Q$ and $J(N)^\wedge_{p}\simeq S^0_{K(1)}\left(p^{v_p(N)}\right)$ are both Spanier-Whitehead self-dual in $\Sp_\Q$ and $\Sp_{K(1)}$, respectively.
	\end{proof}
	\begin{lem}
		$J(4N)\wedge\Ecal_{KU}\simeq J(4N)$. 
	\end{lem}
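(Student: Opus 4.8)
\subsection*{Proof proposal}
The plan is to establish the equivalence one arithmetic place at a time, via the fracture square \eqref{eqn:jn} (with $4N$ in place of $N$), and to see that the only genuine content is at the prime $2$. First note that since $J(4N)$ is $KU$-local and $\Ecal$ is a finite spectrum, $J(4N)\wedge\Ecal_{KU}\simeq J(4N)\wedge\Ecal$, and since $\Ecal$ is dualizable the functor $-\wedge\Ecal$ commutes with the homotopy pullback \eqref{eqn:jn}. From the cofiber sequence $\Sigma^{-3}M(\Z/2)\to\Ecal\to S^0$ obtained by collapsing $\Ecal$ onto its top cell, the map $\Ecal\to S^0$ is an equivalence after inverting $2$; hence smashing it with $J(4N)$ and with the other three corners of \eqref{eqn:jn} gives equivalences rationally and after $\ell$-completion for every odd prime $\ell$, compatibly with the rationalization and Hurewicz maps. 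It therefore remains to treat the $2$-complete corner: by \Cref{cor:jn_structure}, $J(4N)^\wedge_2\simeq S^0_{K(1)}(2^v)$ with $v=v_2(4N)\ge 2$, and $J(4N)^\wedge_2\wedge\Ecal\simeq S^0_{K(1)}(2^v)\wedge\Ecal_{K(1)}$.

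The heart of the argument is the claim that $S^0_{K(1)}(2^v)\wedge\Ecal_{K(1)}\simeq S^0_{K(1)}(2^v)$ for $v\ge 2$. Here I would use that $S^0_{K(1)}(2^v)=\left(KU^\wedge_2\right)^{h(1+2^v\Z_2)}$ and that $\Ecal_{K(1)}$, being invertible, is dualizable, so that smashing with it commutes with homotopy fixed points:
\begin{equation*} S^0_{K(1)}(2^v)\wedge\Ecal_{K(1)}\simeq\left(KU^\wedge_2\wedge\Ecal_{K(1)}\right)^{h(1+2^v\Z_2)}, \end{equation*}
where $1+2^v\Z_2\subseteq\Z_2^\x$ acts on $KU^\wedge_2$ by Adams operations and trivially on $\Ecal_{K(1)}$. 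Now $KU^\wedge_2\wedge\Ecal_{K(1)}$ is an invertible $(1+2^v\Z_2)$-equivariant $KU^\wedge_2$-module; since $\Ecal_{K(1)}$ is the exotic element its algebraic invariant is trivial, i.e.\ $\left(KU^\wedge_2\right)_*\Ecal_{K(1)}\simeq\left(KU^\wedge_2\right)_*$ as graded $\Z_2^\x$-modules (\Cref{rem:exotic_pic}), hence also as $(1+2^v\Z_2)$-modules. Because $v\ge 2$, the group $1+2^v\Z_2$ is procyclic, so it has cohomological dimension $1$ and the proof of \Cref{lem:Sp_k1_alg_p} applies verbatim with $1+2^v\Z_2$ in place of $\Zpx$: a $(1+2^v\Z_2)$-$\left(KU^\wedge_2\right)_*$-module isomorphism lifts to an equivalence of $(1+2^v\Z_2)$-equivariant $KU^\wedge_2$-modules. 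Thus $KU^\wedge_2\wedge\Ecal_{K(1)}\simeq KU^\wedge_2$ equivariantly, and taking $h(1+2^v\Z_2)$-fixed points yields the claim. (This does not force $\Ecal_{K(1)}\simeq S^0_{K(1)}$: that would need the equivalence over all of $\Z_2^\x$, which fails precisely because $\Z_2^\x$ is not procyclic.)

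Finally I would assemble: use $\Ecal\to S^0$ on the rational corner, the rational apex, and the odd-primary corners, and the equivalence just constructed on the $2$-complete corner, after rescaling the latter by a suitable unit of $\pi_0\left(S^0_{K(1)}(2^v)\right)=\Z_2$ so that its rationalization agrees with that of $\Ecal\to S^0$ on $\pi_0$. This produces a levelwise equivalence of the two cospans whose homotopy pullbacks are $J(4N)\wedge\Ecal$ and $J(4N)$; the only compatibility to verify is with the rationalization maps, which (since maps out of the rational corner $S^0_\Q\wedge\Ecal$ are detected on $\pi_0$) reduces to the $\pi_0$-statement arranged by the rescaling.

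The main obstacle is the $p=2$ claim $S^0_{K(1)}(2^v)\wedge\Ecal_{K(1)}\simeq S^0_{K(1)}(2^v)$: $\Ecal_{K(1)}$ is a genuinely nontrivial element of $\pic_{K(1)}$ which is invisible to $KU^\wedge_2$-homology as a $\Z_2^\x$-module but not as a $\Z_2^\x$-equivariant spectrum, so one cannot simply invoke \Cref{lem:Sp_k1_alg_p}; what saves the argument is that the relevant Galois group $1+2^v\Z_2$ for $S^0_{K(1)}(2^v)$ avoids the $\{\pm1\}$-direction responsible for the exotic phenomenon and is procyclic, collapsing the obstruction. This is also precisely why the clean Brown--Comenetz duality $I_{KU}J(4N)\simeq\Sigma^2 J(4N)\wedge M(\wh{\Z})$ holds for $J(4N)$ while an extra factor of $\Ecal_{KU}$ survives for a general $J(N)$, as in \Cref{prop:jn_dual}.
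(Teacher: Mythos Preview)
Your proposal is correct and follows the same arithmetic-place-by-place strategy as the paper's proof, but supplies considerably more justification. The paper's proof is three sentences: it observes $\Ecal\simeq S^0$ away from $2$, notes $J(4N)^\wedge_2\simeq S^0_{K(1)}(2^{v+2})$, and then simply invokes ``the fact that $S^0_{K(1)}(4)\wedge\Ecal_{K(1)}\simeq S^0_{K(1)}(4)$'' without proof. You instead prove this key fact (for all $v\ge2$) by exploiting the procyclicity of $1+2^v\Z_2$ to run the argument of \Cref{lem:Sp_k1_alg_p}, and you also address the fracture-square reassembly, which the paper leaves implicit. The paper's version has the minor advantage of reducing to the single case $v=2$ (implicitly using that $S^0_{K(1)}(2^{v+2})$ is an $S^0_{K(1)}(4)$-module, so triviality of $\Ecal_{K(1)}$ over $S^0_{K(1)}(4)$ propagates upward); your direct argument for general $v$ is just as clean and makes the conceptual reason---that the $\{\pm1\}$-direction responsible for the exotic element has been quotiented away---more visible.
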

	\begin{proof}
		Since $\Ecal\simeq S^0$ when $2$ is inverted, the equivalence holds rationally, and when completed at an odd prime. At prime $2$, $J(4N)^\wedge_{2}\simeq S^0_{K(1)}(2^{v+2})$ by \Cref{cor:jn_structure}, where $v=v_2(N)$. The claim now follows from the fact that $S^0_{K(1)}(4)\wedge_{K(1)} \Ecal_{K(1)}\simeq S^0_{K(1)}(4)$. 
	\end{proof}
	\begin{cor}\label{cor:J_4N_BC_dual}
		There is an equivalence of spectra: $I_{KU}(J(4N))\simeq \Sigma^2J(4N)\wedge M(\wh{\Z})$. It follows from \mbox{\eqref{eqn:BC_dual_K}} and the universal coefficient theorem that we have an isomorphism of homotopy groups:
		\begin{equation*}
		\pi_t(J(4N))^\wedge\simeq \pi_t(J(4N)\wedge M(\wh{\Z}))
		\simeq \pi_{t+2}(I_{KU}(J(4N)))
		\simeq\hom_{\Z}(\pi_{-2-t}(J(4N)),\Q/\Z).
		\end{equation*}
		This was observed in \Cref{rem:jn_duality}.
	\end{cor}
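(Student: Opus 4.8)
The plan is to deduce this entirely from \Cref{prop:jn_dual} and the immediately preceding lemma. First I would instantiate \Cref{prop:jn_dual} with the integer $4N$ in place of $N$, which gives
\begin{equation*}
I_{KU}(J(4N))\simeq \Sigma^2\Ecal_{KU}\wedge J(4N)\wedge M(\wh{\Z}).
\end{equation*}
Since $4\mid 4N$, the lemma $J(4N)\wedge\Ecal_{KU}\simeq J(4N)$ applies verbatim; smashing it with $\Sigma^2 M(\wh{\Z})$ absorbs the exotic factor and yields $I_{KU}(J(4N))\simeq \Sigma^2 J(4N)\wedge M(\wh{\Z})$. That is the first assertion, and no further input is needed for it.

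For the homotopy-group statement I would argue as follows. The group $\wh{\Z}$ is torsion-free, hence flat over $\Z$, so $M(\wh{\Z})$ may be taken with homotopy concentrated in degree $0$ and $\pi_t(J(4N)\wedge M(\wh{\Z}))\simeq \pi_t(J(4N))\otimes_{\Z}\wh{\Z}$ with no $\mathrm{Tor}$ correction. By the explicit description \eqref{eqn:pi_J4N} the groups $\pi_t(J(4N))$ are each finite, $\Z$, or $\Q/\Z$, and on every such group $-\otimes_{\Z}\wh{\Z}$ coincides with profinite completion: it is the identity on finite groups, sends $\Z$ to $\wh{\Z}$, and fixes $\Q/\Z$ because $\Q/\Z\otimes_{\Z}\wh{\Z}\simeq\Q/\Z$. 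This identifies $\pi_t(J(4N))^\wedge$ with $\pi_t(J(4N)\wedge M(\wh{\Z}))$. Feeding the equivalence $I_{KU}(J(4N))\simeq\Sigma^2 J(4N)\wedge M(\wh{\Z})$ just obtained into the $KU$-local Brown--Comenetz formula \eqref{eqn:BC_dual_K}, and using that $J(4N)$ is already $KU$-local by \Cref{prop:JN_k_loc}, I get $\pi_t(J(4N)\wedge M(\wh{\Z}))\simeq \pi_{t+2}(I_{KU}(J(4N)))\simeq\hom_{\Z}(\pi_{-2-t}(J(4N)),\Q/\Z)$, which is the stated formula and recovers the numerical observation of \Cref{rem:jn_duality}.

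The substantive inputs here — Spanier--Whitehead self-duality of $J(4N)$ in $\Sp_{KU}$ and the vanishing of the $\Ecal_{KU}$-factor after smashing with $J(4N)$ — are already in hand, so the only real care is bookkeeping: tracking the single $\Sigma^2$ through the duality, confirming that $-\otimes_{\Z}\wh{\Z}$ is the correct reading of ``profinite completion'' for the divisible summand $\Q/\Z=\pi_{-2}(J(4N))$, and carrying out the degree substitution $s=t+2$ in \eqref{eqn:BC_dual_K}. I do not expect any obstacle beyond this; the one thing a careful reader might flag, a possible $\lim^1$ or $\mathrm{Tor}$ term in $\pi_*(J(4N)\wedge M(\wh{\Z}))$, is killed by flatness of $\wh{\Z}$ over $\Z$.
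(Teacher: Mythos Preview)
Your derivation of the spectrum-level equivalence $I_{KU}(J(4N))\simeq\Sigma^2 J(4N)\wedge M(\wh{\Z})$ from \Cref{prop:jn_dual} and the preceding lemma is exactly the paper's intended route, and the chain $\pi_t(J(4N)\wedge M(\wh{\Z}))\simeq\pi_{t+2}(I_{KU}(J(4N)))\simeq\hom_\Z(\pi_{-2-t}(J(4N)),\Q/\Z)$ is correctly justified via flatness of $\wh{\Z}$ and \eqref{eqn:BC_dual_K}.

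There is, however, a genuine slip in your identification of $-\otimes_\Z\wh{\Z}$ with profinite completion on the group $\Q/\Z$. You correctly compute $\Q/\Z\otimes_\Z\wh{\Z}\simeq\Q/\Z$, but the profinite completion $(\Q/\Z)^\wedge=\lim_n(\Q/\Z)/n(\Q/\Z)$ vanishes, since $\Q/\Z$ is divisible. Thus at $t=-2$ the leftmost isomorphism in the displayed chain would read $0\simeq\Q/\Z$. The remaining three terms all genuinely equal $\Q/\Z$ there (for instance $\hom_\Z(\pi_0(J(4N)),\Q/\Z)=\hom_\Z(\Z,\Q/\Z)=\Q/\Z$), so the chain is sound from the second term onward; only the identification with $(\pi_{-2})^\wedge$ breaks. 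This is not a flaw in your strategy but an issue already present in the paper's own statement; the honest fix is either to restrict the first isomorphism to $t\neq-2$, or to read $(-)^\wedge$ throughout as $-\otimes_\Z\wh{\Z}$ rather than group-theoretic profinite completion.
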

	
	\appendix
	\section{Cyclotomic representations of cyclic groups}\label{appen:cyclo_rep}
	In the appendix, we study the integral and $p$-adic cyclotomic representations of the cyclic group $C_n$.
	\subsection{Integral cyclotomic representations}
	Let $\Phi_n(t)$ be the $n$-th cyclotomic polynomial, i.e. the minimal polynomial of a primitive $n$-th root of unity $\zeta_n$ over $\Q$. The integral cyclotomic representation of $C_n$ has underlying abelian group $\Z[\zeta_n]\simeq \Z[t]/\Phi(t)$ and $g\in C_n$ acts by multiplication by a primitive $n$-th root of unity (or $t\in  \Z[t]/\Phi(t)$). The rank of $\Z[\zeta_n]$ as a free abelian group is equal to $\deg \Phi_n(t)=\phi(n)$.
	\begin{exmps} We consider the following examples:
		\begin{enumerate}
			\item When $n=5$, $\Z[\zeta_5]$ is a free $\Z$-module of rank $4$ as $\phi(5)=4$.  $\{1,\zeta_5,\zeta_5^2,\zeta_5^3\}$ form a basis of $\Z[\zeta_5]$. The minimal polynomial of $\zeta_5$ is $\Phi_5(t)=t^4+t^3+t^2+t+1$. Let $g\in C_5$ be a generator that acts on $\Z[\zeta_5]$ by multiplication by $\zeta_n$. Then the matrix representation of $g\in C_5$ with respect the basis $\{1,\zeta_5,\zeta_5^2,\zeta_5^3\}$ of $\Z[\zeta_5]$ is 
			\begin{equation*}
			g=\begin{pmatrix}
			&&&-1\\
			1&&&-1\\
			&1&&-1\\
			&&1&-1
			\end{pmatrix}.
			\end{equation*}
			\item When $n=6$, $\Z[\zeta_6]$ is a free $\Z$-module of rank $2$ as $\phi(6)=2$.  $\{1,\zeta_6\}$ form a basis of $\Z[\zeta_6]$. The minimal polynomial of $\zeta_6$ is $\Phi_6(t)=t^2-t+1$. Let $g\in C_6$ be a generator that acts on $\Z[\zeta_6]$ by multiplication by $\zeta_n$. Then the matrix representation of $g\in C_6$ with respect the basis $\{1,\zeta_6\}$ of $\Z[\zeta_6]$ is 
			\begin{equation*}
			g=\begin{pmatrix}
			0&1\\1&-1
			\end{pmatrix}.
			\end{equation*}
		\end{enumerate}
	\end{exmps}
	\begin{lem}\label{lem:cyclo_decomp}
		The cyclotomic representation of $C_n$ is equivalent to the external tensor product of the cyclotomic representations of $C_{p^{v_{p}(n)}}$, i.e. there is an equivalence of $C_n$-representations:
		\begin{equation*}
		\Z[\zeta_n]\simeq \bigotimes_{p\mid n} \Z\left[\zeta_{p^{v_{p}(n)}}\right]
		\end{equation*}
	\end{lem}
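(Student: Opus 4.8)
The plan is to prove the underlying ring isomorphism first and then check it intertwines the group actions. Write $n=p_1^{v_1}\cdots p_m^{v_m}$ with the $p_i$ distinct primes, and set $n_i:=p_i^{v_i}$. After fixing embeddings $\Q(\zeta_{n_i})\hookrightarrow\Q(\zeta_n)$ (equivalently, primitive roots $\zeta_{n_i}$ viewed inside $\Q(\zeta_n)$), I would consider the multiplication map
\begin{equation*}
\Psi\colon\ \bigotimes_{i=1}^m\Z[\zeta_{n_i}]\longrightarrow \Z[\zeta_n],\qquad x_1\otimes\cdots\otimes x_m\longmapsto x_1\cdots x_m,
\end{equation*}
which is a ring homomorphism. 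Both source and target are free $\Z$-modules of rank $\prod_i\phi(n_i)=\phi(n)$ (a finite tensor product of free modules is free, and ranks multiply), and once the $\zeta_{n_i}$ are chosen so that $\prod_i\zeta_{n_i}=\zeta_n$ the image of $\Psi$ contains $\zeta_n$, hence all of $\Z[\zeta_n]$, so $\Psi$ is surjective; a surjection between free $\Z$-modules of equal finite rank is an isomorphism (rationalize to reduce to linear algebra). Alternatively one may quote the classical fact that $\mathcal{O}_{K_1\cdots K_m}=\mathcal{O}_{K_1}\otimes_\Z\cdots\otimes_\Z\mathcal{O}_{K_m}$ for number fields with pairwise coprime discriminants, applied to $K_i=\Q(\zeta_{n_i})$ (whose discriminant is a power of $p_i$ up to sign, and whose compositum is $\Q(\zeta_n)$), together with $\mathcal{O}_{\Q(\zeta_m)}=\Z[\zeta_m]$.

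For the equivariance, fix a generator $g$ of $C_n$ and let the cyclotomic representation of $C_n$ be $\Z[\zeta_n]$ with $g$ acting by multiplication by a chosen primitive root $\zeta_n$. Under the Chinese Remainder isomorphism $C_n\simeq\prod_i C_{n_i}$ the element $g$ corresponds to $(g_1,\dots,g_m)$, where $g_i$ is the image of $g$ under the reduction $C_n\twoheadrightarrow C_{n_i}$ and is again a generator. The one bookkeeping point is a compatible choice of the $\zeta_{n_i}$: since $\gcd_i(n/n_i)=1$, write $1=\sum_i a_i(n/n_i)$; then each $a_i$ is a unit modulo $p_i$ (reduce the relation mod $p_i$), so $\zeta_{n_i}:=(\zeta_n^{\,n/n_i})^{a_i}$ is a primitive $n_i$-th root of unity, and $\prod_i\zeta_{n_i}=\zeta_n^{\sum_i a_i(n/n_i)}=\zeta_n$. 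Taking the cyclotomic representation of $C_{n_i}$ to be $\Z[\zeta_{n_i}]$ with $g_i$ acting by multiplication by $\zeta_{n_i}$, the external tensor product has $(g_1,\dots,g_m)$ acting on $x_1\otimes\cdots\otimes x_m$ by $(\zeta_{n_1}x_1)\otimes\cdots\otimes(\zeta_{n_m}x_m)$, whence
\begin{equation*}
\Psi\bigl((g_1,\dots,g_m)\cdot(x_1\otimes\cdots\otimes x_m)\bigr)=\Bigl(\prod_i\zeta_{n_i}\Bigr)\prod_i x_i=\zeta_n\cdot\Psi(x_1\otimes\cdots\otimes x_m),
\end{equation*}
so $\Psi$ is $C_n$-equivariant. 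Since any two primitive $m$-th roots of unity differ by an element of $\gal(\Q(\zeta_m)/\Q)$, the cyclotomic representations are defined up to isomorphism independently of all these choices, so the displayed equivalence of $C_n$-representations holds in general.

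I expect the only mildly delicate step to be this compatible choice of primitive roots realizing $\prod_i\zeta_{n_i}=\zeta_n$; the rest is the standard tensor decomposition of cyclotomic integers plus unwinding the definition of the external tensor product along $C_n\simeq\prod_i C_{n_i}$. (One could instead first establish $\Q(\zeta_n)\simeq\bigotimes_i\Q(\zeta_{n_i})$ as $C_n$-representations by linear disjointness and then descend to $\Z$-lattices, but routing everything through the multiplication map $\Psi$ makes the equivariance most transparent.)
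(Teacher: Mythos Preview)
Your argument is correct. The paper actually states this lemma without proof, treating it as a standard fact about cyclotomic rings, so there is no approach to compare against. Your proof supplies exactly the details one would expect: the multiplication map $\Psi$ is a surjection between free $\Z$-modules of the same rank $\phi(n)=\prod_i\phi(n_i)$, hence an isomorphism, and the Bezout-style choice $\zeta_{n_i}=(\zeta_n^{n/n_i})^{a_i}$ with $1=\sum_i a_i(n/n_i)$ makes the equivariance transparent. The verification that each $a_i$ is a unit mod $p_i$ (so that $\zeta_{n_i}$ is genuinely primitive) and the final remark that the result is independent of choices up to Galois automorphisms are both sound.
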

	\begin{lem}\label{lem:cyclo_cyclic_res}
		There is a short exact sequence of $C_{p^v}$-representations:
		\begin{equation}\label{eqn:Cp_cyclo}
		\begin{tikzcd}
		0\rar&\Z[\zeta_{p^v}]\rar&\Z[C_{p^v}]\rar&\Z[C_{p^{v-1}}]\rar&0
		\end{tikzcd}
		\end{equation}
		where $C_{p^v}$ acts on $\Z[C_{p^{v-1}}]$ via the quotient map $C_{p^v}\twoheadrightarrow C_{p^{v-1}}$. 
	\end{lem}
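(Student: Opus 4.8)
\textbf{Proof proposal for \Cref{lem:cyclo_cyclic_res}.}

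The plan is to exhibit the short exact sequence \eqref{eqn:Cp_cyclo} explicitly and check exactness degreewise on underlying abelian groups, with all maps manifestly $C_{p^v}$-equivariant. First I would recall the ring-theoretic identity $\Z[C_{p^v}]\simeq \Z[t]/(t^{p^v}-1)$, under which a chosen generator $g\in C_{p^v}$ corresponds to $t$ and the regular representation becomes multiplication by $t$. Factoring $t^{p^v}-1=\prod_{j=0}^{v}\Phi_{p^j}(t)$ and grouping, one has $t^{p^v}-1=\Phi_{p^v}(t)\cdot(t^{p^{v-1}}-1)$. This immediately suggests the surjection $\Z[C_{p^v}]\to \Z[C_{p^{v-1}}]$ given by the reduction $\Z[t]/(t^{p^v}-1)\twoheadrightarrow \Z[t]/(t^{p^{v-1}}-1)$, which is exactly the map induced by the quotient group homomorphism $C_{p^v}\twoheadrightarrow C_{p^{v-1}}$ (it sends $t\mapsto t$), hence $C_{p^v}$-equivariant by construction.

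Next I would identify the kernel of this reduction map. Since $\Z[t]$ is a PID-like situation only after localizing, I instead argue as follows: the kernel of $\Z[t]/(t^{p^v}-1)\to \Z[t]/(t^{p^{v-1}}-1)$ is the ideal generated by the image of $t^{p^{v-1}}-1$, i.e.\ the principal ideal $(\overline{t^{p^{v-1}}-1})$ inside $\Z[t]/(t^{p^v}-1)$. Multiplication by $t^{p^{v-1}}-1$ gives a map $\Z[t]/(t^{p^v}-1)\to \Z[t]/(t^{p^v}-1)$ whose image is this ideal and whose kernel is the annihilator of $t^{p^{v-1}}-1$, namely $(\Phi_{p^v}(t))$ because $t^{p^v}-1=\Phi_{p^v}(t)(t^{p^{v-1}}-1)$ and $\Z[t]/(\Phi_{p^v}(t))=\Z[\zeta_{p^v}]$ is a domain. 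Therefore the kernel of the reduction is isomorphic, as a $\Z[C_{p^v}]$-module, to $\Z[t]/(\Phi_{p^v}(t))=\Z[\zeta_{p^v}]$ with $g$ acting by multiplication by $t=\zeta_{p^v}$ — precisely the cyclotomic representation. Concretely, the injection $\Z[\zeta_{p^v}]\hookrightarrow \Z[C_{p^v}]$ is ``multiply by $t^{p^{v-1}}-1$'', which is $C_{p^v}$-equivariant since it is multiplication by a fixed ring element.

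Finally I would confirm exactness. Injectivity of the first map follows from the domain argument above (or a rank count: $\Z[\zeta_{p^v}]$ is free of rank $\phi(p^v)=p^{v-1}(p-1)$ and $\Z[C_{p^v}]$ is free of rank $p^v$, with $p^v-p^{v-1}=p^{v-1}(p-1)$, so the ranks match the quotient $\Z[C_{p^{v-1}}]$ of rank $p^{v-1}$). Exactness in the middle is the identification of kernel with image just carried out. Surjectivity of the second map is clear since it is a reduction of polynomial rings. I would close by noting this sequence is the ``cellular'' incarnation of \Cref{con:integral_moore}: applying $\Sigma^\infty(-)_+$ to $C_{p^v}\to C_{p^{v-1}}$ and taking cofibers realizes \eqref{eqn:Cp_cyclo} on $H_0$. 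The only mildly delicate point — the main obstacle, though a minor one — is making the kernel computation airtight over $\Z$ rather than over a field: one must verify that the annihilator of $t^{p^{v-1}}-1$ in $\Z[t]/(t^{p^v}-1)$ is exactly $(\Phi_{p^v}(t))$ and not something larger, which follows because $\Z[t]/(t^{p^v}-1)$ injects into $\prod_{j\le v}\Z[\zeta_{p^j}]$ after inverting nothing (the total quotient ring argument) so an element killing $t^{p^{v-1}}-1$ must lie in the $\Phi_{p^v}$-component.
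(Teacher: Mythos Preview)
Your proposal is correct and follows the same approach as the paper: identify $\Z[C_n]\simeq\Z[t]/(t^n-1)$ and use the factorization $t^{p^v}-1=\Phi_{p^v}(t)\,(t^{p^{v-1}}-1)$. The paper records this in a single sentence, whereas you spell out the kernel/annihilator computation in detail; note that your ``delicate point'' is actually immediate from $\Z[t]$ being a UFD, so the total quotient ring detour is unnecessary.
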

	\begin{proof}
		From the discussion above, we have $\Z[\zeta_{n}]=\Z[t]/\Phi_n(t)$. When $n=p^v$ is a power of prime, $\Phi_{p^v}(t)=\frac{t^{p^v}-1}{t^{p^{v-1}}-1}$. This means we have a short exact sequence of $C_{p^v}$-representations:
		\begin{equation*}
			0\longrightarrow \Z[\zeta_{p^v}]\longrightarrow \Z[t]/(t^{p^v}-1)\longrightarrow \Z[t]/(t^{p^{v-1}}-1)\longrightarrow 0
		\end{equation*}		
		The claim now follows once we identify $\Z[t]/(t^n-1)$ with $\Z[C_n]$ as a $C_n$-representation for any $n$.
	\end{proof}	
	\subsection{$p$-adic cyclotomic representations}
	From now on, let $\chi\colon \znx\to \Cpx$ be a $p$-adic Dirichlet character of conductor $N$ and $\Zp[\chi]$ be the $\Zp$-subalgebra of $\Cp$ generated by the image of $\chi$. Again, $\Zp[\chi]=\Zp[\zeta_n]$ for some $n$. Write $n=p^v\cdot n'$ with $p\nmid n'$, we have $\Zp[\zeta_n]\simeq \Zp[\zeta_{p^v}]\otimes_{\Zp}\Zp[\zeta_{n'}]$. Now it suffices to analyze $C_n$-actions on $\Zp[\zeta_n]$ in the cases when $n=p^v$ or $p\nmid n$. Let's first recall some basic facts of cyclotomic extensions of $\Q$:
	\begin{lem}[{\cite[Theorem 2.5, 2.6]{Washington_cyclotomic}}] \label{lem:gal_Q_zeta}
		We recall the following basic facts of the cyclotomic extension $\Q(\zeta_{n})/\Q$.
		\begin{enumerate}
			\item $\Q(\zeta_{n})/\Q$ is a Galois extension of degree $\phi(n)$ and $\gal(\Q(\zeta_{n})/\Q)\simeq \zx{n}$, with $a\in \zx{n}$ acting by $\zeta_{n}\mapsto \zeta_{n}^a$.
			\item The ring of integers of $\Q(\zeta_n)$ is $\Z[\zeta_n]$. Consequently, for any $\sigma\in \gal(\Q(\zeta_n)/\Q)$, $\sigma(\Z[\zeta_n])=\Z[\zeta_n]$.
		\end{enumerate}		
	\end{lem}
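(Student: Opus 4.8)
The plan is to recall the two classical facts about cyclotomic fields that make up the statement; both are standard (this is the content of \cite[Theorems 2.5, 2.6]{Washington_cyclotomic}), and I would organize a self-contained argument around two cornerstones: irreducibility of the cyclotomic polynomial, and a discriminant computation.

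For part (1), I would first observe that $\Q(\zeta_n)$ is the splitting field of the separable polynomial $x^n-1$ over $\Q$, so $\Q(\zeta_n)/\Q$ is Galois, and that the rule $\sigma\mapsto a$, where $\sigma(\zeta_n)=\zeta_n^{a}$, defines an injective group homomorphism $\gal(\Q(\zeta_n)/\Q)\hookrightarrow\zx{n}$ (it is well defined since $\sigma$ must send a primitive $n$-th root of unity to another one). The real work is surjectivity, equivalently $[\Q(\zeta_n):\Q]=\phi(n)$, equivalently that the cyclotomic polynomial $\Phi_n$ is irreducible over $\Q$. For this I would run the standard Dedekind argument: let $f$ be the minimal polynomial of $\zeta_n$, which by Gauss's lemma is monic in $\Z[x]$ with $f\mid\Phi_n$; the key claim is that for every prime $p\nmid n$, $\zeta_n^{p}$ is again a root of $f$. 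Otherwise $\zeta_n^p$ is a root of $g:=\Phi_n/f\in\Z[x]$, so $f(x)\mid g(x^p)$, and reducing modulo $p$ via the Frobenius identity $g(x^p)\equiv g(x)^p$ forces $\bar f$ and $\bar g$ to share an irreducible factor in $\Fp[x]$; then $x^n-1$ has a repeated root mod $p$, contradicting $p\nmid n$. Iterating over the prime factors of an arbitrary $a\in\zx{n}$ shows every primitive $n$-th root of unity is a root of $f$, so $\deg f=\phi(n)$ and the map above is an isomorphism.

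For part (2), I would treat $n=p^{v}$ first: from $\Phi_{p^v}(1)=p$ one reads off $(p)=(1-\zeta_{p^v})^{\phi(p^v)}$ in $\Z[\zeta_{p^v}]$, so $p$ is totally ramified with uniformizer $1-\zeta_{p^v}$ and residue field $\Fp$; together with the fact that $\operatorname{disc}(\Z[\zeta_{p^v}])$ is, up to sign, a power of $p$, and the local description at $p$, this forces the index $[\mathcal{O}_{\Q(\zeta_{p^v})}:\Z[\zeta_{p^v}]]$ to be $1$, i.e.\ $\mathcal{O}_{\Q(\zeta_{p^v})}=\Z[\zeta_{p^v}]$. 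For general $n=\prod_i p_i^{v_i}$, the subfields $\Q(\zeta_{p_i^{v_i}})$ have pairwise coprime discriminants and compositum $\Q(\zeta_n)$ of degree $\prod_i\phi(p_i^{v_i})=\phi(n)$, so the ring of integers of $\Q(\zeta_n)$ is the tensor product $\bigotimes_i\Z[\zeta_{p_i^{v_i}}]=\Z[\zeta_n]$, consistent with \Cref{lem:cyclo_decomp}. The final clause is then immediate: any automorphism of $\Q(\zeta_n)$ fixing $\Q$ preserves integral elements, hence sends $\Z[\zeta_n]=\mathcal{O}_{\Q(\zeta_n)}$ to itself. The main obstacle is really just the irreducibility of $\Phi_n$ (the Frobenius/mod-$p$ argument) together with the discriminant and coprime-discriminant gluing in (2); everything else is formal, so in the paper I would cite \cite[Theorems 2.5, 2.6]{Washington_cyclotomic} rather than reproduce these.
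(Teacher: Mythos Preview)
Your proposal is correct and matches the paper's treatment: the paper gives no proof at all for this lemma, simply citing \cite[Theorems 2.5, 2.6]{Washington_cyclotomic}, which is exactly what you conclude you would do. The outline you sketch (Dedekind's irreducibility argument for $\Phi_n$ via Frobenius mod $p$, total ramification at $p$ in $\Z[\zeta_{p^v}]$, and the coprime-discriminant compositum for general $n$) is the standard proof in Washington, so there is nothing to compare.
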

	As a result of this lemma, we can extract the action of $\znx$ on $\Z[\zeta_n]$ from that on $\Q(\zeta_n)$.
	\begin{prop}
		For any $\sigma\in \gal(\Q(\chi)/\Q)$, the $\znx$-representation induced by the Dirichlet character $\sigma\circ \chi$ is isomorphic to that induced by $\chi$. 
	\end{prop}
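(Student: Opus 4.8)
The plan is to unwind the definitions and reduce the claim to the elementary fact that $\sigma$ is an automorphism of $\Z[\zeta_n]$. Recall that the $\znx$-representation induced by a $\Cp$-valued (or $\Cbb$-valued) Dirichlet character $\chi$ has underlying module $\Z[\chi]=\Z[\zeta_n]$ (or $\Zp[\zeta_n]$), with $a\in\znx$ acting by multiplication by the scalar $\chi(a)$. For $\sigma\in\gal(\Q(\chi)/\Q)$, the character $\sigma\circ\chi$ takes values in $\sigma(\imag\chi)=\imag\chi$ (since $\gal(\Q(\chi)/\Q)$ permutes the image, as $\Q(\chi)=\Q(\zeta_n)$ is Galois), so $\Z[\sigma\circ\chi]=\Z[\chi]$ as a module, and $a$ acts by multiplication by $\sigma(\chi(a))$.

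The key step is to exhibit an explicit isomorphism of the two representations. I would take the map $\sigma\colon \Z[\chi]\to\Z[\chi]$ itself (restricting the field automorphism $\sigma$ of $\Q(\zeta_n)$, which preserves the ring of integers $\Z[\zeta_n]$ by \Cref{lem:gal_Q_zeta}(2)); it is an additive isomorphism. The intertwining property is just the multiplicativity of $\sigma$: for $x\in\Z[\chi]$ and $a\in\znx$, writing $\rho$ for the $\chi$-action and $\rho'$ for the $(\sigma\circ\chi)$-action, one has
\begin{equation*}
\sigma(\rho(a)(x))=\sigma(\chi(a)\cdot x)=\sigma(\chi(a))\cdot\sigma(x)=\rho'(a)(\sigma(x)),
\end{equation*}
so $\sigma$ is $\znx$-equivariant. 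The same argument works verbatim in the $p$-adic case, using that $\sigma$ extends to an automorphism of $\Zp[\zeta_n]$ (for $p\mid n$ this is \Cref{cor:Qp_pv}; for $p\nmid n$ one uses that any embedding $\Q(\zeta_n)\hookrightarrow\Cp$ identifies $\sigma$ with an element of $\gal(\Qp(\zeta_n)/\Qp)\subseteq\gal(\Q(\zeta_n)/\Q)$, cf.\ \Cref{prop:gal_Qp_zeta}).

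There is essentially no obstacle here; the only point requiring a word of care is the claim $\imag(\sigma\circ\chi)=\imag(\chi)$, i.e.\ that $\Z[\sigma\circ\chi]$ and $\Z[\chi]$ are literally the same subring of $\Cbb$ (or $\Cp$), which is what makes $\sigma$ a self-map. This holds because $\imag(\chi)$ is a finite subgroup of $\Cx$, hence a group of roots of unity $\mu_m\subseteq\Q(\zeta_n)$ generating $\Q(\chi)$, and $\sigma\in\gal(\Q(\chi)/\Q)$ permutes $\mu_m$. Thus the statement follows, and in particular (combined with the reductions of \Cref{lem:chi_fac} and \Cref{lem:cyclo_decomp}) it justifies the Galois-invariance assertions used later, e.g.\ in \Cref{prop:Dirichlet_J_isom_rep}.
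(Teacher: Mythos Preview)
Your argument is correct and is essentially the same as the paper's: both use the ring automorphism $\sigma\colon\Z[\zeta_n]\to\Z[\zeta_n]$ as the intertwining isomorphism, the paper phrasing this as ``the matrix representations \ldots\ are differed by a change of basis induced by $\sigma$'' while you write out the intertwining identity $\sigma(\chi(a)\cdot x)=\sigma(\chi(a))\cdot\sigma(x)$ explicitly.

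One small caution on your $p$-adic aside (which is not part of the proposition as stated): when $p\nmid n$, it is \emph{not} true that an arbitrary $\sigma\in\gal(\Q(\zeta_n)/\Q)$ is identified with an element of $\gal(\Qp(\zeta_n)/\Qp)$ via an embedding $\iota$; by \Cref{prop:gal_p_adic_cyclo} the map $\iota^*$ is only an injection, and $\cok\iota^*$ is exactly what indexes the nontrivial splitting in \Cref{prop:Phi_n_fac}. So the $p$-adic analogue requires $\sigma\in\gal(\Qp(\chi)/\Qp)$ from the outset, not merely $\sigma\in\gal(\Q(\chi)/\Q)$.
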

	\begin{proof}
		Let $\Z[\chi]=\Z[\zeta_n]$, where $\zeta_n$ is a primitive $n$-th root of unity. For any $\sigma\in \gal(\Q(\chi)/\Q)$, $\sigma(\zeta_n)$ is also a primitive $n$-th root of unity. As a result, the minimal polynomials of $\zeta_n$ and $\sigma(\zeta_n)$ are both $\Phi_n(t)$. It follows that the matrix representations of $\chi$ and $\sigma\circ \chi$ are differed by a change of basis induced by $\sigma$. Thus, the integral representations induced by $\chi$ and $\sigma\circ \chi$ are isomorphic. 
	\end{proof}
	\begin{prop} \label{prop:gal_Qp_zeta}
		Write $n=p^v\cdot n'$, where $p\nmid n'$ and let $m$ be the multiplicative order of $p$ mod $n'$, i.e. 
		\begin{equation*}
		m=\min\{k>0 \mid p^k\equiv 1\mod n'\}.
		\end{equation*}
		Then $\Qp(\zeta_n)/\Qp$ is a Galois extension of local fields of residue index $m$ and ramification index $\phi(p^v)$. Moreover, 
		\begin{equation*}
		\gal(\Qp(\zeta_n)/\Qp)\simeq \gal(\Qp(\zeta_{n'})/\Qp)\x\gal(\Qp(\zeta_{p^v})/\Qp)\simeq (\Z/m)\x \zx{p^v},
		\end{equation*}
		where a generator $\varphi\in \Z/m$ acts on $\Qp(\zeta_{n'})$ by the lift of the Frobenius ($p$-th power map) from $\Zp[\zeta_{n'}]/(p)\simeq \mathbb{F}_{p^m}$ to $\Qp(\zeta_{n'})\simeq \W(\mathbb{F}_{p^m})$. In particular, $\varphi(\zeta_{n'})=\zeta^p_{n'}$.
	\end{prop}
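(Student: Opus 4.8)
The plan is to prove \Cref{prop:gal_Qp_zeta} by reducing the cyclotomic extension $\Qp(\zeta_n)/\Qp$ to the compositum of its prime-to-$p$ part $\Qp(\zeta_{n'})/\Qp$ and its $p$-power part $\Qp(\zeta_{p^v})/\Qp$, and analyzing each separately. First I would record that $\Qp(\zeta_n) = \Qp(\zeta_{n'})\cdot \Qp(\zeta_{p^v})$ since $\zeta_n$ can be written as a product $\zeta_{n'}\zeta_{p^v}$ after choosing compatible roots of unity (using $(n',p^v)=1$ and the Chinese Remainder Theorem on the exponents). Then the key structural input is that these two subextensions are linearly disjoint over $\Qp$: one is unramified and the other is totally ramified, so their degrees $[\Qp(\zeta_{n'}):\Qp]$ and $[\Qp(\zeta_{p^v}):\Qp]$ are coprime after one establishes the ramification claims, and hence $\gal(\Qp(\zeta_n)/\Qp)\simeq \gal(\Qp(\zeta_{n'})/\Qp)\x\gal(\Qp(\zeta_{p^v})/\Qp)$.

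Next I would handle the two factors. For the prime-to-$p$ part: $\Qp(\zeta_{n'})$ is the unramified extension of $\Qp$ obtained by adjoining the $n'$-th roots of unity, equivalently the splitting field of $x^{n'}-1$ over $\Fp$ lifted to characteristic zero; its residue field is $\Fp(\zeta_{n'})\simeq \mathbb{F}_{p^m}$ where $m$ is the multiplicative order of $p$ mod $n'$ (since $\Fp(\zeta_{n'})/\Fp$ has degree equal to the order of the Frobenius, which acts on $\mu_{n'}$ by $x\mapsto x^p$). So $[\Qp(\zeta_{n'}):\Qp]=m$, the extension is unramified, and $\gal(\Qp(\zeta_{n'})/\Qp)\simeq\gal(\mathbb{F}_{p^m}/\Fp)\simeq\Z/m$, generated by the Frobenius lift $\varphi$ with $\varphi(\zeta_{n'})=\zeta_{n'}^p$, which is the assertion about $\W(\mathbb{F}_{p^m})$. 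For the $p$-power part: the minimal polynomial of $\zeta_{p^v}$ over $\Qp$ is the $p^v$-th cyclotomic polynomial $\Phi_{p^v}(x)=\sum_{i=0}^{p-1} x^{ip^{v-1}}$, which is Eisenstein at $p$ (its constant term is $p$ and all non-leading coefficients are divisible by $p$, as one checks from $\Phi_{p^v}(x+1)$ or from the standard argument), so $\Qp(\zeta_{p^v})/\Qp$ is totally ramified of degree $\phi(p^v)$ with $1-\zeta_{p^v}$ a uniformizer. Combining, the residue index of $\Qp(\zeta_n)/\Qp$ is $m$ and the ramification index is $\phi(p^v)$, and the Galois group is the claimed product.

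The main obstacle is establishing the linear disjointness / coprimality cleanly: one must argue that adjoining $\zeta_{p^v}$ to $\Qp(\zeta_{n'})$ does not collapse any of the ramification, i.e. $[\Qp(\zeta_{n'})(\zeta_{p^v}):\Qp(\zeta_{n'})]=\phi(p^v)$ still holds after base change to the unramified extension. This follows because $\Phi_{p^v}$ remains Eisenstein over the ring of integers $\W(\mathbb{F}_{p^m})$ of $\Qp(\zeta_{n'})$ (Eisenstein-ness is about the prime $p$, which stays prime in an unramified extension), so $\Qp(\zeta_n)/\Qp(\zeta_{n'})$ is totally ramified of degree $\phi(p^v)$; dually, $\Qp(\zeta_n)/\Qp(\zeta_{p^v})$ is unramified of degree $m$ since it is generated by prime-to-$p$ roots of unity. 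The rest is bookkeeping with the multiplicativity of residue and ramification indices in towers, and identifying the Galois action on each factor via restriction. I would also remark that \Cref{lem:gal_Q_zeta} gives the global statement $\gal(\Q(\zeta_n)/\Q)\simeq\zx n$ which is consistent with the local decomposition under the inclusion $\gal(\Qp(\zeta_n)/\Qp)\hookrightarrow\gal(\Q(\zeta_n)/\Q)$ identifying the decomposition group at $p$.
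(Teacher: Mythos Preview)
The paper does not supply a proof of this proposition; it is stated as a standard structural fact about local cyclotomic extensions and then used freely. Your argument is the expected one and is essentially correct: decompose $\Qp(\zeta_n)$ as the compositum of its unramified part $\Qp(\zeta_{n'})$ and its totally ramified part $\Qp(\zeta_{p^v})$, identify each Galois group, and use linear disjointness to get the product decomposition.

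One slip to fix: you write that the degrees $m=[\Qp(\zeta_{n'}):\Qp]$ and $\phi(p^v)=[\Qp(\zeta_{p^v}):\Qp]$ are coprime, and infer linear disjointness from that. This is not true in general (for instance $p=3$, $n'=7$ gives $m=6$ and $\phi(3)=2$). Fortunately your later argument does not actually use coprimality: you correctly observe that $\Phi_{p^v}$ remains Eisenstein over $\W(\mathbb{F}_{p^m})$, so $[\Qp(\zeta_n):\Qp(\zeta_{n'})]=\phi(p^v)$ and the degrees multiply. Equivalently, $\Qp(\zeta_{n'})\cap\Qp(\zeta_{p^v})$ is simultaneously unramified and totally ramified over $\Qp$, hence equals $\Qp$. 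Drop the coprimality remark and keep the ramification-based argument, and the proof is clean.
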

	\subsection{$p$-completions of integral cyclotomic representations}
	We conclude this appendix with a discussion on how $\Z[\chi]$ decomposes upon $p$-completion.	The simplest case is
	\begin{cor}\label{cor:Qp_pv}
		$\Zp[\zeta_{p^v}]\simeq \Z[\zeta_{p^v}]\otimes_\Z \Zp\simeq \left(\Z[\zeta_{p^v}]\right)^\wedge_{p}.$
	\end{cor}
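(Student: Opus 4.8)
The plan is to reduce the statement to the single fact that $p$ is totally ramified in $\Z[\zeta_{p^v}]$, equivalently that the cyclotomic polynomial $\Phi_{p^v}(t)=(t^{p^v}-1)/(t^{p^{v-1}}-1)$ stays irreducible over $\Qp$, and otherwise to argue formally. First I would dispose of the second isomorphism $\Z[\zeta_{p^v}]\otimes_\Z\Zp\simeq(\Z[\zeta_{p^v}])^\wedge_p$, which is purely formal: $\Z[\zeta_{p^v}]$ is a finitely generated free $\Z$-module (with basis $1,\zeta_{p^v},\dots,\zeta_{p^v}^{\phi(p^v)-1}$), and for any finitely generated abelian group $M$ one has $M^\wedge_p\simeq M\otimes_\Z\Zp$; applied to $M=\Z[\zeta_{p^v}]$ this gives the isomorphism, and it is a ring isomorphism because it is induced by the canonical inclusion $\Z[\zeta_{p^v}]\hookrightarrow\Z[\zeta_{p^v}]\otimes_\Z\Zp$.

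For the first isomorphism I would use the presentation $\Z[\zeta_{p^v}]\simeq\Z[t]/(\Phi_{p^v}(t))$, so that $\Z[\zeta_{p^v}]\otimes_\Z\Zp\simeq\Zp[t]/(\Phi_{p^v}(t))$, and then compare this with $\Zp[\zeta_{p^v}]$, the $\Zp$-subalgebra of $\Cp$ generated by a primitive $p^v$-th root of unity. There is an evident $\Zp$-algebra map $\Zp[t]/(\Phi_{p^v}(t))\to\Zp[\zeta_{p^v}]$ sending $t\mapsto\zeta_{p^v}$; it is surjective by definition of $\Zp[\zeta_{p^v}]$. Injectivity is equivalent to $\Phi_{p^v}$ being the minimal polynomial of $\zeta_{p^v}$ over $\Qp$, i.e.\ to the irreducibility of $\Phi_{p^v}$ in $\Qp[t]$.

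The irreducibility is the only non-formal input, and it is already available: Proposition~\ref{prop:gal_Qp_zeta} applied with $n=p^v$ (so $n'=1$ and $m=1$) says that $\Qp(\zeta_{p^v})/\Qp$ is totally ramified of degree $\phi(p^v)=\deg\Phi_{p^v}$; since the minimal polynomial of $\zeta_{p^v}$ over $\Qp$ then has degree $\phi(p^v)$, it must equal $\Phi_{p^v}$, which is therefore irreducible. (If one prefers an elementary argument, substituting $t=1+s$ shows $\Phi_{p^v}(1+s)$ is $p$-Eisenstein, which proves irreducibility and simultaneously exhibits $1-\zeta_{p^v}$ as a uniformizer.) Hence $\Zp[t]/(\Phi_{p^v}(t))\simeq\Zp[\zeta_{p^v}]$, which is in fact the ring of integers of the totally ramified local field $\Qp(\zeta_{p^v})$ and in particular a complete discrete valuation ring; chaining the two isomorphisms yields $\Zp[\zeta_{p^v}]\simeq\Z[\zeta_{p^v}]\otimes_\Z\Zp\simeq(\Z[\zeta_{p^v}])^\wedge_p$. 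I do not expect a genuine obstacle here, since the substantive content (total ramification of $p$) has been packaged into Proposition~\ref{prop:gal_Qp_zeta}.
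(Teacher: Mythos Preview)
Your proposal is correct and follows essentially the same approach as the paper: both reduce to the total ramification of $\Qp(\zeta_{p^v})/\Qp$ supplied by Proposition~\ref{prop:gal_Qp_zeta}, which forces the ranks (equivalently, the degree of the minimal polynomial over $\Qp$) to match. You are simply more explicit---writing down the surjection $\Zp[t]/(\Phi_{p^v})\twoheadrightarrow\Zp[\zeta_{p^v}]$ and identifying injectivity with irreducibility---and you additionally offer the Eisenstein argument as a self-contained alternative, which the paper omits.
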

	\begin{proof}
		By \Cref{prop:gal_Qp_zeta}, $\Qp(\zeta_{p^v})/\Qp$ is a totally ramified extension of local fields of rank $\phi(p^v)$. This means $\Zp[\zeta_{p^{v}}]$ is a free $\Zp$-module of rank $\phi(p^v)$, which is equal to the rank of $\Z[\zeta_{p^v}]$ as a free $\Z$-module. This implies $\Z[\zeta_{p^v}]$ does not split upon $p$-completion.
	\end{proof}
	Comparing \Cref{lem:gal_Q_zeta} and \Cref{prop:gal_Qp_zeta}, we have shown:
	\begin{prop}\label{prop:gal_p_adic_cyclo}
		Fix an embedding $\iota\colon \Q[\zeta_n]\hookrightarrow \Cp$. For any $\sigma\in \gal(\Qp(\zeta_n)/\Qp)$, $\sigma\circ \iota(\Q(\zeta_{n}))=\iota(\Q(\zeta_n))$. In addition, the restriction map on the Galois group induced by $\iota$
		\begin{equation}\label{eqn:iota_star}
		\iota^*\colon  \gal(\Qp(\zeta_n)/\Qp)\longrightarrow \gal(\Q(\zeta_n)/\Q)
		\end{equation}
		is injective. More precisely, rewrite $\Q(\zeta_n)=\Q(\zeta_{p^v})\otimes_\Q \Q(\zeta_{n'})$ and $\iota=\iota_p\otimes \iota_{n'}$, where
		\begin{equation*}
		\iota_p \colon \Q(\zeta_{p^v})\hookrightarrow \Cp,\quad \iota_{n'}\colon \Q(\zeta_{n'})\hookrightarrow \Cp.
		\end{equation*}
		Then we have
		\begin{itemize}
			\item $\iota_p^*\colon  \gal(\Qp(\zeta_{p^v})/\Qp)\simto \gal(\Q(\zeta_{p^v})/\Q)$ is an isomorphism.
			\item $\iota_{n'}^*\colon  \gal(\Qp(\zeta_{n'})/\Qp)\hookrightarrow \gal(\Q(\zeta_{n'})/\Q)$ is the inclusion of the subgroup of $\zx{n'}$ generated by $p\in \zx{n'}$.
		\end{itemize}
	\end{prop}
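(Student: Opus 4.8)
The plan is to reduce the whole statement to the two structural facts already in hand: the identification $\gal(\Q(\zeta_n)/\Q)\simeq\zx n$ of \Cref{lem:gal_Q_zeta} and the decomposition of $\gal(\Qp(\zeta_n)/\Qp)$ in \Cref{prop:gal_Qp_zeta}. First I would fix $\zeta:=\iota(\zeta_n)\in\Cp$ and observe that the subfield $\Qp(\zeta)\subseteq\Cp$ is exactly the local cyclotomic field $\Qp(\zeta_n)$ of \Cref{prop:gal_Qp_zeta}, and that it contains $\iota(\Q(\zeta_n))=\Q(\zeta)$. Any $\sigma\in\gal(\Qp(\zeta)/\Qp)$ fixes $\Q$ and sends $\zeta$ to some primitive $n$-th root of unity $\zeta^a$ with $a\in\zx n$; since every $n$-th root of unity is a power of $\zeta$, we have $\sigma(\zeta)\in\Q(\zeta)$, hence $\sigma(\Q(\zeta))=\Q(\zeta)$. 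This gives the restriction homomorphism $\iota^*$ of \eqref{eqn:iota_star}. Injectivity is immediate: $\Qp(\zeta)$ is generated over $\Qp$ by $\zeta$, so $\sigma$ is determined by $\sigma(\zeta)$, which already lies in $\Q(\zeta)$; thus $\iota^*(\sigma)=\mathrm{id}$ forces $\sigma=\mathrm{id}$.

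Next I would split off the $p$-primary part. Writing $n=p^v\cdot n'$ with $p\nmid n'$, the Chinese Remainder Theorem and \Cref{lem:gal_Q_zeta} give $\gal(\Q(\zeta_n)/\Q)\simeq\gal(\Q(\zeta_{p^v})/\Q)\times\gal(\Q(\zeta_{n'})/\Q)$, while \Cref{prop:gal_Qp_zeta} gives the matching decomposition $\gal(\Qp(\zeta_n)/\Qp)\simeq\gal(\Qp(\zeta_{p^v})/\Qp)\times\gal(\Qp(\zeta_{n'})/\Qp)$. Since restriction to $\Qp(\zeta_{p^v})$ and to $\Qp(\zeta_{n'})$ commutes with $\iota=\iota_p\otimes\iota_{n'}$, the map $\iota^*$ respects these product decompositions, i.e. $\iota^*=\iota_p^*\times\iota_{n'}^*$, so it suffices to analyze the two factors separately.

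For the $p$-primary factor: by \Cref{prop:gal_Qp_zeta} the extension $\Qp(\zeta_{p^v})/\Qp$ is totally ramified of degree $\phi(p^v)=[\Q(\zeta_{p^v}):\Q]$, so $\iota_p^*$ is an injective homomorphism between finite groups of the same order, hence an isomorphism. For the prime-to-$p$ factor: again by \Cref{prop:gal_Qp_zeta}, $\gal(\Qp(\zeta_{n'})/\Qp)$ is cyclic of order $m=\mathrm{ord}_{n'}(p)$, topologically generated by the Frobenius lift $\varphi$ with $\varphi(\zeta_{n'})=\zeta_{n'}^{p}$; under the identification $\gal(\Q(\zeta_{n'})/\Q)\simeq\zx{n'}$ (where $a$ acts by $\zeta_{n'}\mapsto\zeta_{n'}^{a}$) this says $\iota_{n'}^*(\varphi)=p\in\zx{n'}$, so the image of $\iota_{n'}^*$ is the subgroup $\langle p\rangle\subseteq\zx{n'}$, the decomposition subgroup at $p$. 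This finishes the proof. The argument is entirely formal; the only point that requires genuine care — and the one I would write out most carefully — is the compatibility of the two Chinese-remainder product decompositions with $\iota^*$, since the labelling of $\gal(\Qp(\zeta_{n'})/\Qp)$ inside $\gal(\Q(\zeta_{n'})/\Q)$ depends on the chosen embedding $\iota_{n'}$ and one must check that it is realised by the Frobenius as stated.
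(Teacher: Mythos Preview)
Your proposal is correct and is essentially the same approach the paper takes: the paper states this proposition immediately after the line ``Comparing \Cref{lem:gal_Q_zeta} and \Cref{prop:gal_Qp_zeta}, we have shown:'' and gives no further proof, so you have simply filled in the details of that comparison. Your explicit degree count for $\iota_p^*$ and the identification of $\iota_{n'}^*(\varphi)$ with $p\in\zx{n'}$ via $\varphi(\zeta_{n'})=\zeta_{n'}^p$ are exactly the intended unpacking.
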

	\begin{prop}\label{prop:Phi_n_fac}
		Pick a representative $\sigma\in \gal(\Q(\zeta_n)/\Q)$ for each coset in 
		\begin{equation*}
		\cok \iota^*=\gal(\Q(\zeta_n)/\Q)/\gal(\Qp(\zeta_n)/\Qp).
		\end{equation*}
		 $\Z[\zeta_n]\otimes \Zp$ decomposes as a $\Zp$-algebra by
		\begin{equation*}
		\Z[\zeta_n]\otimes_\Z\Zp\underset{\sim}{\xrightarrow{\prod (\iota\circ \sigma)\otimes 1}} \prod_{[\sigma]\in \cok \iota^*}\Zp[\zeta_n]\simeq \bigoplus_{[\sigma]\in \cok \iota^*}\Zp[\zeta_n].
		\end{equation*}
	\end{prop}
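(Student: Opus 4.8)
\textbf{Proof proposal for \Cref{prop:Phi_n_fac}.}

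The plan is to decompose the $p$-completion of $\Z[\zeta_n]$ by factoring the $n$-th cyclotomic polynomial $\Phi_n(t)$ over $\Zp$, and then to match the factors with the cosets of $\cok\iota^*$. First I would write $\Z[\zeta_n]\otimes_\Z\Zp\simeq \Zp[t]/(\Phi_n(t))$, so that the problem becomes understanding the factorization of $\Phi_n(t)$ in $\Zp[t]$. Since $\Zp[t]/(\Phi_n(t))$ is a finite \'etale-times-local algebra, by Hensel's lemma it splits as a product $\prod_i \Zp[t]/(f_i(t))$ over the distinct irreducible factors $f_i$ of $\Phi_n(t)$ in $\Zp[t]$, each of which is the minimal polynomial over $\Qp$ of some root $\iota\circ\sigma(\zeta_n)\in\overline{\Qp}$. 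The number of such factors equals the number of $\gal(\overline{\Qp}/\Qp)$-orbits on the primitive $n$-th roots of unity sitting inside $\overline{\Qp}$, which is exactly $[\Q(\zeta_n):\Q]/[\Qp(\zeta_n):\Qp] = |\cok\iota^*|$ by \Cref{prop:gal_Qp_zeta} (residue index $m$ times ramification index $\phi(p^v)$ gives $[\Qp(\zeta_n):\Qp]$). Concretely: fixing the embedding $\iota$, the roots of $\Phi_n$ in $\overline{\Qp}$ are $\{\iota\circ\sigma(\zeta_n)\}_{\sigma\in\gal(\Q(\zeta_n)/\Q)}$, and two of them $\iota\sigma_1(\zeta_n)$, $\iota\sigma_2(\zeta_n)$ are conjugate over $\Qp$ iff $\sigma_1\sigma_2^{-1}$ lies in the image of $\iota^*$, i.e. iff $[\sigma_1]=[\sigma_2]$ in $\cok\iota^*$ (this uses \Cref{prop:gal_p_adic_cyclo}, that $\gal(\Qp(\zeta_n)/\Qp)$ is identified via $\iota^*$ with a subgroup of $\gal(\Q(\zeta_n)/\Q)$).

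Next I would identify each factor $\Zp[t]/(f_i(t))$ with a copy of $\Zp[\zeta_n]:=\Zp\otimes_\Z\Z[\zeta_n]$ in the statement's sense — here I should be careful about notation, since the paper writes $\Zp[\zeta_n]$ for $\Zp[\zeta_{p^v}]\otimes_{\Zp}\Zp[\zeta_{n'}]$, which by \Cref{prop:gal_Qp_zeta} is the ring of integers of the local field $\Qp(\zeta_n)$, hence $\Zp[t]/(f_i(t))$ for a single $f_i$. For the coset represented by $\sigma$, the corresponding projection $\Z[\zeta_n]\otimes\Zp\to\Zp[\zeta_n]$ is $(\iota\circ\sigma)\otimes 1$, sending $\zeta_n\otimes 1\mapsto \iota(\sigma(\zeta_n))$; this is a well-defined $\Zp$-algebra map because $\iota(\sigma(\zeta_n))$ is a root of $\Phi_n$ in $\Zp[\zeta_n]$ (using that $\sigma(\zeta_n)$ is again a primitive $n$-th root of unity and $\Z[\zeta_n]$ is the full ring of integers by \Cref{lem:gal_Q_zeta}). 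Assembling these over a set of coset representatives gives the map $\prod(\iota\circ\sigma)\otimes 1$ in the statement, and the Chinese Remainder Theorem / Hensel splitting argument above shows it is an isomorphism onto $\prod_{[\sigma]}\Zp[\zeta_n]$.

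Finally, a rank count confirms there are no hidden factors: the source is free of rank $\phi(n)=\phi(p^v)\phi(n')$ over $\Zp$, while the target is a product of $|\cok\iota^*|$ copies of $\Zp[\zeta_n]$, each free of rank $[\Qp(\zeta_n):\Qp]$, and $|\cok\iota^*|\cdot[\Qp(\zeta_n):\Qp]=[\Q(\zeta_n):\Q]=\phi(n)$; combined with surjectivity (the images generate, since the $\iota\sigma(\zeta_n)$ hit all roots) and equality of ranks, the map is an isomorphism. I expect the main obstacle to be purely bookkeeping rather than conceptual: namely keeping the three Galois groups $\gal(\Q(\zeta_n)/\Q)$, $\gal(\Qp(\zeta_n)/\Qp)$, and their quotient straight, and verifying that the factorization of $\Phi_n$ over $\Zp$ is governed precisely by the $\cok\iota^*$-action — which is exactly the content of \Cref{prop:gal_Qp_zeta} and \Cref{prop:gal_p_adic_cyclo}, so once those are invoked the argument is essentially the standard ``prime splitting in Dedekind domains'' computation applied to the decomposition group of $p$ in $\Q(\zeta_n)$.
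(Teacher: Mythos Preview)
Your proposal is correct and follows essentially the same approach as the paper: write $\Z[\zeta_n]\otimes_\Z\Zp\simeq\Zp[t]/(\Phi_n(t))$, factor $\Phi_n(t)$ over $\Zp$ into pieces $\Phi_{n,\sigma}(t)=\prod_{\tau\in\gal(\Qp(\zeta_n)/\Qp)}(t-\tau\iota\sigma(\zeta_n))$ indexed by cosets $[\sigma]\in\cok\iota^*$, note each quotient is $\Zp[\zeta_n]$, and apply the Chinese Remainder Theorem. Your invocation of Hensel's lemma and the final rank count are extra sanity checks the paper omits, but the argument is the same.
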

	\begin{proof}
		The minimal polynomial of $\zeta_n$ over $\Z$ is 
		\begin{equation*}
		\Phi_n(t)=\prod_{\sigma\in  \gal(\Q(\zeta_n)/\Q)}(t-\sigma(\zeta_n)).
		\end{equation*}
		We have an isomorphism $\Z[\zeta_n]\otimes_\Z\Zp\simeq \Zp[t]/(\Phi_n(t))$. Over $\Zp$, $\Phi_n(t)$ factors as 
		\begin{equation*}
		\Phi_n(t)=\prod_{[\sigma]\in \cok\iota^*}\Phi_{n,\sigma}(t), \quad \text{where }\Phi_{n,\sigma}(t)=\prod_{\tau\in \gal(\Qp(\zeta_n)/\Qp)}(t-\tau\circ\iota\circ\sigma(\zeta_n)).
		\end{equation*}
		For each $\sigma\in \gal(\Q(\zeta_n)/\Q)$, $\Phi_{n,\sigma}(t)$ is the minimal polynomial of $\iota\circ \sigma(\zeta_n)$ over $\Zp$. As $\Phi_{n,\sigma}(t)$ are coprime to each other for different cosets $[\sigma]\in \cok\iota^*$ and $\Zp[t]/(\Phi_{n,\sigma}(t))\simeq \Zp[\zeta_n]$ for all $\sigma$, the claim now follows from the Chinese Reminder Theorem.
	\end{proof}
	\begin{cor}\label{cor:cyclo_rep_p_decomp}
		Let $\chi\colon \znx\to \Cx$ be a Dirichlet character with $\Z[\chi]=\Z[\zeta_n]$. Then there is a decomposition of the $p$-adic $(\Z/N)^\times$-representation:
		\begin{equation*}
		\Z[\chi]\otimes_\Z \Zp\simeq \bigoplus_{[\sigma]\in \cok \iota^*} \Zp[\iota\circ \sigma\circ \chi],
		\end{equation*}
		where $\iota\circ \sigma\circ \chi$ is the $p$-adic Dirichlet character defined by
		\begin{equation*}
		\begin{tikzcd}
		\znx\rar["\chi"]&\left(\Z{[}\chi{]}\right)^{\x}\rar["\sigma"]&\left(\Z{[}\chi{]}\right)^{\x}\rar[hook,"\iota"]&\Cpx.
		\end{tikzcd}
		\end{equation*} 
	\end{cor}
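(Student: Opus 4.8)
The plan is to deduce the decomposition of $\Z[\chi]\otimes_\Z\Zp$ as a $p$-adic $\znx$-representation directly from the algebra decomposition in \Cref{prop:Phi_n_fac}, keeping track of the $\znx$-action throughout. First I would write $\Z[\chi]=\Z[\zeta_n]$ via \Cref{lem:chi_fac}, so that $\chi$ factors as $\znx\twoheadrightarrow C_n\hookrightarrow(\Z[\zeta_n])^\x$, and the $\znx$-action on $\Z[\chi]$ is by multiplication through $\chi$. Fixing the embedding $\iota:\Q[\zeta_n]\hookrightarrow\Cp$ and a set of coset representatives $\sigma\in\gal(\Q(\zeta_n)/\Q)$ for $\cok\iota^*$, \Cref{prop:Phi_n_fac} gives a $\Zp$-algebra isomorphism $\Z[\zeta_n]\otimes_\Z\Zp\xrightarrow{\ \prod(\iota\circ\sigma)\otimes 1\ }\bigoplus_{[\sigma]}\Zp[\zeta_n]$. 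The only thing to check is that each projection $(\iota\circ\sigma)\otimes 1:\Z[\zeta_n]\otimes_\Z\Zp\to\Zp[\zeta_n]$ intertwines the $\znx$-action given by $\chi$ on the source with the $\znx$-action given by the character $\iota\circ\sigma\circ\chi$ on the target.

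This is essentially a diagram chase. For $a\in\znx$, the action on $\Z[\zeta_n]\otimes_\Z\Zp$ is multiplication by $\chi(a)\in C_n\subseteq(\Z[\zeta_n])^\x$; applying $(\iota\circ\sigma)\otimes 1$, which is a ring homomorphism sending $\chi(a)\mapsto\iota(\sigma(\chi(a)))$, turns this into multiplication by $\iota\circ\sigma\circ\chi(a)$ on $\Zp[\zeta_n]$ — which is precisely the $\znx$-action defining the $p$-adic Dirichlet character $\iota\circ\sigma\circ\chi$ in the sense of the statement. Since the family $\{(\iota\circ\sigma)\otimes1\}_{[\sigma]}$ assembles into the isomorphism of \Cref{prop:Phi_n_fac}, taking the direct sum of these $\znx$-equivariant maps yields the claimed $\znx$-equivariant isomorphism $\Z[\chi]\otimes_\Z\Zp\simeq\bigoplus_{[\sigma]\in\cok\iota^*}\Zp[\iota\circ\sigma\circ\chi]$.

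There is essentially no obstacle here: the content is entirely carried by \Cref{prop:Phi_n_fac}, and the present corollary merely records that that algebra isomorphism is $\znx$-equivariant for the evident actions. The one point requiring a word of care is that the map in \Cref{prop:Phi_n_fac} depends on the choice of coset representatives $\sigma$, but different choices differ by an element of $\gal(\Qp(\zeta_n)/\Qp)$, which changes each $\iota\circ\sigma\circ\chi$ by a Galois twist and hence does not change the isomorphism class of the summand (by the proposition following \Cref{lem:gal_Q_zeta}); so the decomposition is well-defined up to reindexing and isomorphism. I would state the proof in two or three sentences, citing \Cref{prop:Phi_n_fac} for the underlying algebra splitting and simply noting the equivariance.
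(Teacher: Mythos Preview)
Your proposal is correct and follows exactly the paper's approach: the paper's proof is the single sentence ``This is done by forcing the isomorphism in \Cref{prop:Phi_n_fac} to be $\znx$-equivariant,'' and you have simply unpacked what that means. Your diagram chase and the remark on independence of coset representatives are reasonable elaborations of that one line.
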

	\begin{proof}
		This is done by forcing the isomorphism in \Cref{prop:Phi_n_fac} to be $\znx$-equivariant.
	\end{proof}
	\begin{cor}\label{cor:cyclo_rep_p_decomp_pv}
		When $\chi\colon \znx\to \Cx$ is a primitive Dirichlet character of conductor $N=p^v$ and $p>2$, there is an equivalence of  $\zx{p^v}$-representations:
		\begin{equation*}
			\Z[\chi]^\wedge_{p}\simeq \bigoplus_{\substack{0\le a\le p-2\\\ker \omega^a=\ker\chi|_{\zpx}}} \Zp[\chi_a],
		\end{equation*}
		where $\chi_a=\omega^a\cdot (\iota\circ\chi|_{\Z/p^{v-1}})$ and $\omega\colon \zpx\to\Zpx$ is the \Teichmuller character.
	\end{cor}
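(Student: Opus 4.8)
The plan is to prove the decomposition directly from the idempotent decomposition of the group ring $\Zp[\zpx]$ rather than regrouping the output of \Cref{cor:cyclo_rep_p_decomp}. Since $p>2$ we have $\zx{p^v}\simeq \zpx\times \Z/p^{v-1}$, so $\chi$ factors as $\chi=\psi\cdot\theta$ with $\psi=\chi|_{\zpx}$ and $\theta=\chi|_{\Z/p^{v-1}}$; by \Cref{exmp:Dirichlet_j_decomp} primitivity of $\chi$ of conductor $p^v$ forces $\theta$ to be injective, so $\zeta_{p^{v-1}}\in\Z[\chi]$, and setting $d=|\imag\psi|$ (which divides $p-1$) gives $\Z[\chi]=\Z[\zeta_n]$ with $n=p^{v-1}d$ and $\Z[\chi]^\wedge_p=\Z[\zeta_n]\otimes_\Z\Zp$. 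The case $v=1$ (where $\Z/p^{v-1}$ is trivial, $\chi_a=\omega^a$, $\Zp[\zeta_{p^{v-1}}]=\Zp$) is handled uniformly.

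First I would record that, because $|\zpx|=p-1$ is a unit in $\Zp$ and $\Zp$ contains $\mu_{p-1}$, the group ring splits as $\Zp[\zpx]\simeq \prod_{a=0}^{p-2}\Zp$ via the Teichm\"uller characters $\omega^a$, with orthogonal idempotents $e_a=\tfrac{1}{p-1}\sum_{g\in\zpx}\omega^{-a}(g)\,g$. Applying these to $M:=\Z[\chi]^\wedge_p$, a finitely generated free $\Zp$-module carrying a $\zx{p^v}$-action whose $\zpx$-part commutes with the $\Z/p^{v-1}$-part, yields $M\simeq\bigoplus_{a=0}^{p-2} e_aM$, with $\zpx$ acting on $e_aM$ through $\omega^a$ and with $\Z/p^{v-1}$ acting on $e_aM$ by multiplication by $\iota\circ\theta$ of a generator, i.e.\ by a primitive $p^{v-1}$-th root of unity. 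It then remains to identify the nonzero summands, their ranks, and their ring structure.

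For this, the $\zpx$-action on $\Z[\zeta_n]$ is multiplication by $\psi(g_0)=\zeta_d$, a primitive $d$-th root of unity, for $g_0$ a generator of $\zpx$. Its minimal polynomial over $\Zp$ is $\Phi_d$, which splits completely over $\Zp$ (as $\mu_d\subseteq\Zp$) into the linear factors $t-\omega^a(g_0)$ for exactly those $a$ with $\omega^a$ of order $d$, equivalently $\ker\omega^a=\ker\psi$, since $\zpx$ is cyclic with a unique index-$d$ subgroup; hence $e_aM\neq 0$ precisely for those $a$, of which there are $\phi(d)$. Moreover, because $d\mid p-1$ we have $\Qp(\zeta_n)=\Qp(\zeta_d\zeta_{p^{v-1}})=\Qp(\zeta_{p^{v-1}})$, so $[\Qp(\zeta_n):\Qp]=\phi(p^{v-1})$ by \Cref{prop:gal_Qp_zeta}; combined with \Cref{prop:Phi_n_fac} (or a direct Chinese Remainder argument applied to the factorization of $\Phi_n$ over $\Zp$) this shows each nonzero $e_aM$ is isomorphic as a ring to $\Zp[\zeta_{p^{v-1}}]$, free of rank $\phi(p^{v-1})$ over $\Zp$, so the total rank is $\phi(d)\phi(p^{v-1})=\phi(n)$, matching $\Z[\zeta_n]$. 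On $e_aM\simeq \Zp[\zeta_{p^{v-1}}]$ the group $\zx{p^v}=\zpx\times\Z/p^{v-1}$ acts through $\omega^a$ on the first factor and through $\iota\circ\theta$ on the second, i.e.\ through $\chi_a=\omega^a\cdot(\iota\circ\chi|_{\Z/p^{v-1}})$, which is exactly $\Zp[\chi_a]$, giving the claimed decomposition.

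The main obstacle I expect is the bookkeeping in the last step: pinning down the residual $\Z/p^{v-1}$-action on $e_aM$ as precisely $\iota\circ\theta$, and matching the ring structure of each idempotent summand with $\Zp[\zeta_{p^{v-1}}]$. This is where \Cref{prop:gal_p_adic_cyclo} and \Cref{prop:Phi_n_fac} do the real work; the ambiguity in the choice of primitive $p^{v-1}$-th root of unity only changes $\chi_a$ by an element of $\gal(\Qp(\zeta_{p^{v-1}})/\Qp)$, which replaces the minimal polynomial of the corresponding root by a Galois conjugate and hence gives an isomorphic representation, so no information is lost. Alternatively, one can derive the statement formally from \Cref{cor:cyclo_rep_p_decomp} by applying the idempotents $e_a$ inside each summand $\Zp[\iota\sigma\chi]$ appearing there and reindexing; I would present the idempotent argument as the main line and note this as a remark.
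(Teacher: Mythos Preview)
Your proof is correct and takes a genuinely different route from the paper's. The paper applies \Cref{cor:cyclo_rep_p_decomp} directly, obtaining the decomposition indexed by cosets $[\sigma]\in\cok\iota^*$, and then argues that the set $\{\iota\circ\sigma\circ\chi\}$ coincides with $\{\chi_a : \ker\omega^a=\ker\chi|_{\zpx}\}$; it does the $v=1$ case by a counting argument (both sets have size $\phi(n')$) and reduces $v>1$ to $v=1$ via the tensor splitting $\Z[\chi]=\Z[\chi|_{\zpx}]\otimes\Z[\chi|_{\Z/p^{v-1}}]$ together with \Cref{cor:Qp_pv}. Your argument instead bypasses the Galois-coset bookkeeping entirely: you use the idempotents $e_a\in\Zp[\zpx]$ (available since $p-1\in\Zp^\times$) to cut $M$ into $\omega^a$-eigenspaces, and then the factorization of $\Phi_d$ over $\Zp$ tells you exactly which eigenspaces are nonzero and that each is a copy of $\Zp[\zeta_{p^{v-1}}]$. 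What your approach buys is a direct, intrinsic explanation of why the index set is $\{a:\ker\omega^a=\ker\psi\}$---it falls out of the eigenvalue condition rather than a set-matching---and it treats all $v\ge 1$ uniformly. What the paper's approach buys is that it exhibits the result as a specialization of the general Galois-theoretic decomposition \Cref{cor:cyclo_rep_p_decomp}, keeping the appendix's narrative unified. Your closing remark already notes that the two can be reconciled by pushing the idempotents through the summands of \Cref{cor:cyclo_rep_p_decomp}.
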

	\begin{proof}
		By \Cref{cor:cyclo_rep_p_decomp}, we need show the following two sets of characters are the same:
		\begin{equation}\label{eqn:two_sets_char}
			\{\iota\circ \sigma \circ \chi\mid [\sigma]\in \cok \iota^*\}=\{\omega^a\cdot (\iota\circ\chi|_{\Z/p^{v-1}})\mid 0\le a\le p-2, \ker \omega^a=\ker\chi|_{\zpx}\}.
		\end{equation}
		We first prove the $v=1$ case. A $p$-adic character of conductor $p$ is necessarily of the form $\omega^a$ for some $a$, since $\Zp$ contains all $(p-1)$-st roots of unity. As $\iota$ and $\sigma$ are injections, $\ker \iota\circ \sigma \circ \chi=\ker \chi$. Now it suffices to check the two sets have the same size. Since $\Zp[\iota\circ \chi]=\Zp$, we have $|\cok \iota^*|=|\gal(\Q(\chi)/\Q)|=\mathrm{rank}_{\Z}(\Z[\chi])$. The character $\chi$ factors as $\zpx\twoheadrightarrow C_{n'}\hookrightarrow (\Z[\zeta_{n'}])^\x$ for some $n'|(p-1)$. Then $\Z[\chi]$ has rank $\phi(n')$. Let $g\in\zpx$ be a generator, then $\ker \chi$ is the subgroup of $\zpx$ generated by $g^{n'}$. We have 
		\begin{equation*}
			\{a\mid 0\le a\le p-2, \ker \omega^a=\ker\chi=\langle g^{n'}\rangle\subseteq \zpx \}=\{a \mid 0\le a\le p-2, \text{ the order of $a\in\zpx$ is }(p-1)/n' \}.
		\end{equation*}
		The size of this set is $\phi(n')$, which is equal to $|\cok \iota^*|$, from which we conclude the two sets of characters in \eqref{eqn:two_sets_char} are the same when $v=1$.
		
		When $v>1$, write $\Z[\chi]=\Z[\chi|_{\zpx}]\otimes \Z[\chi|_{\Z/p^{v-1}}]$. The character $\chi$ being primitive implies $\chi|_{\Z/p^{v-1}}$ is injective and $\Z[\chi|_{\Z/p^{v-1}}]=\Z[\zeta_{p^{v-1}}]$. By \Cref{cor:Qp_pv}, $\Z[\chi|_{\Z/p^{v-1}}]^\wedge_{p}=\Zp[\iota\circ \chi|_{\Z/p^{v-1}}]$. On the other hand , write $\iota=\iota_{n'}\cdot\iota_p$ as in \Cref{prop:gal_p_adic_cyclo}, where $\iota_p\colon \Q(\zeta_{p^{v-1}})\hookrightarrow \Cp$ is a field extension. \Cref{prop:gal_p_adic_cyclo} says $\iota_p^*$ is an isomorphism, which implies $\cok \iota^*=\cok \iota_{n'}^*$. The  analysis above shows:
		\begin{align*}
			\Z[\chi]^\wedge_{p}\simeq& \Z[\chi|_{\zpx}]^\wedge_{p}\bigotimes_{\Zp} \Zp[\iota_p\circ \chi|_{\Z/p^{v-1}}]\\
			\bigoplus_{[\sigma]\in \cok \iota^*} \Zp[\iota\circ \sigma\circ \chi]\simeq&  \left(\bigoplus_{[\sigma]\in \cok \iota_{n'}^*} \Zp[\iota_{n'}\circ \sigma\circ \chi|_{\zpx}]\right)\bigotimes_{\Zp} \Zp[\iota_p\circ \chi|_{\Z/p^{v-1}}]
		\end{align*}
		Now we have reduced this case to the $v=1$ situation for the character $\chi|_{\zpx}$.
	\end{proof}
	\printbibliography
\end{document}